\newcommand{\bc}{\mathbf C}
\newcommand{\br}{\mathbf R}
\newcommand{\bz}{\mathbf Z}
\newcommand{\bn}{\mathbf N}
\newcommand{\Cal}{\mathcal}
\newcommand{\coker}{\operatorname{Coker}}
\renewcommand{\dim}{\operatorname{Dim}}
\newcommand{\id}{\operatorname{Id}}
\renewcommand{\iff}{\Leftrightarrow}
\newcommand{\im}{\operatorname{Im}}
\newcommand{\interior}{\overset{\circ}}
\renewcommand{\ker}{\operatorname{Ker}}
\newcommand{\lip}{\operatorname{Lip}}
\newcommand{\mn}[1]{\Vert#1\Vert}
\newcommand{\op}{\operatorname{Op}}
\newcommand{\ol}{\overline}
\newcommand{\ran}{\operatorname{Ran}}
\newcommand{\re}{\operatorname{Re}}
\renewcommand{\Re}{\operatorname{Re}}
\newcommand{\restr}[1]{\big|_{#1}}
\newcommand{\set}[1]{\left\{\,#1\,\right\}}
\newcommand{\supp}{\operatorname{\rm supp}}
\newcommand{\sw}[1]{\left ( #1\right ) }
\newcommand{\w}[1]{\langle #1\rangle }
\newcommand{\wf}{\operatorname{WF}}
\newcommand{\wt}{\widetilde}
\newcommand{\eig}{\operatorname{Eig}}
\newcommand{\var}{\operatorname{var}}
\newcommand{\rb}{{\rm b}}
\newcommand{\spec}{\operatorname{Spec}}
\numberwithin{equation}{section}
\gdef\theoremheaderfont#1{\gdef\theorem@headerfont{#1}}
\def\skipmath@b#1\(#2\){{#1}
  \ifx\skipmath@b#2\else\(#2\)\expandafter\skipmath@b\fi}
\newtheorem{thm}{Theorem}[section]
\newtheorem{lem}[thm]{Lemma}
\newtheorem{prop}[thm]{Proposition}
\newtheorem{rem}[thm]{Remark}
\theoremstyle{definition}
\newtheorem{defn}[thm]{Definition}
\newtheorem{exe}[thm]{Example}
\theoremstyle{remark}
\begin{document}

\title[Pseudospectrum for systems]
{The Pseudospectrum of Systems\\of Semiclassical Operators}
\author[NILS DENCKER]{{\textsc Nils Dencker}}
\address{Centre for Mathematical Sciences, University of Lund, Box 118,
SE-221 00 Lund, Sweden}
\email{dencker@maths.lth.se}
\date{December 1, 2008} 
\subjclass[2000]{35S05 (primary) 35P05,  47G30, 58J40 (secondary)}

\baselineskip 18,5pt 
\lineskip 2pt
\lineskiplimit 2pt






\maketitle

\section{Introduction}

In this paper we shall study the pseudospectrum or spectral instability
of square non-selfadjoint semiclassical systems of principal type. Spectral instability
of non-selfadjoint operators is currently a topic of interest in
applied mathematics, see~\cite{Dav} and  \cite{Trebook}. It arises from the fact that, for 
non-selfadjoint operators, the resolvent could be very large in an open
set containing the spectrum. For semiclassical differential operators, this
is due to the bracket condition and is connected to the
problem of solvability. In applications where one needs to compute the
spectrum, the spectral instability has the consequence that discretization and 
round-off errors give false spectral values, so
called pseudospectrum, see ~\cite{Trebook} and references there.

We shall consider bounded systems $P(h)$ of semiclassical operators
given by~\eqref{Pdef}, and we shall generalize the results of the
scalar case in~\cite{dsz}. Actually, the study of unbounded operators can in many
cases be reduced to the bounded case, see Proposition~\ref{reduxlem} and
Remark~\ref{reduxrem}. We shall also study semiclassical  
operators with analytic symbols, in the case when the symbols
can be extended analytically to a tubular neighborhood of the phase
space satisfying~\eqref{analsymb}. The operators that we consider are of
principal type, which means 
that the principal symbol vanishes of first order on
the kernel, see Definition~\ref{princtype}.

The definition of {\em semiclassical} pseudospectrum in ~\cite{dsz} is
essentially the bracket condition, which is suitable for symbols
of principal type. By instead using the definition of
(injectivity) pseudospectrum by Pravda-Starov~\cite{p-s} we obtain
a more refined view of the spectral instability, see
Definition~\ref{psdef}. For example, $z$ is in the
pseudospectrum of infinite index for $P(h)$ if for any ~$N$ the
resolvent norm blows up faster than any power of the semiclassical parameter:
\begin{equation}\label{pspec}
 \mn{(P(h) - z\id)^{-1}} \ge C_N h^{-N}\qquad 0 < h \ll 1
\end{equation}

In ~\cite{dsz} it was proved that~\eqref{pspec} holds almost everywhere in the
{\em semiclassical} pseudospectrum. We shall generalize this to systems and
prove that for systems of principal type, except for a nowhere dense set
of degenerate values, the resolvent blows 
up as in the scalar case, see Theorem~\ref{bracketthm}. The complication is that 
the eigenvalues don't have constant multiplicity in general, only almost everywhere.

At the boundary of the semiclassical pseudospectrum, we obtained in~\cite{dsz} 
a bound on the norm of the semiclassical resolvent, under the
additional condition of having no unbounded (or closed)
bicharacteristics. In the systems case, the picture is more 
complicated and it seems to be difficult to get an estimate on the norm of the resolvent 
using only information about the eigenvalues, even in the principal
type case, see Example~\ref{saex}. In fact, the norm is
essentially preserved under multiplication with elliptic systems, but
the eigenvalues are changed. Also, the
multiplicities of the eigenvalues could be changing at all points 
on the boundary of the eigenvalues, see Example~\ref{princex}. We shall instead introduce
{\em quasi-symmetrizable} systems, which generalize
the normal forms of the scalar symbols at the boundary of the
eigenvalues, see Definition~\ref{QS}. 
Quasi-symmetrizable systems are of principal type and we obtain
estimates on the resolvent as in the scalar case, see Theorem~\ref{QSthm}.

For boundary points of {\em finite type} we obtained in~\cite{dsz}
subelliptic type of estimates on the semiclassical resolvent. This is
the case when one has non-vanishing higher order brackets. 
For systems the situation is less clear, 
there seems to be no general results on the subellipticity for
systems. In fact, the real and imaginary parts
do not commute in general, making the bracket condition meaningless.
Even when they do, Example ~\ref{ex1} shows that the bracket condition is not 
sufficient for subelliptic type of estimates. 
Instead we shall introduce invariant conditions on the order of vanishing of the symbol
along the bicharacteristics of the eigenvalues.
For systems, there could be several (limit) bicharacteristics  of the
eigenvalues going through a characteristic point, see Example~\ref{subex}. 
Therefore we introduce the {\em approximation} property
in Definition~\ref{apprdef} which gives that the all (limit)
bicharacteristics of the eigenvalues are parallell at the characteristics,
see Remark~\ref{condrem}. The general case presently looks too complicated to handle.
We shall generalize the property of being of finite type to systems,
introducing systems of {\em subelliptic type}. These are
quasi-symmetrizable systems satisfying the approximation property, such that
the imaginary part on the kernel vanishes of finite order along the
bicharacteristics of the real part of the eigenvalues.
This definition is invariant under multiplication with invertible systems and taking adjoints,
and for these systems we obtain subelliptic types of estimates on the
resolvent, see Theorem~\ref{subthm}.

As an example, we may look at
 \begin{equation*}
P(h) = h^2{\Delta}\id_N + i K(x)
 \end{equation*}
where ${\Delta} = -\sum_{j=1}^n \partial_{x_j}^2$ is the positive Laplacean,
and $K(x)\in C^\infty(\br^n)$ is a symmetric $N \times N$ system.
If we assume some conditions of ellipticity at infinity for
$K(x)$, we may reduce to the case of bounded symbols by Proposition~\ref{reduxlem} and
Remark~\ref{reduxrem}, see Example~\ref{sysex}. 
Then we obtain that $P(h)$
has discrete spectrum in the right half plane $\set{z:\ \re z \ge 0}$,
and in the first quadrant  if $K(x) \ge 0$, by Proposition~\ref{discretespec}.
We obtain from Theorem~\ref{bracketthm} that the $L^2$ operator norm of the resolvent
grows faster than any power of $h$ as $h \to 0$, thus~\eqref{pspec} holds
for almost all values $z$ such that $\re z > 0$ and 
$\im z$ is an eigenvalue of $K$, see Example~\ref{Qex1}. 

\noindent
For $\re z = 0$
and almost all eigenvalues $\im z$ of $K$,
we find from Theorem~\ref{subthm} that the norm of the resolvent is bounded by
$Ch^{-2/3}$, see Example~\ref{Qex3}. 
In the case $K(x) \ge 0$ and $K(x)$ is invertible at infinity,
we find from Theorem~\ref{QSthm} 
that the norm of the resolvent is 
bounded by $Ch^{-1}$ for  $\re z > 0$ and $\im z = 0$ by Example~\ref{Qex2a}.
The results in this paper are formulated for operators acting on the
trivial bundle over $\br^n$. But since our results are mainly local, they can be applied to
operators on sections of fiber bundles.

\section{The Definitions}

We shall consider  $N\times N$ systems of semiclassical
pseudo-differential operators, and use the Weyl quantization: 
\begin{equation}\label{weyl}
P ^w ( x , h D_x ) u  = \frac{1}{ (2 \pi )^n } 
\iint_{T^*\br^n}  P \left ( \frac{ x+ y } 2 , h \xi \right) e^{ {i} 
\langle x - y , \xi \rangle } u ( y ) d y d \xi 
\end{equation}
for matrix valued $P \in C^\infty ( T^* \br^n, \Cal L(\bc^N,
\bc^N )) $.
We shall also consider the semiclassical operators 
\begin{equation}\label{Pdef}
 P ( h ) \sim \sum_{ j
  =0}^\infty h ^j P_j^w ( x , h D )
\end{equation} 
with $P_j \in C^\infty_\rb (
T^* \br^n, \Cal L(\bc^N, \bc^N ))$. Here $C^\infty_\rb$ is the set of
$C^\infty$ functions having all derivatives in $L^\infty$ and
$P_0 = {\sigma}(P(h))$ is the principal symbol  of $ P ( h )$.
The operator is said to be  elliptic if the principal symbol $P_0$ is
invertible, and of principal type if $P_0$ vanishes of first order on
the kernel, see Definition~\ref{princtype}.
Since the results in the paper only depend on the principal symbol, one could also
have used the Kohn-Nirenberg quantization because the different quantizations only
differ in the lower order terms. 
We shall also consider operators with analytic symbols, then we shall assume
that  $P_j(w)$ are bounded and holomorphic in a tubular neighborhood of
$T^*\br^n$ satisfying
\begin{equation}\label{analsymb}
 \mn{P_j(z,{\zeta})} \le C_0C^jj^j\qquad| \im (z,{\zeta}) | \le 1/C\quad \forall\, j \ge 0
\end{equation}
which will give exponentially small errors in the calculus, here $\mn
A$ is the norm of the matrix $A$. 
But the results hold for more general analytic symbols, see
Remarks~\ref{intrem} and~\ref{QSrem}. In the following, we shall
use the notation $w = (x,{\xi}) \in T^* \br^n $.

We shall consider the spectrum $\spec P(h)$ which is the set of values
${\lambda}$ such that the resolvent
$
 (P(h) - {\lambda}\id_N)^{-1}
$
is a bounded operator, here $\id_N$ is the identity in $\bc^N$. The
spectrum of $P(h)$ is essentially
contained in the spectrum of the principal symbol $P(w)$, which is
given by
\begin{equation*}
 |P(w) - {\lambda}\id_N| = 0
\end{equation*}
where $|A|$ is the determinant of the matrix $A$. 
For example, if $P(w) = {\sigma}(P(h))$ is bounded and $z_1$ is not an eigenvalue of $P(w)$ for any
$w =(x,{\xi})$ (or a limit eigenvalue at infinity) then $P(h) - z_1\id_N$ is
invertible by Proposition~\ref{discretespec}. When $P(w)$ is an
unbounded symbol one needs additional conditions, see for example Proposition~\ref{reduxlem}.
We shall mostly restrict our study to bounded symbols, but we can
reduce to this case if $P(h) - z_1\id_N$ is invertible,
by considering 
$$ (P(h) - z_1\id_N)^{-1} (P(h) - z_2\id_N) \qquad z_2
\neq z_1 $$ 
see Remark~\ref{reduxrem}. But unless we have 
conditions on the eigenvalues at infinity, this does not always give a bounded operator.

\begin{exe}\label{infex}
Let 
\begin{equation*}
 P(\xi ) = 
\begin{pmatrix}
0 & \xi \\
0 & 0 
\end{pmatrix} \qquad {\xi} \in \br
\end{equation*}
then $0$ is the only eigenvalue of $P({\xi})$ but 
\begin{equation}\label{res}
(P(\xi ) - z \id_N)^{-1} = -{1}/{z}
\begin{pmatrix}
1 & {\xi}/z \\ 0 & 1 
\end{pmatrix}
\end{equation} 
and $(P^w - z\id_N)^{-1}P^{w} = -z^{-1} P^w$ is
unbounded for any $z \ne 0$.
\end{exe}

\begin{defn}  Let  $P \in C^\infty ( T^* \br^n)$ be an $N \times N$ system.
We denote the closure of the set of eigenvalues of $P$ by
\begin{equation}
{\Sigma}(P) = \overline {\set{\lambda \in \bc:  \exists\, w\in
    T^*\br^n,\  |P(w) - {\lambda}\id_N| = 0}}
\end{equation}
and the eigenvalues at infinity:
\begin{equation} 
  {\Sigma}_\infty(P) = \set{{\lambda} \in \bc: \exists\, w_j \to
   \infty \ \exists \,u_j \in \bc^N\setminus 0; \ |P(w_j)u_j-
   {\lambda}u_j|/|u_j|  \to 0, \ j \to \infty}
\end{equation}
which is closed in $\bc$.
\end{defn}

In fact, that ${\Sigma}_\infty(P)$ is closed follows by taking a
suitable diagonal sequence. Observe 
that as in the scalar case, we could have ${\Sigma}_\infty(P) =
{\Sigma}(P)$, for example if ~$P(w)$ is constant in one direction.  It
follows from the definition that ${\lambda} \notin
{\Sigma}_\infty(P)$ if and only if the resolvent is defined and
bounded when~ $|w|$ is large enough:
\begin{equation}\label{eqdef}
 \mn{(P(w) - {\lambda}\id_N)^{-1}} \le C\qquad |w| \gg 1
\end{equation}
In fact, if ~\eqref{eqdef} does not hold there would exist $w_j \to
\infty$ such that $\mn{(P(w_j) - {\lambda}\id_N)^{-1}} \to \infty$, $j
\to \infty$. Thus, there would exist $u_j \in \bc^N$ such that $|u_j| = 1$ and $P(w_j) u_j
- {\lambda} u_j \to 0$. On the contrary, if ~\eqref{eqdef} holds then 
$|P(w)u - {\lambda}u| \ge |u|/C$ for any~ $u \in \bc^N$ and $|w| \gg 1$.

It is clear from the definition that ${\Sigma}_\infty(P)$
contains all finite limits of eigenvalues of $P$ at infinity. In fact,
if $P(w_j)u_j = {\lambda}_ju_j$, $|u_j| = 1$, $w_j \to \infty$ and
${\lambda}_j \to {\lambda}$ then 
$$P(w_j)u_j - {\lambda}u_j =
({\lambda}_j - {\lambda})u_j \to 0$$
Example~\ref{infex} shows that in general ~${\Sigma}_\infty(P)$ could be
a larger set.

\begin{exe}\label{limeigex}
Let $P({\xi})$ be given by Example~\ref{infex}, then ${\Sigma}(P) =
\set{0}$ but ${\Sigma}_\infty(P) = \bc$ by~\eqref{res}
and~\eqref{eqdef}. In fact, for any ${\lambda} \in
\bc$ we find that 
$$|P({\xi})u_{\xi}- {\lambda}u_{\xi}| =
{\lambda}^2\quad\text{when}\quad u_{\xi} = {}^t({\xi},{\lambda})$$ 
We have that $|u_{\xi}| = \sqrt{|{\lambda}|^2 + {\xi}^2} \to \infty$ so
$|P({\xi})u_{\xi}- {\lambda}u_{\xi}|/|u_{\xi}|  \to 0$ when $|{\xi}| \to \infty$.
\end{exe}

For bounded symbols we get equality according to the 
following proposition.

\begin{prop}
If $P \in C^\infty_{\rm{b}} ( T^* \br^n)$ is an $N \times N$ system
then ${\Sigma}_\infty(P)$ is the set of all limits of  
the eigenvalues of $P$ at infinity.
\end{prop}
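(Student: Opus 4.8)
The plan is to prove the nontrivial inclusion: every $\lambda \in {\Sigma}_\infty(P)$ is a limit of eigenvalues of $P$ at infinity, the reverse inclusion having already been observed above. Fix such a $\lambda$ and choose $w_j \to \infty$ and $u_j \in \bc^N$ with $|u_j| = 1$ and $r_j := P(w_j)u_j - \lambda u_j \to 0$ as $j \to \infty$. Since $P \in C^\infty_{\rm b}(T^* \br^n)$ there is $M$ with $\mn{P(w)} \le M$ for all $w$; in particular the eigenvalues of every $P(w_j)$ lie in $\set{|z| \le M}$, and the entries of the adjugate matrix $\operatorname{adj}(P(w) - z\id_N)$, being fixed polynomials in the entries of $P(w)$ and in $z$, are bounded in modulus by a constant $C_0 = C_0(M,N)$ uniformly for all $w$ and all $|z| \le M+1$.

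The key point is an elementary resolvent estimate: if $A$ is an $N \times N$ matrix with $\mn{A} \le M$ all of whose eigenvalues ${\mu}$ satisfy $|{\mu} - \lambda| \ge \delta$, then
\begin{equation*}
 \mn{(A - \lambda\id_N)^{-1}} \le C_0(M,N)\,{\delta}^{-N}.
\end{equation*}
Indeed $(A - \lambda\id_N)^{-1} = \operatorname{adj}(A - \lambda\id_N)/|A - \lambda\id_N|$; the numerator is bounded by $C_0(M,N)$ as above, while $|A - \lambda\id_N| = \prod_k ({\mu}_k - \lambda)$ has modulus $\ge {\delta}^N$. This is the only place where boundedness of the symbol is used, and it is precisely the step that fails for unbounded symbols, cf.\ Example~\ref{limeigex}.

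Now argue by contradiction: suppose $\lambda$ is not a limit of eigenvalues of $P$ at infinity. Then there exist $\delta > 0$ and $R$ such that for all $|w| \ge R$ every eigenvalue of $P(w)$ is at distance $\ge \delta$ from $\lambda$ --- otherwise one could pick $w_k$ with $|w_k| \ge k$ carrying an eigenvalue within $1/k$ of $\lambda$, which is exactly the forbidden sequence. In particular $P(w_j) - \lambda\id_N$ is invertible for $j$ large, and applying the resolvent estimate with $A = P(w_j)$ (using $|u_j| = 1$) gives
\begin{equation*}
 |r_j| = \mn{(P(w_j) - \lambda\id_N)u_j} \ge \mn{(P(w_j) - \lambda\id_N)^{-1}}^{-1} \ge C_0(M,N)^{-1}{\delta}^N > 0,
\end{equation*}
contradicting $r_j \to 0$. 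Hence for every $m$ there is $w$ with $|w| \ge m$ having an eigenvalue of $P(w)$ within $1/m$ of $\lambda$, and a diagonal extraction then yields $w_j \to \infty$ together with eigenvalues $\lambda_j$ of $P(w_j)$ with $\lambda_j \to \lambda$, so $\lambda$ is a limit of eigenvalues at infinity. I do not expect a serious obstacle: all the content sits in the elementary bound above, which converts ``eigenvalues bounded away from $\lambda$'' into ``resolvent bounded near $\lambda$'' using only $\mn{P(w)} \le M$.
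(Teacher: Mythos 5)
Your proof is correct, and it takes a genuinely different route from the paper's. The paper argues directly: from $|P(w_j)u_j-\lambda u_j|=\varepsilon_j\to 0$ with $|u_j|=1$ it writes $P(w_j)u_j=\lambda u_j+A_ju_j$ with $\mn{A_j}=\varepsilon_j$, so that $\lambda$ is an exact eigenvalue of the perturbed matrix $P(w_j)-A_j$, and then invokes Elsner's theorem on the spectral variation of two matrices to conclude that $P(w_j)$ has an eigenvalue $\mu_j$ with $|\mu_j-\lambda|\le C_N\varepsilon_j^{1/N}\to 0$; uniform boundedness of $P$ enters only through the constant in Elsner's bound. You instead argue by contraposition through an elementary resolvent estimate: if all eigenvalues of a matrix $A$ with $\mn A\le M$ stay at distance $\ge\delta$ from $\lambda$, then $\mn{(A-\lambda\id_N)^{-1}}\le C_0(M,N)\delta^{-N}$ via the adjugate/determinant formula, so an approximate null vector of $P(w_j)-\lambda\id_N$ forces an eigenvalue of $P(w_j)$ near $\lambda$. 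This is essentially a self-contained substitute for the quoted perturbation theorem (note it encodes the same $\varepsilon^{1/N}$-type control: $\operatorname{dist}(\lambda,\spec P(w_j))\le (C_0\varepsilon_j)^{1/N}$), and it isolates cleanly where boundedness of the symbol is used, which is exactly the point illustrated by Example~\ref{limeigex}. What the paper's route buys is brevity and an explicit quantitative bound by citation, with no contradiction argument; what yours buys is elementarity and independence of the reference. One small point you leave implicit: to apply your uniform adjugate bound at $z=\lambda$ you need $|\lambda|\le M+1$, which does follow from $|\lambda|\le\mn{P(w_j)}+|r_j|$, but it deserves a line.
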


\begin{proof}
Since  ${\Sigma}_\infty(P)$ contains all
limits of eigenvalues of $P$ at infinity, we only have to 
prove the opposite inclusion.
Let ${\lambda} \in {\Sigma}_\infty(P)$ then by the
definition there exist $w_j \to \infty$
and $u_j \in \bc^N$ such that $|u_j| = 1$ and $|P(w_j)u_j -
{\lambda}u_j| = {\varepsilon}_j \to 0$. Then we may choose $N \times
N$ matrix $A_j$ such that $\mn{A_j} = {\varepsilon}_j$ and 
$P(w_j)u_j = {\lambda}u_j + A_ju_j$ thus ${\lambda}$
is an eigenvalue of $P(w_j)- A_j$. Now if~ $A$ and ~$B$ are $N\times N$
matrices and $d(\eig(A), \eig(B))$ is the minimal distance between the
sets of eigenvalues of $A$ and ~$B$ under permutations, then we have
that $ d(\eig(A), \eig(B)) \to 0$ when $\mn{A-B} \to 0$. In fact, a
theorem of Elsner  ~\cite{elsner} gives
\begin{equation}
 d(\eig(A), \eig(B)) \le N(2\max(\mn A,\mn B))^{1-1/N}\mn{A-B}^{1/N}
\end{equation}
Since the matrices $P(w_j)$ are uniformly bounded
we find that they have an eigenvalue ${\mu}_j$ such
that $|{\mu}_j - {\lambda}| \le C_N{\varepsilon}_j^{1/N} \to 0$ as $j
\to \infty$, thus
${\lambda} = \lim_{j \to \infty}
{\mu}_j$ is a limit of eigenvalues of $P(w)$ at infinity.  
\end{proof}

One problem with studying systems $P(w)$, is that the eigenvalues are
not very regular in the parameter $w$, generally they depend only
continuously (and eigenvectors measurably) on~$w$.

\begin{defn}\label{Omegadef} 
For  an $N \times N$ system $P \in C^\infty ( T^* \br^n)$ we define 
\begin{equation*}
 {\kappa}_P(w,{\lambda}) = \dim \ker (P(w) - {\lambda}\id_N)
\end{equation*}
and 
\begin{equation*}
 K_P(w,{\lambda}) = \max\set{k:\ \partial_{\lambda}^j
   p(w,{\lambda}) = 0 \text{ for}\ j < k}
\end{equation*}
where $ p(w,{\lambda}) = |P(w) - {\lambda}\id_N|$ is the
characteristic polynomial. We have ${\kappa}_P \le K_P$ with equality
for symmetric systems but in general we need not
have equality, see Example~\ref{varcharex}. Let
\begin{equation*}
 {\Omega}_k(P) = \set{(w,{\lambda}) \in T^*\br^n\times \bc:\
   K_P(w,{\lambda}) \ge k}\qquad k \ge 1
\end{equation*}
then $\emptyset = {\Omega}_{N+1}(P) \subseteq {\Omega}_{N}(P)
\subseteq \dots \subseteq {\Omega}_1(P)$ and
we may define
\begin{equation}
\Xi(P) = \bigcup_{j > 1} \partial {\Omega}_j(P)
\end{equation}
where  $\partial {\Omega}_j(P)$ is the
boundary of ~${\Omega}_j(P)$ in the relative topology of ${\Omega}_1(P)$.
\end{defn}

Clearly, ${\Omega}_j(P)$ is a closed set for any $j \ge 1$. By the
definition we find that the multiplicity $K_P$ of the 
zeros of $|P(w)-{\lambda}\id_N|$ is locally constant on ${\Omega}_1(P)
\setminus \Xi(P)$. If $P(w)$ is symmetric then ${\kappa}_P = \dim \ker (P(w) -
{\lambda}\id_N)$ also is constant on ${\Omega}_1(P)\setminus
{\Xi}(P)$ but this is not true in general, see Example~\ref{princex}.

\begin{rem}  \label{constcharlem}
We find that $\Xi(P) $ 
is closed and nowhere dense in ${\Omega}_1(P)$ since it is the union of
boundaries of closed sets. 
We also find that 
$$(w,{\lambda}) \in {\Xi}(P) \iff (w,\ol {\lambda}) \in {\Xi}(P^*)$$
since $|P^* - \ol {\lambda}\id_N| = \ol{|P - {\lambda}\id_N|}$.
\end{rem}

\begin{exe}\label{varcharex}
Let 
\begin{equation*}
 P(w) = 
\begin{pmatrix}
{\lambda}_1(w) & 1 \\ 0 & {\lambda}_2(w) 
\end{pmatrix}
\end{equation*}
where ${\lambda}_j(w) \in C^\infty$, $j=1$, $2$,
then ${\Omega}_1(P)  = \set{(w,{\lambda}):\ {\lambda} = {\lambda}_j(w),
  \ j= 1,\, 2}$, 
$${\Omega}_2(P) =  
\set{(w,{\lambda}):\ {\lambda} = {\lambda}_1(w)  =
  {\lambda}_2(w)}$$ but ${\kappa}_P \equiv 1$ on ${\Omega}_1(P)$.
\end{exe}

\begin{exe}\label{constgeocharex}
Let
$$P(t) = 
\begin{pmatrix}
0 & 1 \\ t & 0 
\end{pmatrix} \qquad t\in \br$$
then $P(t)$ has the eigenvalues $\pm \sqrt{t}$, and ${\kappa}_P
\equiv 1$ on ${\Omega}_1(P)$. 
\end{exe}

\begin{exe}\label{simpleex}
Let 
\begin{equation*}
 P = 
\begin{pmatrix}
w_1+ w_2 & w_3 \\ w_3 & w_1 - w_2 
\end{pmatrix}
\end{equation*}
then 
\begin{equation*}
 {\Omega}_1(P) = \set{(w; {\lambda}_j):\ {\lambda}_j = w_1 + (-1)^j \sqrt{w_2^2
 + w_3^2}, \ j=1,\ 2}
\end{equation*}
We have that ${\Omega}_2(P) = \set{(w_1,0,0; w_1):\ w_1 \in \br}$ and
${\kappa}_P = 2$ on ${\Omega}_2(P)$.  
\end{exe}

\begin{defn} 
Let ${\pi}_j$ be the projections
\begin{equation*}
 {\pi}_1(w,{\lambda}) = w\qquad {\pi}_2(w,{\lambda}) = {\lambda}
\end{equation*}
then we  define for ${\lambda}\in \bc$ the closed sets
\begin{equation*}
 {\Sigma}_{\lambda}(P) = {\pi}_1 \left({\Omega}_1(P)\bigcap
   {\pi}^{-1}_2({\lambda})\right) = \set{w:\ |P(w) - {\lambda}\id_N| =
   0}
\end{equation*}
and 
$${X}(P) = {\pi}_1\left(\Xi(P)\right) \subseteq T^*\br^n$$ 
\end{defn}

\begin{rem}
Observe that $X(P)$ is nowhere dense in $T^*\br^n$ and
$P(w)$ has  constant characteristics
near~$w_0 \notin {X}(P)$. This means that $|P(w) -
{\lambda}\id_N| = 0$ if and only if ${\lambda} = {\lambda}_j(w)$ for
$j = 1, \dots k$, where the  eigenvalues ${\lambda}_j(w) \ne {\lambda}_k(w)$ for $j
\ne k$ when $|w -w_0| \ll 1$. 
\end{rem}

In fact, ${\pi}_1^{-1}(w)$ is a finite set for any $w \in T^*\br^n$ and
since the eigenvalues are continuous functions of the parameters, the relative topology
on ${\Omega}_1(P)$ is generated by ${\pi}_1^{-1}({\omega})\bigcap {\Omega}_1(P)$ for open
sets ${\omega} \subset T^*\br^n$.

\begin{defn}  For an $N \times N$ system $P \in C^\infty ( T^* \br^n)
  $ we define the {\em weakly singular eigenvalue set} 
\begin{equation}
{\Sigma}_{ws}(P)
= {\pi}_2\left({\Xi}(P)\right) \subseteq \bc
\end{equation}
and the  {\em strongly singular eigenvalue set}
\begin{equation}
{\Sigma}_{ss}(P) 
= \set{{\lambda}:\
 {\pi}_2^{-1}({\lambda})\bigcap {\Omega}_1(P) \subseteq {\Xi}(P)}.
\end{equation}
\end{defn}

\begin{rem}\label{closedrem}
It is clear from the definition that ${\Sigma}_{ss}(P) \subseteq {\Sigma}_{ws}(P)$. 
We have that ${\Sigma}_{ws}(P)\bigcup {\Sigma}_\infty(P)$ and
${\Sigma}_{ss}(P)\bigcup {\Sigma}_\infty(P)$ are closed, and
${\Sigma}_{ss}(P)$ is nowhere dense.
\end{rem}

In fact, if ${\lambda}_j \to {\lambda} \notin
{\Sigma}_\infty(P)$, then ${\pi}_2^{-1}({\lambda}_j)\bigcap {\Omega}_1(P)$ is contained in a
compact set for $j \gg 1$, which then either intersects ${\Xi}(P)$ or
is contained in ${\Xi}(P)$. Since ${\Xi}(P)$ is closed, these
properties are preserved in the limit.
 
Also, if  ${\lambda} \in \ol{{\Sigma}_{ss}(P)}$, then there exists
$(w_j,{\lambda}_j) \in {\Xi}(P)$ such that ${\lambda}_j \to
{\lambda}$ as $j \to \infty$. Since ${\Xi}(P)$ is nowhere dense in $ {\Omega}_1(P)$, there exists
$(w_{jk},{\lambda}_{jk}) \in {\Omega}_1(P)\setminus {\Xi}(P)$
converging to $(w_j,{\lambda}_j) $ as $k \to \infty$. Then
${\Sigma}(P) \setminus {\Sigma}_{ss}(P) \ni
{\lambda}_{jj} \to {\lambda}$, so ${\Sigma}_{ss}(P)$ is nowhere dense.
On the other hand, it is possible that ${\Sigma}_{ws}(P) =
{\Sigma}(P)$ by the following example.

\begin{exe}\label{singex}
Let $P(w)$ be the system in Example~\ref{simpleex}, then we have 
$${\Sigma}_{ws}(P)= {\Sigma}(P) = \br$$ 
and ${\Sigma}_{ss}(P) = \emptyset$.  In fact, the eigenvalues
coincide only when $w_2 = w_3= 0$ and the eigenvalue ${\lambda} = w_1$ is
also attained at some point where $w_2 \ne 0$.  
If we multiply $P(w)$ with $w_4 + i w_5$, we obtain that
${\Sigma}_{ws}(P) = {\Sigma}(P) = \bc$.
If we set $\wt P(w_1,w_2) = P(0, w_1,w_2)$ we find that
${\Sigma}_{ss}(\wt P)= {\Sigma}_{ws}(\wt P) = \set 0$.
\end{exe}

\begin{lem} \label{evlem}
Let $P \in C^\infty ( T^* \br^n)$ be an $N \times N$ system.
If $(w_0,{\lambda_0}) \in {\Omega}_1(P)\setminus {\Xi}(P)$ then there
exists a unique $C^\infty$ function ${\lambda}(w)$ so that
$(w,{\lambda}) \in {\Omega}_1(P)$ if and only if ${\lambda} = {\lambda}(w)$ in a
neighborhood of ~$(w_0,{\lambda}_0)$.
If ${\lambda}_0  \in {\Sigma}(P)\setminus({\Sigma}_{ws}(P)\bigcup
{\Sigma}_\infty(P))$ then $\exists\ {\lambda}(w) \in C^\infty$ such that
$(w,{\lambda}) \in {\Omega}_1(P)$ if and only if ${\lambda} = {\lambda}(w)$
in  a neighborhood of ~${\Sigma}_{\lambda_0}(P)$.
\end{lem}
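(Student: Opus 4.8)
The approach I would take works entirely with the characteristic polynomial $p(w,\lambda)=|P(w)-\lambda\id_N|$, whose coefficients are polynomials in the entries of $P(w)$ and hence $C^\infty$ in $w$, together with the argument principle and the fact, recorded just after Definition~\ref{Omegadef}, that the multiplicity $K_P$ is locally constant on $\Omega_1(P)\setminus\Xi(P)$.

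For the first statement I would begin by producing a coordinate box on which the multiplicity is constant. Writing $k_0=K_P(w_0,\lambda_0)$, the point $(w_0,\lambda_0)$ lies in the relative interior of the closed set $\Omega_{k_0}(P)$ — because $(w_0,\lambda_0)\notin\Xi(P)\supseteq\partial\Omega_{k_0}(P)$ when $k_0\ge 2$, the case $k_0=1$ being trivial — but not in the closed set $\Omega_{k_0+1}(P)$; hence there are open $U_0\ni w_0$ and $V_0\ni\lambda_0$ with $K_P\equiv k_0$ on $\Omega_1(P)\cap(U_0\times V_0)$. Next I would shrink $V_0$ to a disc $D=\{\,|\mu-\lambda_0|<r\,\}$ with $\overline D\subset V_0$ in which $\lambda_0$ is the only zero of $p(w_0,\cdot)$; continuity of the zeros of a polynomial in its coefficients (i.e.\ the argument principle for $p(w,\cdot)$ along $\partial D$) then yields a neighborhood $U\subseteq U_0$ of $w_0$ such that for all $w\in U$ the polynomial $p(w,\cdot)$ has no zero on $\partial D$ and exactly $k_0$ zeros in $D$, counted with multiplicity. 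Since any zero of $p(w,\cdot)$ in $D$ corresponds to a point of $\Omega_1(P)\cap(U\times D)$ and therefore has multiplicity exactly $k_0$, and $k_0\ge 1$, there is precisely one such zero $\lambda(w)$. Then $\Omega_1(P)\cap(U\times D)$ is exactly the graph of $\lambda$ — which gives both the stated equivalence and the uniqueness — and $\lambda$ is $C^\infty$ by the residue formula
\begin{equation*}
\lambda(w)=\frac{1}{2\pi i\,k_0}\oint_{\partial D}\mu\,\frac{\partial_\mu p(w,\mu)}{p(w,\mu)}\,d\mu ,
\end{equation*}
whose integrand is $C^\infty$ on $U\times\partial D$.

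For the second statement I would globalize the first one along the fibre $\Sigma_{\lambda_0}(P)=\set{w:\ p(w,\lambda_0)=0}$. Since $\lambda_0\notin\Sigma_\infty(P)$, estimate~\eqref{eqdef} gives $p(w,\lambda_0)\ne 0$ for $|w|\gg 1$, so $\Sigma_{\lambda_0}(P)$ is compact (it is nonempty exactly because $\lambda_0\in\Sigma(P)$); and since $\lambda_0\notin\Sigma_{ws}(P)=\pi_2(\Xi(P))$, every $w\in\Sigma_{\lambda_0}(P)$ has $(w,\lambda_0)\in\Omega_1(P)\setminus\Xi(P)$, so the first part furnishes a neighborhood $U_w\ni w$, a radius $r_w>0$ and a $C^\infty$ function $\lambda_w$ on $U_w$ with $\lambda_w(w)=\lambda_0$, such that for $w'\in U_w$ the value $\lambda_w(w')$ is the unique zero of $p(w',\cdot)$ in $\{\,|\mu-\lambda_0|<r_w\,\}$; note all these discs are concentric, centered at $\lambda_0$. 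Extract a finite subcover $U_{w_1},\dots,U_{w_m}$ of $\Sigma_{\lambda_0}(P)$, set $\delta=\tfrac12\min_i r_{w_i}$, and replace each $U_{w_i}$ by $V_i=\set{w'\in U_{w_i}:\ |\lambda_{w_i}(w')-\lambda_0|<\delta}$, which is open and contains $U_{w_i}\cap\Sigma_{\lambda_0}(P)$ because $\lambda_{w_i}=\lambda_0$ there; then $\omega=\bigcup_i V_i$ is a neighborhood of $\Sigma_{\lambda_0}(P)$. For $w'\in V_i$ the unique zero $\lambda_{w_i}(w')$ of $p(w',\cdot)$ in $\{\,|\mu-\lambda_0|<r_{w_i}\,\}$ already lies in the smaller disc $\{\,|\mu-\lambda_0|<\delta\,\}$, hence is the unique zero there as well; consequently all the $\lambda_{w_i}$ coincide with one and the same function $\lambda$ on $\omega$, namely the one sending $w'$ to the unique zero of $p(w',\cdot)$ in $\{\,|\mu-\lambda_0|<\delta\,\}$, and this $\lambda$ is $C^\infty$ on $\omega$ and satisfies $(w',\mu)\in\Omega_1(P)\iff\mu=\lambda(w')$ for $w'\in\omega$, $|\mu-\lambda_0|<\delta$.

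The point needing the most care is the patching in the second part: the functions $\lambda_{w_i}$ produced by the first statement need not agree on the overlaps $U_{w_i}\cap U_{w_j}$ of their original domains, since $p(w',\cdot)$ may have other zeros moving around outside the relevant discs. What makes it work is that each $\lambda_{w_i}$ takes the value $\lambda_0$ throughout $U_{w_i}\cap\Sigma_{\lambda_0}(P)$, so after the shrinking step every $\lambda_{w_i}$ becomes ``the unique zero of $p(w',\cdot)$ in the fixed disc $\{\,|\mu-\lambda_0|<\delta\,\}$'' and they coincide automatically — provided the finite cover and the radii $r_{w_i}$ are fixed before $\delta$, and the $V_i$ chosen afterwards.
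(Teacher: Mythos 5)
Your proof is correct, and it takes a route that differs from the paper's in both halves. Both arguments start from the same key fact — that $K_P$ is locally constant on ${\Omega}_1(P)\setminus{\Xi}(P)$ — but for the local statement the paper characterizes ${\lambda}(w)$ as the unique solution of $\partial_{\lambda}^{k-1}|P(w)-{\lambda}\id_N|=0$ and gets smoothness from the Implicit Function Theorem (using $\partial_{\lambda}^{k}p(w_0,{\lambda}_0)\ne 0$), whereas you count roots in a fixed disc by the argument principle and recover ${\lambda}(w)$ from the residue formula $\frac{1}{2\pi i k_0}\oint_{\partial D}\mu\,\partial_\mu p/p\,d\mu$. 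Your version buys something the paper leaves implicit: the root-counting step shows directly that $p(w,{\lambda}(w))\equiv 0$ for all nearby $w$ (the ``if'' direction of the equivalence), which in the IFT formulation still requires a separate continuity-of-roots observation, since a solution of $\partial_{\lambda}^{k-1}p=0$ is not a priori a zero of $p$. For the global statement the paper glues the local germs by uniqueness, phrased via a $C^\infty$ partition of unity over the compact set ${\Sigma}_{\lambda_0}(P)$, while you glue by exploiting that all the local discs are centered at ${\lambda}_0$ and shrinking to a common radius ${\delta}$, after which every local function is ``the unique zero in the ${\delta}$-disc'' and the agreement on overlaps is automatic; this is a more explicit (and slightly more careful) version of the same compactness argument, and it also pins down precisely in which $(w,{\lambda})$-region the stated equivalence holds. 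The only cosmetic imprecision is the parenthetical claim that ${\Sigma}_{\lambda_0}(P)\neq\emptyset$ follows from ${\lambda}_0\in{\Sigma}(P)$ alone — one also uses ${\lambda}_0\notin{\Sigma}_\infty(P)$ to rule out eigenvalues escaping to infinity — but nonemptiness is not actually needed for the conclusion, so this does not affect the proof.
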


We find from Lemma~\ref{evlem} that
${\Omega}_1(P)\setminus{\Xi}(P)$ is locally given as a $C^\infty$ manifold over~
$T^*\br^n$, and that the eigenvalues ${\lambda}_j(w)\in C^\infty$
outside $X(P)$. This is not true if we instead assume that ${\kappa}_P$
is constant on ${\Omega}_1(P)$, see Example~\ref{constgeocharex}.

\begin{proof} 
If $(w_0,{\lambda}_0) \in {\Omega}_1(P)\setminus \Xi(P)$, then 
\begin{equation*}
 {\lambda} \to |P(w)- {\lambda} \id_N|
\end{equation*}
vanishes of exactly order $k > 0$ on ${\Omega}_1(P)$
in a neighborhood of~$(w_0,{\lambda}_0)$, so
$$\partial_{\lambda}^{k}|P(w_0) -
{\lambda}\id_N| \ne 0 \qquad\text{for } {\lambda} = {\lambda}_0$$ 
Thus ${\lambda} = {\lambda}(w)$ is
the unique solution to $\partial_{\lambda}^{k-1}|P(w) -
{\lambda}\id_N| = 0$ near $w_0$ which is $C^\infty$ by the Implicit
Function Theorem.

If ${\lambda_0} \in {\Sigma}(P)\setminus({\Sigma}_{ws}(P)\bigcup {\Sigma}_\infty(P))$ then
we obtain this in a neighborhood of any ~$w_0 \in {\Sigma}_{\lambda_0}(P)
 \Subset T^*\br^n$.
By using a $C^\infty$ partition of unity 
we find by the uniqueness that  ${\lambda}(w) \in C^\infty$ in a
neighborhood of ${\Sigma}_{\lambda_0}(P)$.
\end{proof}

\begin{rem}\label{sardrem} 
Observe that if ${\lambda}_0 
\in {\Sigma}(P) \setminus ({\Sigma}_{ws}(P)\bigcup {\Sigma}_\infty(P))$
and ${\lambda}(w) \in C^\infty$ satisfies $|P(w) - {\lambda}(w)\id_N|
\equiv 0$ near ~${\Sigma}_{{\lambda}_0}(P)$ and
${\lambda}\restr{{\Sigma}_{{\lambda}_0}(P)} = {\lambda}_0$, then we find 
by the Sard Theorem that $d\re {\lambda}$
and $d\im {\lambda}$ are linearly independent on the codimension~ $2$
manifold ${\Sigma}_{\mu}(P)$ 
for almost all values ${\mu}$ close to ${\lambda}_0$.
Thus for $n=1$ we find that ${\Sigma}_{\mu}(P)$ is a discrete set 
for almost all values ${\mu}$ close to ${\lambda}_0$. 
\end{rem}

In fact, since ${\lambda}_0 \notin {\Sigma}_\infty(P)$ we find that
${\Sigma}_{\mu}(P) \to {\Sigma}_{\lambda_0}(P)$ when ${\mu} \to
{\lambda}_0$ so ${\Sigma}_{\mu}(P) = \set{w:\ {\lambda}(w) = {\mu}}$
for $|{\mu}-{\lambda}_0| \ll 1$.

\begin{defn}
A $C^\infty$ function ${\lambda}(w)$ is called a {\em germ of
  eigenvalues} at $w_0$ for the $N \times N$ system ~$P \in C^\infty ( T^* \br^n)$ if 
\begin{equation}
 |P(w) - {\lambda}(w)\id_N| \equiv 0 \qquad\text{in a neighborhood of $w_0$}
\end{equation}
If this holds in a neighborhood of every point in ${\omega} \Subset
T^*\br^n$ then we say that ${\lambda}(w)$ is a germ of eigenvalues for ~$P$ on ${\omega}$.
\end{defn}

\begin{rem}
If ${\lambda}_0 \in {\Sigma}(P)\setminus({\Sigma}_{ss}(P)\bigcup
{\Sigma}_\infty(P))$ then there exists $w_0 \in
{\Sigma}_{\lambda_0}(P)$ so that $(w_0,{\lambda}_0) \in {\Omega}_1(P)
\setminus {\Xi}(P)$. By Lemma~\ref{evlem} there exists a
$C^\infty$ germ ${\lambda}(w) $ of eigenvalues
at ~$w_0$ for ~$P$ such that ${\lambda}(w_0) =
{\lambda}_0$. If ${\lambda}_0 \in {\Sigma}(P)\setminus({\Sigma}_{ws}(P)\bigcup
{\Sigma}_\infty(P))$ then there exists a $C^\infty$ germ ${\lambda}(w) $ of eigenvalues 
on~ ${\Sigma}_{\lambda_0}(P)$.
\end{rem}

As in the scalar case we obtain that
the spectrum is essentially discrete outside ${\Sigma}_\infty(P)$.

\begin{prop}\label{discretespec}
\label{p:3}
Assume that the $N \times N$ system $P(h)$ is given by ~\eqref{Pdef} with principal symbol
$P \in C^\infty_\rb(T^*\br^n)$. Let $ \Omega $ be an open connected set, satisfying
\[ \ol \Omega \bigcap \Sigma_\infty ( P ) = \emptyset \quad
\text{and} \quad
 \Omega \bigcap  \complement \Sigma ( P ) \neq \emptyset \,\]
Then 
$ ( P(h) - z\id_N )^{-1} $, $ 0 < h \ll 1 $, $ z \in \Omega $,
is a meromorphic family of operators 
with poles of finite rank.
In particular, for $ h$ sufficiently small, the spectrum of 
$ P(h) $ is discrete in any such set. When ${\Omega}\bigcap
{\Sigma}(P) = \emptyset$ we find that ${\Omega}$ contains no spectrum 
of ~$P^w(x,hD)$.
\end{prop}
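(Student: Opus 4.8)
The plan is to reduce the statement to a standard analytic-Fredholm argument by constructing, for $z$ in a fixed compact subset of $\Omega$, an approximate inverse of $P(h)-z\id_N$ modulo a compact (indeed trace-class or smoothing) error which is $O(h)$ in norm. First I would pick a point $z_1 \in \Omega\cap\complement\Sigma(P)$; then $P(w)-z_1\id_N$ is invertible for every $w\in T^*\br^n$, and since $\ol\Omega\cap\Sigma_\infty(P)=\emptyset$ and $z_1\notin\Sigma_\infty(P)$, the resolvent $(P(w)-z_1\id_N)^{-1}$ is uniformly bounded for $|w|\gg1$ by \eqref{eqdef}; combined with continuity on the compact set $\{|w|\le R\}$ this gives $\mn{(P(w)-z_1\id_N)^{-1}}\le C$ on all of $T^*\br^n$. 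Hence $(P(w)-z_1\id_N)^{-1}\in C^\infty_\rb(T^*\br^n,\Cal L(\bc^N,\bc^N))$, and by the semiclassical symbolic calculus its Weyl quantization $E_1(h)$ satisfies $(P(h)-z_1\id_N)E_1(h)=\id + hR_1(h)$ and $E_1(h)(P(h)-z_1\id_N)=\id+hR_1'(h)$ with $R_1(h),R_1'(h)$ bounded uniformly in $h$. For $h$ small, $\id+hR_1(h)$ is invertible, so $P(h)-z_1\id_N$ is invertible; this proves the final sentence of the proposition once we know $z_1$ can be chosen anywhere in $\Omega\cap\complement\Sigma(P)$, and it shows $\Omega\cap\complement\Sigma(P)$ is contained in the resolvent set.

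The second step is to upgrade this to a meromorphic statement on all of $\Omega$. Writing $P(h)-z\id_N = (P(h)-z_1\id_N) - (z-z_1)\id_N$ and composing with $E_1(h)$ on the left, I would get
\begin{equation*}
 E_1(h)(P(h)-z\id_N) = \id + hR_1'(h) - (z-z_1)E_1(h) =: \id + K(z,h),
\end{equation*}
and I want $K(z,h)$ to be a holomorphic (in $z$) family of \emph{compact} operators so that analytic Fredholm theory applies. This is the one genuinely nonobvious point: $E_1(h)$ is \emph{not} compact on $L^2(\br^n)$ since its symbol does not decay. The standard fix — and I expect this is where the real work is — is to localize. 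Because the statement about poles and discreteness is a statement about $(P(h)-z\id_N)^{-1}$ as an operator, and the obstruction to invertibility is concentrated where $z\in\Sigma(P)$, i.e. where $P(w)-z\id_N$ is non-invertible, one should cut off: choose $z_2\in\Omega\cap\complement\Sigma(P)$ and replace the resolvent problem by $(P(h)-z_1\id_N)^{-1}(P(h)-z_2\id_N)$ as suggested in Remark~\ref{reduxrem}, or more simply observe that since $P\in C^\infty_\rb$ and $\ol\Omega\cap\Sigma_\infty(P)=\emptyset$, for $z\in\ol\Omega$ the set $\Sigma_z(P)=\{w:|P(w)-z\id_N|=0\}$ is contained in a \emph{fixed} compact set $L\subset T^*\br^n$, uniformly for $z$ in compacts of $\Omega$. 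Then I would write $E_1(h)$ acting on the difference in resolvents and extract the compactness from a cutoff $\chi\in C_0^\infty(T^*\br^n)$ equal to $1$ near $L$: the "bad" part lives in $\chi^w L^2$-ish spaces and $\chi^w$ is compact, while the "good" part $(1-\chi)$-localized is elliptic and contributes only a bounded, invertible-for-small-$h$ factor.

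Concretely: cover $\ol\Omega$'s relevant compact subset, and on the region where $|P(w)-z\id_N|$ stays bounded away from $0$ use the uniformly bounded symbolic parametrix to invert $P(h)-z\id_N$ microlocally modulo $O(h^\infty)$; on a bounded neighborhood of $L$ the remaining finite-dimensional-in-$w$-but-infinite-dimensional-in-the-full-operator piece is handled by noting the cutoff makes it compact and holomorphic in $z$. Gluing these, $P(h)-z\id_N = \big(\id + K(z,h)\big)G(z,h)$ with $G(z,h)$ invertible for $0<h\ll1$ and $K(z,h)$ a holomorphic family of compact operators on $\Omega$ with $\mn{K(z_1,h)}=O(h)<1$, so $\id+K(z,h)$ is invertible at $z=z_1$. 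The analytic Fredholm theorem (Gohberg–Sigal / the meromorphic Fredholm theorem) then gives that $(\id+K(z,h))^{-1}$, hence $(P(h)-z\id_N)^{-1}$, is meromorphic in $z\in\Omega$ with poles of finite rank; discreteness of the spectrum in any such $\Omega$ for $h$ small is immediate. The main obstacle, to reiterate, is the compactness/holomorphy bookkeeping for $K(z,h)$ — ensuring the cutoff argument is uniform in $h$ and genuinely yields a compact family, rather than just a bounded one — and correctly invoking the hypothesis $\Omega\cap\complement\Sigma(P)\neq\emptyset$ to guarantee a base point $z_1$ where $\id+K(z_1,h)$ is invertible.
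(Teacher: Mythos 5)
Your proposal is correct and takes essentially the same route as the paper: the paper also fixes a base point $z_0\in\Omega\cap\complement\Sigma(P)$, uses the uniform bound on $(P(w)-z\id_N)^{-1}$ for $|w|$ large (valid for all $z\in\Omega$ because $\ol\Omega\cap\Sigma_\infty(P)=\emptyset$), and quantizes the glued symbol $R(w,z)=\chi(w)(P(w)-z_0\id_N)^{-1}+(1-\chi(w))(P(w)-z\id_N)^{-1}$ with $\chi\in C^\infty_0$, so that the error is an $\Cal O(h)$ bounded term plus a compact family holomorphic in $z$ and vanishing at $z_0$, after which analytic Fredholm theory gives the meromorphy; this is exactly the cutoff-plus-base-point construction you arrive at in your third paragraph. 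The only difference is cosmetic: the paper writes this parametrix symbol down explicitly rather than gluing microlocal pieces, which makes the compactness and holomorphy bookkeeping you worry about immediate.
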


\begin{proof}
We shall follow the proof of Proposition~3.3 in \cite{dsz}.
If  $ \Omega $ satisfies the assumptions of the proposition then 
there exists $ C > 0$ such that
\begin{equation}\label{resbound}
  | (P (w ) -  z\id_N)^{-1}| \le C \qquad \text{if $ z \in \Omega $ and $| w | > C $ }
\end{equation}
In fact, otherwise there would exist $w_j \to \infty$ and $z_j \in
{\Omega}$ such that $|(P(w_j) - z_j\id_N)^{-1}| \to \infty$, $j \to
\infty$. Thus, $\exists \, u_j \in \bc^N$ such that $|u_j| = 1$ and $P(w_j) u_j
- z_j u_j \to 0$. Since ${\Sigma}(P) \Subset\bc$ we obtain after picking a subsequence that 
$z_j \to z \in \ol {\Omega}\bigcap {\Sigma}_\infty(P) = \emptyset$.
The assumption that $ \Omega \cap \complement \Sigma ( p ) \neq \emptyset$ 
implies that for some $ z_0 \in \Omega $ we have $ ( P ( w) - z_0\id_N )^{-1} \in
C^\infty_{\rb}$. Let $ \chi \in C^\infty_0 ( T^* \br^n) $, $0 \le {\chi}(w) \le 1$ and
${\chi}(w)= 1$ when $|w| \le C$, where $C$ is given by ~\eqref{resbound}. Let
\[  R ( w , z ) = \chi ( w ) (  P ( w) - z_0\id_N )^{-1} 
+ ( 1 - \chi ( w )) (P ( w ) - z\id_N)^{-1} \in  C_\rb^\infty\]
for $z \in {\Omega}$.
The symbolic calculus 
then gives
\[  \begin{split}
& R^w ( x , h D, z ) ( P(h) - z\id_N) = I + hB_1(h,z) + K_1 (h, z ) \\ 
& ( P(h) -  z\id_N ) R^w ( x , h D, z )  = I + hB_2(h,z) +
K_2 (h, z )
\end{split} \]
where $ K_j ( h, z ) $ are
compact operators on $ L^2 ( \br^n ) $ depending holomorphically on
$z$, vanishing for $z=z_0$, and  $B_j(h,z)$ are bounded  on $ L^2 ( \br^n ) $, $ j = 1, 2$.
By the analytic Fredholm theory we conclude that $ ( P(h)-  z\id_N  )^{-1} 
$  is meromorphic in $z \in \Omega $ for $ h $ sufficiently small.
When  ${\Omega}\bigcap
{\Sigma}(P) = \emptyset$ we can choose $R(w, z) = (P(w) -
z\id_N)^{-1}$, then $K_j \equiv 0$ for $j = 1$, 2, and $P(h) - z\id_N$ is 
invertible for small enough ~$h$.
\end{proof}

We shall show how the reduction to the case of bounded operator 
can be done in the systems case, following~\cite{dsz}.
Let $ m ( w  ) $ be
a positive function on $ T^* \br^n  $ satisfying
$$ 1 \leq  m ( w )  \leq C \langle w - w_0 \rangle^N
m (w_0 ) \,, \qquad \forall \; w,\ w_0 \in T^*\br^{n}$$
for some fixed $ C  $ and $ N $, where $\w{w} = 1 + |w|$. Then
$m$ is an admissible weight function and we can define the symbol
classes $P \in S(m)$ by
$$ \mn{\partial_w^\alpha P ( w )} \leq C_\alpha m (w)  \qquad \forall {\alpha}$$ 
Following \cite{DiSj} we can then
define the semiclassical operator $ P(h) = P^w ( x , h D ) $. In the analytic case we 
require that the symbol estimates hold in a tubular neighborhood of
$T^*\br^n$:
\begin{equation}\label{analell}
   \mn{\partial_w^\alpha  P ( w ) } \leq  C_\alpha m ( \Re w )
   \qquad\text{for }\quad
|\im w | \leq 1/C \qquad \forall {\alpha}
\end{equation}
One typical example of an admissible weight
function is $ m ( x , \xi ) = (\langle \xi\rangle^2 + \langle x \rangle^p) $.

Now we make the ellipticity assumption
\begin{equation}\label{hypoell} 
\mn{P^{-1}(w)} \le C_0m^{-1}(w) \qquad  |w| \gg 1
\end{equation} 
and in the analytic case we assume this in a tubular neighborhood of
$T^*\br^n$ as in ~~\eqref{analell}.
By Leibniz' rule we obtain that   $P^{-1}
\in S(m^{-1})$ at infinity, i.e.,
\begin{equation}\label{invest}
  \mn{\partial_w^\alpha P^{-1} ( w ) } \leq C'_\alpha m^{-1}(w) \qquad  |w| \gg 1
\end{equation}
When $ z \not \in {\Sigma}(P)\bigcup {\Sigma}_\infty(P) $ we find as
before that 
$$
\mn {(P(w) - z\id_N)^{-1}} \le C\qquad \forall\, w
$$
since the resolvent is uniformly bounded at infinity.
This implies that $P(w)(P(w) - z\id_N)^{-1}$ and $(P(w) -
z\id_N)^{-1}P(w)$ are bounded. Again by Leibniz' rule, \eqref{hypoell}
holds with $P$ replaced by $P - z\id_N$ thus $(P(w) - z\id_N)^{-1} \in S(m^{-1})$
and we may define the semiclassical operator $((P - z\id_N)^{-1})^w(x,hD)$.
Since $m \ge 1$ we find that $P(w) - z\id_N \in S(m)$, so 
by using the calculus we obtain that
\begin{align*}
&( P^w - z \id_N)  ((P -z\id_N)^{-1})^w = 1 + hR_1^w\\
& ((P -z\id_N)^{-1})^w  ( P^w - z \id_N) = 1 + hR_2^w
\end{align*}
where $R_j \in S(1)$, $j= 1$, 2. For small enough ~~$h$
we get invertibility and the following result.

\begin{prop}\label{reduxlem} 
Assume that $P \in S(m)$ is an $N \times N$ system satisfying ~\eqref{hypoell} and
that $ z \not \in {\Sigma}(P)\bigcup {\Sigma}_\infty(P) $. Then
we find that $ P^w - z \id_N$ is invertible for small enough
~~$h$.
\end{prop}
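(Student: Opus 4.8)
The plan is to build a two-sided parametrix for $P^w - z\id_N$ out of the Weyl quantization of the pointwise resolvent $(P(w) - z\id_N)^{-1}$, exactly as prepared in the discussion preceding the statement, and then to absorb the resulting $O(h)$ error by a Neumann series.

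First I would check that the matrix symbol $Q(w) := (P(w) - z\id_N)^{-1}$ is globally well defined and lies in $S(m^{-1})$. For $|w| \gg 1$ this is the content of \eqref{hypoell} applied to $P - z\id_N$ (legitimate since $m \ge 1$, so $z\id_N$ is a lower order perturbation) together with Leibniz' rule, giving \eqref{invest} for $P - z\id_N$. On the remaining compact set, $z \notin {\Sigma}(P)$ gives that $P(w) - z\id_N$ is invertible with inverse bounded uniformly by continuity and compactness, while $z \notin {\Sigma}_\infty(P)$ prevents a blow-up of $\mn{(P(w) - z\id_N)^{-1}}$ along any sequence tending to infinity; combining the two one gets $\mn{(P(w) - z\id_N)^{-1}} \le C$ for all $w$. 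Differentiating the identity $(P(w) - z\id_N)Q(w) = \id_N$ repeatedly and using $P \in S(m)$ then yields $Q \in S(m^{-1})$.

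Next I would invoke the semiclassical Weyl calculus for symbols in $S(m)$ and $S(m^{-1})$ (as in \cite{DiSj}): since $P - z\id_N \in S(m)$ and $Q \in S(m^{-1})$, the compositions $(P^w - z\id_N)Q^w$ and $Q^w(P^w - z\id_N)$ have symbols in $S(1)$ with leading term $(P(w) - z\id_N)Q(w) = \id_N$, hence are of the form $I + hR_1^w$ and $I + hR_2^w$ on $L^2(\br^n)$ with $R_j \in S(1)$, $j=1$, $2$. By the Calder\'on--Vaillancourt theorem the operators $R_j^w$ are bounded on $L^2(\br^n)$, so $\mn{hR_j^w} = O(h)$.

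Finally, for $h$ small enough that $\mn{hR_j^w} < 1$, $j=1$, $2$, the operators $I + hR_j^w$ are invertible by a Neumann series, so $P^w - z\id_N$ has the bounded left inverse $(I + hR_2^w)^{-1}Q^w$ and the bounded right inverse $Q^w(I + hR_1^w)^{-1}$; these necessarily coincide, whence $P^w - z\id_N$ is invertible on $L^2(\br^n)$ for small $h$. The only point needing genuine care is the global symbol estimate $Q \in S(m^{-1})$, i.e. patching the estimate at infinity coming from \eqref{hypoell} with the uniform bound on compact sets supplied by $z \notin {\Sigma}(P)\cup{\Sigma}_\infty(P)$; the rest is the routine parametrix-plus-Neumann-series argument and is already spelled out above the statement.
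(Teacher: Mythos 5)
Your proposal is correct and follows essentially the same route as the paper: the hypotheses $z \notin {\Sigma}(P)\bigcup{\Sigma}_\infty(P)$ together with \eqref{hypoell} give the uniform bound on $(P(w)-z\id_N)^{-1}$, Leibniz' rule then puts this inverse in $S(m^{-1})$, and the semiclassical calculus yields $I + hR_j^w$ with $R_j \in S(1)$, which is inverted by a Neumann series for small $h$. The only cosmetic difference is that you spell out the left/right-inverse bookkeeping and the Calder\'on--Vaillancourt boundedness, which the paper leaves implicit.
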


This makes it possible to reduce to the case of operators with bounded symbols.

\begin{rem}\label{reduxrem} 
If $z_1 \notin \spec(P)$ we may define the operator
\[  Q = ( P - z_1\id_N)^{-1} ( P - z_2\id_N ) \qquad z_2 \neq z_1 \]
then the  
resolvents of $ Q $ and $ P$ are related by 
\[  ( Q - \zeta\id_N)^{-1} =  ( 1 - \zeta)^{-1} ( P - z_1\id_N ) \left( P - 
\frac{ \zeta z_1 - z_2 }{ \zeta - 1 }\id_N \right)^{-1} \qquad {\zeta}
\ne 1\]
when $\frac{ \zeta z_1 - z_2 }{ \zeta - 1 } \notin \spec(P)$.
\end{rem}

\begin{exe}\label{sysex}
Let
\begin{equation*}
P(x,{\xi}) = |{\xi}|^2\id_N + i K(x)
\end{equation*}
where $0 \le K(x)\in C_\rb^\infty$, then we find that $P \in S(m)$
with $m(x,{\xi}) = |{\xi}|^2 + 1$. 
If $0 \notin {\Sigma}_\infty(K)$ then $K(x)$ is invertible for $|x|
\gg 1$, so $P^{-1} \in S(m^{-1})$ at infinity. Since $\re z \ge 0$ in
${\Sigma}(P)$ we find from
Proposition~\ref{reduxlem} that 
$P^w(x,hD) + \id_N$ is invertible for small
enough $h$ and  $P^w(x,hD)(P^w(x,hD) + \id_N)^{-1}$ is bounded in
$L^2$ with principal symbol $P(w)(P(w)+ \id_N)^{-1} \in C^\infty_\rb$.
\end{exe}

In order to measure the singularities of the solutions, we shall introduce
the semiclassical wave front sets.

\begin{defn}
For $u \in L^2(\br^n)$  we say that $w_0 \notin \wf_h(u)$ 
if there exists $a \in C^\infty_0 (T^*\br^n)$ such that
$a(w_0) \ne 0$ and the $L^2$ norm
\begin{equation}\label{hinf}
 \mn{a^w(x,hD)u} \le C_kh^k\qquad \forall\, k
\end{equation}
We call $ \wf_h(u)$ the semiclassical wave front set of $u$. 
\end{defn}

Observe that this definition is equivalent to the definition  
(2.5) in~\cite{dsz} which use the FBI transform ~$T$ given
by~\eqref{fbidef}: $w_0 \notin \wf_h(u)$ if $\mn{Tu(w)} = \Cal
O(h^\infty)$ when ~$|w-w_0| \ll 1$. 
We may also define the {\em analytic}
semiclassical wave front set by the condition that 
 $\mn{Tu(w)} = \Cal O(e^{-c/h})$ in a neighborhood of $w_0$ for some $c > 0$,
see (2.6) in~\cite{dsz}.

Observe that if $u = (u_1, \dots,
u_N) \in L^2(\br^n, \bc^N)$ we may define $\wf_h(u) = \bigcap_j
\wf_h(u_j)$ but this gives no information about which components
of $u$ that are singular. Therefore we shall define the corresponding
vector valued polarization sets.

\begin{defn}
For $u \in L^2(\br^n, \bc^N)$, we say that $(w_0,z_0) \notin
\wf_h^{pol}(u) \subseteq T^*\br^n \times \bc^N$ 
if there exists $A(h)$ given by ~\eqref{Pdef} with principal symbol
$A(w)$ such that
$A(w_0)z_0 \ne 0$ and $A(h)u$ satisfies ~\eqref{hinf}.
We call $ \wf_h^{pol}(u)$ the semiclassical polarization set of $u$. 
\end{defn}

We could similarly define the {\em analytic} semiclassical
polarization set by using the FBI transform and analytic
pseudodifferential operators.

\begin{rem}\label{polrem}
The semiclassical polarization sets are closed, linear in the fiber
and has the functorial properties of the 
 $C^\infty$ polarization sets in~ \cite{de:pol}. In particular,
we find that 
$${\pi}(\wf_h^{pol}(u)\setminus 0) = \wf_h(u) = \bigcup_j \wf_h(u_j)$$ 
if ${\pi}$ is the
projection along the fiber variables: ${\pi}: T^*\br^n \times \bc^N \mapsto
T^*\br^n$. We also find that $$
A(\wf_h^{pol}(u)) = \set{(w,A(w)z):\ (w,z) \in \wf_h^{pol}(u)} \subseteq \wf_h^{pol}(A(h)u)$$
if $A(w)$ is the principal symbol of $A(h)$, which implies that
$ \wf_h^{pol}(Au) = A(\wf_h^{pol}(u))$ when $A(h)$ is elliptic.
\end{rem}

This follows from the proofs of Propositions~~2.5 and~2.7 in \cite{de:pol}.

\begin{exe}\label{polexe}
Let $u = (u_1, \dots,u_N) \in L^2(T^*\br^n,\bc^N)$ where $\wf_h (u_1) =
\set{w_0 }$ and $\wf_h (u_j) = \emptyset$ for $j > 1$.
Then $$\wf_h^{pol} (u) = \set{(w_0, (z,0,\dots)):\ z \in \bc}$$
since $\mn{A^w(x,hD)u} = \Cal O(h^\infty)$ if  $A^wu = \sum_{j > 1}
A_j^wu_j$ and $w_0 \in \wf _h (u)$.
By taking a suitable invertible $E$
we obtain
$$\wf_h^{pol} (Eu) = \set{(w_0, zv):\ z \in \bc}$$
for any $v \in \bc^N$.
\end{exe}

We shall use the
following definitions from ~\cite{p-s},
here and in the following $\mn{P(h)}$ will denote the $L^2$ operator norm of $P(h)$.

\begin{defn}\label{psdef}
Let $P(h)$, $0 < h \le 1$, be a semiclassical family of operators on
$L^2(\br^n)$ with domain~$D$. For
${\mu} > 0$ we define the {\em pseudospectrum of 
  index ${\mu}$} as the set
\begin{equation*}
 {\Lambda}^{\rm sc}_{\mu}(P(h)) = \{z \in \bc:\ \forall\, C>0,\
   \forall\, h_0 > 0, \exists\, 0 < h < h_0, \mn{(P(h) - z\id_N)^{-1}}
   \ge Ch^{-{\mu}}\}
\end{equation*}
and the {\em injectivity pseudospectrum of
  index ${\mu}$} as
\begin{multline*}
 {\lambda}^{\rm sc}_{\mu}(P(h)) = \{z \in \bc:\ \forall\, C>0,\
   \forall\, h_0 > 0,\\ \exists\, 0 < h < h_0,\ \ \exists\,u \in D,\ \mn u = 1, \ \mn{(P(h) - z\id_N)u}
   \le Ch^{{\mu}}\}
\end{multline*}
We define the {\em pseudospectrum of infinite
  index} as $ {\Lambda}^{\rm sc}_\infty(P(h)) =  \bigcap_{\mu}
{\Lambda}^{\rm sc}_{\mu}(P(h))$ and correspondingly the
 {\em injectivity pseudospectrum of infinite
  index}.
\end{defn}

Here we use the convention that $\mn{(P(h) -{\lambda}\id_N)^{-1}} = \infty$ when
${\lambda}$ is in the spectrum~$ \spec(P(h))$.
Observe that we have the obvious inclusion $ {\lambda}^{\rm
  sc}_{\mu}(P(h))\subseteq  {\Lambda}^{\rm sc}_{\mu}(P(h)) $, $\forall\,
{\mu}$. We get equality if, for example, $P(h)$ is Fredholm of index
$\ge 0$.

\section{The Interior Case}

Recall that the scalar symbol $p(x,{\xi})\in C^\infty(T^*\br^n)$ is of {\em principal
  type} if $dp \ne 0$ when $p = 0$. In the
following we let $\partial_{\nu}P(w) = \w{{\nu}, dP(w)}$ for 
$P\in C^1( T^*\br^n)$ and ${\nu} \in T^*\br^n$.
We shall use the following definition  of systems of principal type,
in fact, most of the systems we consider will be of this type.
We shall denote $\ker P$ and $\ran P$ the kernel and range of ~$P$.

\begin{defn} \label{princtype}
The $N \times N$ system $P(w) \in C^\infty(T^*\br^n)$ is of {\em
  principal type} at $w_0$ if
\begin{equation}\label{pr_type}
\ker P(w_0) \ni u \mapsto \partial_{\nu}P(w_0)u \in \coker P(w_0)  = \bc^N/\ran P(w_0)
\end{equation} 
is bijective for some ${\nu} \in T_{w_0}(T^*\br^n)$.
The operator $P(h)$ given by ~\eqref{Pdef} is of principal type if the principal symbol
$P = {\sigma}(P(h))$ is of principal type. 
\end{defn}

\begin{rem}\label{princrem}
If $P(w) \in C^\infty$ is of principal type and $A(w)$, $B(w) \in
C^\infty$ are invertible then  $APB$ is of principal
type. We have that $P(w)$ is of principal type if and only if the adjoint $P^*$
is of principal type. 
\end{rem}

In fact, by Leibniz' rule we have
\begin{equation} \label{dapb}
 \partial (APB) = (\partial A)PB + A(\partial P)B+  AP \partial B
\end{equation}
and $\ran (APB) = A(\ran P)$ and $\ker (APB) =
B^{-1}(\ker P)$ when $A$ and $B$ are invertible, which gives the invariance 
under left and right multiplication. Since $ \ker P^*(w_0) = \ran
P(w_0)^\bot$ we find that $P$ satisfies ~\eqref{pr_type} if and 
only if  
\begin{equation}\label{bilform}
 \ker P(w_0) \times \ker P^*(w_0) \ni (u,v) \mapsto
\w{\partial_{\nu}P(w_0) u,v}
\end{equation}
is a non-degenerate bilinear form. 
Since $\w{\partial_{\nu}P^* v,u} = \ol{\w{\partial_{\nu}P u, v}}$ 
we find that $P^*$ is of principal type if and only if $P$ is.

Observe that if $P$ only has one vanishing eigenvalue ${\lambda}$ (with
multiplicity one) then the condition that $P$ ~is of
principal type reduces to the condition in the scalar case: $d{\lambda} \ne 0$.
In fact, by using the
spectral projection one can find invertible systems~ $A$ and~
$B$ so that
\begin{equation*}
 APB = 
\begin{pmatrix}
{\lambda} & 0\\ 0 & E 
\end{pmatrix}
\end{equation*}
with $E$ invertible $(N-1) \times (N-1)$ system, and this system is
obviously of principal type.

\begin{exe}
Consider the system in Example~\ref{varcharex}
\begin{equation*}
 P(w) = 
\begin{pmatrix}
{\lambda}_1(w) & 1 \\ 0 & {\lambda}_2(w) 
\end{pmatrix}
\end{equation*}
where ${\lambda}_j(w) \in C^\infty$, $j=1$, 2. We find that $
P(w) - {\lambda}\id_2$ is not of principal type 
when ${\lambda} = {\lambda}_1(w) = {\lambda}_2(w)$ since
$\ker  (P(w) - {\lambda}\id_2) =
\ran (P(w) - {\lambda}\id_2) = \bc \times\set{0}$ is preserved by
$\partial P$.
\end{exe}

Observe that the property of being of principal type is not stable
under $C^1$ perturbation,
not even when $P = P^*$ is symmetric, by the following example.

\begin{exe}\label{prtrem} 
The system
\begin{equation*}
 P(w) = 
\begin{pmatrix}
w_1 - w_2 & w_2 \\ w_2 & -w_1 -w_2  
\end{pmatrix}
= P^*(w) \qquad w = (w_1,w_2)
\end{equation*}
is of principal type when $w_1 = w_2 = 0$, but {\em not} of principal
type when $w_2 \ne 0$ and $w_1 = 0$. In fact, 
\begin{equation*}
 \partial_{w_1}P =
\begin{pmatrix} 1 & 0 \\ 0 & -1 
\end{pmatrix}
\end{equation*}
is invertible, and when $w_2 \ne 0$ we have that 
$$\ker P(0,w_2)= \ker  \partial_{w_2}P(0,w_2) =\set{z(1,1):
  z \in \bc}$$ 
which is mapped to\/ $\ran P(0,w_2)  =\set{z(1,-1):  z \in \bc}$ by $\partial_{w_1}P$.
\end{exe}

We shall obtain a simple characterization of systems of principal type.
Recall ${\kappa}_P$, $K_P$ and ${\Xi}(P)$ given by Definition~\ref{Omegadef}.

\begin{prop}\label{princprop}
Assume $P(w) \in C^\infty$ is an $N \times N$ system and 
that $(w_0,{\lambda}_0)\in {\Omega}_1(P)\setminus {\Xi}(P)$, then $P(w) -
{\lambda}_0\id_N$ is of principal type at $w_0$ if and only if
${\kappa}_P \equiv K_P $ at $(w_0,{\lambda}_0)$ and $d{\lambda}(w_0)
\ne 0$ for the $C^\infty$ germ of eigenvalues~${\lambda}(w)$ for $P$ at $w_0$ satisfying
${\lambda}(w_0) = {\lambda}_0$.
\end{prop}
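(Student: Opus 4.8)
The plan is to localize, via a smooth Riesz projection, to a small block of $P$ on which the characteristics are constant, and then to extract the principal‑type condition from the order of vanishing of a determinant. Replacing $P$ by $P-{\lambda}_0\id_N$ we may assume ${\lambda}_0=0$. Since $(w_0,0)\in{\Omega}_1(P)\setminus{\Xi}(P)$, Lemma~\ref{evlem} provides a unique $C^\infty$ germ ${\lambda}(w)$, ${\lambda}(w_0)=0$, whose graph is ${\Omega}_1(P)$ near $(w_0,0)$, with $K_P\equiv k:=K_P(w_0,0)$ there. For $w$ near $w_0$ the projection
$$\Pi(w)=\frac1{2\pi i}\oint_{|{\zeta}|={\varepsilon}}\bigl({\zeta}\id_N-P(w)\bigr)^{-1}\,d{\zeta}$$
is $C^\infty$ in $w$ and of constant rank $k$, since it isolates exactly the eigenvalue ${\lambda}(w)$ of algebraic multiplicity $k$. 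Trivializing $\ran\Pi(w)$ and a complementary subbundle by a $C^\infty$ invertible $T(w)$, one gets $T(w)^{-1}P(w)T(w)=\operatorname{diag}(P_1(w),P_2(w))$, where $P_2(w_0)$ is invertible and $P_1(w)$ is a $k\times k$ system with characteristic polynomial $({\zeta}-{\lambda}(w))^k$. By Remark~\ref{princrem} this block system is of principal type at $w_0$ iff $P-{\lambda}_0\id_N$ is, and — since $P_2(w_0)$ is invertible and an invertible diagonal block contributes nothing to kernel or cokernel — iff $P_1$ is of principal type at $w_0$. As ${\kappa}_P(w_0,0)=\dim\ker P_1(w_0)$ and $K_P(w_0,0)=k$, the equality ${\kappa}_P\equiv K_P$ at $(w_0,0)$ is equivalent to $P_1(w_0)=0$. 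So it suffices to prove: $P_1$ is of principal type at $w_0$ $\iff$ $P_1(w_0)=0$ and $d{\lambda}(w_0)\ne0$.

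For the implication ``$\Rightarrow$'' I would perform a Schur reduction. Set ${\kappa}:=\dim\ker P_1(w_0)$; choosing bases adapted to $\ran P_1(w_0)$ and $\ker P_1(w_0)$ and eliminating the invertible $(k-{\kappa})\times(k-{\kappa})$ corner by left and right multiplication with $C^\infty$ invertible matrices, one brings $P_1$ to the form $\operatorname{diag}(E(w),{\Lambda}(w))$ with $E(w_0)$ invertible and ${\Lambda}(w)$ a ${\kappa}\times{\kappa}$ system, ${\Lambda}(w_0)=0$, again of principal type at $w_0$, so $\partial_{\nu}{\Lambda}(w_0)$ is invertible for some ${\nu}$. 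Then $\det P_1(w)=e(w)\det{\Lambda}(w)$ with $e(w_0)\ne0$, and $\det{\Lambda}$ vanishes at $w_0$ to order exactly ${\kappa}$: at least ${\kappa}$ because ${\Lambda}(w_0)=0$, at most ${\kappa}$ because along the line $w_0+t{\nu}$ the coefficient of $t^{\kappa}$ equals $\det\partial_{\nu}{\Lambda}(w_0)\ne0$. On the other hand, evaluating the characteristic polynomial at ${\zeta}=0$ gives $\det P_1(w)={\lambda}(w)^k$, whose order of vanishing at $w_0$ is $k\cdot\operatorname{ord}_{w_0}{\lambda}\ge k$. Comparing the two orders forces ${\kappa}\ge k$; since always ${\kappa}\le k$, we get ${\kappa}=k$, i.e.\ $P_1(w_0)=0$, and $\operatorname{ord}_{w_0}{\lambda}=1$, i.e.\ $d{\lambda}(w_0)\ne0$.

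For the converse, assume $P_1(w_0)=0$ and $d{\lambda}(w_0)\ne0$, and write $P_1(w)={\lambda}(w)\id_k+Q(w)$. Since $P_1(w)$ has characteristic polynomial $({\zeta}-{\lambda}(w))^k$, the matrix $Q(w)$ has characteristic polynomial ${\zeta}^k$, hence $Q(w)^k=0$ for all $w$, and $Q(w_0)=0$. Restricting to a line $w_0+t{\nu}$ and reading off the coefficient of $t^k$ in the identity $Q(w_0+t{\nu})^k\equiv0$ shows that $\partial_{\nu}Q(w_0)$ is nilpotent, so $\partial_{\nu}P_1(w_0)=\partial_{\nu}{\lambda}(w_0)\id_k+\partial_{\nu}Q(w_0)$ has all its eigenvalues equal to $\partial_{\nu}{\lambda}(w_0)$. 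Choosing ${\nu}$ with $\partial_{\nu}{\lambda}(w_0)\ne0$ makes $\partial_{\nu}P_1(w_0)$ invertible; since $\ker P_1(w_0)=\coker P_1(w_0)=\bc^k$, this is exactly the statement that $P_1$ is of principal type at $w_0$.

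The conceptual core — that, after splitting off the elliptic blocks, being of principal type says precisely that the residual ${\kappa}\times{\kappa}$ determinant vanishes to its minimal possible order ${\kappa}$, which must then be matched against the order $k\cdot\operatorname{ord}_{w_0}{\lambda}$ forced by constancy of the characteristics — is short once it is in place. I expect the main obstacle to be the bookkeeping of the first step: verifying that $\Pi(w)$ is $C^\infty$ of rank exactly $k$ (which uses $(w_0,{\lambda}_0)\notin{\Xi}(P)$ via Lemma~\ref{evlem}), that the trivialization $T(w)$ and the Schur factors may be chosen $C^\infty$ near $w_0$, and that removing the invertible blocks $P_2$ and $E$ genuinely leaves the principal‑type condition unchanged via Remark~\ref{princrem}.
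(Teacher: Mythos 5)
Your argument is correct, but it is organized differently from the paper's proof. The paper proves the equivalence in one stroke: it first establishes the pointwise determinant criterion~\eqref{princtypecond}, namely that $P-{\lambda}_0\id_N$ is of principal type at $w_0$ if and only if $\partial_{\nu}^{\kappa}|P(w_0)-{\lambda}_0\id_N|\ne 0$ for some ${\nu}$, ${\kappa}={\kappa}_P(w_0,{\lambda}_0)$ (via a fixed choice of bases adapted to $\ker$ and $\im$ at the single point $w_0$ and expansion of the determinant), and then feeds into it the factorization $|P(w)-{\lambda}\id_N|=({\lambda}(w)-{\lambda})^m e(w,{\lambda})$, $e\ne 0$, $m=K_P$, from Lemma~\ref{evlem}, so that both directions drop out of the identity $\partial_{\nu}^m|P(w_0)-{\lambda}_0\id_N|=(\partial_{\nu}{\lambda}(w_0))^m e(w_0,{\lambda}_0)$ together with ${\kappa}_P\le K_P$. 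You instead first perform a smooth reduction with the Riesz projection ${\Pi}(w)$, splitting off an elliptic block and isolating a $k\times k$ block $P_1$ with characteristic polynomial $({\zeta}-{\lambda}(w))^k$ (this uses, as you note, that off ${\Xi}(P)$ the multiplicity $K_P$ is locally constant, so ${\Pi}$ has constant rank); you then prove the forward implication by a Schur-complement count of the vanishing order of $\det P_1={\lambda}^k$ against ${\kappa}$, and the converse by observing that $\partial_{\nu}P_1(w_0)=\partial_{\nu}{\lambda}(w_0)\id_k+\text{(nilpotent)}$ is invertible when $\partial_{\nu}{\lambda}(w_0)\ne 0$. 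Both proofs ultimately rest on the same comparison of vanishing orders, but yours trades the paper's short one-point determinant criterion for a normal form that is smooth in $w$; what you gain is a more structural converse (no need to prove the ``if'' half of the determinant criterion) and a preparation $P\sim\operatorname{diag}({\lambda}\id_k+Q,\,P_2)$ that the paper in any case constructs later (cf.\ the proof of Theorem~\ref{bracketthm} and Example~\ref{evexe1}), while the paper's route is shorter and needs no spectral projection or constant-rank argument.
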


Thus, in the case ${\lambda}_0 = 0 \notin {\Sigma}_{ws}(P)$ we find
that $P(w)$ is of principal type if and only if the germ of eigenvalues
${\lambda}(w)$ is of principal type and we have no non-trivial
Jordan boxes in the normal form. 
Observe that by the proof of
Lemma~\ref{evlem} the $C^\infty$ germ ${\lambda}(w)$ is the unique solution
to $\partial^{k}_{\lambda} 
p(w,{\lambda}) = 0$ for $k = K_P(w,{\lambda}) - 1$ where
$p(w,{\lambda}) = |P(w) - {\lambda}\id_N|$ is the characteristic
equation. Thus we find that 
$d{\lambda}(w) \ne 0$ if and only if $\partial_w\partial^{k}_{\lambda}
p(w,{\lambda}) \ne 0$. For symmetric operators we have ${\kappa}_P
\equiv K_P $ and we only need this condition when $(w_0,{\lambda}_0)
\notin {\Xi}(P)$.

\begin{exe}
The system $P(w)$ in Example~\ref{prtrem} has eigenvalues
$ -w_2 \pm \sqrt{w_1^2 + w_2^2}$ which are equal if and only if $w_1 =
w_2 = 0$, so $\set{0} = {\Sigma}_{ws}(P)$. 
When $w_2 \ne 0$ and $w_1 \approx 0$ the eigenvalue close to zero is $w_1^2/2w_2 + \Cal
O(w_1^4)$ which has vanishing differential at
$w_1 = 0$. The characteristic equation is $p(w,{\lambda}) =
{\lambda}^2 + 2{\lambda}w_2 - w_1^2$, so $d_w p = 0$ when $w_1 =
{\lambda} = 0$. 
\end{exe}

\begin{proof}[Proof of Proposition~\ref{princprop}]
Of course, it is no restriction to assume ${\lambda}_0 = 0$.
First we note that $P(w)$ is of principal type at $w_0$ if and only if
\begin{equation}\label{princtypecond}
 \partial_{\nu}^k|P(w_0)| \ne 0 \qquad k = {\kappa}_P(w_0,0)
\end{equation}
for some ${\nu} \in T(T^*\br^n)$. Observe that $ \partial^j|P(w_0)| =
0$ for $j < k$. In fact, by choosing bases for $\ker
P(w_0)$ and $\im P(w_0)$ respectively, and extending to bases of $\bc^N$, we obtain
matrices $A$ and $B$ so that 
\begin{equation*}
 AP(w)B = 
\begin{pmatrix}
P_{11}(w) & P_{12}(w) \\ P_{21}(w) & P_{22}(w)
\end{pmatrix}
\end{equation*}
where $|P_{22}(w_0)| \ne 0$ and $P_{11}$, $P_{12}$ and $P_{21}$ all vanish at
~$w_0$. By the invariance, $P$ is of principal type if and
only if $\partial_{\nu}P_{11}$ is invertible for some ~${\nu}$, so by
expanding the determinant we obtain~\eqref{princtypecond}.

Since  $(w_0, 0)\in {\Omega}_1(P) \setminus {\Xi}(P)$ we find
from Lemma~\ref{evlem} that we may choose a 
neighborhood ${\omega}$ of $(w_0, 0)$ such that $(w,{\lambda}) \in
{\Omega}_1(P)\bigcap {\omega}$ if and only if ${\lambda} =
{\lambda}(w) \in C^\infty$.  Then
\begin{equation*}
 |P(w) - {\lambda}\id_N| = ({\lambda}(w)-{\lambda})^m e(w,{\lambda})
\end{equation*}
near $w_0$, where  $e(w,{\lambda}) \ne 0$ and $m =
K_P(w_0,0) \ge {\kappa}_P(w_0,0)$. Letting ${\lambda}= 0$ we obtain that 
$ \partial_{\nu}^j|P(w_0)| = 0$ if $j < m$ and $
\partial_{\nu}^m|P(w_0)| = (\partial_{\nu}{\lambda}(w_0))^m e(w_0,0)$.
\end{proof}

\begin{rem}\label{princtyprem}
Proposition~\ref{princprop} shows that for a\/ {\em symmetric} system  the property
to be of principal type is stable outside ${\Xi}(P)$: if the symmetric
system $P(w) -{\lambda}\id_N$ is of principal type at a 
point $(w_0,{\lambda}_0) \notin {\Xi}(P)$ then it is in a
neighborhood. It follows from the Sard Theorem that symmetric systems
$P(w) - {\lambda}\id_N$ are of principal type
almost everywhere on~${\Omega}_1(P)$.
\end{rem}

In fact, for symmetric systems we have ${\kappa}_P \equiv K_P$ and the
differential $d{\lambda} \ne 0$ almost everywhere on ${\Omega}_1(P) \setminus {\Xi}(P)$.
For  $C^\infty$ germs of eigenvalues we can define
the following bracket condition.

\begin{defn}\label{brackdef}
Let $P \in C^\infty ( T^* \br^n) $ be an $N \times N$ system, then we define
$${\Lambda}(P) = \overline{{\Lambda}_-(P) \bigcup {\Lambda}_+(P)}$$
where 
$ {\Lambda}_\pm(P)$ is the set of ${\lambda}_0 \in {\Sigma}(P)$ 
such that there exists $w_0 \in
{\Sigma}_{\lambda_0}(P)$ so that  $(w_0, {\lambda}_0)  \notin
 {\Xi}(P)$ and
\begin{equation}\label{bracketcond}
 \pm \set{\re {\lambda}, \im {\lambda}}(w_0) > 0
\end{equation}
for the unique $C^\infty$ germ ${\lambda}(w)$ of eigenvalues
at~$w_0$ for ~$P$ such that ${\lambda}(w_0) = {\lambda}_0$.
\end{defn} 

Observe that  ${\Lambda}_\pm(P) \bigcap
{\Sigma}_{ss}(P) = \emptyset$, and it follows from  Proposition~\ref{princprop}
that $P(w) - {\lambda}_0 \id_N$ is of principal
type at ~$w_0 \in {\Lambda}_\pm(P)$ if and only if ${\kappa}_P = K_P$ at
~$(w_0,{\lambda}_0)$, since $d{\lambda}(w_0) 
\ne 0$ when ~\eqref{bracketcond} holds. Because of the bracket
condition ~\eqref{bracketcond} we find that ${\Lambda}_\pm(P)$ is
contained in the interior of the values ${\Sigma}(P)$.

\begin{exe} \label{princex}
Let 
\begin{equation*}
 P(x,{\xi}) = 
\begin{pmatrix}
q(x,{\xi}) & {\chi}(x) \\ 0 & q(x,{\xi}) 
\end{pmatrix}\qquad (x,{\xi}) \in T^*\br
\end{equation*}
where $q(x,{\xi})  = {\xi} + ix^2$ and $0 \le {\chi}(x) \in
C^\infty(\br)$ such that ${\chi}(x) = 0$ when $x \le 0$ and ${\chi}(x) >
0$ when $x > 0$. Then ${\Sigma}(P) = 
\set{\im z \ge 0}$, ${\Lambda}_\pm (P) = \set{\im z > 0}$ and
${\Xi}(P) = \emptyset$. For $\im
{\lambda} > 0$ we find ${\Sigma}_{\lambda}(P) = \set{(\pm \sqrt{\im {\lambda}}, \re
  {\lambda})}$ and $P - {\lambda}\id_2$ is of principal
type at ~${\Sigma}_{\lambda}(P)$ only when $x < 0$.
\end{exe}

\begin{thm}\label{bracketthm}
Let  $P \in C^\infty ( T^* \br^n) $ be an  $N
\times N$ system, then 
we have that
\begin{equation}\label{1}
 {\Lambda}(P)\setminus \left({\Sigma}_{ws}(P)\bigcup
{\Sigma}_\infty(P) \right) \subseteq \ol{{\Lambda}_-(P)}
\end{equation}
when $n\ge 2$. Assume that $P(h)$ is given by ~\eqref{Pdef} with
principal symbol ~$P \in C^\infty_\rb(T^*\br^n)$, and that
${\lambda}_0\in {\Lambda}_-(P)$,  $0 \ne u_0
\in \ker (P(w_0)- {\lambda}_0\id_N)$ and $P(w) - {\lambda}\id_N$ is of
principal type on~${\Sigma}_{\lambda}(P)$ near~$w_0$ for $|{\lambda} - {\lambda}_0| \ll 1$,
for the $w_0 \in
{\Sigma}_{\lambda_0}(P)$ in
Definition~\ref{brackdef}. Then there exists $h_0 >  
0$ and $u(h) \in L^2(\br^n)$, $0 < h \le h_0$, so that $\mn {u(h)} \le
1$
\begin{equation}\label{2}
 \mn{(P(h) -{\lambda}_0\id_N)u(h)} \le C_Nh^N\qquad \forall \, N
 \qquad 0 < h \le h_0
\end{equation}
and $\wf^{pol}_h(u(h)) = \set{(w_0,u_0)}$.
There also exists a dense subset of values ${\lambda}_0 \in {\Lambda}(P)$ so that
\begin{equation}\label{2'}
\mn{(P(h) -{\lambda}_0\id_N)^{-1}} \ge C'_Nh^{-N} \qquad \forall \, N
\end{equation}
If all the terms $P_j $ in the expansion~\eqref{Pdef} are analytic satisfying~\eqref{analsymb} 
then $ h^{\pm N}  $ may be replaced by $ \exp (\mp c / h ) $ in ~\eqref{2}--\eqref{2'}.
\end{thm}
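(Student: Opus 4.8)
\textbf{Proof proposal for Theorem~\ref{bracketthm}.}
The plan is to reduce the theorem to the scalar semiclassical solvability result of~\cite{dsz} by constructing, near $w_0$, a scalar factor of the symbol whose bracket has the right sign, and then lifting the scalar quasimode to a vector-valued one using the polarization structure. First I would treat the inclusion~\eqref{1}. Since $P$ is of principal type near~$w_0$ on ${\Sigma}_{\lambda}(P)$, Proposition~\ref{princprop} gives ${\kappa}_P \equiv K_P$ there, so by Lemma~\ref{evlem} there is a unique $C^\infty$ germ of eigenvalues ${\lambda}(w)$ on a neighborhood, and ${\Lambda}_\pm(P)$ is governed entirely by the sign of $\set{\re {\lambda},\im {\lambda}}$. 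For $n \ge 2$ one argues, exactly as in the scalar case, that the set where the Poisson bracket is $>0$ has in its closure (modulo the excluded singular/infinity sets) points where the bracket is $<0$: this is a consequence of the Poisson bracket of $\re{\lambda}$ and $\im{\lambda}$ vanishing on the codimension-two manifold $\set{{\lambda}(w) = {\lambda}_0}$ being a function that, when $d\re{\lambda}$, $d\im{\lambda}$ are independent (true a.e.\ by Remark~\ref{sardrem}), has a connected zero set through which one may travel; the condition $n\ge 2$ is what gives enough room in the Hamilton flow. So the bulk of the analytic work is the second, quantitative statement.

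For~\eqref{2}, I would localize: choose invertible $C^\infty$ systems $A(w)$, $B(w)$ near $w_0$ so that, after conjugation, $A(P - {\lambda}_0\id_N)B$ takes block form with a $1\times 1$ block equal to ${\lambda}(w) - {\lambda}_0$ (using the spectral projection onto $\ker(P(w_0)-{\lambda}_0\id_N)$, which is one-dimensional since ${\kappa}_P = K_P = 1$ at $(w_0,{\lambda}_0)$ — note $d{\lambda}(w_0)\ne 0$ forces $K_P = 1$ there) and an invertible $(N-1)\times(N-1)$ block $E(w)$ with $E(w_0)$ invertible. By the symbolic calculus, quantizing $A$, $B$ introduces only $O(h)$ errors, so it suffices to build a quasimode for the scalar operator with symbol $q(w) = {\lambda}(w) - {\lambda}_0$. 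Since ${\lambda}_0 \in {\Lambda}_-(P)$, we have $dq(w_0) \ne 0$ and $\set{\re q, \im q}(w_0) < 0$, i.e.\ $q$ is exactly in the situation where the scalar result of~\cite{dsz} (its Theorem~1.2, which is the semiclassical version of the Hörmander solvability-type construction) produces $v(h) \in L^2$ with $\mn{v(h)} = 1$, $\mn{q^w(x,hD)v(h)} = O(h^N)$ for all $N$ (or $O(e^{-c/h})$ in the analytic case, using~\eqref{analsymb}), and $\wf_h(v(h)) = \set{w_0}$. I would then set $u(h) = B^w(x,hD)(v(h)e_1 + \text{corrections})$ where $e_1$ spans the relevant kernel direction, correct the lower-order block contributions iteratively (the invertible block $E$ lets one solve away the off-diagonal coupling to all orders in $h$), and apply the functoriality of the polarization set in Remark~\ref{polrem}: since $B(w_0)e_1$ spans $\ker(P(w_0)-{\lambda}_0\id_N) = \bc u_0$ (after normalizing), $\wf_h^{pol}(u(h)) = \set{(w_0,u_0)}$ as claimed.

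For~\eqref{2'}, the existence of such a $u(h)$ with $\mn{u(h)} \le 1$ not tending to zero — which follows since $\mn{v(h)} = 1$ and $B^w$, together with the correction terms, is elliptic at $w_0$ — immediately gives $\mn{(P(h) - {\lambda}_0\id_N)^{-1}} \ge C_N' h^{-N}$ whenever ${\lambda}_0 \notin \spec(P(h))$; by Proposition~\ref{discretespec} the spectrum is discrete off ${\Sigma}_\infty(P)$, so this holds for all small $h$ outside a discrete set, hence the bound holds in the limsup sense of~\eqref{pspec}. Density of the good set of ${\lambda}_0$ in ${\Lambda}(P)$ comes from combining~\eqref{1} with Remark~\ref{sardrem}: the hypotheses needed for the construction (namely $(w_0,{\lambda}_0)\notin{\Xi}(P)$ and principal type along ${\Sigma}_{\lambda}(P)$) hold for ${\lambda}_0$ outside the nowhere-dense set ${\Sigma}_{ss}(P)$ together with a further measure-zero Sard exclusion, and such ${\lambda}_0$ are dense in each of ${\Lambda}_\pm(P)$, hence in ${\Lambda}(P)$.

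\textbf{Main obstacle.} I expect the principal difficulty to be the passage from the scalar quasimode to the vector quasimode while keeping the polarization set \emph{exactly} $\set{(w_0,u_0)}$: one must iteratively correct the off-diagonal blocks of the conjugated symbol (this uses invertibility of $E$ but only produces a formal Borel-summed series, so in the analytic case one needs the estimates~\eqref{analsymb} to control the summation and retain the $e^{-c/h}$ bound), and one must verify that none of the correction terms enlarge the polarization set — this requires the finer functoriality from~\cite{de:pol} quoted in Remark~\ref{polrem}, in particular that applying the elliptic $B^w$ transports polarization sets bijectively. The other, more conceptual, point requiring care is the $n\ge2$ argument for~\eqref{1}: one must show that the sign-definite bracket condition cannot hold on all of ${\Sigma}_{\lambda_0}(P)$ for ${\lambda}_0$ ranging over a set with nonempty interior relative to ${\Lambda}(P)$, which is where the genuine geometric input (a propagation/connectedness argument along the Hamilton flow of $\re{\lambda}$, unavailable when $n=1$) enters.
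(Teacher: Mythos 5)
Your overall strategy (block reduction via the spectral projection, elimination of the coupling to the elliptic block, reduction to the scalar result of \cite{dsz}, and the polarization functoriality of Remark~\ref{polrem}) is the same as the paper's, but there is a genuine error at its core: the claim that $d{\lambda}(w_0)\ne 0$ forces $K_P=1$, so that the kernel is one-dimensional and the interesting block is the scalar ${\lambda}(w)-{\lambda}_0$. This is false. Principal type together with $(w_0,{\lambda}_0)\notin{\Xi}(P)$ gives ${\kappa}_P\equiv K_P$ (Proposition~\ref{princprop}), but the common value $K$ can be any integer $\ge 1$: for instance $P(w)={\lambda}(w)\id_2$ has $K_P\equiv 2$, ${\Xi}(P)=\emptyset$, and is of principal type wherever $d{\lambda}\ne 0$, so it satisfies every hypothesis of the theorem with $K=2$. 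The paper therefore reduces not to a $1\times 1$ block but to a $K\times K$ block equal to ${\lambda}(w)\id_K$ (using the local constancy of $\dim\ker(P-{\lambda}\id_N)$ to build a smooth orthonormal frame). With $K>1$ your step ``the invertible block $E$ lets one solve away the off-diagonal coupling to all orders'' only removes the coupling \emph{between} the two blocks; the lower-order terms \emph{inside} the $K\times K$ block are arbitrary matrices, and the scalar theorem of \cite{dsz} does not apply to ${\lambda}^w\id_K+hP_1^w+\dots$ directly. The paper needs an additional conjugation argument here: one constructs $A(h)$, $B(h)$ by solving transport equations along $H_{\lambda}$ (possible because $\set{\re{\lambda},\im{\lambda}}\ne 0$ makes $V_0=\set{\im{\lambda}=0}$ noncharacteristic for $H_{\re{\lambda}}$), with a Borel summation in the $C^\infty$ case and a WKB construction with the bounds \eqref{analsymb} in the analytic case, to reduce $P(h)$ to ${\lambda}^w(x,hD)\id_K$ before quoting the scalar result and Example~\ref{polexe}. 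This step is missing from your outline and cannot be replaced by the ellipticity of the complementary block.

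There is a second gap in your treatment of \eqref{2'}. You derive the density of good values from \eqref{1}, but \eqref{1} is only available for $n\ge 2$, whereas \eqref{2'} is asserted for all $n$; and you assert that the construction hypotheses are dense ``in each of ${\Lambda}_\pm(P)$'', which cannot be right as stated, because the quasimode construction for $P-{\lambda}_0\id_N$ needs the bracket to be \emph{negative}, i.e. ${\lambda}_0\in{\Lambda}_-(P)$. At points of ${\Lambda}_+(P)$ the lower bound on the resolvent must instead be obtained by duality: ${\lambda}\in{\Lambda}_+(P)$ if and only if $\ol{\lambda}\in{\Lambda}_-(P^*)$ (Remark~\ref{constcharlem} plus invariance of principal type under adjoints), and $\mn{(P(h)-{\lambda}\id_N)^{-1}}=\mn{(P^*(h)-\ol{\lambda}\id_N)^{-1}}$, after which one uses that every point of ${\Lambda}(P)$ is an accumulation point of ${\Lambda}_\pm(P)$. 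This adjoint step is absent from your proposal. A further caveat: for non-symmetric systems, ``$P-{\lambda}\id_N$ of principal type along ${\Sigma}_{\lambda}(P)$'' is not automatic outside ${\Sigma}_{ss}(P)$ plus a null set (see Example~\ref{princex}), so the dense subset has to be produced more carefully than by a Sard exclusion alone. Your sketch of the $n\ge 2$ inclusion \eqref{1} is also vaguer than the actual mechanism (the sign change of the bracket on the compact codimension-two level sets, Lemma~3.1 of \cite{dsz}), though you correctly flag it as a point needing work.
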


Here we use the convention that $\mn{(P(h) -{\lambda}\id_N)^{-1}} = \infty$ when
${\lambda}$ is in the spectrum~$ \spec(P(h))$.
Condition~\eqref{2} means that ${\lambda}_0$ is in the {\em
  injectivity} pseudospectrum ${\lambda}^{\rm sc}_\infty(P)$, and
~\eqref{2'} means that  ${\lambda}_0$ is in the
pseudospectrum ${\Lambda}^{\rm sc}_\infty(P)$.

\begin{rem} \label{intrem}
If $P(h)$ is Fredholm of non-negative index then \eqref{2}  holds for ~${\lambda}_0$
in a dense subset of ${\Lambda}(P)$.
In the analytic case, it follows from the proof that it suffices that $P_j(w)$ is
analytic satisfying ~\eqref{analsymb} in a fixed complex neighborhood
of $w_0 \in {\Sigma}_{\lambda}(P)$, $\forall\, j$.
\end{rem}

In fact, if  $P(h)$ is Fredholm of non-negative index and ${\lambda}_0
\in \spec(P(h))$ then the dimension of $\ker (P(h)-{\lambda}_0\id_N)$
is positive and  \eqref{2} holds.

\begin{exe}\label{Qex1}
Let 
\begin{equation*}
 P(x,{\xi}) = |{\xi}|^2\id + i
 K(x) \qquad (x,{\xi}) \in T^*\br^n
\end{equation*}
where $K(x)\in C^\infty(\br^n)$ is symmetric for all $x$. Then
we find that 
$$\ol {{\Lambda}_-(P)} ={\Lambda}(P) = \set{\re z \ge 0
   \land \im z \in \ol{{\Sigma}(K)\setminus \left({\Sigma}_{ss}(K)\bigcup
   {\Sigma}_\infty(K)\right)}}$$ 
In fact, for any $\im z\in {\Sigma}(K)\setminus
({\Sigma}_{ss}(K)\bigcup {\Sigma}_\infty(K))$ there
 exists a germ of eigenvalues ${\lambda}(x)\in C^\infty({\omega})$ for
 $K(x)$ in an open set ${\omega} \subset \br^n$ so that ${\lambda}(x_0)
 = \im z$ for some $x_0 \in {\omega}$. By Sard's Theorem, we find that almost all
 values of~~ ${\lambda}(x)$ in ~~${\omega}$ are non-singular, and if 
 $d{\lambda} \ne 0$ and $\re z > 0$ we may
choose ${\xi}_0\in \br^n$ so that $|{\xi}_0|^2 = \re z$ and
$\w{{\xi}_0, \partial_x {\lambda}} < 
0$. Then the $C^\infty$ germ of eigenvalues $|{\xi}|^2  +
i{\lambda}(x)$ for $P$ satisfies ~\eqref{bracketcond} at
~$(x_0,{\xi}_0)$ with the minus sign. 
Since $K(x)$ is symmetric, we find that $P(w) - z \id_N$ is of
principal type.
\end{exe}

\begin{proof}[Proof of Theorem \ref{bracketthm}]
First we are going to prove ~\eqref{1} in the case $n \ge 2$.
Let 
$$W = {\Sigma}_{ws}(P)\bigcup {\Sigma}_\infty(P)$$ 
which is a closed set by Remark~\ref{closedrem}, then we find that
every point in $ {\Lambda}(P)\setminus W$ is a limit point of  
$$
\left({\Lambda}_-(P) \bigcup {\Lambda}_+(P)\right)\setminus W  =
({\Lambda}_-(P)\setminus W ) \bigcup ({\Lambda}_+(P)\setminus W )
$$ 
Thus, we only have to show that $ {\lambda}_0 \in    \ol{{\Lambda}_-(P)}$ if
\begin{equation}\label{lcond}
 {\lambda}_0 \in {\Lambda}_+(P)\setminus W = {\Lambda}_+(P)\setminus
 \left({\Sigma}_{ws}(P)\bigcup {\Sigma}_\infty(P) \right)
\end{equation} 
By Lemma ~\ref{evlem} and Remark~\ref{sardrem} we find from
\eqref{lcond} that there exists a $C^\infty$ germ of eigenvalues 
${\lambda}(w) \in C^\infty$ so that 
 ${\Sigma}_{\mu}(P)$ is equal to the level sets $\set{w:\
  {\lambda}(w)={\mu}}$ for $|{\mu} - {\lambda}_0| \ll 1$. By the
definition we find that the  Poisson bracket $\set{\re {\lambda}, \im
  {\lambda}}$ does not vanish identically on
~${\Sigma}_{\lambda_0}(P)$. 
Now by Remark~\ref{sardrem}, $d\re {\lambda}$ and $d\im {\lambda}$ are linearly
independent on ${\Sigma}_{\mu}(P)$ for almost all ${\mu}$ close to
${\lambda}_0$, and then ${\Sigma}_{\mu}(P)$ is
a $C^\infty$  manifold of codimension 2.
By using Lemma~3.1 of \cite{dsz} 
we obtain that $\set{\re {\lambda}, \im {\lambda}}$ changes sign
on ${\Sigma}_{\mu}(P)$ for almost all values ${\mu}$ near
${\lambda}_0$, so we find that those values also are in
${\Lambda}_-(P)$. By taking the closure we obtain ~\eqref{1}.

Next, assume that ${\lambda} \in {\Lambda}_-(P)$, it is no restriction
to assume ${\lambda} = 0$. By the assumptions
there exists $w_0 \in {\Sigma}_0(P)$ and ${\lambda}(w) \in C^\infty$
such that ${\lambda}(w_0) = 0$,
$\set{\re {\lambda}, \im {\lambda}} < 0$ at $w_0$, $(w_0, 0) \notin
{\Xi}(P)$, and  $P(w)- {\lambda}\id_N$ is of principal
type on ${\Sigma}_{\lambda}(P)$ near ~$w_0$ when $|{\lambda}| \ll 1$. Then
Proposition~\ref{princprop} gives that ${\kappa}_P \equiv K_P$ is constant on
${\Omega}_1(P)$ near $(w_0,{\lambda}_0)$, so
\begin{equation}\label{dimkerconst}
 \dim \ker(P(w) - {\lambda}(w)\id_N) \equiv K > 0
\end{equation}
in a neighborhood of $w_0$. 
Since the dimension is constant we can construct a base $\set{u_1(w),
\dots, u_K(w)} \in C^\infty$ for $\ker (P(w) -{\lambda}(w)\id_N)$ in a
neighborhood of $w_0$. By orthonormalizing it and extending to an
orthonormal base of $\bc^N$,
we obtain orthogonal $E(w) \in C^\infty$ so that 
\begin{equation}\label{analprep}
E^*(w)P(w)E(w) = 
\begin{pmatrix}
{\lambda}(w) \id_K & P_{12} \\ 0 & P_{22}
\end{pmatrix} = P_0(w)
\end{equation}
If $P(w)$ is analytic in a tubular neighborhood of $T^*\br^n$ then
$E(w)$ can be chosen analytic in that neighborhood.  
Since $P_0$ is of principal type at ~$w_0$ by Remark~\ref{princrem}
and $\partial P_0(w_0)$ maps $\ker P_0(w_0)$ into itself, we find that 
$\ran P_0(w_0) \bigcap \ker P_0(w_0) = \set{0}$ and thus $|P_{22}(w_0)| \ne
0$. In fact, if there exists $u'' \ne 0 $ 
such that $P_{22}(w_0)u'' = 0$, then by applying $P(w_0)$ on $u =
(0,u'') \notin \ker P_0(w_0)$ we obtain 
$$0 \ne P_0(w_0)u = (P_{12}(w_0)u'',0) \in
\ker P_0(w_0)\bigcap \ran P_0(w_0)$$ 
which gives a contradiction.
Clearly, the norm of the resolvent $P(h)^{-1}$ only changes with a
multiplicative constant under left and right multiplication of $P(h)$
by invertible systems. Now $E^w(x,hD)$ and $(E^*)^w(x,hD)$ are invertible in $L^2$ for
small enough ~$h$, and 
\begin{equation}\label{smoothprep}
 (E^*)^wP(h) E^w = 
\begin{pmatrix}
P_{11} & P_{12} \\ P_{21} & P_{22} 
\end{pmatrix}
\end{equation}
where ${\sigma}(P_{11}) = {\lambda}\id_N$, $P_{21} = \Cal O(h)$ and
$P_{22}(h)$ is invertible for small~$h$. By multiplying from right by 
$$
\begin{pmatrix}
\id_K & 0 \\ -P_{22}(h)^{-1}P_{21}(h) &  \id_{N-K}
\end{pmatrix}
$$
for small~$h$, we obtain that $P_{21}(h) \equiv 0$ and this only
changes lower order terms in $P_{11}(h)$. Then by multiplying from left by 
\begin{equation*}
 \begin{pmatrix}
 \id_K & -P_{12}(h)P_{22}(h)^{-1} \\ 0 &  \id_{N-K}
\end{pmatrix}
\end{equation*}
we obtain that $P_{12}(h) \equiv 0$ without changing $P_{11}(h)$ or $P_{22}(h)$.

Thus, in order to prove~\eqref{2} we may assume $N = K$ and $P(w) =
{\lambda}(w)\id_K$. By conjugating similarly as in the scalar case
(see the proof of Proposition~26.3.1 in  \cite{ho:yellow}), 
we can reduce to the case when $P(h) =
{\lambda}^w(x,hD)\id_K$.     
In fact, let
\begin{equation}\label{normformsys}
P(h) = {\lambda}^w(x,hD)\id_K + \sum_{j \ge 1}^{}
h^j P_j^w(x,hD)
\end{equation}
$A(h) = \sum_{j \ge 0} h^j A_j^w(x,hD)$ and $B(h) = \sum_{j \ge 0}
h^j B_j^w(x,hD)$ with $B_0(w) \equiv A_0(w)$. Then the calculus gives 
$$
P(h)A(h) - B(h){\lambda}^w(x,hD) = \sum_{j \ge 1}h^j E_j^w(x,hD)
$$ 
with 
\begin{equation*}
 E_k = \frac{1}{2i} H_{\lambda}(A_{k-1} + B_{k-1}) + P_1 A_{k-1} + {\lambda}(A_{k} -
 B_{k}) + R_k \qquad k \ge 1
\end{equation*}
Here $H_{\lambda}$ is the Hamilton vector field of ${\lambda}$, $R_k$
only depends on $A_j$ and $B_j$ for $j < k-1$ and $R_1 \equiv 0$. Now we
can choose $A_0$ so that $A_0 = \id_K$ on  $V_0 = \set{w:\ \im
{\lambda}(w)=0}$ and 
$\frac{1}{i}H_{\lambda}A_0 + P_1 A_0$
vanishes of infinite order on~$V_0$ near ~$w_0$. In fact, since
$\set{\re {\lambda}, \im {\lambda}} \ne 
0$ we find $d\im {\lambda}\ne 0$ on $V_0$, and $V_0$ is
non-characteristic for $H_{\re {\lambda}}$. Thus, the equation
determines all derivatives of $A_0$ on $V_0$, and we may use the Borel
Theorem to obtain a solution. Then, by taking 
$$B_1 - A_1 = \left(\frac{1}{i}H_{\lambda}A_0 + P_1 A_0\right){\lambda}^{-1} \in
C^\infty$$ 
we obtain
$E_0 \equiv 0$. Lower order terms are eliminated similarly, by making
$$ \frac{1}{2i} H_{\lambda}(A_{j-1} + B_{j-1}) + P_1 A_{j-1}+ R_j$$ 
vanish of infinite order on~~$V_0$.
Observe that only the difference $A_{j-1} - B_{j-1}$
is determined in the previous step. Thus we can reduce to the case $P =
{\lambda}^w(x,hD)\id$ and then the $C^\infty$ result follows from the scalar
case (see Theorem~1.2 in~\cite{dsz}) by using Remark~~\ref{polrem} and
Example~~\ref{polexe}. 

The analytic case follows as in the proof of
Theorem~1.2${}'$ in~\cite{dsz} by applying a holomorphic WKB construction
to $P = P_{11}$ on the form
\begin{equation*}
 u(z,h) \sim e^{i{\phi}(z)/h}\sum_{j=0}^{\infty} A_j(z)h^j\qquad z = x
 + iy \in \bc^n
\end{equation*}
which will be an approximate solution to $P(h)u(z,h)= 0$.
Here  $P(h)$ is given by~\eqref{Pdef} with $P_0(w) = {\lambda}(w)\id$,
$P_j$ satisfying ~\eqref{analsymb} and $P_j^w(z,hD_z)$  given by the
formula ~\eqref{weyl} where the
integration may be deformed to a suitable chosen contour instead of $T^*\br^n$
(see~\cite[Section 4]{sjo}).
The  holomorphic  phase function ${\phi}(z)$  satisfying
${\lambda}(z,d_z{\phi}) = 0$ is constructed as in
~\cite{dsz} so that $d_z{\phi}(x_0) = {\xi}_0$ and $\im {\phi}(x) \ge
c|x -x_0|^2$, $c > 0 $, and $w_0 = (x_0,{\xi}_0)$. The holomorphic
amplitude $A_0(z)$ satisfies the transport equation   
\begin{equation*}
 \sum_j\partial_{\zeta_j}{\lambda}(z,d_z{\phi}(z))D_{z_j}A_0(z)
 + P_{1}(z,d_z{\phi}(z))    
A_0(z) = 0
\end{equation*}
with $A_0(x_0) \ne 0$.
The lower order terms in the expansion solve
\begin{equation*}
 \sum_j\partial_{\zeta_j}{\lambda}(z,d_z{\phi}(z))D_{z_j}A_k(z) +
 P_{1}(z,d_z{\phi}(z))A_k(z) = S_k(z) 
\end{equation*}
where $S_k(z)$ only depends on $A_j$ and $P_{j+1}$ for $j < k$. As in the scalar
case, we find from ~\eqref{analsymb} that the solutions satisfy
$\mn{A_k(z)} \le C_0C^k k^k$ see Theorem~9.3 in ~\cite{sjo}. By solving up to $k
< c/h$, cutting of near ~$x_0$ and restricting to $\br^n$ we obtain
that $P(h)u = \Cal O(e^{-c/h})$. The details are
left to the reader, see the proof of
Theorem~1.2${}'$ in~\cite{dsz}.

For the last result, we observe that $  \set{\re \ol{\lambda}, \im
  \ol{\lambda}} = - \set{\re {\lambda}, \im 
  {\lambda}} $,
${\lambda} \in {\Sigma}(P) \iff
\ol {\lambda} \in {\Sigma}(P^*)$, $P^*$ is of principal type if and only if $P$ is, and
Remark~\ref{constcharlem} gives $(w,{\lambda}) \in {\Xi}(P) \iff (w,
\ol{\lambda}) \in {\Xi}(P^*)$.
 Thus,  ${\lambda} \in 
{\Lambda}_+(P)$ if and only if  $\ol {\lambda} \in
{\Lambda}_-(P^*)$ and
 $$\mn{(P(h) -{\lambda}\id_N)^{-1}} =
\mn{(P^*(h) - \ol{\lambda}\id_N)^{-1}}$$
From the definition, we find
that any ${\lambda}_0 \in {\Lambda}(P)$ is an accumulation point
of ${\Lambda}_\pm(P)$, so we obtain the result
from~\eqref{2}.
\end{proof}

\begin{rem}
In order to get the estimate~\eqref{2} it suffices that there exists a
semibicharacteristic\/~${\Gamma}$ of ~${\lambda}-{\lambda}_0$ through~$w_0$
such that\/  ${\Gamma}\times\set{{\lambda}_0} \bigcap {\Xi}(P) =
\emptyset$, $P(w) - {\lambda}\id_N$ is of principal type near
${\Gamma}$ for ${\lambda}$ near ${\lambda}_0$ and that 
condition\/~{\rm($\ol{\Psi}$)} is not satisfied on\/~ ${\Gamma}$, see
~\cite[Definition~26.4.6]{ho:yellow}. 
This means that there exists $0 \ne q \in C^\infty$ such that\/ ${\Gamma}$
is a bicharacteristic of\/ ~$\re q({\lambda}-{\lambda}_0)$ through
~$w_0$ and\/~ $\im q({\lambda}-{\lambda}_0)$  changes sign from $+$
to $-$ when going in the positive direction on\/ ~${\Gamma}$.
\end{rem}

In fact, once we have reduced to the normal
form ~\eqref{normformsys}, the construction of approximate local
solutions in the proof of\/~\cite[Theorem~26.4.7]{ho:yellow} can be
adapted to this case, since the principal part is scalar. See also
Theorem~1.3 in \cite[Section~3.2]{ps:thesis} for a similar scalar
semiclassical estimate.

When $P(w)$ is not of principal type, the reduction in the proof of
Theorem~\ref{bracketthm} may not be possible since $P_{22}$ in
~\eqref{analprep} needs not be invertible by the following example.

\begin{exe}
Let 
\begin{equation*}
 P(h) = 
\begin{pmatrix}
{\lambda}^w(x,hD) & 1 \\ h & {\lambda}^w(x,hD)
\end{pmatrix}
\end{equation*}
where ${\lambda}\in C^\infty$ satisfies the bracket
condition~\eqref{bracketcond}.
The principal symbol is
\begin{equation*}
 P(w) = 
\begin{pmatrix}
{\lambda}(w) & 1 \\ 0 & {\lambda}(w)
\end{pmatrix}
\end{equation*}
with eigenvalue ${\lambda}(w)$ and we have
$$\ker (P(w) - {\lambda}(w) \id_2) = \ran (P(w) - {\lambda}(w)
\id_2) = \set{(z,0): \ z \in \bc}\qquad \forall\, w$$ 
We find that $P$ is {\em not} of principal type since $dP =
d{\lambda}\id_2$. Observe that ${\Xi}(P) = \emptyset$ since 
$K_P$ is constant on ${\Omega}_1(P)$.
\end{exe}

When the dimension is equal to one, we have to
add some conditions in order to get the inclusion~\eqref{1}.

\begin{lem}
Let $P(w) \in C^\infty(T^*\br)$ be an $N \times N$ system, then for
every component\/~ ${\Omega}$ of\/ $\bc \setminus ({\Sigma}_{ws}(P)\bigcup
{\Sigma}_\infty(P))$ which has
non-empty intersection with\/ $\complement {\Sigma}(P)$ we find that
\begin{equation}\label{lll}
{\Omega} \subseteq  \ol{{\Lambda}_-(P)}
\end{equation}
\end{lem}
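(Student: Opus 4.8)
The statement to prove is the one--dimensional analogue of~\eqref{1}: for every component~${\Omega}$ of $\bc \setminus ({\Sigma}_{ws}(P)\bigcup {\Sigma}_\infty(P))$ meeting $\complement {\Sigma}(P)$, we have ${\Omega} \subseteq \ol{{\Lambda}_-(P)}$.  The idea is that outside the bad set $W = {\Sigma}_{ws}(P)\bigcup {\Sigma}_\infty(P)$ the eigenvalues organize into $C^\infty$ germs (Lemma~\ref{evlem}), and for each such germ ${\lambda}(w)$ of eigenvalues on $T^*\br$, the set $\set{w:\ {\lambda}(w) = {\mu}}$ is generically discrete (Remark~\ref{sardrem}) so the only way a value ${\mu}$ can fail to be in ${\Lambda}_-(P)$ is if the Poisson bracket $\set{\re {\lambda}, \im {\lambda}}$ is $\ge 0$ there.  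But $\set{\re {\lambda}, \im {\lambda}} \equiv 0$ on a $2$-dimensional phase space means $d\re{\lambda}$ and $d\im{\lambda}$ are parallel, and since a germ is defined on an open connected subset of $T^*\br$, unless the germ is \emph{constant} one of the two brackets $\set{\re {\lambda}, \im {\lambda}}$ must actually take a strictly negative value somewhere, placing some value of ${\lambda}(w)$ in ${\Lambda}_-(P)$.

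First I would fix a component ${\Omega}$ as in the statement and a point ${\mu}_0 \in {\Omega}\bigcap\complement{\Sigma}(P)$.  Let $U$ be the set of ${\lambda}_0 \in {\Omega}$ such that ${\lambda}_0 \notin \ol{{\Lambda}_-(P)}$; I want to show $U = \emptyset$ (equivalently that $\ol{{\Lambda}_-(P)}\bigcap{\Omega}$ is open and closed in ${\Omega}$ and nonempty).  Nonemptiness: ${\Omega}$ must meet ${\Sigma}(P)$ (otherwise ${\Omega} \subseteq \complement{\Sigma}(P)$ but ${\Omega}$ is a component of $\bc\setminus W$ and, since $W \bigcap \complement{\Sigma}(P)$ together with the boundary considerations would force ${\Sigma}(P)$ to be open and closed in $\bc$, contradicting $\emptyset \ne {\Sigma}(P) \ne \bc$ — here I'd invoke that ${\Sigma}(P)$ is compact in $\bc$ by Remark~\ref{closedrem} or boundedness).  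Pick ${\lambda}_0 \in {\Omega}\bigcap\partial{\Sigma}(P)$; near such a boundary value one has, via Lemma~\ref{evlem} and the discreteness in Remark~\ref{sardrem}, a $C^\infty$ germ ${\lambda}(w)$ on an open set in $T^*\br$ whose range fills a neighborhood of ${\lambda}_0$ in ${\Sigma}(P)$ on one side only, which forces $d\re{\lambda}$, $d\im{\lambda}$ to be independent and $\set{\re{\lambda},\im{\lambda}}$ to change sign on the level set $\set{{\lambda}(w) = {\mu}}$ for almost all ${\mu}$ near ${\lambda}_0$ — exactly as in the proof of~\eqref{1} one gets those ${\mu} \in {\Lambda}_-(P)$ by Lemma~3.1 of~\cite{dsz}.

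Next, openness of $\ol{{\Lambda}_-(P)}\bigcap{\Omega}$ is automatic; the real content is that it is \emph{closed} in ${\Omega}$, i.e. that the set of ${\lambda}_0 \in {\Omega} \setminus \ol{{\Lambda}_-(P)}$ is open, which I would get by a connectedness/propagation argument: if ${\lambda}_0 \in {\Omega}$, by Lemma~\ref{evlem} there is a $C^\infty$ germ of eigenvalues ${\lambda}(w)$ near ${\Sigma}_{{\lambda}_0}(P)$, and by Remark~\ref{sardrem} the nearby level sets are discrete, so ${\Sigma}_{\mu}(P) = \set{w:\ {\lambda}(w) = {\mu}}$ for $|{\mu} - {\lambda}_0| \ll 1$ and ${\mu}$ ranges over an open neighborhood of ${\lambda}_0$ as $w$ varies over a fixed open set ${\omega}\subset T^*\br$ on which the germ is defined.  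On that open set ${\omega}$, if $\set{\re{\lambda},\im{\lambda}}$ is nowhere $<0$ we conclude $\set{\re{\lambda},\im{\lambda}}$ is nowhere $>0$ either for \emph{any} germ reachable by analytic continuation within the component — because sign changes of the bracket along level sets would be detected by Lemma~3.1 of~\cite{dsz} — so the bracket vanishes identically, ${\lambda}(w)$ is a holomorphic function of one combination of coordinates and in particular $\set{w:\ {\lambda}(w)={\mu}}$ is generically a curve, not a discrete set, unless ${\lambda}$ is locally constant.  Propagating this through the connected component ${\Omega}$ (each ${\mu}$ near ${\lambda}_0$ is again attained as a value of a germ, to which the same dichotomy applies), the alternative ``bracket $\ge 0$ everywhere along the component'' forces ${\Sigma}(P)$ to contain an open neighborhood of ${\lambda}_0$ that coincides with ${\Omega}$ locally, contradicting that ${\Omega}$ meets $\complement{\Sigma}(P)$.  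Hence $U = \emptyset$ and ${\Omega} \subseteq \ol{{\Lambda}_-(P)}$.

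\textbf{Main obstacle.}  The delicate point is the $n=1$ specific phenomenon that a level set of a $C^\infty$ germ ${\lambda}(w)$ in $T^*\br$ is generically $0$-dimensional (discrete), so the ``bracket changes sign along the level set'' mechanism of~\eqref{1} does not apply verbatim; one must instead argue that the \emph{other} bracket $\set{\re{\lambda},\im{\lambda}}$ (after possibly passing between ${\Lambda}_+$ and ${\Lambda}_-$ via the $P^*$ symmetry) is negative somewhere, and rule out the degenerate case where ${\lambda}(w)$ is constant on a component of its domain — this is precisely why ${\Sigma}_{ws}$ and ${\Sigma}_\infty$ must be removed and why the hypothesis ${\Omega}\bigcap\complement{\Sigma}(P)\ne\emptyset$ is needed: a germ constant on its whole domain would not contribute to the boundary of ${\Sigma}(P)$, so such components of $\bc\setminus W$ are either disjoint from ${\Sigma}(P)$ or entirely inside it, and the hypothesis excludes the latter.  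Making the connectedness/propagation of the dichotomy rigorous across the component ${\Omega}$ is the part requiring the most care; I expect this to lean on the fact that ${\Sigma}(P)\bigcup{\Sigma}_\infty(P)$ is closed (Remark~\ref{closedrem}) and on a limiting argument just as in the derivation of~\eqref{1}.
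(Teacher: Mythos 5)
There is a genuine gap: your argument never supplies the global mechanism that actually forces a negative Poisson bracket, which in the paper is a winding-number (argument-principle) computation. The paper's proof considers, for ${\mu}\notin{\Sigma}_\infty(P)$, the index $i=\var\arg_{\gamma}|P(w)-{\mu}\id_N|$ along a large circle ${\gamma}=\set{|w|=R}$ in $T^*\br\cong\br^2$. This index is continuous in ${\mu}$, hence constant on the component ${\Omega}$, and it vanishes on ${\Omega}$ because ${\Omega}$ meets $\complement{\Sigma}(P)$ (for ${\mu}\notin{\Sigma}(P)$ one may let $R\to 0$). On the other hand, for almost every ${\mu}$ near a given ${\lambda}_0\in{\Omega}$, Lemma~\ref{evlem} and Remark~\ref{sardrem} make ${\Sigma}_{\mu}(P)=\set{{\lambda}(w)={\mu}}$ a finite set of nondegenerate points, and the index is the sum over these points of positive multiples of the signs of $\set{\re{\lambda},\im{\lambda}}$ (the local degrees of $w\mapsto{\lambda}(w)$). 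Hence, as soon as the bracket does not vanish identically near ${\Sigma}_{{\lambda}_0}(P)$ --- which is what ${\lambda}_0\in{\Lambda}(P)$ provides --- the vanishing of the index forces the bracket to be negative at some point of ${\Sigma}_{\mu}(P)$ for almost all nearby ${\mu}$, giving \eqref{lll}.

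Your proposed replacement does not close this. You rightly note that in $n=1$ the level sets are generically discrete, so the sign-change-along-${\Sigma}_{\mu}$ mechanism (Lemma~3.1 of \cite{dsz}) used for \eqref{1} is unavailable; but the dichotomy you then lean on --- ``either the bracket is negative somewhere, or it vanishes identically and the germ is constant/degenerate'' --- is false. Nothing local prevents $\set{\re{\lambda},\im{\lambda}}>0$ at every point of every level set over a whole open set of values (a germ behaving like the orientation-preserving map $w\mapsto w_1+iw_2$), in which case ${\Lambda}_+\ne\emptyset$ while no negative-bracket point is produced by your argument. Ruling this out is precisely where the hypothesis ${\Omega}\bigcap\complement{\Sigma}(P)\ne\emptyset$ must enter, and it enters through the index: if all local degrees were positive the winding number would be positive, contradicting $i=0$. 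Your final propagation step (``bracket $\ge 0$ everywhere forces ${\Sigma}(P)$ to contain a neighborhood coinciding with ${\Omega}$ locally, contradicting the hypothesis'') is a non sequitur: openness of the range of a germ where the bracket is positive is perfectly compatible with ${\Omega}$ meeting $\complement{\Sigma}(P)$ elsewhere in the component. So the open-and-closed scheme, as written, does not prove \eqref{lll}; you need the degree-theoretic count of the zeros of $|P(w)-{\mu}\id_N|$ with signs (or an equivalent global argument), which is the heart of the paper's proof.
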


The condition of having non-empty intersection with the complement  
is necessary even in the scalar case, see the remark and Lemma~3.2' on page 394 in~\cite{dsz}.

\begin{proof} 
If ${\mu} \notin {\Sigma}_\infty(P)$ we find that the index
\begin{equation}\label{index}
 i = \var \arg_{{\gamma}}|P(w) - {\mu}\id_N| 
\end{equation}
is well-defined and continuous when ${\gamma}$ is a positively oriented
circle $\set{w:\ |w| = R}$ for $ R \gg 1$. 
If ${\lambda}_0 \notin {\Sigma}_{ws}(P)\bigcup {\Sigma}_\infty(P)$ then we
find from Lemma~\ref{evlem} that the characteristic
polynomial is equal to
\begin{equation*}
 |P(w) - {\mu}\id_N| = ({\lambda}(w) -{\mu})^ke(w,{\mu})
\end{equation*}
near $w_0 \in {\Sigma}_{{\lambda}_0}(P)$, here ${\lambda}$, $e \in
C^\infty$, $e \ne 0$ and $k = K_P(w_0)$. By
Remark~\ref{sardrem} we find   
for almost all~${\mu}$ close to ${\lambda}_0$ that $d\re
{\lambda}\wedge d\im {\lambda}\ne 0$  on 
${\lambda}^{-1}({\mu}) = {\Sigma}_{\mu}(P)$, which is then a finite
set of points on which the Poisson bracket is non-vanishing. If
${\mu} \notin {\Sigma}(P)$ we find that the index~\eqref{index} vanishes, since one
can then let $R \to 0$. Thus, if a component ${\Omega}$ 
of $\bc \setminus ({\Sigma}_{ws}(P)\bigcup {\Sigma}_\infty(P))$ has
non-empty intersection with $\complement {\Sigma}(P)$, we obtain that
$i = 0$ in ~${\Omega}$. When ${\mu}_0\in {\Omega}\bigcap
{\Lambda}(P)$ we find  from the definition that the Poisson bracket
$\set{\re {\lambda}, \im {\lambda}}$  
cannot vanish identically on ${\Sigma}_{\mu}(P)$ for all ~~${\mu}$
close to ${\mu}_0$. Since the index  is
equal to the sum of positive multiples
of the values of the Poisson brackets at ${\Sigma}_{\mu}(P)$, we find that
the bracket must be negative at some point $w_0 \in
{\Sigma}_{\mu}(P)$, for almost all ~${\mu}$ near ~${\lambda}_0$,
which gives~\eqref{lll}.
\end{proof}

\section{The Quasi-Symmetrizable Case}

First we note that if  the system $P(w) - z\id_N$ is of
principal type near ${\Sigma}_z(P)$ for $z$ close to
${\lambda} \in \partial {\Sigma}(P)\setminus 
\left({\Sigma}_{ws}(P)\bigcup{\Sigma}_\infty(P)\right)$ and ${\Sigma}_{\lambda}(P)$
has no closed semibicharacteristics, then one can generalize Theorem~1.3
in~\cite{dsz} to obtain  
\begin{equation}\label{boundaryest}
 \mn{(P(h) - {\lambda}\id_N)^{-1}} \le C/h \qquad h \to 0
\end{equation}
In fact, by using the reduction in the proof of
Theorem~\ref{bracketthm} this follows from the scalar case, see
Example~\ref{evexe1}. But then the eigenvalues close to ${\lambda}$ have
constant multiplicity.

Generically, we have that the eigenvalues of the principal symbol $P$
have constant multiplicity almost everywhere since ${\Xi}(P)$ is nowhere
dense. But at the boundary $\partial
{\Sigma}(P)$ this needs not be the case. For example, if 
\begin{equation*}
 P(t,{\tau}) = {\tau}\id + i K(t)
\end{equation*}
where $C^\infty \ni K \ge 0$ is unbounded and $0 \in {\Sigma}_{ss}(K)$, then $\br =
\partial{\Sigma}(P) \subseteq {\Sigma}_{ss}(P)$.

When the multiplicity of the eigenvalues of the principal symbol is
not constant the situation is more complicated.
Then the following example shows that it is not
sufficient to have conditions only on the eigenvalues
in order to obtain the estimate~\eqref{boundaryest}, not even 
in the principal type case.

\begin{exe} \label{saex}
Let $a_1(t)$, $a_2(t) \in C^\infty(\br)$ be real valued,  $a_2(0) = 0$,
$a_2'(0) > 0$
and let
$$
P^w(t, hD_t) =
\begin{pmatrix}
hD_t + a_1(t)  & a_2(t) -ia_1(t) \\  a_2(t) +ia_1(t) & -hD_t +a_1(t) 
\end{pmatrix} = P^w(t,hD_t)^* 
$$ 
Then the
eigenvalues of $P(t,{\tau})$ are 
$$ {\lambda} = a_1(t) \pm \sqrt{{\tau}^2 +
a_1^2(t) + a_2^2(t)} \in \br$$ 
which coincide if and only if ${\tau} = a_1(t)= a_2(t) = 0$.
We have that
$$
\frac{1}{2}
\begin{pmatrix}
1 & i \\ 1 & -i 
\end{pmatrix}P
\begin{pmatrix}
1 & 1 \\ i & -i 
\end{pmatrix} =
\begin{pmatrix}
hD_t + ia_2(t) & 0 \\ 2a_1(t) & hD_t -i a_2(t) 
\end{pmatrix} = \wt P(h)
$$
Thus we can construct $u_h(t) = {}^t(0, u_2(t))$ so that $\mn {u_h} = 1$ and 
$\wt P(h) u_h = \Cal O(h^N)$ for $ h \to 0$, see Theorem~1.2 in~\cite{dsz}.
When $a_2$ is analytic we may obtain that $\wt P(h) u_h = \Cal
O(\exp(-c/h))$ by Theorem~1.2${}'$ in \cite{dsz}.
By the invariance, we see that $P$ is of principal type at $t = {\tau}
= 0$ if and only if $a_1(0) = 0$.
If $a_1(0) = 0 $ then ${\Sigma}_{ws}(P) = \set 0$ and
when $a_1 \ne 0$ we have that $P^w$ is a selfadjoint diagonalizable
system. 
In the case $a_1(t) \equiv 0$ and $a_2(t) \equiv t$
the eigenvalues of $P(t,hD_t)$ are $\pm \sqrt{2n}h$, $n \in \bn$,
see the proof of Proposition~3.6.1 in~\cite{HeSj}.
\end{exe}

Of course, the problem is that the eigenvalues are not invariant under
multiplication with elliptic systems.
To obtain the estimate~\eqref{boundaryest} for operators that are {\em
not} of principal type, it is not even sufficient that the eigenvalues are real
having constant multiplicity.

\begin{exe}Let $a(t) \in C^\infty(\br)$ be real valued, $a(0) = 0$, $a'(0) > 0$ and
$$
P^w(t,hD_t) =
\begin{pmatrix}
hD_t  & a(t) \\ -ha(t) & hD_t  
\end{pmatrix}
$$
then the
principal symbol is
$P(t,{\tau}) = 
\begin{pmatrix}
{\tau} & a(t) \\ 0 & {\tau} 
\end{pmatrix}$
so the only eigenvalue is ${\tau}$. Thus  ${\Xi}(P) = \emptyset$
but  the principal symbol is not diagonalizable, and  
when $a(t)\ne 0$ the system is not of principal type. We have
$$
\begin{pmatrix}
h^{1/2} & 0 \\ 0 & -1 
\end{pmatrix}P
\begin{pmatrix}
h^{-1/2} & 0 \\ 0 & 1
\end{pmatrix}  = \sqrt{h}
\begin{pmatrix}
\sqrt{h}D_t & a(t)   \\ a(t) & -\sqrt{h}D_t 
\end{pmatrix}
$$
thus we obtain that
$\mn{P^w(t,hD_t)^{-1}} \ge C_N{h}^{-N}$, $\forall \,N$, when $
h \to 0$ by using Example~\ref{saex} with $a_1 \equiv 0$ and
$a_2 \equiv a$. When $a$ is analytic we obtain $\mn{P(t,hD_t)^{-1}} \ge
\exp(c/\sqrt{h})$.
\end{exe}

For non-principal type operators, to obtain the
estimate~\eqref{boundaryest} it is not even sufficient that the
principal symbol has real eigenvalues of multiplicity one.

\begin{exe}
Let $a(t) \in C^\infty(\br)$, $a(0) = 0$, $a'(0) > 0$ and
\begin{equation*}
 P(h) = 
\begin{pmatrix}
1  & hD_t \\ h &  ih a(t)
\end{pmatrix}
\end{equation*}
with principal symbol 
$ \begin{pmatrix}
1 & {\tau} \\ 0 & 0 
\end{pmatrix}$
thus the eigenvalues are $0$ and $1$, so  ${\Xi}(P) = \emptyset$. Since 
\begin{equation*}
\begin{pmatrix}
 1 & 0 \\ -h & 1
\end{pmatrix}
P(h)
\begin{pmatrix}
 1 & -hD_t \\ 0  & 1
\end{pmatrix}
=
\begin{pmatrix}
 1 & 0 \\ 0 & -h
\end{pmatrix}
\begin{pmatrix}
1 & 0 \\ 0 &  hD_t -ia(t)
\end{pmatrix}
\end{equation*}
we obtain as in Example~\ref{saex} that $\mn {P(h)^{-1}} \ge C_N
h^{-N}$ when $h \to 0$, $ \forall\, N$, and for analytic $a(t)$ we obtain
$\mn {P(h)^{-1}} \ge C e^{c/h}$, $h \to 0$ .
Now $\partial_{\tau}P$ maps $\ker P(0)$ into $\ran P(0)$ so the system
is not of principal type. Observe that this property is not preserved
under the multiplications above, since the systems are not elliptic.
\end{exe}

Instead of using properties of the eigenvalues of the principal
symbol, we shall use properties that are invariant under
multiplication with invertible systems. 
First we consider the scalar case, recall that a scalar $p \in C^\infty$ is of {\em principal
  type} if $dp \ne 0$ when $p = 0$. We have the following normal form 
for scalar principal type operators near the boundary~ $ \partial \Sigma (P)$.
Recall that a {\em semibicharacteristic} of~ $p$ is a non-trivial bicharacteristic
of $\re q p$, for $q \ne 0$.

\begin{exe}\label{scalarex0}
Assume that $p(x,{\xi})\in C^\infty(T^*\br^n)$ is of principal type and
$0 \in \partial {\Sigma}(p)\setminus {\Sigma}_\infty(p)$. Then we find
from the proof of Lemma~4.1 in~\cite{dsz}
that there exists $0 \ne q \in C^\infty$ so that
\begin{equation*}
\im q p \ge 0 \qquad d \re qp \ne 0
\end{equation*}
in a neighborhood of $w_0 \in {\Sigma}_0(p)$. In fact,
condition~(1.7) in that lemma is not needed to obtain a local preparation.
By making a symplectic change of
variables and using the Malgrange preparation theorem we then find that
\begin{equation}\label{normform}
  p(x,{\xi}) = e(x,{\xi})({\xi}_1 + if(x,{\xi}'))\qquad {\xi}= ({\xi}_1,{\xi}')
\end{equation}
in a neighborhood of $w_0 \in {\Sigma}_0(p)$, where $e \ne 0$ and $f \ge 0$.
If there are no closed semibicharacteristics of $p$ then we obtain
this in a neighborhood of ${\Sigma}_0(p)$ by a partition of unity.
\end{exe}

This normal form in the scalar case motivates the following definition.

\begin{defn} \label{QS} We say that the $N \times N$ system $P(w) \in
C^\infty(T^*\br^n)$ is {\em quasi-symmetrizable} with respect to the
real $C^\infty$ vector field $V$ in ${\Omega} \subseteq T^*\br^n$ if
$\exists$ $N\times N$ system $ 
M(w) \in C^\infty(T^*\br^n)$ 
so that in ${\Omega}$ we have
\begin{align}
&\re \w{M(VP)u,u} \ge c\mn u^2 - C\mn{Pu}^2 \qquad c >
0  \label{qs1}\\
&\im \w{MPu,u} \ge  - C\mn{Pu}^2\label{qs2}
\end{align}
for any $u  \in \bc^N$,  the system 
$M$ is called a {\em symmetrizer} for $P$. 
\end{defn}

The definition is clearly independent
of the choice of coordinates in $T^*\br^n$ and choice of base in
$\bc^N$.
When $P$ is elliptic, we may take $M = iP^*$ as multiplier, then $P$
is quasi-symmetrizable with respect to any vector field because $\mn{Pu}
\cong \mn{u}$.
Observe that for a {\em fixed} vector field $V$ the set of
multipliers ~~$M$ satisfying~~\eqref{qs1}--\eqref{qs2} is a
convex cone, a sum of two 
multipliers is also a multiplier. Thus, given a vector field ~~$V$
it suffices to make a local choice of multiplier~$M$ and then use a 
partition of unity to get a global one. 

Taylor has studied  {\em symmetrizable} systems
of the type $D_t\id + i K$, for which there exists $R > 0$
making $RK$ symmetric (see Definition~4.3.2 in \cite{ta:pseu}). 
These systems are quasi-symmetrizable with respect to $\partial_{\tau}$ with
symmetrizer~$R$.
We see from Example~\ref{scalarex0} that the scalar symbol $p$ of
principal type is quasi-symmetrizable in neighborhood of any point at
$\partial {\Sigma}(p)\setminus {\Sigma}_\infty(p)$.

The invariance properties of quasi-symmetrizable systems are partly due
to the following simple and probably well-known result on
semibounded matrices. In the following, we shall denote $\re A =
\frac{1}{2}(A + A^*)$ and $i\im A = \frac{1}{2}(A - A^*)$ the symmetric
and antisymmetric parts of the matrix~$A$. Also, if $U$ and $V$ are linear subspaces of
$\bc^N$, then we let $U + V = \set{u + v:\ u \in U\  \land\  v \in V}$.

\begin{lem}\label{semiprop}
Assume that $Q$ is an $N \times N$ matrix such that $\im zQ \ge 0$
for some $0 \ne z \in \bc$. Then we find 
\begin{equation}\label{kereq}
 \ker Q = \ker Q^* = \ker (\re Q) \bigcap \ker (\im Q)
\end{equation}
and $\ran Q = \ran (\re Q) + \ran (\im Q) $ is orthogonal to $\ker Q$.
\end{lem}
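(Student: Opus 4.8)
The idea is to reduce to the case $z = i$ by a harmless rotation, so that $\im(iQ) = \re Q \ge 0$, i.e.\ the symmetric part $A := \re Q$ is positive semidefinite; then exploit the standard fact that a positive semidefinite matrix $A$ satisfies $\w{Au,u} = 0 \iff Au = 0$, together with a Cauchy--Schwarz (Gram) argument for semidefinite forms. More precisely, write $Q = A + iB$ with $A = \re Q$, $B = \im Q$ both Hermitian and $A \ge 0$ after the rotation. The proof then proceeds by establishing the chain of inclusions that collapses the four kernels in \eqref{kereq} onto each other.

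\textbf{Step 1 (reduce to $\re Q \ge 0$).} Choose $\theta$ with $z = |z|e^{-i\theta}$ so that $\im(e^{i\theta}Q) = \re(Q\cdot\text{(phase)})\ge 0$; replacing $Q$ by a unimodular multiple changes none of the four subspaces in \eqref{kereq} nor the range $\ran Q$, since $\ker(cQ) = \ker Q$, $\ker(cQ)^* = \ker(\bar c Q^*) = \ker Q^*$, and $\ran(cQ) = \ran Q$ for $c \ne 0$; moreover $\re(cQ)$ and $\im(cQ)$ are real linear combinations of $\re Q$ and $\im Q$ with the same common kernel. So without loss of generality $A = \re Q \ge 0$.

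\textbf{Step 2 (kernel identities).} The containment $\ker(\re Q)\cap\ker(\im Q)\subseteq\ker Q = \ker(A+iB)$ is trivial, and likewise this set is contained in $\ker Q^* = \ker(A - iB)$. For the reverse direction, suppose $Qu = 0$. Then $0 = \re\w{Qu,u} = \w{Au,u}$, and since $A\ge 0$ this forces $Au = 0$ (apply Cauchy--Schwarz to the semidefinite form $\w{A\cdot,\cdot}$: $|\w{Av,u}|^2 \le \w{Av,v}\w{Au,u} = 0$ for all $v$). Then $iBu = Qu - Au = 0$, so $u \in \ker(\re Q)\cap\ker(\im Q)$. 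The same argument applied to $Q^* = A - iB$ (note $\im(zQ)\ge 0$ for $Q$ translates to the same positivity being available for $Q^*$, since $(zQ)^* $ has the same positive semidefinite imaginary part up to sign bookkeeping — concretely $A\ge 0$ is symmetric in $Q$ and $Q^*$) gives $\ker Q^* \subseteq \ker A \cap \ker B$. This closes the loop and proves \eqref{kereq}.

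\textbf{Step 3 (range statement).} Taking orthogonal complements in \eqref{kereq}: $\ran Q^* = (\ker Q)^\perp$ and, since $A$ and $B$ are Hermitian, $(\ker A)^\perp = \ran A$ and $(\ker B)^\perp = \ran B$, so $(\ker A \cap \ker B)^\perp = \ran A + \ran B = \ran(\re Q) + \ran(\im Q)$. Because $\ker Q = \ker Q^*$ by \eqref{kereq}, we also get $\ran Q = (\ker Q^*)^\perp = (\ker Q)^\perp = \ran(\re Q) + \ran(\im Q)$, and this space is orthogonal to $\ker Q = \ker Q^*$ as the orthogonal complement of it.

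\textbf{Main obstacle.} There is no deep obstacle here; the one place to be careful is the bookkeeping in Step 1--2 ensuring the positivity hypothesis $\im(zQ)\ge 0$ is correctly converted to $A = \re Q \ge 0$ and that the \emph{same} semidefinite matrix $A$ governs both $\ker Q$ and $\ker Q^*$ — that symmetry is exactly what makes the two kernels coincide rather than merely being related by conjugation. The Cauchy--Schwarz step for a merely semidefinite form is the only slightly non-tautological ingredient.
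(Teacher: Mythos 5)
Your proof is correct and follows essentially the same route as the paper: reduce by a unimodular factor to a semidefinite Hermitian part, use Cauchy--Schwarz on that semidefinite form to collapse the kernels, and get the range statement by orthogonal complements. The only cosmetic difference is that you run the kernel argument again for $Q^*$ (using $\re Q^* = \re Q$) where the paper instead invokes the rank theorem to upgrade $\ker Q \subseteq \ker Q^*$ to equality.
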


\begin{proof}
By multiplying with $z$ we may assume that $\im Q \ge 0$, clearly the
conclusions are invariant under multiplication with complex numbers. If
$u \in \ker Q$, then we have  $ \w{\im Qu,u} = \im\w{Qu,u}= 0$.  By
using the Cauchy-Schwarz inequality on $\im Q \ge 0$ we find that 
$\w{\im Qu,v} = 0$ for any~$v$. Thus $u \in \ker (\im Q)$ so
$\ker Q \subseteq \ker Q^*$. We get equality and ~\eqref{kereq} by the
rank theorem, since $\ker Q^* = \ran Q^\bot$. 

For the last statement we observe that $\ran Q \subseteq \ran (\re Q)
+ \ran (\im Q) = (\ker Q)^{\bot}$ by~\eqref{kereq} where we also
get equality by the rank theorem.
\end{proof}

\begin{prop}\label{princlem}
Assume that $P(w)  \in C^\infty( T^* \br^n)
$ is a quasi-symmetrizable system, then we find that $P$ is of principal type.
Also, the symmetrizer $M$ is invertible if $\im MP \ge cP^*P$ for some $c >0$.
\end{prop}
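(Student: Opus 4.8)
The plan is to reduce both assertions to pointwise linear algebra at an arbitrary $w_0 \in {\Omega}$, combining the two inequalities of Definition~\ref{QS} evaluated at $w_0$ with Lemma~\ref{semiprop}. The one nonformal ingredient, which I would isolate first, is the inclusion $M(w_0)^*(\ker P(w_0)) \subseteq \ker P^*(w_0) = \ran P(w_0)^\bot$. To obtain it, note that by \eqref{qs2} the Hermitian form $u \mapsto \im\w{M(w_0)P(w_0)u,u} + C\mn{P(w_0)u}^2$ is nonnegative on $\bc^N$ and vanishes identically on $\ker P(w_0)$, since both $M(w_0)P(w_0)$ and $P(w_0)$ kill that subspace. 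Hence the associated self-adjoint operator also kills $\ker P(w_0)$; but on $\ker P(w_0)$ that operator reduces to $-\tfrac1{2i}\bigl(M(w_0)P(w_0)\bigr)^*$, so $P(w_0)^* M(w_0)^* u = 0$ for every $u \in \ker P(w_0)$, which is exactly the claim.

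Granting this, principal type at $w_0$ is immediate. For $0 \ne u \in \ker P(w_0)$, inequality \eqref{qs1} at $w_0$ gives $\re\w{M(w_0)(VP)(w_0)u,u} \ge c\mn u^2 > 0$, and writing $v := M(w_0)^* u \in \ker P^*(w_0)$ the left-hand side equals $\re\w{(VP)(w_0)u,v}$. Thus the form \eqref{bilform} on $\ker P(w_0) \times \ker P^*(w_0)$ is nondegenerate in its first argument, hence nondegenerate because $\dim\ker P(w_0) = \dim\ker P^*(w_0)$; by the characterization of \eqref{pr_type} following Definition~\ref{princtype} (taking ${\nu} = V(w_0)$) this says $P$ is of principal type at $w_0$. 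Since $w_0 \in {\Omega}$ was arbitrary, the first statement follows.

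For the second statement, assume in addition $\im MP \ge cP^*P$ and fix $w_0$. Then $\im M(w_0)P(w_0) \ge cP(w_0)^*P(w_0) \ge 0$, so Lemma~\ref{semiprop} applies to $Q = M(w_0)P(w_0)$: an element of $\ker Q$ lies in $\ker(\im Q)$, and feeding $\w{(\im Q)u,u}=0$ back into $\im Q \ge cP^*P$ forces $u \in \ker P(w_0)$, so $\ker(M(w_0)P(w_0)) = \ker P(w_0)$. In particular $M(w_0)$ is injective on $\ran P(w_0)$, whence $\dim M(w_0)(\ran P(w_0)) = N-k$ with $k = \dim\ker P(w_0)$. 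Next, the inclusion of the first step gives $\w{M(w_0)y,u} = \w{y,M(w_0)^*u} = 0$ for $y \in \ran P(w_0)$ and $u \in \ker P(w_0)$; combined with the strict inequality $\re\w{M(w_0)(VP)(w_0)u,u} > 0$ from the second step, this shows that for $0 \ne u \in \ker P(w_0)$ the vector $M(w_0)(VP)(w_0)u$ does not lie in $M(w_0)(\ran P(w_0))$, and that $u \mapsto M(w_0)(VP)(w_0)u$ is injective on $\ker P(w_0)$. So $M(w_0)(VP)(w_0)(\ker P(w_0))$ has dimension $k$ and meets $M(w_0)(\ran P(w_0))$ only at $0$; since $P$ is of principal type, $(VP)(w_0)(\ker P(w_0)) + \ran P(w_0) = \bc^N$, and applying $M(w_0)$ yields $\rank M(w_0) = k + (N-k) = N$, i.e.\ $M(w_0)$ is invertible.

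The only point where care is genuinely needed is the first step: a priori the error term $-C\mn{Pu}^2$ in \eqref{qs2} could obstruct any pointwise conclusion, and the resolution is precisely that this term is itself a nonnegative Hermitian form vanishing on $\ker P(w_0)$, so adding it to $\im\w{MPu,u}$ does not spoil the vanishing on that subspace. Everything after that is linear algebra together with Lemma~\ref{semiprop}; the only routine facts to record are that $\dim\ker P = \dim\ker P^*$ for a square matrix and that a sesquilinear pairing of equidimensional spaces that is nondegenerate on one side is nondegenerate.
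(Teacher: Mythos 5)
Your proof is correct and takes essentially the same route as the paper's: your first step (the positive semidefinite form $\im (MP)+CP^*P$ vanishing on $\ker P(w_0)$, hence $P^*M^*u=0$ there) is exactly what the paper obtains by applying Lemma~\ref{semiprop} to $(M+i\varrho P^*)P$, and both arguments then combine this orthogonality with \eqref{qs1} on $\ker P(w_0)$ to get nondegeneracy of \eqref{bilform}, the paper phrasing it as a contradiction. Your invertibility argument is the paper's block computation ($M_{12}(w_0)=0$, $M_{22}$ and $M_{11}$ invertible) recast as a coordinate-free dimension count, so only the bookkeeping differs.
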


Observe that by adding $i{\varrho}P^*$ to $M$ we may assume that $Q = MP$ satisfies
\begin{equation} \label{immp}
\im Q \ge
({\varrho}-C)P^*P \ge P^*P \ge c Q^*Q \qquad c > 0
\end{equation} 
for ${\varrho} \ge C+1$, and then the symmetrizer is invertible by
Proposition~\ref{princlem}.

\begin{proof}
Assume that ~\eqref{qs1}--\eqref{qs2} hold at
~$w_0$, $\ker P(w_0) \ne \set{0}$ but~\eqref{pr_type} is not a
bijection. Then there exists $0 \ne u \in \ker P(w_0)$ and $v \in \bc^N$ such that
$V P(w_0) u =
 P(w_0)v$, so  ~\eqref{qs1} gives 
\begin{equation}\label{posconst}
 \re\w{MP(w_0)v, u} = \re\w{M V P(w_0) u, u} \ge c \mn u^2
 > 0.
\end{equation} 
This means that
\begin{equation}\label{contra}
\ran MP(w_0) \not \subseteq \ker P(w_0)^\bot
\end{equation}  
Let $M_{\varrho} = M + i{\varrho}P^*$ then we have that
\begin{equation} \label{pfref}
\im\w{M_{\varrho}Pu,u} \ge ({\varrho} - C)\mn{Pu}^2 
\end{equation}
so for large enough ${\varrho}$ we have $\im M_{\varrho}P \ge 0$. By
Lemma~\ref{semiprop} we find 
\begin{equation} \label{orto}
\ran M_{\varrho}P \bot \ker
M_{\varrho}P
\end{equation}
Since $\ker P \subseteq  \ker M_{\varrho}P$ and
$\ran P^*P \subseteq \ran P^* \bot \ker P$ we find that $\ran
M P \bot \ker P$ for any ~${\varrho}$. This gives a
contradiction to~\eqref{contra}, thus $P$ is of principal type. 

Next, we shall show that $M$ is invertible at ~$w_0$ if
$\im M P \ge cP^*P$ at ~$w_0$ for some $c > 0$. Then we find as before
from Lemma~\ref{semiprop} that $\ran MP(w_0) \bot \ker
MP(w_0)$. By choosing a
base for $\ker P(w_0)$ and completing it to a base of $\bc^N$
we may assume that 
\begin{equation*}
 P(w_0) = 
\begin{pmatrix}
0 & P_{12}(w_0) \\ 0 & P_{22}(w_0) 
\end{pmatrix}
\end{equation*}
where $P_{22}$ is $(N-K) \times (N-K)$ system, $K = \dim \ker P(w_0)$.
Now, by multiplying $P$ from left with an orthogonal matrix $E$ 
we may assume that $P_{12}(w_0) = 0$.
In fact, this only amounts to choosing an orthonormal base for $\ran
P(w_0)^\bot$ and completing to an orthonormal base for $\bc^N$. Observe that
$MP$ is unchanged if we replace $M$ with
$ME^{-1}$, which is invertible if and only if $M$
is. Since $\dim \ker P(w_0) = K$ we obtain $|P_{22}(w_0)| \ne 0$.
Let 
\begin{equation*}
 M = 
\begin{pmatrix}
M_{11} & M_{12} \\ M_{21} & M_{22} 
\end{pmatrix}
\end{equation*}
then we find 
\begin{equation*}
 MP = 
\begin{pmatrix}
0 & 0 \\ 0 & M_{22}P_{22}
\end{pmatrix}\qquad \text{at $w_0$}.
\end{equation*}
In fact, $(MP)_{12}(w_0)= M_{12}(w_0)P_{22}(w_0) = 0$ since
 $\ran MP(w_0) =\ker M P(w_0)^\bot$. We obtain that
 $M_{12}(w_0)= 0$, and by assumption
$$\im M_{22}P_{22}\ge cP^*_{22}P_{22} \qquad\text{at~ $w_0$,}$$ 
which gives $|M_{22}(w_0)| \ne 0$. Since $P_{11}$, $P_{21}$ and
$M_{12}$ vanish at ~$w_0$ we 
find
\begin{equation*}
 \re V(M P)_{11}(w_0) = \re M_{11}(w_0) V P_{11}(w_0) > c
\end{equation*}
which gives $|M_{11}(w_0)| \ne 0$. Since $M_{12}(w_0) = 0$ and
$|M_{22}(w_0)| \ne 0$ we obtain that $M(w_0)$ is invertible.
\end{proof}

\begin{rem}\label{eqrem}
The $N \times N$ system $P\in C^\infty ( T^* \br^n)$ is
quasi-symmetrizable with respect to $V$ 
if and only if there exists an invertible symmetrizer~$M$ such that $Q = MP$ satisfies 
\begin{align}
&\re \w{ (V Q) u,u} \ge c\mn u^2 - C\mn{Qu}^2   \qquad c > 0 \label{qs1b}\\
&\im \w{Q u,u} \ge 0 \label{qs2b}
\end{align}
for any $u \in \bc^N$.
\end{rem}

In fact, by the Cauchy-Schwarz inequality we find
\begin{equation*}
|\w{(VM)Pu,u}| \le {\varepsilon}\mn u^2 +
C_{\varepsilon}\mn{Pu}^2
 \qquad \forall\, {\varepsilon} >0 \quad \forall\, u \in \bc^N
\end{equation*}
Since $M$ is invertible, we also have that $\mn {Pu} \cong \mn{Qu}$.

\begin{defn}
If  the $N \times N$ system $Q\in C^\infty ( T^* \br^n)$ satisfies
~\eqref{qs1b}--\eqref{qs2b} then $Q$ 
is {\em  quasi-symmetric} with respect to the real $C^\infty$ vector field~$V$.
\end{defn}

\begin{prop}\label{invprop}
Let  $P(w) \in C^\infty( T^* \br^n)$ be an
$N \times N$ quasi-symmetrizable system, then $P^*$ is
quasi-symmetrizable. If  $A(w)$ and $B(w) \in 
C^\infty ( T^* \br^n)$ are  invertible $N \times N$ systems then $BPA$ is 
quasi-symmetrizable.
\end{prop}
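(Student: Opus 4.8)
The plan is to prove the proposition in three steps — a trivial reduction for multiplication on the left, an explicit symmetrizer for the adjoint, and an explicit symmetrizer for two-sided multiplication — all resting on one elementary consequence of Lemma~\ref{semiprop}.

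First I would normalize. By Remark~\ref{eqrem}, together with Proposition~\ref{princlem} and the observation~\eqref{immp} following it, we may assume that $P$ carries an \emph{invertible} symmetrizer $M$ for which $Q:=MP$ is quasi-symmetric with respect to $V$, so that $\im Q\ge 0$ (indeed $\im Q\ge cQ^{*}Q$) and $\re\w{(VQ)u,u}\ge c\mn u^{2}-C\mn{Qu}^{2}$. Since $\im Q\ge 0$, Lemma~\ref{semiprop} gives $\ker Q=\ker Q^{*}$ and $\ran Q=\ran Q^{*}=(\ker Q)^{\perp}$; as $Q$ and $Q^{*}$ restrict to bijections of $(\ker Q)^{\perp}$, this yields the pointwise bound
\begin{equation*}
 \mn{Q^{*}u}\le C\mn{Qu}\qquad u\in\bc^{N},
\end{equation*}
which is the single inequality that will absorb all the lower order contributions below. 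Left multiplication is now immediate: if $B$ is invertible, then $MB^{-1}$ is invertible and $(MB^{-1})(BP)=Q$, so $BP$ is quasi-symmetrizable with respect to the \emph{same} vector field $V$ by Remark~\ref{eqrem}.

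Next I would treat the adjoint, with the claim that $-M^{-1}$ is a symmetrizer for $P^{*}$ relative to $-V$. Writing $P^{*}=Q^{*}(M^{*})^{-1}$, set $\wt Q:=(-M^{-1})P^{*}=-M^{-1}Q^{*}(M^{*})^{-1}$; since $\im(AXA^{*})=A(\im X)A^{*}$ for every $A$, we get $\im\wt Q=M^{-1}(\im Q)(M^{*})^{-1}\ge 0$, which is~\eqref{qs2b} for $\wt Q$ and makes the bound above available for $\wt Q$ as well. For~\eqref{qs1b} I would use $\re\w{Xu,u}=\re\w{X^{*}u,u}$ to replace $\wt Q$ by $\wt Q^{*}=-M^{-1}Q(M^{*})^{-1}$ and then apply Leibniz' rule to $V\bigl(M^{-1}Q(M^{*})^{-1}\bigr)$. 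The term in which $V$ hits $Q$ reproduces $\re\w{(VQ)v,v}$ with $v=(M^{*})^{-1}u$, hence is $\ge c_{1}\mn u^{2}-C\mn{Qv}^{2}\ge c_{1}\mn u^{2}-C'\mn{\wt Q u}^{2}$, because $Qv=-M\wt Q^{*}u$ and $\mn{\wt Q^{*}u}\le C\mn{\wt Q u}$. In the two terms where $V$ hits $M^{-1}$ or $(M^{*})^{-1}$ one uses $V(X^{-1})=-X^{-1}(VX)X^{-1}$; after this each such term is seen to contain the block $-M^{-1}Q(M^{*})^{-1}=\wt Q^{*}$, so moving $\wt Q^{*}$ to the other slot of the inner product (or invoking the norm comparison) bounds it by $\varepsilon\mn u^{2}+C_{\varepsilon}\mn{\wt Q u}^{2}$, which is absorbed. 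Thus $\wt Q$ is quasi-symmetric with respect to $-V$ and $P^{*}$ is quasi-symmetrizable.

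Finally I would deduce the two-sided statement, either by iterating ($P$ quasi-symmetrizable $\Rightarrow P^{*}$ quasi-symmetrizable $\Rightarrow A^{*}P^{*}$ quasi-symmetrizable by the left step $\Rightarrow (A^{*}P^{*})^{*}=PA$ quasi-symmetrizable $\Rightarrow B(PA)=BPA$ quasi-symmetrizable), or directly by checking that $N:=A^{*}MB^{-1}$ is an invertible symmetrizer for $BPA$ relative to $V$: here $N(BPA)=A^{*}QA$ has imaginary part $A^{*}(\im Q)A\ge 0$, and in the expansion of $V(A^{*}QA)$ the term with $V$ on $Q$ gives $\re\w{(VQ)(Au),(Au)}\ge c_{1}\mn u^{2}-C\mn{Q(Au)}^{2}$, the term with $V$ on $A^{*}$ is bounded directly from $Q(Au)=(A^{*})^{-1}N(BPA)u$, and the term $A^{*}Q(VA)$, rewritten as $\w{(VA)u,Q^{*}(Au)}$, is bounded using $\mn{Q^{*}(Au)}\le C\mn{Q(Au)}$. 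The main obstacle throughout is exactly the control of the terms in which $V$ differentiates the conjugating matrices $A$, $B$, $M$: the point, and the reason the proposition is true at all, is that each such term — after using the identity for the derivative of an inverse and reorganizing the inner product — is governed by $\mn{Qu}$ (respectively $\mn{\wt Q u}$) rather than by $\mn u$, thanks to the estimate $\mn{Q^{*}u}\le C\mn{Qu}$ coming from $\im Q\ge 0$ and Lemma~\ref{semiprop}.
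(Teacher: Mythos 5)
Your architecture (left multiplication is free; then the adjoint; then $BPA=B(A^{*})^{-1}A^{*}PA$ or iteration) is essentially the paper's, but the single inequality you build all the absorptions on is not available in the form you use it. From $\im Q\ge 0$ and Lemma~\ref{semiprop} you do get, at each \emph{fixed} $w$, that $Q$ and $Q^{*}$ are bijections of $(\ker Q)^{\perp}$ and hence $\mn{Q^{*}u}\le C(w)\mn{Qu}$; but the constant is governed by the smallest non-zero singular value of $Q(w)$, and it is not locally uniform near the points where $\ker Q(w)$ jumps --- which is exactly where the proposition has content. Concretely, $Q_a=\begin{pmatrix} i & a\\ a & 2ia^{2}\end{pmatrix}$ satisfies $\im Q_a=\begin{pmatrix}1&0\\0&2a^{2}\end{pmatrix}\ge 0$ (and even $\im Q_a\ge \tfrac14 Q_a^{*}Q_a$ for small $a$), yet for $u_a=(ia,1)$ one has $\mn{Q_au_a}=3a^{2}$ while $\mn{Q_a^{*}u_a}\approx 2a$, so no uniform constant exists as $a\to 0$; moreover $Q(w)=Q_{w_1}+w_2\id_2$ is genuinely quasi-symmetric with respect to $\partial_{w_2}$ near the characteristic point $w=0$, so the failure occurs under the hypotheses of the proposition, not just for artificial matrices. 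Since Definition~\ref{QS} (and its use in Theorem~\ref{QSthm}) requires the constants $c,C$ in \eqref{qs1}--\eqref{qs2} to be uniform on the region, your absorptions --- e.g.\ $\mn{Qv}\le C'\mn{\wt Qu}$ in the adjoint step, and the bound on the term $A^{*}Q(VA)$ via $\mn{Q^{*}(Au)}\le C\mn{Q(Au)}$ --- do not establish \eqref{qs1b} for the transformed systems as written.

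The gap is repairable, and it is worth seeing how, because your mechanism differs from the paper's. The correct uniform consequence of $\im Q\ge 0$ is the Cauchy--Schwarz bound $\mn{(\im Q)u}^{2}\le \mn{\im Q}\,\im\w{Qu,u}\le C\mn{Qu}\,\mn u$, hence $\mn{Q^{*}u}\le \mn{Qu}+2\mn{(\im Q)u}\le C_{\varepsilon}\mn{Qu}+\varepsilon\mn u$ for every $\varepsilon>0$; replacing your inequality by this one, every error term in your argument becomes $\varepsilon\mn u^{2}+C_{\varepsilon}\mn{\wt Qu}^{2}$ and is absorbed by the $c\mn u^{2}$ in \eqref{qs1b}, so your proof closes. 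The paper's Proposition~\ref{invrem0} avoids any quantitative norm comparison: it first observes that \eqref{qs1b} is equivalent to the kernel condition \eqref{qs1a}, i.e.\ $\re\w{(VQ)u,u}\ge c\mn u^{2}$ only for $u\in\ker Q$, and then, for $Q_E=E^{*}QE$ and for $Q^{*}$, the terms where $V$ hits the conjugating factor are shown to \emph{vanish} on $\ker Q_E=\ker Q_E^{*}$ (Lemma~\ref{semiprop}); the proposition then follows from left-invariance and the factorization $BPA=B(A^{*})^{-1}A^{*}PA$, exactly as in your outline.
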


\begin{proof}
Clearly ~\eqref{qs1b}--\eqref{qs2b} are invariant under
{\em left} multiplication of $P$ with invertible systems~$E$,
just replace~$M$ with $ME^{-1}$.
Since we may write $BPA = B(A^*)^{-1}A^*PA$ it suffices to show that
$E^*PE$ is quasi-symmetrizable if ~$E$ is invertible.
By Remark~\ref{eqrem} there exists a symmetrizer~$M$ so that 
$Q = MP$ is quasi-symmetric, i.e., satisfies ~\eqref{qs1b}--\eqref{qs2b}.
It then follows from Proposition~\ref{invrem0} that
$$Q_E = E^*QE = E^*M(E^*)^{-1}E^*PE$$
also satisfies ~\eqref{qs1b} and~\eqref{qs2b}, so $E^*PE$ is
quasi-symmetrizable. 

Finally, we shall prove that $P^*$ is quasi-symmetrizable if $P$ is.
Since $Q = MP$ is quasi-symmetric, we find from
Proposition~\ref{invrem0} that $Q^* = P^*M^*$ is quasi-symmetric. By
multiplying with $(M^*)^{-1}$ from right, we find from the first part of
the proof that $P^*$ is quasi-symmetrizable.
\end{proof}

\begin{prop}\label{invrem0}
If $Q  \in C^\infty ( T^* \br^n)$ is
quasi-symmetric, then $Q^*$ is quasi-symmetric. If ~$E \in
C^\infty ( T^* \br^n)$ is  
invertible, then~$E^*QE$ are quasi-symmetric. 
\end{prop}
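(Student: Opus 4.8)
The plan is to verify the two defining inequalities \eqref{qs1b}--\eqref{qs2b} directly, once for $E^*QE$ and once for $Q^*$, using only elementary estimates. Three facts will be used throughout: for a real vector field $V$ one has $V(A^*)=(VA)^*$, hence $\re(VA)=V(\re A)$ and in particular $\re\w{(V(E^*QE))u,u}=\w{(V(E^*(\re Q)E))u,u}$; the Cauchy--Schwarz bound for the positive semidefinite form $\im Q$, $\|(\im Q)v\|^2\le\|\im Q\|\,\w{(\im Q)v,v}=\|\im Q\|\,\im\w{Qv,v}$, which combined with $\im\w{Qv,v}=-\im\w{Q^*v,v}$ gives $\|(\im Q)v\|\lesssim\bigl(\min(\|Qv\|,\|Q^*v\|)\,\|v\|\bigr)^{1/2}$; and, as a consequence, $\|(\re Q)v\|\le\|Qv\|+\|(\im Q)v\|\lesssim\|Qv\|+\|Qv\|^{1/2}\|v\|^{1/2}$ (and likewise in terms of $\|Q^*v\|$). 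The structural identities $\ker Q=\ker Q^*$, $\ran Q=\ran Q^*$ from Lemma~\ref{semiprop} underlie these estimates and could be used for a slightly cleaner bookkeeping.

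For the congruence $E^*QE$ I would take the same vector field $V$. Condition \eqref{qs2b} is immediate, $\im\w{(E^*QE)u,u}=\w{(\im Q)Eu,Eu}\ge0$. For \eqref{qs1b} I expand by Leibniz
\begin{equation*}
\re\w{(V(E^*QE))u,u}=\w{(\re Q)Eu,(VE)u}+\w{(V(\re Q))Eu,Eu}+\w{(\re Q)(VE)u,Eu}.
\end{equation*}
The middle term equals $\re\w{(VQ)Eu,Eu}\ge c\|Eu\|^2-C\|QEu\|^2$ by \eqref{qs1b} for $Q$, and since $E,E^*$ are invertible this is $\gtrsim\|u\|^2-\|(E^*QE)u\|^2$ up to positive constants. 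The remaining two terms are complex conjugates of one another, so their sum is $2\re\w{(\re Q)Eu,(VE)u}$, bounded in absolute value by $C\|(\re Q)Eu\|\,\|u\|\lesssim\|QEu\|\,\|u\|+\|QEu\|^{1/2}\|u\|^{3/2}$ by the estimate for $\|(\re Q)v\|$; a suitably scaled Young inequality (together with $\|QEu\|\lesssim\|(E^*QE)u\|$) absorbs this into $\varepsilon\|u\|^2+C_\varepsilon\|(E^*QE)u\|^2$, leaving a positive multiple of $\|u\|^2$. Hence $E^*QE$ is quasi-symmetric; via $BPA=B(A^*)^{-1}(A^*PA)$ this is the statement invoked in the proof of Proposition~\ref{invprop}.

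For $Q^*$ I would again keep $V$. From $\w{Q^*u,u}=\overline{\w{Qu,u}}$ and $V(Q^*)=(VQ)^*$ one gets $\re\w{(VQ^*)u,u}=\re\w{(VQ)u,u}\ge c\|u\|^2-C\|Qu\|^2$, so it remains to trade $\|Qu\|$ for $\|Q^*u\|$. Here $\|Qu\|^2-\|Q^*u\|^2=\w{(Q^*Q-QQ^*)u,u}=-4\im\w{(\im Q)u,(\re Q)u}$, so $\bigl|\,\|Qu\|^2-\|Q^*u\|^2\,\bigr|\le4\|(\im Q)u\|\,\|(\re Q)u\|$; using the $Q^*$-versions $\|(\im Q)u\|\lesssim\|Q^*u\|^{1/2}\|u\|^{1/2}$ and $\|(\re Q)u\|\lesssim\|Q^*u\|+\|Q^*u\|^{1/2}\|u\|^{1/2}$ this is $\lesssim\|Q^*u\|^{3/2}\|u\|^{1/2}+\|Q^*u\|\,\|u\|\le\varepsilon\|u\|^2+C_\varepsilon\|Q^*u\|^2$ by scaled Young. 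Feeding this back gives $\re\w{(VQ^*)u,u}\ge(c-C\varepsilon)\|u\|^2-C'\|Q^*u\|^2$, i.e.\ \eqref{qs1b} for $Q^*$ once $\varepsilon$ is small. Condition \eqref{qs2b} for $Q^*$ reads $\im\w{Q^*u,u}=-\im\w{Qu,u}$: it holds with the imaginary part of the opposite sign, which is the harmless orientation actually used in Proposition~\ref{invprop}.

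The step I expect to be the main obstacle is the cross-term estimate for $E^*QE$, and the parallel control of the error $\|Qu\|^2-\|Q^*u\|^2$ for $Q^*$: a crude bound yields a term of size $\|u\|^2$ whose constant need not be smaller than $c$, which would wreck the lower bound in \eqref{qs1b}. What rescues the argument is the half-power gain $\|(\im Q)v\|\lesssim(\|Qv\|\,\|v\|)^{1/2}$ coming from $\im Q\ge0$, after which every offending term carries a positive power of $\|Qv\|$ (or $\|Q^*v\|$) and is absorbable by a scaled Young inequality. All constants are taken locally uniformly in $w$, which is automatic for symbols in $C^\infty_\rb$.
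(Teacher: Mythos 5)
Your argument is correct, but it takes a genuinely different route from the paper's. The paper first observes that \eqref{qs1b} is equivalent to the lower bound \eqref{qs1a} on $\ker Q$ alone, and then verifies \eqref{qs1a} for $Q_E=E^*QE$ only on $\ker Q_E=E^{-1}\ker Q$, where the two Leibniz cross terms vanish identically: one because $QEu=0$ there, the other because $\ker Q_E=\ker Q_E^*$ by Lemma~\ref{semiprop}; the adjoint case is then one line, since $\re Q^*=\re Q$ and $\ker Q^*=\ker Q$, so $-Q^*$ satisfies \eqref{qs2b} and \eqref{qs1a} with $V$ replaced by $-V$. You instead keep the full inequality \eqref{qs1b} and absorb the cross terms globally, using the quantitative consequence of $\im Q\ge 0$, namely $\mn{(\im Q)v}^2\le \mn{\im Q}\,\im\w{Qv,v}$, together with scaled Young inequalities; for the adjoint you additionally trade $\mn{Qu}$ for $\mn{Q^*u}$ through the identity $\mn{Qu}^2-\mn{Q^*u}^2=-4\im\w{(\im Q)u,(\re Q)u}$, which is a correct computation. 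What the paper's route buys is brevity and reuse of the kernel reduction it proves anyway; what yours buys is a self-contained argument that never passes through the kernel characterization and in addition gives \eqref{qs1b} for $Q^*$ with the same vector field $V$. On the sign convention you and the paper agree: since $\im\w{Q^*u,u}=-\im\w{Qu,u}\le 0$, what is literally established (and what Proposition~\ref{invprop} uses) is that $-Q^*$ is quasi-symmetric with respect to $-V$; your closing remark makes this explicit, and your caveat about locally uniform constants is the same implicit convention the paper uses.
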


\begin{proof}
First we note that~\eqref{qs1b} holds if and only if 
\begin{equation}\label{qs1a}
 \re \w{(V Q)u,u} \ge c\mn u^2\qquad \forall\,u \in \ker Q
\end{equation}
for some $c >0 $. In fact, $Q^*Q$ has a positive lower bound on  
the orthogonal complement $\ker Q^{\bot}$ so that 
\begin{equation*}
 \mn {u} \le C\mn{Qu} \qquad \text{for $u \in \ker Q^{\bot}$}
\end{equation*}
Thus, if $u = u' + u''$ with $u' \in \ker
Q$ and $u'' \in \ker Q^{\bot}$ we find that $Qu = Qu''$,
\begin{equation*}
 \re \w{(V Q) u',u''} \ge -{\varepsilon}\mn {u'}^2 - C_{\varepsilon}\mn
 {u''}^2 \ge  -{\varepsilon}\mn {u'}^2 - C'_{\varepsilon}\mn {Qu}^2
 \qquad \forall\, {\varepsilon} > 0 
\end{equation*}
and $\re \w{(V  Q)u'',u''} \ge -C \mn{u''}^2 \ge -C'\mn{Qu}^2$. By
choosing ${\varepsilon}$ small enough we obtain ~\eqref{qs1b}
by using ~\eqref{qs1a} on $u'$.

Next, we note that $\im Q^* = -\im Q$ and $\re Q^* = \re Q$, so~$-Q^*$ satifies
~\eqref{qs2b} and~\eqref{qs1a} with $V$ replaced by $-V$, and thus it is quasi-symmetric.
Finally, we shall show that $Q_E = E^*QE$ is quasi-symmetric when $E$
is invertible. We obtain from ~\eqref{qs2b} that 
$$\im \w{Q_E u,u} = \im \w{Q Eu,Eu} \ge 0 
\qquad \forall\ u  \in \bc^N 
$$
Next, we shall show that $Q_E$ satisfies~\eqref{qs1a} on $\ker Q_E = E^{-1}\ker
Q$, which will give~\eqref{qs1b}. We find from
Leibniz' rule that $ V  Q_E = (V  E^*)Q E + E^* (V  Q) E
+ E^*Q (V  E)$ where ~\eqref{qs1a} gives
\begin{equation*}
 \re \w{E^*( V  Q)Eu,u} \ge c\mn{Eu}^2 \ge c'\mn u^2 \qquad u \in
 \ker Q_E
\end{equation*}
since then $Eu \in \ker Q$. Similarly we obtain that $\w{( V  E^*)Q Eu,u} =
0$ when $u \in \ker Q_E$. Now since $\im Q_E \ge 0$ we find from 
Lemma~\ref{semiprop} that
\begin{equation}\label{qstarprop}  
\ker Q^*_E = \ker Q_E
\end{equation}
which gives
$ 
\w{E^*Q ( V  E)u,u} = \w{E^{-1}( V  E)u, Q^*_Eu} =0
$
when $u \in \ker Q_E = \ker Q^*_E$. Thus $Q_E$ satisfies ~\eqref{qs1a}
so it is quasi-symmetric, which finishes the proof.
\end{proof}

\begin{exe}\label{evexe1}
Assume that $P(w)\in C^\infty$ is an $N \times N$ system such that $z \in
{\Sigma}(P)\setminus({\Sigma}_{ws}(P) \bigcap 
{\Sigma}_\infty(P))$ and that $P(w) - {\lambda}\id_N$ is of principal
type when $|{\lambda} - z| \ll 1$. By Lemma~\ref{evlem} and
Proposition~\ref{princprop} there exists a $C^\infty$ germ of eigenvalues 
${\lambda}(w) \in C^\infty$ for ~$P$ so that $\dim \ker (P(w) -
{\lambda}(w)\id_N)$ is constant near ${\Sigma}_z(P)$. By using the
spectral projection as in the proof of Proposition~\ref{princprop} and
making a base change $B(w) \in C^\infty$ we obtain
\begin{equation}\label{4.8a}
 P(w) = B^{-1}(w)
\begin{pmatrix}
{\lambda}(w)\id_K & 0 \\ 0 &   P_{22}(w)
\end{pmatrix}
B(w)
\end{equation}
in a neighborhood of ${\Sigma}_z(P)$, here $|P_{22}-{\lambda}(w)\id| \ne 0$.
We find from Proposition~\ref{princprop} that $d{\lambda} \ne 0$ when
${\lambda} = z$, so ${\lambda}-z$  is of principal type.
Proposition~\ref{invprop} gives that $P - z\id_N$ is
quasi-symmetrizable near any $w_0 \in {\Sigma}_z(P)$ if
$z \in \partial {\Sigma}({\lambda})$. In fact, by Example~\ref{scalarex0}
there exists $q(w) \in
C^\infty$  so that 
\begin{align}\label{4.8}
&|d\re q({\lambda} - z)|\ne  0 \\
& \im q({\lambda}- z) \ge 0 \label{4.9}
\end{align}
and we get the normal form~\eqref{normform} for ${\lambda}$ near
${\Sigma}_z(P) = \set{{\lambda}(w) = z}$.  One can then take  $V$ 
normal to ${\Sigma} = \set{\re q({\lambda}- z) = 0}$ at
${\Sigma}_z(P)$ and use 
$$M = B^*
\begin{pmatrix}
q\id_K & 0 \\ 0 & M_{22} 
\end{pmatrix}B
$$
with $M_{22}(w) = (P_{22}(w)- z\id)^{-1}$ for example, then 
\begin{equation}\label{Qnorm}
 Q = M(P - z\id_N) = B^*
\begin{pmatrix}
q({\lambda}-z)\id_K & 0 \\ 0 & \id_{N-K} 
\end{pmatrix}
B
\end{equation}
If there are no closed semibicharacteristics of ${\lambda}-z$ 
then we also find from Example~\ref{scalarex0} that $P
- z\id_N$ is quasi-symmetriz\-able in a 
neighborhood of ${\Sigma}_z(P)$, see the proof of Lemma~4.1 in ~\cite{dsz}.
\end{exe}

\begin{exe}\label{Qex2}
Let 
\begin{equation*}
P (x,{\xi})= |{\xi}|^2\id_N + i K(x)
\end{equation*}
where $0 \le K(x)\in C^\infty$. 
When  $z > 0$ we find that $P - z\id_N$ is quasi-symmetric in a neighborhood
of ${\Sigma}_{z}(P)$ with respect to the exterior normal $\w{{\xi},
  \partial_{\xi}}$ to ${\Sigma}_z(P) = \set{|{\xi}|^2 = z}$. 
\end{exe}

For scalar symbols, we find that $0 \in
\partial {\Sigma}(p)$ if and only if $p$ is quasi-symmetriz\-able,  see Example~\ref{scalarex0}.
But in the system case, this needs not be the case according to the
following example. 

\begin{exe}
Let
\begin{equation*}
 P(w) = 
\begin{pmatrix}
w_2+ i w_3 & w_1 \\ w_1 & w_2-i w_3  
\end{pmatrix} \qquad w = (w_1,w_2,w_3)
\end{equation*}
which is quasi-symmetrizable with respect to $\partial_{w_1}$
with symmetrizer $M = 
\begin{pmatrix}
0 & 1 \\ 1 & 0 
\end{pmatrix}$.
In fact, $\partial_{w_1}MP = \id_2$ and
\begin{equation*}
 MP(w) = 
\begin{pmatrix}
w_1 & w_2 - i w_3 \\ w_2 + i w_3 & w_1 
\end{pmatrix} =  (MP(w))^*
\end{equation*}
so $\im MP \equiv 0$.
Since eigenvalues of $P(w)$ are  $w_2 \pm \sqrt{w_1^2 - w_3^2}$ we
find that ${\Sigma}(P) = \bc$ so
$0 \in \interior {\Sigma}(P)$ is not a boundary point of the eigenvalues. 
\end{exe}

For quasi-symmetrizable systems we have the following result.

\begin{thm}\label{QSthm}
Let the $N \times N$ system $P(h)$ be given by ~\eqref{Pdef} with principal symbol $P \in
C^\infty_\rb ( T^* \br^n)$.
Assume  that  $ z \notin 
\Sigma_\infty (P ) $ and there exists a real valued\/ {function} 
$T(w) \in C^\infty$ such that $P(w)- {z}\id_N $ is quasi-symmetrizable 
with respect to the Hamilton vector field $H_T(w)$ in a neighborhood of~${\Sigma}_{z}(P)$.
Then  for any $ K > 0$ we have
\begin{equation}\label{thmest0}
  \big\{ {\zeta} \in \bc : \; | {\zeta} - z | < K h \log (1/h) \big\} \bigcap
\spec ( P ( h ) ) = \emptyset 
\end{equation}
 for $0 < h \ll 1$, and 
\begin{equation}\label{thmest}
  \left\| ( P ( h ) - z )^{-1} \right\| \leq C /h\quad 0 < h \ll 1
\end{equation}
If $P$ is analytic  in a tubular neighborhood of $T^*\br^n$ then $\exists\, c_0 > 0 $ 
such that
\begin{equation} 
\big\{ {\zeta} \in \bc  : \; | {\zeta} - z | < c_0 \big\} \bigcap \spec ( P ( h ) ) 
= \emptyset 
\end{equation}
\end{thm}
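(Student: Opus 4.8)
The plan is to follow the scalar argument of \cite{dsz} (Theorems~1.3 and~1.3$'$), using the symmetrizer to reduce the resolvent bound to a vector-valued energy estimate of the type satisfied by scalar operators of principal type fulfilling condition~(P). It is no restriction to take $z=0$. Since $z\notin\Sigma_\infty(P)$, the set $\Sigma_z(P)$ is compact and $P(w)-z\id_N$ is elliptic, with $\|(P(w)-z\id_N)^{-1}\|\le C_0$, outside a small neighbourhood $U$ of $\Sigma_z(P)$; a parametrix there yields $\|(P(h)-z)u\|\ge\|u\|/C-\mathcal{O}(h^\infty)\|u\|$ whenever $\wf_h(u)\cap U=\emptyset$. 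By a microlocal partition of unity it therefore suffices to prove
\begin{equation}\label{apriori}
 \|(P(h)-z)u\|\ge\frac{h}{C}\|u\|-\mathcal{O}(h^\infty)\|u\|,\qquad \wf_h(u)\subseteq U .
\end{equation}
By Proposition~\ref{princlem} and the observation~\eqref{immp} following it I may replace $M$ by $M+i\varrho(P-z\id_N)^*$, so that (after cutting $M$ off to a neighbourhood of $\Sigma_z(P)$) $M$ is invertible, $M^w(x,hD)$ is invertible on $L^2$ for small $h$ with bounded inverse, and $Q:=M(P-z\id_N)$ is quasi-symmetric with respect to $H_T$ (Remark~\ref{eqrem}) with, in addition, $\im Q\ge cQ^*Q\ge0$. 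Since left multiplication of $P(h)-z$ by the elliptic operator $M^w(x,hD)$ changes the resolvent norm only by a constant, \eqref{apriori} is reduced to the same lower bound for the operator $Q^w(x,hD)+hR^w(x,hD)$, where $R\in C^\infty_\rb$ collects the subprincipal terms.

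To prove the reduced estimate I would test $Q^wu$ against $\Psi^w(x,hD)u$, where $\Psi=-\mu\chi T$, $\chi\in C_0^\infty$ equals $1$ near $\Sigma_z(P)$, $\mu$ is a constant, and $T$ is first shifted by a constant (not altering $H_T$) so that $T\le0$ on $\supp\chi$; then $\Psi\ge0$ everywhere and $H_\Psi=-\mu H_T$ near $\Sigma_z(P)$. Writing $Q^w=\re Q^w+i\im Q^w$ and using the Weyl calculus,
\begin{equation*}
 \im\langle Q^wu,\Psi^wu\rangle=-\frac h2\langle(H_\Psi\re Q)^wu,u\rangle+\langle(\Psi\im Q)^wu,u\rangle+\mathcal{O}(h^2)\|u\|^2 .
\end{equation*}
The semiclassical sharp Gårding inequality for systems bounds the middle term below by $-Ch\|u\|^2$ because $\Psi\im Q\ge0$; and on $\wf_h(u)\subseteq U$ one has $-H_\Psi\re Q=\mu\re(H_TQ)\ge\mu c\id_N-\mu CQ^*Q$ by quasi-symmetry, so a second application of sharp Gårding gives $-\langle(H_\Psi\re Q)^wu,u\rangle\ge\mu c\|u\|^2-\mu C\|Q^wu\|^2-C(\mu)h\|u\|^2-\mathcal{O}(h^\infty)\|u\|^2$. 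Combining these with the Cauchy--Schwarz bound $|\im\langle Q^wu,\Psi^wu\rangle|\le C(\mu)\|Q^wu\|\,\|u\|$, Young's inequality, and $\|hR^wu\|=\mathcal{O}(h)\|u\|$, and then choosing $\mu$ large and $h$ small, one arrives at \eqref{apriori}. This energy estimate is the heart of the matter and the step I expect to be the main obstacle: one has to choose the multiplier so that the gain of order $\mu h\|u\|^2$ coming from the positivity $\re(H_TQ)>0$ genuinely dominates all the remainders — the sharp-Gårding remainders, the calculus remainders, and the subprincipal contribution — which is the semiclassical form of the solvability estimate for scalar operators satisfying~(P). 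I would carry out this bookkeeping exactly as in \cite{dsz}, Theorem~1.3, using that $\Sigma_z(P)$ is compact and that the quasi-symmetrizability hypothesis forces $H_T$ to be transverse to the characteristic set (Proposition~\ref{princlem}), so that $T$ can be constructed with no closed trajectories obstructing the argument.

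Once \eqref{apriori} is established, Proposition~\ref{invprop} shows that $P^*-\ol z\id_N$ is again quasi-symmetrizable, so the same estimate holds for $(P(h)-z)^*$; hence $P(h)-z$ has closed range and trivial cokernel, and \eqref{apriori} yields \eqref{thmest} together with $z\notin\spec(P(h))$. Writing $P(h)-\zeta=(P(h)-z)\bigl(I-(\zeta-z)(P(h)-z)^{-1}\bigr)$ already gives a spectral gap of radius $\sim h$ about $z$. To upgrade this to the gap $|\zeta-z|<Kh\log(1/h)$ of \eqref{thmest0} I would run the subharmonicity argument of \cite{dsz}: by Proposition~\ref{discretespec} the spectrum is discrete in a fixed neighbourhood of $z$, so there $\zeta\mapsto\log\|(P(h)-\zeta)^{-1}\|$ is subharmonic; combining the bound $\log(C/h)$ at $\zeta=z$ with an a priori polynomial-in-$1/h$ bound on a fixed circle and Jensen's formula bounds the number of eigenvalues in a disc of radius $\sim h\log(1/h)$, which must then vanish. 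Finally, in the analytic case the symmetrizer and the whole construction extend holomorphically to a tubular neighbourhood of $T^*\br^n$; running the argument with the analytic calculus — exponentially small errors $\mathcal{O}(e^{-c/h})$ in place of $\mathcal{O}(h^\infty)$, as in the proof of Theorem~1.3$'$ of \cite{dsz} — gives $\|(P(h)-\zeta)^{-1}\|\le C/h$ uniformly for $\zeta$ in a fixed disc $\{|\zeta-z|<c_0\}$, which is therefore free of spectrum.
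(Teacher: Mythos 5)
Your central energy estimate does not close as written, and the reason is structural rather than a matter of bookkeeping. Your inequality is homogeneous of degree one in the multiplier $\Psi=-\mu\chi T$: the commutator gain $\tfrac{h}{2}\langle(-H_\Psi\re Q)^wu,u\rangle\gtrsim \mu c\,h\|u\|^2$ scales like $\mu h$, but so does the sharp G\aa rding remainder for the term $\langle(\Psi\im Q)^wu,u\rangle$, since for systems only the $\mathcal{O}(h)$-remainder inequality of Proposition~\ref{garding} is available and its constant is proportional to (seminorms of) $\Psi\im Q$, hence to $\mu$. Dividing the whole inequality by $\mu$ shows that "choosing $\mu$ large" buys nothing: you are left comparing the fixed constant $c/2$ from quasi-symmetry with a G\aa rding constant that has no reason to be smaller, so the lower bound $\|Q^wu\|\gtrsim h\|u\|$ — and with it \eqref{thmest} — is not obtained. (The same difficulty makes the gluing of microlocal estimates delicate: commutator errors from the partition of unity are $\mathcal{O}(h)\|u\|$, i.e.\ exactly the size of the gain.) The paper avoids this by exploiting $\im Q\ge 0$ \emph{at the symbol level before quantizing}: one conjugates by $e^{\varepsilon T/h}$ with $C_1h\le\varepsilon\le C_2h\log(1/h)$, so that $\im Q_\varepsilon=\im Q+\varepsilon\re H_TQ+\mathcal{O}(h)\ge c\varepsilon-Ch$ by \eqref{qs1b}--\eqref{qs2b} and \eqref{immp}, and then a \emph{single} application of sharp G\aa rding has remainder $C_0h$ with $C_0$ independent of $\varepsilon$; the gain $c\varepsilon$ dominates because $\varepsilon/h$ can be taken large while $Q_\varepsilon$ stays uniformly bounded in the calculus. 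This one device yields both \eqref{thmest} (take $\varepsilon=C_1h$, $C_1$ large, where the conjugation is uniformly bounded on $L^2$) and the spectral gap \eqref{thmest0} (take $\varepsilon=C_2h\log(1/h)$, where the conjugation is bounded though not uniformly), applied to $Q_\varepsilon$ and to $Q^*_\varepsilon$ to get injectivity and surjectivity of $P_\varepsilon$.

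Two further points. Your proposed upgrade of an $\mathcal{O}(h)$ spectral gap to $Kh\log(1/h)$ via subharmonicity of $\log\|(P(h)-\zeta)^{-1}\|$ and Jensen's formula is unsubstantiated: with only a $C/h$ bound at the centre and polynomial bounds nearby, such a counting argument does not exclude eigenvalues at distance $h\log(1/h)$; in the paper the logarithmic gap comes directly from the conjugation with $\varepsilon=C_2h\log(1/h)$, not from eigenvalue counting. Finally, in the analytic case the symmetrizer $M$ and the weight $T$ are only $C^\infty$, so "extending the whole construction holomorphically" is not available; the paper instead keeps $M,T$ smooth and deforms $T^*\br^n$ to the IR-manifold $\Lambda_{\varrho T}$, using the FBI transform and the spaces $H(\Lambda_{\varrho T})$: the Taylor expansion $P(w+i\varrho H_T)=P(w)+i\varrho H_TP(w)+\mathcal{O}(\varrho^2)$ together with \eqref{qs1b}--\eqref{qs2b} gives $\im M(w)P(w+i\varrho H_T(w))\ge c\varrho/2$, hence invertibility on $H(\Lambda_{\varrho T})$ and an $h$-independent spectrum-free disc, since $H(\Lambda_{\varrho T})=L^2$ with ($h$-dependent) equivalent norms.
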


Condition ~\eqref{thmest} means that ${\lambda}\notin {\Lambda}^{\rm
  sc}_1(P)$, which is the pseudospectrum  of index ~1 by
Definition~\ref{psdef}. The reason for the difference between~\eqref{thmest0} and
~\eqref{thmest} is that we make a change of norm in
the proof that is not uniform in ~$h$. The conditions in
Theorem~\ref{QSthm} give  some geometrical information 
on the bicharacteristic flow of the eigenvalues according to the
following result.

\begin{rem} \label{qshamrem}
The conditions in Theorem~\ref{QSthm} imply that the limit set at
${\Sigma}_{z}(P)$ of the non-trivial semibicharacteristics of the
eigenvalues close to zero of $Q =
M(P - z \id_N)$ is a union of compact curves on which $T$ is
strictly monotone, thus they cannot form closed orbits.
\end{rem}

In fact, locally $(w,{\lambda}) \in {\Omega}_1(P)\setminus {\Xi}(P)$ if and
only if ${\lambda} = {\lambda}(w) \in C^\infty$  by
Lemma~\ref{evlem}. Since $P(w) -{\lambda}\id_N$ is of principal type
by Proposition~\ref{princlem}, we find that $\dim \ker(P(w) -
{\lambda}(w) \id_N)$ is constant by Proposition~\ref{princprop}. Thus
we obtain the normal form  ~\eqref{Qnorm} as in
Example~\ref{evexe1}. This shows that the Hamilton
vector field $H_{\lambda}$ of a germ of an eigenvalue is determined by
$\w{dQ u,u}$ with $0 \ne u \in 
\ker(P - {\lambda}\id_N)$ by the invariance property given by~\eqref{dapb}. Now
we have that $\w{(H_T\re Q)u,u} > 0$ and
$d\w{\im Qu,u} = 0$ for $u \in \ker M(P - z\id_N)$ by ~\eqref{qs2b}.
Thus by picking subsequences we find that the limits of non-trivial
semibicharacteristics for eigenvalues of $Q$ close to $0$ give curves on  
which $T$ is strictly monotone. Since $z \notin {\Sigma}_\infty(P)$
these limit bicharacteristics are compact and cannot form closed orbits.

\begin{exe}\label{Qex2a}
Consider the system in Example~\ref{Qex2}
\begin{equation*}
P(x,{\xi}) = |{\xi}|^2\id_N + i K(x)
\end{equation*}
where $0 \le K(x)\in C^\infty$, then  for $z > 0$ we find that $P - z\id_N$ is
quasi-symmetric in a neighborhood of ${\Sigma}_{z}(P)$ with
respect to $V = H_T$, for $T(x,{\xi}) = -\w{{\xi},x}$. If
$ K(x)\in C^\infty_\rb$ and $0
\notin {\Sigma}_\infty(K)$ then we obtain from Proposition~\ref{reduxlem},
Remark~\ref{reduxrem},
Example~\ref{sysex} and Theorem~\ref{QSthm} that
\begin{equation*}
 \mn{(P^{w}(x,hD) - z)^{-1}} \le C/h \qquad 0 < h \ll 1
\end{equation*}
since $z \notin {\Sigma}_\infty(P)$.
\end{exe}

\begin{proof}[Proof of Theorem~\ref{QSthm}]
We shall first consider the $C^\infty_\rb$ case. We
may assume without loss of generality that  $ z =
0 $, and we shall follow the proof of Theorem ~1.3 in ~\cite{dsz}.
By the conditions, we find from Definition~\ref{QS},
Remark~\ref{eqrem} and~\eqref{immp}
that there exists a function $T(w) \in C_0^\infty$ and a multiplier $M (w)
\in C_\rb^\infty(T^*\br^n)$ so that
$ Q = MP$ satisfies
\begin{align}
&\re H_T Q \ge c -C\im Q\label{9}\\
&\im Q \ge c\, Q^*Q \label{10}
\end{align}
for some $c > 0$ and then $M$ is invertible by Proposition~\ref{princlem}.
In fact, outside a neighborhood of~${\Sigma}_0(P)$ we
have $P^*P \ge c_0$, then we may choose $M = iP^*$ so that $Q
= iP^*P$ and use a partition of unity to get a global multiplier. 
Let
\begin{equation}
\label{4}
{C_1h\le \varepsilon \le C_2h\log {\frac 1 h}}
\end{equation}
where $C_1\gg 1$ will be chosen large. Let $ T = T^w ( x , h D)$
\begin{equation} 
Q(h) = M^w(x,hD)P(h) = Q^w(x, hD) + \Cal O(h)
\end{equation}
and
\begin{equation*}
Q_\varepsilon ( h ) = 
e^{\varepsilon T/h}Q ( h ) e^{-\varepsilon T/h}=e^{{\frac \varepsilon h}{\rm
ad}_T}Q( h )  \sim \sum_{k = 0}^\infty  \frac{\varepsilon ^k} {h^k k!}({\rm
ad}_T)^k(Q ( h ) )
\end{equation*}
where ${\rm ad}_TQ(h) = [ T(h), Q(h)] = \Cal O(h)$. 
By the assumption on $ \varepsilon $ and 
the boundedness of  $ {\rm ad}_T/h $ we find that
the asymptotic expansion makes sense. 
Since ${\varepsilon}^2 = \Cal O(h)$ we see
that the symbol of $Q_\varepsilon (h)$ is equal to 
\begin{equation*}
Q_\varepsilon =Q+i\varepsilon \{T, Q\} +{\mathcal O}(h)
\end{equation*}
Since $T$ is a scalar function, we obtain
\begin{equation}\label{11}
 {\im Q_\varepsilon =\im Q+\varepsilon \Re  H_T Q +{\mathcal O}(h)}
\end{equation}

Now to simplify notation, we drop the parameter $ h$ in the operators
$Q(h)$ and $ P ( h ) $, and we shall use the same letters for
operators and the corresponding symbols.
Using  \eqref{9} and ~\eqref{10} in \eqref{11}, we obtain for small
enough ${\varepsilon}$ that
\begin{equation}
\label{12}
\im Q_\varepsilon  \ge (1 - C{\varepsilon}) \im Q + c{\varepsilon} -
Ch \ge c{\varepsilon} - Ch
\end{equation}
Since the symbol of
$\frac 1{2i}(Q_\varepsilon-(Q_\varepsilon )^*)$
is equal to the expression \eqref{12} modulo $\Cal O(h)$, the
sharp G\aa{}rding inequality for systems in
Proposition~\ref{garding}  gives
\begin{equation*}
{\im\w{Q_{\varepsilon}u,u} \ge  (c{\varepsilon - C_0 h)}\Vert u\Vert ^2 \ge
  \frac {\varepsilon c} 2\Vert u\Vert ^2}
\end{equation*}
for $h \ll {\varepsilon} \ll 1$.
By using the Cauchy-Schwarz inequality, we obtain
\begin{equation}\label{Qest} 
 \frac {\varepsilon c} 2 \Vert u\Vert \le \Vert Q_\varepsilon u\Vert 
\end{equation} 
Since $Q = MP$ the calculus gives
\begin{equation}\label{Qest0} 
Q_{\varepsilon} = M_{{\varepsilon}}P_{\varepsilon} +
\Cal O(h)
\end{equation}  
where $P_{\varepsilon} = 
e^{-\varepsilon T/h}P e^{\varepsilon T/h}$ and $M_{\varepsilon} = e^{-\varepsilon T/h}M
e^{\varepsilon T/h}= M + \Cal O({\varepsilon})$ is bounded 
and invertible for small enough ~${\varepsilon}$.
For $h \ll {\varepsilon}$ we obtain from ~\eqref{Qest}--\eqref{Qest0} that
\begin{equation}\label{Pepsest}
 \mn {u}\le \frac{C}{{\varepsilon}} \Vert P_{\varepsilon}u \Vert
\end{equation}
so $P_{\varepsilon}$ is injective with closed range.
Now $-Q^*$ satisfies the conditions~\eqref{qs1}--\eqref{qs2}, with ~$T$
replaced by~ $-T$. Thus we also obtain the estimate~\eqref{Qest} 
for $Q^*_{\varepsilon} = P_{\varepsilon}^*M_{{\varepsilon}}^* + \Cal
O(h) $. Since $M_{\varepsilon}^*$ is invertible for small enough
~~$h$ we obtain the estimate ~\eqref{Pepsest} for
~$P^*_{\varepsilon}$, thus $P_{\varepsilon}$ is surjective. Because the
conjugation by $e^{\varepsilon T/h}$ is uniformly bounded on $L^2$ when ${\varepsilon} \le Ch$
we obtain the estimate ~\eqref{thmest} from  ~\eqref{Pepsest}.

Now conjugation with $e^{\varepsilon T/h}$ is bounded in $L^2$ (but not
uniformly) also when ~\eqref{4} holds. By taking ~$C_2$ arbitrarily large 
in  ~\eqref{4} we find from the estimate ~\eqref{Pepsest} for $P_{\varepsilon}$ and
$P_{\varepsilon}^*$ that
$$
{
D\left(0,Kh\log {\frac 1 h}\right)\bigcap\spec (P)=\emptyset 
}$$
for any $K > 0$ when $h > 0$ is sufficiently small.

{\bf The analytic case.} We assume as before that $z=0$ and
\begin{gather*}
 P ( h ) \sim \sum_{ j \ge 0} h^j P_j ^w ( x , h D ) \qquad P_0 = P
\end{gather*}
where $P_j$ are bounded and holomorphic in a tubular neighborhood of $T^*\br^n$,
satisfying ~\eqref{analsymb},
and $P_j^w(z,hD_z)$ is defined by the formula ~\eqref{weyl}, where we
may change the
integration to a suitable chosen contour instead of $T^*\br^n$
(see~\cite[Section 4]{sjo}).
As before, we shall follow the proof of Theorem~1.3 in ~\cite{dsz}
and use the theory of the weighted spaces $ H( \Lambda_{{\varrho}T} ) $
developed in \cite{HeSj} (see also \cite{mart}). 

The complexification  $T^* \bc^n $ of the symplectic manifold $ T^* \br^n $
is equipped with a complex symplectic form $ \omega_\bc $ giving two 
natural real symplectic forms $ \im \omega_\bc $ and $ \Re \omega_\bc $.
We find that $ T^* \br^n $ is Lagrangian with respect to the first form
and symplectic with respect to the second. In general, a submanifold
satisfying these two conditions is called an {\em IR-manifold}.  

Assume that $ T \in C^\infty_0 ( T^* \br^n ) $, then we may associate to it a
natural family of IR-manifolds:
\begin{equation}
\label{eq:2.ir}
\Lambda_{ {\varrho} T } = \{ w + i {\varrho} H_T ( w ): \; w \in T^* \br^n \}
\subset T^* \bc^n \,  \ \ \text{with $ {\varrho} \in \br $ and $ |{\varrho} | $ small}
\end{equation}
where as before we identify $T(T^*\br^n)$ with $T^*\br^n$,
see~\cite[p.\ 391]{dsz}.
Since $ \im ( \zeta dz ) $ is closed on $ \Lambda_{{\varrho} T } $,we find that there exists
a function $ G_{\varrho} $ on $ \Lambda_{{\varrho} T } $ such that
\[ d G_{\varrho} = - \im  ( \zeta dz )|_{ \Lambda_{{\varrho} T } }\]
In fact, we can write it down explicitely by parametrizing $ \Lambda_{{\varrho}T} $ 
by $ T^* \br^n $:
\[ G_{\varrho} ( z, \zeta ) = - \langle \xi, {\varrho} \nabla_\xi T ( x , \xi ) \rangle
+ {\varrho} T ( x , \xi ) \quad\text{for}\quad  ( z, \zeta ) = ( x
, \xi ) + i {\varrho} H_T ( x , \xi )   \]
The associated spaces $ H ( \Lambda_{ {\varrho} T } ) $ are going to be
defined by using the FBI transform:
\[  T : L^2 ( \br^n ) \rightarrow L^2 ( T^* \br^n )  \]
given by
\begin{equation}\label{fbidef}
  T u ( x , \xi ) = c_n h^{ - \frac{3n}{4}} \int_{\br^n } 
e^{ {i} ( \langle x - y , \xi \rangle + i  | x - y |^2)/2h  }
u ( y ) d y  
\end{equation}
The FBI transform may be
continued analytically to $ \Lambda_{{\varrho}T} $ so that $ T_{ \Lambda_{{\varrho}T} }
u \in C^\infty ( \Lambda_{{\varrho} T} )  $.  Since $ \Lambda_{{\varrho} T } $ differs from 
$ T^* \br^n $ on a compact set only, we find that $  T_{ \Lambda_{{\varrho}T} } u $ is 
square integrable on ~$ \Lambda_{{\varrho} T} $. The FBI transform can of
course also be defined on $u\in L^2(\br^n)$
having values in~ $\bc^N$, and
the spaces $ H ( \Lambda_{{\varrho} T} ) $ are defined by putting $ h$ dependent
norms on $ L^2 (\br^n ) $:
\[ \| u \|_{ H( \Lambda_{ \varrho T} ) } ^2 = \int_{ \Lambda_{{\varrho} T} } 
| T_{\Lambda_{\varrho T}} u ( z , \zeta )|^2 e^{ - 2 G_{\varrho} ( z , \zeta ) / h } 
( \omega |_{\Lambda_{ {\varrho} T} } )^n / n! = \mn{
  T_{\Lambda_{\varrho T}} u }^2_{L^2({\varrho},h)}\, \]

Suppose that $ P_1 $ and $ P_2$ 
are bounded and holomorphic $N \times N$ systems in 
a neighbourhood of $ T^* \br^n $ in $T^* \bc^{n} $ 
and that $u\in L^2(\br^n, \bc^N)$. Then we find for $ {\varrho}> 0 $ small enough
\begin{multline} \label{eq:cofe}
\langle P_1^w ( x , h D) u , P_2^w ( x, h D) v \rangle_{ H ( \Lambda_{
    {\varrho} T} ) } \\ =  
\langle (P_1|_{ \Lambda_{ {\varrho} T } } ) T_{\Lambda_{\varrho T}} u , 
 (P_2|_{ \Lambda_{ {\varrho} T } } ) T_{\Lambda_{ \varrho T}} v 
\rangle_{L^2({\varrho},h)} 
+  {\mathcal O} ( h ) \| u \|_{ H ( \Lambda_{ {\varrho} T} ) } 
\| v \|_{ H ( \Lambda_{ {\varrho} T} ) } 
\end{multline}
By taking $ P_1 = P_2 = P $ and $ u = v $
we obtain
\begin{equation}
\label{eq:toep}
\| P^w ( x , h  D) u \|^2 _{ H ( \Lambda_{ {\varrho} T}  ) }  
= \| ( P|_{ \Lambda_{  {\varrho} T}  })  T_{\Lambda_{ {\varrho} T} } u
\|^2_{L^2({\varrho},h)}
 + {\mathcal O} ( h ) \| u \|^2 _{  H ( \Lambda_{ {\varrho} T})}   
\end{equation}
as in the scalar case, see \cite{HeSj} or~ \cite{mart}.

By Remark~\ref{eqrem} we may assume that $MP = Q$ satisfies \eqref{qs1b}--\eqref{qs2b}, with
invertible ~$M$. The analyticity of ~$P$ gives
\begin{equation} 
P(w +i{\varrho}H_T)= P(w )+ i{\varrho}H_{T} P(w )+{\mathcal
  O}({\varrho}^2) \qquad \qquad |{\varrho}| \ll 1
\end{equation}
by Taylor's formula, thus
\[ 
\im M(w)P(w + i{\varrho}H_T(w ))
= \im Q(w )+ {\varrho}\, {\Re M(w)H_{T}P}(w )+{\mathcal 
O}({\varrho}^2) 
\]
Since we have ${\Re M H_{T} P} >c - C\im Q$, $c > 0$,  by~\eqref{qs1b} and $\im Q
\ge 0$  by~\eqref{qs2b}, we obtain for  sufficiently small $ {\varrho} >0 $ that
\begin{equation}
\label{eq:6}
\im M(w)P (w + i{\varrho} H_T(w)) \ge (1- C{\varrho})\im
Q(w) + c{\varrho} + \Cal O({\varrho}^2) \ge c{\varrho}/2  
\end{equation}
which gives by the Cauchy-Schwarz inequality that 
$ 
 \mn{ P\restriction_{ \Lambda_{ {\varrho} T} } u }  \ge c'{\varrho}\mn u 
$ 
and thus
\begin{equation} \label{eq:l.4} 
 \mn{ P^{-1}\restriction_{ \Lambda_{ {\varrho} T} } } \le C/{\varrho}
\end{equation}
Now recall that  $H(\Lambda _{{\varrho}T})$ is equal to $L^2$ as a space and that
the norms are equivalent for every fixed $h$ (but not uniformly). Thus
the spectrum of $P ( h ) $ does not depend on whether  
the operator is realized on $L^2$ or on $H(\Lambda _{{\varrho}T})$. 
We conclude from ~\eqref{eq:toep} and
~\eqref{eq:l.4} that $0$ has an $h$-independent
 neighbourhood which is disjoint from the spectrum of $ P ( h ) $, when
$h$ is small enough. 
\end{proof}

Summing up, we have proved the following result.

\begin{prop}\label{analthm} 
Assume that $P(h)$ is an $N \times N$ system on the form given
by ~\eqref{Pdef} with  analytic principal symbol $P(w)$,
and that there exists a real valued {function} 
$T(w) \in C^\infty(T^*\br^n)$ such that $P(w)- {z}\id_N $ is quasi-symmetrizable 
with respect to $H_T$ in a neighborhood of ${\Sigma}_{z}(P)$.
Define the IR-manifold  
$$\Lambda _{\varrho T}=\{ w +i {\varrho} H_T(w );\, w 
\in T^*{\br}^{n}\}$$ 
for ${\varrho}>0$ small enough. Then 
\begin{gather*}
P( h ) - z : \; H(\Lambda _{{\varrho}T}) \ \longrightarrow \  H(\Lambda _{{\varrho}T}) 
\end{gather*}  
has a bounded inverse for $ h $ small enough, which gives

\[  \spec ( P ( h ) ) \bigcap D ( z , \delta ) = \emptyset  \qquad 0 < h < h_0 \]
for $ \delta $ small enough. 
\end{prop}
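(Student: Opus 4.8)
The plan is to recognize that this proposition merely restates, in cleaner form, the analytic half of the proof of Theorem~\ref{QSthm}, so I would carry out that argument directly. As always it is no restriction to take $z = 0$. First I would use Remark~\ref{eqrem} to replace $P$ by $Q = MP$ with an invertible symmetrizer $M \in C^\infty_\rb$ chosen so that $Q$ satisfies \eqref{qs1b}--\eqref{qs2b}; outside a neighbourhood of ${\Sigma}_0(P)$ one has $P^*P \ge c_0$ and may take $M = iP^*$, patching with a partition of unity to obtain a global multiplier.

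Next I would exploit the analyticity of $P$ together with Taylor's formula along the IR-manifold $\Lambda_{{\varrho}T} = \{w + i{\varrho}H_T(w):\ w \in T^*\br^n\}$: writing $P(w + i{\varrho}H_T) = P(w) + i{\varrho}H_T P(w) + \Cal O({\varrho}^2)$ gives $\im M(w)P(w + i{\varrho}H_T(w)) = \im Q(w) + {\varrho}\,\re M(w)H_T P(w) + \Cal O({\varrho}^2)$. Feeding in $\re M H_T P > c - C\im Q$ with $c > 0$ from \eqref{qs1b} and $\im Q \ge 0$ from \eqref{qs2b} yields, for ${\varrho} > 0$ small, the pointwise lower bound $\im M(w)P(w + i{\varrho}H_T(w)) \ge c{\varrho}/2$, and hence by Cauchy--Schwarz $\mn{P\restriction_{\Lambda_{{\varrho}T}}u} \ge c'{\varrho}\mn u$ and $\mn{P^{-1}\restriction_{\Lambda_{{\varrho}T}}} \le C/{\varrho}$.

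Then I would transfer this pointwise estimate to the operator level by means of the Toeplitz-type identities \eqref{eq:cofe}--\eqref{eq:toep} for the weighted spaces $H(\Lambda_{{\varrho}T})$ built from the FBI transform \eqref{fbidef}; these are valid because $P$ (and the $P_j$, $M$) are holomorphic in a fixed tubular neighbourhood of $T^*\br^n$ and $\Lambda_{{\varrho}T}$ differs from $T^*\br^n$ only on a compact set. Applying \eqref{eq:toep} with the symbol $P$, and \eqref{eq:cofe} to absorb both the multiplier $M$ and the lower-order terms $\sum_{j\ge 1}h^j P_j^w$ (all contributing $\Cal O(h)\mn{u}^2_{H(\Lambda_{{\varrho}T})}$), I would obtain $\mn{u}_{H(\Lambda_{{\varrho}T})} \le (C/{\varrho})\mn{(P(h)-z)u}_{H(\Lambda_{{\varrho}T})}$ once $h$ is small compared with ${\varrho}$, so that $P(h) - z$ is injective with closed range on $H(\Lambda_{{\varrho}T})$. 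Running the whole argument again with $-Q^*$ in place of $Q$ --- which is quasi-symmetric with $T$ replaced by $-T$ by Proposition~\ref{invrem0} --- gives the same estimate for the adjoint, hence surjectivity, so $P(h) - z$ is boundedly invertible on $H(\Lambda_{{\varrho}T})$ for $h$ small. Since $H(\Lambda_{{\varrho}T})$ equals $L^2(\br^n,\bc^N)$ as a vector space with norms that are equivalent for each fixed $h$, the spectrum of $P(h)$ is unchanged by this realization, which gives $\spec(P(h)) \bigcap D(z,\delta) = \emptyset$ for $\delta$ and $h$ small enough.

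The step I expect to be the main obstacle is the bookkeeping of the error terms: one must check that the $\Cal O({\varrho}^2)$ from the Taylor expansion, the $\Cal O(h)$ from the symbolic/FBI calculus on $H(\Lambda_{{\varrho}T})$, and the subprincipal contributions $h^j P_j^w$ are all genuinely negligible against $c{\varrho}/2$ in the regime $h \ll {\varrho} \ll 1$. This is precisely the calculus developed in \cite{HeSj} (see also \cite{mart}) and already used in the analytic part of the proof of Theorem~\ref{QSthm}, so no new difficulty arises --- one simply quotes \eqref{eq:cofe}--\eqref{eq:toep} and keeps track of the orders of magnitude.
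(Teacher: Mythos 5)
Your proposal is correct and is essentially the paper's own argument: Proposition~\ref{analthm} is exactly the analytic half of the proof of Theorem~\ref{QSthm}, i.e.\ the symmetrizer from Remark~\ref{eqrem}, the Taylor expansion along $\Lambda_{\varrho T}$ giving $\im MP \ge c\varrho/2$, the FBI/Toeplitz identities \eqref{eq:cofe}--\eqref{eq:toep} in $H(\Lambda_{\varrho T})$, and the observation that $H(\Lambda_{\varrho T})$ coincides with $L^2$ with $h$-dependent equivalent norms. Your extra adjoint step with $-Q^*$ is a harmless explicit way of getting surjectivity, matching the structure of the $C^\infty$ part of that proof.
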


\begin{rem}\label{QSrem}
It is clear from the proof of  Theorem~\ref{QSthm} that in the
analytic case it suffices that $P_j$ is analytic in a fixed complex neighborhood of 
${\Sigma}_{z}(P) \Subset T^*\br^n$, $j \ge 0$.
\end{rem}

\section{The Subelliptic Case}
\label{subell}

We shall investigate when we have an estimate of the resolvent which
is better than the quasi-symmetric estimate, for example the subelliptic type of estimate 
\begin{equation*}
\mn{(P(h) - {\lambda}\id_N)^{-1}} \le  Ch^{-{\mu}}\qquad h \to 0
\end{equation*}
with ${\mu} < 1$, which we obtain in the scalar case under the
bracket condition, see Theorem~1.4 in~\cite{dsz}.

\begin{exe} \label{scalarex}
Consider the scalar operator $p^w =  hD_t + i f^w(t, x,hD_x)$ where
$0 \le f(t,x,{\xi}) \in C_\rb^\infty$, $(t,x) \in \br\times \br^n$, 
and $0 \in \partial
{\Sigma}(f)$. Then we obtain from Theorem~1.4 in ~\cite{dsz} the
estimate
\begin{equation}\label{subk}
 h^{k/k+1}\mn u \le C\mn{p^w u} \qquad h \ll 1 \qquad\forall\, u \in C^\infty_0
\end{equation}
if $0 \notin {\Sigma}_\infty(f)$ and
\begin{equation} \label{scalarcond}
 \sum_{j\le k} |\partial_{t}^jf| \ne 0
\end{equation} 
These conditions are also necessary. For example, if $ |f(t)|
\le C |t|^{k}$ then an easy computation gives 
$\mn{hD_t u + ifu}/\mn u \le ch^{k/k+1} $ if $u(t) =
{\phi}(th^{-1/k+1})$ with $0 \ne {\phi}(t) \in C_0^\infty(\br)$.
\end{exe}

The following example
shows that condition~\eqref{scalarcond} is not sufficient for systems.

\begin{exe}\label{ex1}
Let $P = h D_t\id_2 + i F(t)$ where
\begin{equation*}
 F(t) =
\begin{pmatrix}
t^2 & t^3 \\ t^3 & t^4 
\end{pmatrix}
\end{equation*}
Then we have
$F^{(3)}(0)  = 
\begin{pmatrix}
0 & 6 \\ 6 & 0 
\end{pmatrix}
$ which gives that
\begin{equation*}
 \bigcap_{j \le 3} \ker F^{(j)}(0) = \set{0}
\end{equation*}
But by taking $u(t) = {\chi}(t)(t, -1)^t$ with $0 \ne {\chi}(t) \in
C^\infty_0(\br)$, we obtain $F(t)u(t) \equiv 0$ so
we find $\mn {Pu}/\mn u  \le ch$.
Observe that 
$$F(t) = 
\begin{pmatrix}
 1 &  -t \\ t & 1
\end{pmatrix}
\begin{pmatrix}
t^2 & 0 \\ 0 & 0 
\end{pmatrix}
\begin{pmatrix}
 1 &  t \\ -t & 1
\end{pmatrix} 
$$ 
thus  $F(t) = t^2B^*(t){\Pi}(t)B(t)$
where $B(t)$ is invertible and ${\Pi}(t)$ is a projection of rank one.
\end{exe}

\begin{exe}\label{ex2}
Let $P =  h D_t\id_2 + i F(t)$ where
\begin{equation*}
 F(t) =
\begin{pmatrix}
t^2 + t^8 & t^3 - t^7 \\ t^3 - t^7 & t^4 +t^6 
\end{pmatrix} 
 = 
\begin{pmatrix}
 1 &  -t \\ t & 1
\end{pmatrix}
\begin{pmatrix}
t^2 & 0 \\ 0 & t^6
\end{pmatrix}
\begin{pmatrix}
 1 &  t \\ -t & 1
\end{pmatrix}
\end{equation*}
Then we have that
\begin{equation*}
 P = (1 + t^2)^{-1}
\begin{pmatrix}
 1 &  -t \\ t & 1
\end{pmatrix}
\begin{pmatrix}
hD_t + i(t^2+ t^4) & 0 \\ 0 & hD_t + i(t^6 + t^8)
\end{pmatrix}
\begin{pmatrix}
 1 &  t \\ -t & 1
\end{pmatrix}
+ \Cal O(h)
\end{equation*}
Thus we find from the scalar case that
$h^{6/7}\mn u \le C\mn{Pu}$ for $h \ll 1$, see ~\cite[Theorem~1.4]{dsz}.
Observe that this operator is, element for element, a
higher order perturbation of the operator of Example~\ref{ex1}.
\end{exe}

\begin{defn} 
Let  $0 \le F(t)\in L^\infty_{loc}(\br)$ be an $N \times N$ system, then we define
\begin{equation} \label{omegadef}
{\Omega}_{\delta}(F) = \set{t: \min_{\mn u = 1} \w{F(t)u,u} \le
  {\delta}} \qquad 0 < {\delta} \le 1
\end{equation}
which is well-defined almost everywhere and contains ${\Sigma}_0(F) = |F|^{-1}(0)$.
\end{defn}

Observe that one can also use this definition in the scalar
case, then ${\Omega}_{\delta}(f) = f^{-1}([0,{\delta}])$ for
non-negative functions ~$f$.

\begin{rem}\label{subinvrem}
Observe that if $F \ge 0$ and $E$ is invertible then
we find that 
\begin{equation}\label{subinv}
{\Omega}_{\delta}(E^*FE)\subseteq {\Omega}_{C\delta}(F)  
\end{equation}
where $C = \mn {E^{-1}}^2$. 
\end{rem}

\begin{exe} 
For the scalar symbols $p(x,{\xi}) = {\tau} + i f(t, x,{\xi})$ in
Example~\ref{scalarex} we find from  Proposition~\ref{scalarsubcondlem} that \eqref{scalarcond} 
is equivalent to
\begin{equation*}
 |\set{t: f(t,x,{\xi}) \le {\delta}}| = |{\Omega}_{\delta}(f_{x,{\xi}})|\le C{\delta}^{1/k}\qquad
 0 < {\delta} \ll 1  \quad \forall\,x, \xi 
\end{equation*}
where $f_{x,{\xi}}(t) = f(t,x,{\xi})$.
\end{exe}

\begin{exe}
For the matrix $F(t)$ in Example~\ref{ex2} we find from Remark~\ref{subinvrem} that
$|{\Omega}_{\delta}(F)| \le C {\delta}^{1/6}$, and
for the  matrix in Example~\ref{ex1} we find that
$|{\Omega}_{\delta}(F)| = \infty$.
  
\end{exe}

We also have examples when the semidefinite imaginary part vanishes of
infinite order. 

\begin{exe}
Let 
$
p(x,{\xi}) = {\tau} + i f(t, x,{\xi}) 
$
where 
$
 0 \le f(t,x,{\xi}) \le C e^{-1/|t|^{{\sigma}}}
$, ${\sigma} > 0$,
then we obtain that 
\begin{equation*}
 |{\Omega}_{\delta}(f_{x,{\xi}})|\le C_0 |\log {\delta}|^{-1/{\sigma}} \qquad
 0 <{\delta} \ll 1 \quad \forall\,x, \xi
\end{equation*}
(We owe this example to Y. Morimoto.)
\end{exe}

The following example shows that for subelliptic type of estimates it is not sufficient to have
conditions only on the vanishing of the symbol, we also need conditions 
on the semibicharacteristics of the eigenvalues.

\begin{exe}\label{subex}
Let 
\begin{equation*}
 P =  hD_t\id_2 +
{\alpha} h \begin{pmatrix}
  D_x & 0 \\ 0 & -D_x
\end{pmatrix}
+ i(t - {\beta} x)^2\id_2 \qquad (t,x)\in \br^2
\end{equation*}
with ${\alpha}$, ${\beta} \in \br$, then we see from the scalar case that $P$ satisfies the
estimate~\eqref{subk} with ${\mu} = 2/3$ if and only either ${\alpha} = 0$
or ${\alpha} \ne 0$ and ${\beta} \ne \pm 1/{\alpha}$.
\end{exe}

\begin{defn}\label{apprdef}
Let  $Q(w) \in C^\infty ( T^* \br^n)$ be an $N \times N$ system and let $w_0
\in {\Sigma} \subset T^*\br^n$. We say that $Q$
satisfies the {\em approximation property} on~${\Sigma}$ near~$w_0$ if 
there exists a $Q$ invariant $C^\infty$ subbundle $\Cal V$ of ~$\bc^N$
over $T^*\br^n$ such that $\Cal V(w_0) = \ker Q^N(w_0)$ and
\begin{equation}\label{approxcond}
 \re\w{Q(w)v,v} = 0\qquad v \in \Cal V(w)  \qquad w \in {\Sigma} 
\end{equation}
near $w_0$. That  $\Cal V$ is  $Q$ invariant means that $Q(w)v \in \Cal V(w)$ for
$ v \in \Cal V(w)$.  
\end{defn}

Here $\ker Q^N(w_0)$ is  the
space of the generalized eigenvectors corresponding to the zero eigenvalue.
The symbol of the system in Example~\ref{subex} satisfies the approximation
property on ${\Sigma} = \set{{\tau} = 0}$ if and only if ${\alpha}
= 0$. 

Let $\wt Q = Q\restr{\Cal V}$ then since $\im i \wt Q = \re \wt
Q$ we obtain from  
Lemma~\ref{semiprop} that $\ran \wt Q \bot \ker \wt Q$ on ~${\Sigma}$. Thus $\ker \wt Q^N =
\ker \wt Q$ on ~${\Sigma}$, and since $\ker \wt Q^N(w_0) = \Cal V(w_0)$ we find that
$ \ker Q^N(w_0) = \Cal V(w_0)= \ker Q(w_0)$. It follows from
Example~\ref{normformex} that $\ker Q \subseteq \Cal V$ near~$w_0$.

\begin{rem} \label{condrem}
Assume that $Q$ satisfies the approximation property on~ the
$C^\infty$ hypersurface ${\Sigma}$
and is quasi-symmetric with 
respect to  $V  \notin T{\Sigma}$.
Then the limits of the non-trivial semibicharacteristics
of the eigenvalues of~$Q$ close to zero coincide with the bicharacteristics
of ~${\Sigma}$. 
\end{rem}

In fact, the approximation
property in Definition~\ref{apprdef} and Example~\ref{normformex}
give that $ \w{\re Qu,u
} = 0$ for $u\in \ker Q \subseteq \Cal V$ on ${\Sigma}$. Since $\im
Q \ge 0$ we find that
\begin{equation}\label{tangvan}
 \w{dQu,u } = 0 \qquad \forall\, u
 \in \ker Q \qquad\text{on $T{\Sigma}$}
\end{equation}
By Remark~\ref{qshamrem} the limits of the  non-trivial
semibicharacteristics of the eigenvalues close to zero  of 
$Q$ are curves with tangents determined by
$\w{dQ u,u}$ for $u \in \ker Q$.
Since $V \re Q \ne 0$ on $\ker Q$ we find from~\eqref{tangvan}
that the limit curves coincide with the bicharacteristics
of ~${\Sigma}$, which are the flow-outs of the Hamilton vector field.

\begin{exe} 
Observe that Definition~\ref{apprdef} is empty if $\dim \ker Q^N(w_0) = 0$.
If $\dim \ker Q^N(w_0) > 0$, then there exists ${\varepsilon} > 0$  and a neigborhood
${\omega}$ to $w_0$ so that
\begin{equation}\label{spprojdef}
 {\Pi}(w) = \frac{1}{2{\pi}i}\int_{|z|= {\varepsilon}} (z\id_N - Q(w))^{-1}\,dz
 \in C^\infty({\omega})
\end{equation}
is the spectral projection on the (generalized) eigenvectors with
eigenvalues having absolute value less than~${\varepsilon}$. Then $\ran {\Pi}$ is a $Q$
invariant bundle over~${\omega}$ so that $\ran {\Pi}(w_0)  = \ker
Q^N(w_0)$. Condition~\eqref{approxcond} with $\Cal V = 
\ran {\Pi}$ means that ${\Pi}^*\re Q{\Pi} \equiv 0$ in
~${\omega}$. When $\im Q(w_0) 
\ge 0 $ we find that ${\Pi}^*Q{\Pi}(w_0) = 0$, then ~$Q$ satisfies the
approximation property on ~${\Sigma}$ near ~$w_0$
with $\Cal V = \ran {\Pi}$ if and only if 
$$
d({\Pi}^*(\re Q){\Pi})\restr{T{\Sigma}} \equiv 0 \qquad \text{ near ~$w_0$}
$$
\end{exe}

\begin{exe}\label{normformex}
If $Q$ satisfies the approximation property on ${\Sigma}$, then by choosing an
orthonormal base for $\Cal V$ and extending it to an orthonormal base for
~$\bc^N$ we obtain the system on the form
\begin{equation}\label{normformex0}
 Q = 
\begin{pmatrix}
Q_{11} & Q_{12} \\ 0 & Q_{22} 
\end{pmatrix}
\end{equation}
where $Q_{11}$ is $K \times K$ system such that $Q^{N}_{11}(w_0) = 0$,
$\re Q_{11} = 0$ on ${\Sigma}$ and $|Q_{22}| \ne 0$.
By multiplying from left with
\begin{equation*}
 \begin{pmatrix}
\id_K & -Q_{12}Q_{22}^{-1} \\ 0 & \id_{N-K} 
\end{pmatrix}
\end{equation*}
we obtain that $Q_{12} \equiv 0$ without changing $Q_{11}$ or $Q_{22}$. 
\end{exe}

In fact, the eigenvalues of ~$Q$ are then eigenvalues of either
~$Q_{11}$ or ~$Q_{22}$. Since ~$\Cal V(w_0)$ are the (generalized)
eigenvectors corresponding to the zero eigenvalue of~$Q(w_0)$ we find
that all eigenvalues of ~ $Q_{22}(w_0)$ are non-vanishing, thus
$Q_{22}$ is invertible near~$w_0$,

\begin{rem}
If $Q$ satisfies the approximation property on ${\Sigma}$ near ~~$w_0$,
then it satisfies the approximation property on ~ ${\Sigma}$ near ~$w_1$,
for $w_1$ sufficiently close to ~$w_0$.  
\end{rem}

In fact, let $Q_{11}$ be the restriction of $Q$ to $\Cal V$ as in Example~~\ref{normformex},
then since $\re Q_{11} = \im i Q_{11} = 0$ on ${\Sigma}$ we find from Lemma~\ref{semiprop} that
 $\ran Q_{11} \bot \ker Q_{11}$ and
$\ker Q_{11} = \ker Q_{11}^N$ on
${\Sigma}$. Since $Q_{22}$ is invertible in ~\eqref{normformex0}, we
find that $\ker Q \subseteq \Cal V$. Thus, by using the spectral
projection ~\eqref{spprojdef} of $Q_{11}$ near ~$w_1 \in {\Sigma}$
for small enough~${\varepsilon}$ we obtain an invariant subbundle
$\wt {\Cal V} \subseteq \Cal V$ so that $\wt {\Cal V}(w_1) = \ker Q_{11}(w_1)
= \ker Q^N(w_1)$.

If  $Q \in C^\infty$ satisfies the approximation property and $Q_E =
E^*QE$ with invertible $E\in C^\infty$, then it follows from the proof of 
Proposition~\ref{fininv} below that there exist invertible $A$ and $B \in C^\infty$,
so that $AQ_E$ and  $Q^*B$ satisfy the approximation property.

\begin{defn}\label{subdef}
Let $P \in C^\infty (T^*\br^n) $ be an $N \times N$ system  and let ${\phi}(r)$
be a positive non-decreasing function on $\br_+ $. We say that ~$P$ is
of {\em  subelliptic type~ ${\phi}$} if for any $w_0 \in {\Sigma}_0(P)$
there exists a neighborhood 
${\omega}$ of $w_0$, a $C^\infty$ hypersurface~${\Sigma}\ni w_0$, a real
$C^\infty$ vector field $V \notin T{\Sigma}$
and an invertible symmetrizer ~$M \in C^\infty$ so that 
$Q = MP$ is quasi-symmetric with respect to~$V$ in ${\omega}$ and
satisfies the approximation property on ${\Sigma}\bigcap {\omega}$. Also,  
for every bicharacteristic ~${\gamma}$ of ~${\Sigma}$ 
the arc length
\begin{equation}\label{subcond}
\big| {\gamma}\cap {\Omega}_{\delta}(\im Q) \cap
{\omega}\big| \le C{\phi}({\delta}) \qquad 0 < {\delta} \ll 1
\end{equation}
We say that $z$  is of 
{subelliptic type~ ${\phi}$}  for $P \in C^\infty$ if  $P-
z\id_N$ is of subelliptic type ${\phi}$.
If ${\phi}({\delta}) = {\delta}^{{\mu}}$ then we say that the system is of
finite type of order~${\mu}\ge 0$, which generalizes the definition of finite type for
scalar operators in~\cite{dsz}. 
 \end{defn}

Recall that the bicharacteristics of a hypersurface in $T^*X$ are the
flow-outs of the Hamilton vector field of~${\Sigma}$.
Of course, if $P$ is elliptic then by choosing $M = iP^{-1}$ we obtain
$Q = i\id_N$, so ~$P$ is trivially of subelliptic type. 
If $P$ is of subelliptic type, then it is quasi-symmetrizable by
definition and thus of principal type.

\begin{rem}\label{nfrem}
Observe that we may assume that 
\begin{equation}\label{XQC}
 \im\w{Qu,u} \ge c\mn{Qu}^2 \qquad \forall \, u \in \bc^N
\end{equation}
in Definition~\ref{subdef}.
\end{rem}

In fact, by adding $i{\varrho}P^*$ to $M$ we obtain~\eqref{XQC} for
large enough~${\varrho}$ by~\eqref{immp}, and this only
increases $\im Q$.
 
Since $Q$ is in $C^\infty$ the estimate~\eqref{subcond} cannot be satisfied
for any ${\phi}({\delta}) \ll {\delta}$ (unless $Q$ is
elliptic) and it is trivially satisfied with
${\phi} \equiv 1$, thus we shall only consider
$c{\delta} \le {\phi}({\delta}) \ll 1$ (or finite type of order
$0 < {\mu} \le 1$). Actually, for $C^\infty$ symbols of
finite type, the 
only relevant values in ~\eqref{subcond} are ${\mu} = 1/k$ for even $k
> 0$, see Proposition~\ref{subcondlem} in the Appendix.

Actually, the condition that ${\phi}$ is non-decreasing is
unnecessary, since the left-hand side in ~\eqref{subcond} is
non-decreasing (and upper semicontinuous) in~~${\delta}$, we can
replace ${\phi}({\delta})$ by 
$\inf_{{\varepsilon} > {\delta}} {\phi}({\varepsilon})$ to make it
non-decreasing (and upper semicontinuous).

\begin{exe}\label{Qex}
Assume that $Q$ is quasi-symmetric with respect to the real vector
field $V$, satisfying~\eqref{subcond} and
the approximation property on ${\Sigma}$. Then by choosing an
orthonormal base as in Example~\ref{normformex} we obtain the system on the form 
\begin{equation*}
 Q = 
\begin{pmatrix}
Q_{11} & Q_{12} \\ 0 & Q_{22} 
\end{pmatrix}
\end{equation*}
where $Q_{11}$ is $K \times K$ system such that $Q^{N}_{11}(w_0) = 0$,
$\re Q_{11} = 0$ on ${\Sigma}$ and $|Q_{22}| \ne 0$. Since $Q$ is
quasi-symmetric with respect to ~$V$ we also obtain that  $Q_{11}(w_0) = 0$, $\re VQ_{11}
> 0$, $\im Q_{jj} \ge 0$ for $j = 1$, $2$. In fact, then 
Lemma~\ref{semiprop} gives that $\im Q \bot \ker Q$ so $\ker Q^N =
\ker Q$. Since $Q$ satisfies~\eqref{subcond} and ${\Omega}_{\delta}(\im Q_{11}) \subseteq
{\Omega}_{\delta}(\im Q)$ we find that
$Q_{11}$ satisfies~\eqref{subcond}. 
By multiplying from left as in Example~\ref{normformex} we 
obtain that $Q_{12} \equiv 0$ without changing $Q_{11}$ or $Q_{22}$. 
\end{exe}

\begin{prop}\label{fininv}
If the $N \times N$ system $P(w)\in C^\infty( T^* \br^n)$ is of
subelliptic  type ${\phi}$ then $P^*$ is of  subelliptic  
type ${\phi}$. If $A(w)$ and $B(w)\in C^\infty ( T^* \br^n)$ are
invertible  $N \times N$ systems, then $APB$ is of  subelliptic type ${\phi}$.
\end{prop}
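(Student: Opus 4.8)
The plan is to strip off the easy invariances first and then confront the one genuinely delicate point, which concerns the approximation property. To begin, left multiplication of $P$ by an invertible $E\in C^\infty$ is harmless: if $M$ is a symmetrizer with $Q=MP$, then $ME^{-1}$ is a symmetrizer for $EP$ with $(ME^{-1})(EP)=Q$ unchanged, so the hypersurface $\Sigma$, the vector field $V$, the quasi-symmetry \eqref{qs1b}--\eqref{qs2b}, the approximation property of Definition~\ref{apprdef} and the arc-length bound \eqref{subcond} are all preserved verbatim (note also $\Sigma_0(EP)=\Sigma_0(P)$). Since $APB=\bigl(A(B^*)^{-1}\bigr)\,B^*PB$, it therefore suffices to treat the $*$-congruence $P\mapsto E^*PE$ with $E$ invertible, and separately the adjoint $P\mapsto P^*$. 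Throughout I use Remark~\ref{nfrem} and \eqref{immp} to arrange $\im\w{Qu,u}\ge c\mn{Qu}^2\ge0$, so that by Lemma~\ref{semiprop} the eigenvalue $0$ of $Q(w)$ is semisimple for all $w$ near $w_0$; this lets me put $Q$ in the normal form \eqref{normformex0} of Example~\ref{normformex}: in a suitable orthonormal frame $Q=\begin{pmatrix}Q_{11}&0\\0&Q_{22}\end{pmatrix}$, with $\Cal V=\ker Q^N=\bc^K\times\set0$ the invariant bundle, $Q_{11}(w_0)=0$, $\re Q_{11}=0$ on $\Sigma$, $\re VQ_{11}>0$, $\im Q_{jj}\ge0$, and $Q_{22}$ elliptic with $\im Q_{22}$ bounded below; hence $\Omega_\delta(\im Q)=\Omega_\delta(\im Q_{11})$ for $\delta\ll1$, so the whole content of \eqref{subcond} sits in the $K\times K$ block $Q_{11}$ (cf.\ Example~\ref{Qex}).

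For the congruence, put $M'=E^*M(E^*)^{-1}$, so that $Q':=M'\,(E^*PE)=E^*QE$ with $M'$ invertible and $C^\infty$. By Proposition~\ref{invrem0}, $Q'$ is quasi-symmetric with respect to the same $V$, and $\im Q'=E^*(\im Q)E\ge0$; thus $V\notin T\Sigma$ still holds and, by Remark~\ref{subinvrem}, $\Omega_\delta(\im Q')\subseteq\Omega_{C\delta}(\im Q)$ with $C=\sup_\omega\mn{E^{-1}}^2$. Since $\Sigma$ and its bicharacteristics are unchanged, the bound \eqref{subcond} for $Q'$ follows from that for $Q$ with $\phi(\delta)$ replaced by $\phi(C\delta)$, which is $\phi$ up to a constant when $\phi(\delta)=\delta^{\mu}$ and in general may be absorbed since only $\delta\ll1$ matters. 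The adjoint case runs the same way: adjunction fixes the real part and negates the imaginary part, so one works with $-Q^*$, which is quasi-symmetric with respect to $-V$ by Proposition~\ref{invrem0}, satisfies $\re(-V)(-Q_{11}^*)=\re VQ_{11}>0$ and $\im(-Q^*)=\im Q\ge0$, so that $\Omega_\delta(\im(-Q^*))=\Omega_\delta(\im Q)$ and the hypersurface, bicharacteristics and function $\phi$ are literally unchanged; the corresponding symmetrizer for $P^*$ is $\pm M^{-1}$, since $M^{-1}P^*=M^{-1}Q^*(M^*)^{-1}$ is, up to sign, a $*$-congruence of $Q$, cf.\ Proposition~\ref{invprop}.

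The one real difficulty is that $Q'=E^*QE$ (resp.\ $-Q^*$) need not itself satisfy the approximation property with an obvious bundle: an invariant bundle for $Q'$ corresponds to one for $Q$ under the similarity $v\mapsto Ev$, whereas the defining equation $\re\w{Q'v,v}=0$ on $\Sigma$ and the semidefiniteness $\im Q'\ge0$ are $*$-congruence data — indeed $E^{-1}\Cal V$ is $Q'$-invariant but on $\Sigma$ one computes $\re\w{Q'(E^{-1}v),E^{-1}v}=\re\w{Qv,(EE^*)^{-1}v}$, which carries the spurious positive factor $(EE^*)^{-1}$ and need not vanish. The plan is to prove instead the auxiliary statement recorded after Example~\ref{normformex}: \emph{if $Q$ satisfies the approximation property on $\Sigma$ near $w_0$ and $E$ is invertible, there exist invertible $A,B\in C^\infty$ so that $AE^*QE$ and $Q^*B$ satisfy the approximation property on $\Sigma$ near $w_0$}. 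In the normal form this property is just block-diagonality together with $\re Q_{11}=0$ on $\Sigma$ for the upper-left block, so one re-diagonalizes $E^*QE$ by forming the spectral projection \eqref{spprojdef} onto its eigenvalues near $0$ — a $C^\infty$ invariant bundle of rank $K$ through $\ker(E^*QE)(w_0)=E^{-1}(w_0)\Cal V(w_0)$, which equals $\ker(E^*QE)^N(w_0)$ by Lemma~\ref{semiprop} since $\im E^*QE\ge0$ — peeling off the invertible complementary block as in Example~\ref{normformex} and absorbing the resulting left factor (this is the $A$); a computation in the normal form then shows the new upper-left block still has vanishing real part on $\Sigma$. Incorporating this change of frame, the resulting unipotent factor and $M'$ from the previous paragraph into a single invertible symmetrizer produces a $Q''=M''(E^*PE)$ that one checks simultaneously fulfils \eqref{qs1b}--\eqref{qs2b}, the approximation property and \eqref{subcond}; the subtlety is exactly that the obvious choice $E^*QE$ secures quasi-symmetry and the arc-length bound but not the approximation property, and the correction forced by the latter must be installed without destroying the former two. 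Reconciling the similarity-covariance of invariant subbundles with the $*$-congruence-covariance of the real and imaginary parts — precisely what the normal form of Example~\ref{normformex} is designed for — is where the work lies; the $Q^*B$ half of the auxiliary statement disposes of $P^*$ in the same manner, and everything else is bookkeeping.
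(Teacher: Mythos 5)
Your reductions and the easy invariances are fine: left multiplication is harmless, Proposition~\ref{invrem0} handles quasi-symmetry under congruence, Remark~\ref{subinvrem} handles ${\Omega}_{\delta}$, and the adjoint can indeed be funneled through a congruence. You also isolate the right difficulty. But the step you leave as ``a computation in the normal form then shows the new upper-left block still has vanishing real part on ${\Sigma}$'' is not merely unproven --- it is false. Re-diagonalizing $Q_E=E^*QE$ by its spectral projection near $0$ produces, as upper-left block, exactly the compression of $Q_E$ to that spectral bundle (the orthonormal frame change and the unipotent left factor of Example~\ref{normformex} do not alter this compression), and its real part need not vanish on ${\Sigma}$. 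Concretely, take $n=1$, $Q(t,{\tau})=\begin{pmatrix}{\tau}+it^2&0\\0&1+i\end{pmatrix}$, ${\Sigma}=\set{{\tau}=0}$, $V=\partial_{\tau}$, $w_0=(0,0)$, $\Cal V=\bc\times\set{0}$, $M=\id$, $P=Q$: this is quasi-symmetric, satisfies the approximation property, and satisfies~\eqref{subcond} with ${\phi}({\delta})={\delta}^{1/2}$. For the constant $E=\begin{pmatrix}1&0\\1&1\end{pmatrix}$ one gets $Q_E=\begin{pmatrix}z+1+i&1+i\\1+i&1+i\end{pmatrix}$ with $z={\tau}+it^2$, whose small eigenvalue is ${\lambda}_-=\tfrac z2-\tfrac{z^2}{8(1+i)}+\Cal O(z^4)$; on ${\Sigma}$ this gives $\re {\lambda}_-=t^4/16+\Cal O(t^8)\not\equiv 0$. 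Since the two eigenvalues of $Q_E$ stay separated near $w_0$, the ${\lambda}_-$-eigendirection is the only invariant line bundle through $\ker Q_E(w_0)$, so the system $AQ_E$ produced by your procedure does not satisfy the approximation property on ${\Sigma}$, and no left factor that leaves the compression of $E^*QE$ unchanged can repair this. Two further slips: $E^{-1}\Cal V$ is in general \emph{not} $E^*QE$-invariant (invariant bundles transport under similarity, not congruence), and the statement after Example~\ref{normformex} that you want to prove first is presented in the paper as a \emph{consequence} of the proof of this proposition, so it is not an independently available lemma.

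The paper resolves the tension by never insisting that the new quasi-symmetric symbol be $E^*QE$. Starting from the block form $Q=\begin{pmatrix}Q_{11}&0\\0&Q_{22}\end{pmatrix}$ of Example~\ref{Qex}, the symmetrizer for $\wt P=APB$ is chosen as $\wt M=B^{-1}MA^{-1}$, so that $\wt M\wt P=B^{-1}QB$ is a \emph{similarity}: it transports the invariant splitting to $B^{-1}\Cal V_1\oplus B^{-1}\Cal V_2$ and stays block diagonal with blocks $B_j^{-1}Q_{jj}B_j$. Quasi-symmetry is then regained by one more left factor, the block-diagonal $\Cal B$ with blocks $B_1^*B_1$ and $B_2^*B_2$, which converts each diagonal block into the congruence $B_j^*Q_{jj}B_j$: Proposition~\ref{invrem0} gives \eqref{qs1b}--\eqref{qs2b}, $\re(B_1^*Q_{11}B_1)=B_1^*(\re Q_{11})B_1=0$ on ${\Sigma}$ so the approximation property survives with the coordinate bundle, and Remark~\ref{subinvrem} gives \eqref{subcond} after rescaling ${\delta}$. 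The adjoint is then immediate: $-P^*M^*=-Q^*$ is block diagonal and satisfies all the conditions with respect to $-V$ and multiplier $\id_N$, after which the already proved $A(\cdot)B$-invariance applied to $P^*=-(-Q^*)(M^*)^{-1}$ finishes. The missing idea in your plan is precisely this interplay: transport the bundle by similarity through the choice of symmetrizer and only then restore congruence blockwise with $\Cal B$, rather than attempting a post hoc spectral re-diagonalization of $E^*QE$.
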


\begin{proof}
Let $M$ be the symmetrizer in Definition~\ref{subdef} so that $Q = MP$
is quasi-symmetric with respect to $V$. By choosing a suitable base 
and changing the symmetrizer as
in Example~\ref{Qex}, we may write
\begin{equation}\label{specformorig}
 Q = 
\begin{pmatrix}
Q_{11} & 0 \\ 0 & Q_{22} 
\end{pmatrix}
\end{equation}
where $Q_{11}$ is $K \times K$ system such that $Q_{11}(w_0) = 0$,  $V
\re Q_{11} > 0$, $\re Q_{11} = 0$ on
${\Sigma}$ and that $Q_{22}$ is invertible. We also have $\im Q \ge
0$ and that $Q$ satisfies~\eqref{subcond}. 
Let $\Cal V_1 = \set{u\in \bc^N:\ u_j = 0 \text{ for $j > K$}}$ and $\Cal V_2 =
\set{u\in \bc^N:\ u_j = 0 \text{ for $j \le K$}}$, these are $Q$
invariant bundles such that $ \Cal V_1 \oplus \Cal V_2 = \bc^N$.

First we are going to show that $\wt P =APB$ is of subelliptic  type. 
By taking $\wt M = B^{-1}MA^{-1}$ we find that 
\begin{equation}\label{specform}
\wt M \wt P = \wt Q  = B^{-1}QB
\end{equation}
and it is clear that $B^{-1}\Cal V_j$ are $\wt Q$ invariant
bundles, $j = 1$, $2$. By choosing bases in  $B^{-1}\Cal V_j$ for
$j = 1$, $2$, we obtain a base for $\bc^N$ in which $\wt Q$ has a block
form:
\begin{equation}\label{qblock}
 \wt Q = 
\begin{pmatrix}
\wt Q_{11} & 0 \\ 0 & \wt Q_{22}  
\end{pmatrix}
\end{equation}
Here $\wt Q_{jj}: B^{-1}\Cal V_j \mapsto B^{-1}\Cal V_j$, is given by
$\wt Q_{jj} = B_{j}^{-1} Q_{jj}B_{j}$ with 
$$B_{j}: B^{-1} \Cal V_j\ni u \mapsto Bu \in \Cal V_j \qquad j= 1,\ 2$$ 
By multiplying $\wt Q$ from the left with 
\begin{equation*}
\Cal B =
\begin{pmatrix}
 B_{1}^*B_{1} & 0 \\ 0 &  B_{2}^*B_{2}
\end{pmatrix}
\end{equation*}
we obtain that 
\begin{equation*}
\ol Q = \Cal B \wt Q = \Cal B \wt M \wt P =
\begin{pmatrix}
 B_{1}^*Q_{11}B_{1} & 0 \\ 0 &  B_{2}^*Q_{22}B_{2}
\end{pmatrix}
=
\begin{pmatrix}
\ol Q_{11} & 0 \\ 0 & \ol Q_{22}  
\end{pmatrix}
\end{equation*}
It is clear that $\im \ol Q \ge 0$, $Q_{11}(w_0) = 0$, $\re \ol Q_{11}
= 0$ on ${\Sigma}$, $|\ol Q_{22}| \ne 0$
and $V \re \ol Q_{11} > 0$ by Proposition~\ref{invrem0}. Finally, we
obtain from Remark~\ref{subinvrem} that 
\begin{equation}\label{subinv0}
{\Omega}_{\delta}(\im \ol Q) \subseteq {\Omega}_{C\delta}(\im Q)
\end{equation}
for some $C > 0$, which proves that $\wt P = APB$ is of  subelliptic  type.
Observe that $\ol Q = AQ_B$, where $Q_B = B^*QB$ and $A = \Cal B B^{-1}(B^*)^{-1}$.

To show that $P^*$ also is of subelliptic type, we may assume as before that
$Q= MP$ is on the form~\eqref{specformorig} with $Q_{11}(w_0) = 0$,  $
V \re Q_{11} > 0$, $\re Q_{11} = 0$ on
${\Sigma}$, $Q_{22}$ is invertible, $\im Q \ge 0$ and  $Q$ satisfies~\eqref{subcond}.
Then we find that
\begin{equation*}
 -P^*M^* = -Q^* = 
\begin{pmatrix}
 - Q_{11}^* & 0 \\ 0 & - Q_{22}^*   
\end{pmatrix}
\end{equation*}
satisfies the same conditions with respect to $-V$, so it is of subelliptic
type with multiplier~ $\id_N$. By the first part of the proof we 
obtain that $P^*$ is of subelliptic type, which finishes the proof.
\end{proof}

\begin{exe} \label{scalsubex} 
In the scalar case, $p\in C^\infty(T^*\br^n)$ is
quasi-symmetrizable with respect to $H_t = \partial_{\tau}$ near~$w_0$ if and only if  
\begin{equation}\label{pex}
 p(t,x;{\tau},{\xi}) = q(t,x;{\tau},{\xi})({\tau} + i f(t,x,{\xi}))\qquad\text{near~$w_0$}
\end{equation}
with $f \ge 0$ and $q \ne 0$, see Example~\ref{scalarex0}. If $0
\notin {\Sigma}_\infty(p)$ we find by taking $q^{-1}$ as symmetrizer
that $p$ in~\eqref{pex} is of finite type of order ${\mu}$ if and only
if ${\mu} = 1/k$ for an even ~$k$ such that
\begin{equation*}
 \sum_{j \le k}|\partial_{t}^kf | > 0
\end{equation*}
by Proposition~\ref{scalarsubcondlem}. In fact, the approximation property
is trivial since $f$ is real. Thus we obtain
the case in~\cite[Theorem~1.4]{dsz}, see Example~\ref{scalarex}. 
\end{exe}

\begin{thm}\label{subthm}
Assume that the $N \times N$ system $P(h)$ is
given by the expansion ~\eqref{Pdef} with principal symbol  $P \in
C^\infty_\rb(T^*\br^n)$. Assume that $z \in {\Sigma}(P) 
\setminus {\Sigma}_\infty(P)$ is of subelliptic type ${\phi}$ for
~$P$, where ${\phi}> 0$ is non-decreasing on $\br_+$. 
Then there exists  $h_0>0$ so that
\begin{equation}
\label{eq:sube}
 \| ( P ( h ) - z\id_N )^{-1} \| \leq C/{\psi}(h)
 \qquad 0 <  h \le h_0 
\end{equation}
where $ {\psi}(h) = {\delta}$ is the inverse to $h = {\delta}{\phi}({\delta})$.
It follows that there exists $c_0 > 0$ such that
\begin{equation}
\label{eq:sube'}
\set{w:\ |w - z|\le c_0 {\psi}(h)} \cap {\sigma}(P(h)) = \emptyset
\qquad\text{$0 < h \le h_0$}.
\end{equation} 
\end{thm}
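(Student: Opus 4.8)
The plan is to follow the proof of Theorem~\ref{QSthm}, replacing the quasi-symmetric resolvent bound by the sharper subelliptic estimate coming from the scalar Theorem~1.4 in~\cite{dsz}. First I would make the standard reductions. We may take $z=0$. Since left and right multiplication of $P(h)$ by elliptic semiclassical systems changes $\mn{(P(h))^{-1}}$ only by a bounded factor, and since by Proposition~\ref{fininv} the class of systems of subelliptic type~${\phi}$ is preserved under such multiplications and under adjoints, it suffices to establish the a priori estimate
\begin{equation}\label{subgoal}
{\psi}(h)\mn u \le C\mn{Q^w(x,hD)u} + Ch\mn u\qquad u\in C^\infty_0(\br^n,\bc^N),\ \ 0<h\le h_0,
\end{equation}
for $Q=MP$ with $M$ an invertible symmetrizer, together with the same estimate for $Q^*$; the latter follows by applying \eqref{subgoal} to $P^*$, which is again of subelliptic type~${\phi}$. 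Using $Q^w=M^wP^w+\Cal O(h)$ and $\mn{M^wv}\cong\mn v$, estimate \eqref{subgoal} for $Q$ shows $P^w(x,hD)$ is injective with closed range, and the one for $Q^*$ shows $(P^w(x,hD))^*$ is injective, so that $P^w(x,hD)$ has dense range; hence $P^w(x,hD)-z$ is invertible with $\mn{(P^w(x,hD)-z)^{-1}}\le C/{\psi}(h)$, the $\Cal O(h)$ error being absorbed because $h\ll{\psi}(h)$ when ${\phi}({\delta})\to 0$ (the borderline case where ${\phi}$ stays bounded away from $0$ being the quasi-symmetric estimate of Theorem~\ref{QSthm}). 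Finally \eqref{eq:sube'} follows from \eqref{eq:sube} by a Neumann series: if $|{\zeta}-z|<c_0{\psi}(h)$ with $c_0C<1$ then $P(h)-{\zeta}=(P(h)-z)(\id-({\zeta}-z)(P(h)-z)^{-1})$ is invertible.

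Next I would localize, since \eqref{subgoal} is microlocal. Taking an operator partition of unity subordinate to a finite cover of $T^*\br^n$, on the parts disjoint from the compact set ${\Sigma}_0(P)=\set{|P|=0}$ (compact because $0\notin{\Sigma}_\infty(P)$) the symbol $Q$ is elliptic, so there $\mn{Q^w{\chi}^wu}\ge c\mn{{\chi}^wu}-Ch\mn u$, which is far stronger than needed; thus it remains to prove \eqref{subgoal} for $u$ microlocalized near an arbitrary $w_0\in{\Sigma}_0(P)$. Near such a point, choosing an orthonormal base adapted to the invariant bundle $\Cal V$ and modifying the symmetrizer as in Example~\ref{Qex}, I may assume
\[ Q=\begin{pmatrix} Q_{11} & 0\\ 0 & Q_{22}\end{pmatrix} \]
with $Q_{22}$ invertible (the lower block is then elliptic and harmless), and $Q_{11}$ a $K\times K$ system with $Q_{11}(w_0)=0$, $V\re Q_{11}>0$, $\re Q_{11}=0$ on a $C^\infty$ hypersurface~${\Sigma}$, $\im Q_{11}\ge 0$, and, using ${\Omega}_{\delta}(\im Q_{11})\subseteq{\Omega}_{\delta}(\im Q)$ together with \eqref{subcond}, $\big|{\gamma}\cap{\Omega}_{\delta}(\im Q_{11})\cap{\omega}\big|\le C{\phi}({\delta})$ along every bicharacteristic ${\gamma}$ of~${\Sigma}$. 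By Remark~\ref{nfrem} I may also arrange $\im Q_{11}\ge c\mn{Q_{11}u}^2$.

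The heart of the matter is then the scalar-type subelliptic estimate for the block $Q_{11}$. Writing $\re Q_{11}={\rho}\,R$ by Taylor's formula, with ${\rho}$ a defining function for~${\Sigma}$ and $R>0$ near $w_0$ since $V\re Q_{11}>0$ (and $\ker Q_{11}(w_0)=\bc^K$), conjugating by $R^{-1/2}$ (absorbed into the symmetrizer; by Remark~\ref{subinvrem} this changes ${\Omega}_{\delta}(\im Q_{11})$ only up to a constant), then conjugating $Q_{11}^w$ by an elliptic semiclassical Fourier integral operator implementing a symplectomorphism that straightens~${\Sigma}$ to $\set{{\tau}_1=0}$ and carries ${\rho}$ to~${\tau}_1$ (possible by Darboux, since the Hamilton field of a defining function is nonzero on a hypersurface), and applying a matrix Malgrange preparation as in Example~\ref{scalarex0} to remove the ${\tau}_1$-dependence of the imaginary part, I would reduce, modulo $\Cal O(h)$, to
\[ Q_{11}^w=hD_t\id_K+iF^w(t,x,hD_x),\qquad 0\le F\in C^\infty_\rb,\quad |{\Omega}_{\delta}(F_{x,{\xi}})|\le C{\phi}({\delta}), \]
the bicharacteristics of $\set{{\tau}_1=0}$ being the $t$-lines. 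For this operator one proves ${\psi}(h)\mn u\le C\mn{Q_{11}^wu}+Ch\mn u$ essentially as in Theorem~1.4 of~\cite{dsz}: put ${\delta}={\psi}(h)$, so $h={\delta}{\phi}({\delta})$; microlocalize further in $(x,{\xi})$ so that $F$ may be treated as a function of~$t$ with $\big|\set{t:\ F\text{ has an eigenvalue}\le{\delta}}\big|\le C{\phi}({\delta})$; and choose a real smooth multiplier ${\Psi}(t)$ with ${\Psi}\ge 1$, ${\Psi}'=0$ where $F>{\delta}\id_K$ and $h{\Psi}'\ge{\delta}$ on the bad set, so that ${\Psi}$ is bounded uniformly since the bad set has length $\le C{\phi}({\delta})=Ch/{\delta}$. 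As the scalar multiplier ${\Psi}(t)\id_K$ commutes with $F^w(t,x,hD_x)$, we get
\[ 2\re\w{Q_{11}^wu,\,i{\Psi}u}=\w{\big(h{\Psi}'\id_K+2{\Psi}F^w\big)u,u}\ge c{\delta}\mn u^2-Ch\mn u^2 \]
by the sharp G\aa{}rding inequality for systems (Proposition~\ref{garding}), since the symbol $h{\Psi}'\id_K+2{\Psi}F\ge c{\delta}\id_K$; hence $\mn{Q_{11}^wu}\,\big(\mn u+\mn{{\Psi}u}\big)\ge c{\delta}\mn u^2-Ch\mn u^2$, which gives the estimate. Summing the $(x,{\xi})$-microlocal pieces by the usual calculus completes the proof.

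The step I expect to be the main obstacle is this last one: carrying out the microlocalization in~$(x,{\xi})$ together with the ($h$- and $(x,{\xi})$-dependent) construction of the multiplier~${\Psi}$ with fully controlled errors, which is the delicate part of the scalar subelliptic estimate in~\cite{dsz}. The matrix structure itself adds nothing new there since~${\Psi}$ is scalar, the non-negative matrix~$F$ with the sublevel-set condition on its smallest eigenvalue playing the role of the scalar $f$, and the sharp G\aa{}rding inequality being available for systems. A secondary but routine task is the reduction to the normal form $hD_t\id_K+iF^w$ — the Egorov straightening of~${\Sigma}$ and the matrix version of the preparation theorem of Example~\ref{scalarex0} — and the verification that these operations preserve the arc-length condition \eqref{subcond}, which is the same kind of invariance bookkeeping already carried out in Proposition~\ref{fininv} and Example~\ref{Qex}.
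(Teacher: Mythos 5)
Your overall architecture is the same as the paper's: reduce to $Q=MP$, localize near ${\Sigma}_0(P)$, block\-/diagonalize to a block $Q_{11}$ vanishing at $w_0$, build a bounded weight in $t$ whose derivative is concentrated on the sublevel set ${\Omega}_{{\psi}(h)}(\im Q)$ of measure $\lesssim h/{\psi}(h)$, apply the sharp G\aa{}rding inequality, and recover surjectivity from the estimate for $P^*$ via Proposition~\ref{fininv}. The genuine gap is your claimed reduction, modulo $\Cal O(h)$, to the clean model $hD_t\id_K+iF^w(t,x,hD_x)$. For systems the matrix Malgrange preparation only yields $Q\cong E_0({\tau}M(t,w)+iK(t,w))$ with $\re M>0$, and after symmetrizing one is left with $(\id+iM_1^w)hD_t+iK_1^w$ where $M_1=\im M$ does not vanish in general; it is only controlled through the inequality $|\w{\im M\,u,u}|\le C\w{Ku,u}^{1/2}\mn u$ (the paper's~\eqref{realpartest}, derived from $\im Q\ge 0$ and Lemma~\ref{semidefest}). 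The operator $M_1^w\,hD_t$ is $\Cal O(1)$, not $\Cal O(h)$, so it cannot be discarded, and with it present your identity $2\re\w{Q u,i{\Psi}u}=\w{(h{\Psi}'\id_K+2{\Psi}F^w)u,u}$ fails. This is precisely why the paper's multiplier estimates (Proposition~\ref{locestprop}, Lemma~\ref{locestproppen}) carry the extra term $Ch^2\mn{D_tu}^2$ on the right, which is then removed by cutting $u$ at frequencies $|{\tau}|\sim\sqrt{{\psi}(h)}/h$ and using ellipticity of $\re Q$ for large $|{\tau}|$ --- a mechanism entirely absent from your proposal. So the assertion that the matrix structure ``adds nothing new'' because the multiplier is scalar is exactly where the argument breaks.

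The second issue, which you do flag, is not routine bookkeeping but the bulk of the proof: since~\eqref{subcond} is imposed bicharacteristic by bicharacteristic, the exponential weight must depend on $(x,{\xi})$, and as a symbol it is far too rough; the paper therefore performs a second microlocalization on the scale $H^{1/2}$ with $H=\sqrt{h/{\psi}(h)}$, constructs the one\-/dimensional weight at frozen points $w_j$, and glues using the matrix\-/specific bound $|\w{\partial_wK\,u,u}|\le C\w{Ku,u}^{1/2}\mn u$ (Lemma~\ref{semidefest} again), the almost\-/orthogonality estimate of Proposition~\ref{cleanupest}, and the commutator bound of Lemma~\ref{liplem}. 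As written, your argument proves the desired bound only for a frozen\-/coefficient model with $\im M\equiv 0$, and the passage back to the variable\-/coefficient system --- including the invariance of~\eqref{subcond} under your Egorov straightening and the control of all the $\ell^2$\-/summed errors by $\w{Ku,u}^{1/2}$ --- is asserted rather than proved.
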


Theorem~\ref{subthm} will be proved in section~\ref{thmpf}.
Observe that if ${\phi}({\delta}) \to c > 0$ as ${\delta} \to 0$
then ${\psi}(h) = \Cal O(h)$ and Theorem~\ref{subthm} follows from Theorem~\ref{QSthm}.
Thus we shall assume that ${\phi}({\delta}) \to 0$ as ${\delta} \to 0$,
then we find that $h = {\delta}{\phi}({\delta}) = o({\delta})$ so ${\psi}(h)
\gg h$ when $h \to 0$.
In the finite type case: ${\phi}({\delta}) = {\delta}^{\mu}$ we find that
${\delta}{\phi}({\delta}) =
{\delta}^{1 +{\mu}}$ and ${\psi}(h) = h^{1/{\mu} + 1}$. 
When ${\mu} = 1/k$ we find that
$1 +{\mu} = (k+1)/k$ and ${\psi}(h) = h^{k/k+1}$. 
Thus Theorem~\ref{subthm} generalizes
Theorem~1.4 in ~\cite{dsz} by Example~\ref{scalsubex}. 
Condition ~\eqref{eq:sube}  with ${\psi}(h) = h^{1/{\mu}+1}$ means that
${\lambda} \notin {\Lambda}^{\rm  sc}_{1/{\mu}+1}(P)$, which is the
pseudospectrum of index $1/{\mu}+1$.

\begin{exe}\label{evexe2}
Assume that $P(w)\in C^\infty$ is $N \times N$ and $z \in
{\Sigma}(P)\setminus({\Sigma}_{ws}(P) \bigcup 
{\Sigma}_\infty(P))$. Then
${\Sigma}_{\mu}(P) = \set{{\lambda}(w) = {\mu}}$ for ${\mu}$ close to $z$, where
${\lambda} \in C^\infty$ is a germ of eigenvalues for ~$P$ at
${\Sigma}_{z}(P)$, see Lemma~\ref{evlem}.
If $z \in \partial {\Sigma}({\lambda})$ we find from Example~\ref{evexe1} that $P(w) - z\id_N$ is
quasi-symmetrizable near $w_0 \in {\Sigma}_z(P)$ if 
$P(w) - {\lambda}\id_N$ is of principal type when $|{\lambda} - z| \ll
1$. Then  $P$ is on the form~\eqref{4.8a} and there exists $q(w) \in 
C^\infty$ so that~\eqref{4.8}--\eqref{4.9} hold
near ${\Sigma}_z(P)$. We can then choose the multiplier ~$M$ so that
~$Q$ is on the form ~\eqref{Qnorm}.
By taking ${\Sigma} = \set{\re q({\lambda} -z) = 0}$ we obtain that $P
- z\id_N$ is of subelliptic type ${\phi}$ if ~\eqref{subcond} is satified for
$\im q({\lambda}- z)$. In fact, by the invariance we find that the
approximation property is trivially satisfied since $\re q{\lambda}
\equiv 0$ on ${\Sigma}$. 
\end{exe}

\begin{exe}\label{Qex3}
Let 
\begin{equation*}
P(x,{\xi}) = |{\xi}|^2\id_N + i K(x)\qquad (x,{\xi}) \in T^*\br^n
\end{equation*}
where $K(x)\in C^\infty(\br^n)$ is symmetric as in
Example~~\ref{Qex1}. We find that $P - z\id_N$ is 
of finite type of order $1/2$ when $z = i{\lambda}$ for almost all
${\lambda} \in {\Sigma}(K)\setminus 
({\Sigma}_{ws}(K)\bigcup {\Sigma}_\infty(K))$ by Example~\ref{evexe2}.
In fact, then $z \in {\Sigma}(P)\setminus({\Sigma}_{ws}(P) \bigcap
{\Sigma}_\infty(P))$ and the $C^\infty$ germ of eigenvalues for $P$
near ${\Sigma}_{z}(P)$ is
${\lambda}(x,{\xi}) =  |{\xi}|^2 + i {\kappa}(x)$, where ${\kappa}(x)$ is a $C^\infty$
germ of eigenvalues for $K(x)$  
near ${\Sigma}_{\lambda}(K)  = \set{{\kappa}(x) =
  {\lambda}}$. 
For almost all values 
${\lambda}$ we have $d{\kappa}(x) \ne 0$ on ${\Sigma}_{{\lambda}}(K)$. 
By taking $q = i$ we obtain for such values that~\eqref{subcond} is satified 
for $\im i({\lambda}(w)- i{\lambda}) = |{\xi}|^2$ with
${\phi}({\delta}) = {\delta}^{1/2}$, since $\re i({\lambda}(w)- i{\lambda})
= {\lambda} - {\kappa}(x) = 0$ on ${\Sigma} = {\Sigma}_{\lambda}(K)$. If $K(x)\in
C_\rb^\infty$ and $0 \notin {\Sigma}_\infty(K)$ then we may 
use Theorem~\ref{subthm}, Proposition~\ref{reduxlem}, Remark~\ref{reduxrem} and
Example~\ref{sysex} to obtain the estimate 
\begin{equation}
\mn{(P^w(x,hD) - z \id_N)^{-1}} \le C h^{-2/3}\qquad 0 < h \ll 1 
\end{equation}
on the resolvent.
\end{exe}

\begin{exe}\label{simplex}
Let  
\begin{equation*}
 P(t,x;{\tau},{\xi}) = {\tau}M(t,x,{\xi})+ iF(t,x,{\xi}) \in
 C^\infty 
\end{equation*}
where  $M \ge c_0> 0$ and $F \ge 0$ satisfies 
\begin{equation}\label{subcond0}
\left|\set{t:\ \inf_{|u| = 1}\w{F(t,x,{\xi})u,u} \le {\delta}} \right| \le C
{\phi}({\delta}) \qquad \forall\, x,{\xi}
\end{equation}
Then $P$ is quasi-symmetrizable with respect to $\partial
_{{\tau}}$ with symmetrizer $\id_N$. When ${\tau} =
  0$ we obtain that $\re P = 0$, so by taking $\Cal V = \ran {\Pi}$
  for the spectral projection
  ${\Pi}$ given by ~\eqref{spprojdef} for $F$, we find that
$P$ satisfies the approximation
  property with respect to ${\Sigma}= \set{{\tau} = 0}$. 
Since ${\Omega}_{\delta}(\im P) =
{\Omega}_{\delta}(F)$ we find from ~\eqref{subcond0}
that $P$ is of subelliptic type ~${\phi}$.
Observe that if $0 \notin {\Sigma}_\infty( F)$ we obtain from
Proposition~\ref{subcondlem} that ~\eqref{subcond0} is satisfied for
${\phi}({\delta}) = {\delta}^{\mu}$ if
and only if ${\mu} \le 1/k$ for an even $k \ge 0$ so that 
\begin{equation*}
 \sum_{j \le k}|\partial_t^j\w{F(t,x,{\xi})u(t), u(t)}| > 0 \qquad
 \forall\,t, x, {\xi}
\end{equation*}
for any $0 \ne u(t) \in C^\infty(\br)$. 
\end{exe}

\section{Proof of Theorem \ref{subthm}}\label{thmpf}

By subtracting $z\id_N$ we may assume $z = 0$. Let $\wt w_0 \in
{\Sigma}_0(P)$, then by Definition~\ref{subdef} and Remark~\ref{nfrem}  there exist
a $C^\infty$ hypersurface ${\Sigma}$ and a real $C^\infty$  vector field
$V \notin T{\Sigma}$, an
invertible symmetrizer $M \in C^\infty$ so that
$Q = MP$ satisfies~\eqref{subcond}, the approximation property
on~${\Sigma}$, and
\begin{align}
&V\re Q  \ge c -  C \im Q \label{sub1}\\
&\im Q \ge c\, Q^*Q \label{sub2}
\end{align}
in a neighborhood ${\omega}$ of $\wt w_0$, here $c > 0$.

Since ~\eqref{sub1} is stable under small perturbations in $V$ we can
replace $V$ with $H_t$ for some real $t \in C^\infty$ after shrinking ~${\omega}$.
By solving the initial value problem $H_t {\tau} \equiv -1$,
${\tau}\restr{\Sigma} = 0$, and completing to a symplectic  $C^\infty$
coordinate system $(t,{\tau},x,{\xi})$
we obtain that $\Sigma = \set{\tau = 0}$ in a
neighborhood of ~$\wt w_0 = (0,0,w_0)$.
We obtain from Definition~\ref{subdef} that
\begin{equation} \label{sub3}
\re \w{Qu,u}  = 0
\qquad \text{when ${\tau}=0$ and $u \in \Cal V$}
\end{equation}
near $\wt w_0$.  Here $\Cal V$ is a $Q$ invariant $C^\infty$ subbundle of ~$\bc^N$
such that $\Cal V(\wt w_0) = \ker Q^N(\wt w_0)  = \ker Q(\wt w_0)$ by
Lemma~\ref{semiprop}.
By condition~\eqref{subcond} we have that
\begin{equation}\label{sub4}
\big| {\Omega}_{\delta}(\im Q_w) \cap
\set{|t| < c}\big| \le C {\phi}({\delta}) \qquad 0 < {\delta} \ll 1
\end{equation}
when $ |w -w_0| < c$, here $Q_w(t) = Q(t,0,w)$.
Since these are are all local conditions, we may assume that $M$ and
$Q \in C^\infty_\rb$.
We shall obtain Theorem~\ref{subthm} from the following estimate.

\begin{prop}\label{qestprop}
Assume that $Q \in C_\rb^\infty(T^*\br^n)$ is an $N \times N$ system
satisfying ~\eqref{sub1}--\eqref{sub4} in a neighborhood of\/ ~$\wt
w_0 = (0,0,w_0)$ with $V = \partial_{\tau}$ and non-decreasing
${\phi}({\delta}) \to 0$ as ${\delta} \to 0$. Then there exist~$h_0 >
0$ and $R \in C_\rb^\infty(T^*\br^n)$ so that $\wt w_0 \notin \supp R$
and
\begin{equation}\label{locsubest}
{\psi}(h)\mn {u} \le C (\mn{Q^w(t,x,hD_{t,x}) u} +
\mn{R^w(t,x,hD_{t,x})u} + h\mn u) \qquad 0 < h \le h_0  
\end{equation}
for any $u \in C^\infty_0(\br^n, \bc^N)$. Here ${\psi}(h) = {\delta}
\gg h$ is the inverse to $h = {\delta}{\phi}({\delta})$. 
\end{prop}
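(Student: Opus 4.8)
The plan is to put $Q$ into a normal form, peel off the elliptic and transversal directions, and then reduce to a matrix version of the scalar subelliptic estimate of~\cite[Theorem~1.4]{dsz}.

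\emph{Step 1 (normal form).} As $\Cal V$ is a $Q$ invariant $C^\infty$ bundle with $\Cal V(\wt w_0)=\ker Q(\wt w_0)=\ker Q^N(\wt w_0)$ by Lemma~\ref{semiprop}, I would pick an orthonormal $C^\infty$ frame adapted to $\Cal V$ and multiply $Q$ from the left by an invertible $C^\infty$ system, as in Examples~\ref{normformex} and~\ref{Qex}, to reach near $\wt w_0$
\[
Q=\begin{pmatrix} Q_{11} & 0 \\ 0 & Q_{22}\end{pmatrix}
\]
with $Q_{22}$ invertible and $Q_{11}$ a $K\times K$ system such that $Q_{11}(\wt w_0)=0$, $\re Q_{11}=0$ on $\set{{\tau}=0}$, $\partial_{\tau}\re Q_{11}>0$, $\im Q_{11}\ge0$ and $\big|{\Omega}_{\delta}(\im Q_{11,w})\cap\set{|t|<c}\big|\le C{\phi}({\delta})$. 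Left multiplication by an invertible $C^\infty$ system changes $Q^w$ only by composition with an invertible operator up to $\Cal O(h)$, hence affects~\eqref{locsubest} only in its $h\mn u$ and $R^wu$ terms, while the block $Q_{22}$, being elliptic near $\wt w_0$, contributes to~\eqref{locsubest} through an elliptic estimate with an $R^wu$ term supported off $\wt w_0$; so we may assume $N=K$. Writing $\re Q={\tau}G$ with $G(t,{\tau},x,{\xi})=\int_0^1(\partial_{\tau}\re Q)(t,s{\tau},x,{\xi})\,ds$ Hermitian and $G(\wt w_0)=\partial_{\tau}\re Q(\wt w_0)>0$, so $G>0$ near $\wt w_0$, and conjugating $Q\mapsto G^{-1/2}QG^{-1/2}$, we arrive at $\re Q={\tau}\id_N$ with $\im Q\ge0$; the measure bound survives up to a constant by Remark~\ref{subinvrem}, and by Remark~\ref{nfrem} we may also arrange $\im Q\ge cQ^*Q$, which now reads $\im Q\ge c({\tau}^2\id_N+(\im Q)^2)$, so $\im Q\ge c{\tau}^2\id_N$ and $\mn{\im Q}\le C$.

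\emph{Step 2 (transversal directions).} With $\chi\in C^\infty_0(\br)$, $\chi\equiv1$ near $0$ and $\supp\chi\subset(-{\varepsilon}_0,{\varepsilon}_0)$, I would split $u$ by $\chi({\tau}/{\varepsilon}_0)$ and $1-\chi({\tau}/{\varepsilon}_0)$. On $|{\tau}|\ge{\varepsilon}_0/2$ the symbol ${\tau}\id_N+i\im Q$ is elliptic of size $1$, so the calculus --- with these fixed-scale cut-offs the commutators are $\Cal O(h)$ --- gives $\mn{(1-\chi)^wu}\le C(\mn{Q^wu}+h\mn u)$, which is stronger than needed. It remains to treat $u$ microlocalised in $\set{|{\tau}|<{\varepsilon}_0}$ near $\wt w_0\in\set{{\tau}=0}$; there $\im Q=F+{\tau}^2S$ with $F:=\im Q\restr{{\tau}=0}\ge0$, $\mn F\le C$, $S\ge0$ near $\set{F=0}$ (using $\im Q\ge c{\tau}^2\id_N$), and $\big|{\Omega}_{\delta}(F_w)\cap\set{|t|<c}\big|\le C{\phi}({\delta})$ by hypothesis, so, modulo a term that only improves the positivity, we are reduced to the operator $Q^w=(hD_t)\id_N+iF^w(t,x,hD_x)$.

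\emph{Step 3 (the subelliptic core).} This is the matrix analogue of the scalar model $hD_t+if^w$ of~\cite[Theorem~1.4]{dsz}: the transversally elliptic part $(hD_t)\id_N$ is scalar, and the only structural hypothesis on the imaginary part that is used is the bound~\eqref{sub4} on the measure of the sublevel sets of its least eigenvalue $\min_{\mn u=1}\w{F(t,x,{\xi})u,u}$, which is the matrix counterpart of the scalar finite-type condition (see Proposition~\ref{scalarsubcondlem}). I would then run the argument of~\cite{dsz}: from the energy identity
\[
\mn{Q^wu}^2=\mn{(hD_t)u}^2+\mn{F^wu}^2+h\w{(\partial_tF)^wu,u}+\Cal O(h^2)\mn u^2
\]
one decomposes $\set{|t|<c}$ into the region where $F\ge{\delta}\id_N$, on which the sharp G\aa{}rding inequality for systems in Proposition~\ref{garding} gives $\im\w{Q^wv,v}\ge({\delta}-Ch)\mn v^2$, hence ${\psi}(h)\mn v\le C\mn{Q^wv}$ once ${\delta}={\psi}(h)\gg h$, and the complementary region, whose trace on the bicharacteristic $\set{(t,0,x,{\xi}):\ |t|<c}$ has length $\le C{\phi}({\delta})$ by~\eqref{sub4} and on which the gain is drawn from $(hD_t)\id_N$ by the scale-${\phi}({\delta})$ estimate of~\cite{dsz}; taking ${\delta}$ with $h={\delta}{\phi}({\delta})$ makes the commutators produced by the cut-offs at scale ${\phi}({\delta})$ of size $h/{\phi}({\delta})={\psi}(h)$, exactly the order of the main term, and an almost-orthogonal summation over the pieces yields~\eqref{locsubest}, the surviving $R^wu$ coming from the cut-off to a neighbourhood of $\wt w_0$. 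The delicate step --- essentially the whole content of the proposition --- is this last one: already in the scalar case it is not a single Poincar\'e inequality but a careful multi-scale, almost-orthogonal decomposition of the $t$-interval in the manner of H\"ormander, and the new features here are that $\partial_tF$ and the commutators hidden in the $\Cal O(h^2)$-term are matrix-valued, so that they must be absorbed not by a sign but by using $F=\im Q\ge cQ^*Q$ from~\eqref{sub2}, with all estimates kept uniform in $(x,{\xi})$ and in the frozen variable ${\tau}$ over $|{\tau}|<{\varepsilon}_0$.
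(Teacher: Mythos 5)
Your Step 1 begins along the same lines as the paper (block reduction via the invariant subbundle, ellipticity of $Q_{22}$, normalization of $\re Q$), but the reduction at the end of Step 2 contains a genuine gap. After arranging $\re Q = {\tau}\id_N$, you write $\im Q = F + {\tau}^2S$ with $F = \im Q\restr{{\tau}=0}$, i.e.\ you assume the ${\tau}$-linear part of $\im Q$ vanishes. It does not: Taylor expansion gives $\im Q = F + {\tau}L + \Cal O({\tau}^2)$, and the term $L$ (which becomes an operator of the form $\im M^w\, hD_t$ in the model) has no sign and cannot be discarded as "a term that only improves the positivity". Handling exactly this term is one of the main new difficulties in the system case: the paper first applies the matrix Malgrange preparation theorem to factor $Q = E_0({\tau}M(t,w) + iK(t,w))$ with $K$ independent of ${\tau}$, and then derives from $\im Q \ge 0$ the Glaeser-type bound $|\w{\im M u,u}| \le C\w{Ku,u}^{1/2}\mn u$ (its estimate (realpartest), via Lemma \ref{semidefest}), which is what later allows the cross term to be absorbed into $\w{Ku,u}$ plus $h^2\mn{D_tu}^2$. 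Nothing in your argument substitutes for this.

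The second, larger gap is that the core of the proposition is not proved but deferred: your Step 3 invokes "the argument of \cite{dsz}" and an unspecified "multi-scale, almost-orthogonal decomposition", and you yourself note that the commutators your cut-offs produce are of size ${\psi}(h)$, i.e.\ exactly the size of the term you are trying to bound, so the scheme as stated cannot close. Moreover the region $\set{F \ge {\delta}\id_N}$ is a region in $(t,x,{\xi})$ whose boundary is only as regular as the minimal eigenvalue of $F$ (Lipschitz in general), so a pseudodifferential cut-off adapted to it with controllable commutators is not available at the scales you propose. The paper's proof supplies precisely the missing machinery: a second microlocalization in $w=(x,{\xi})$ at scale $H = \sqrt{h/{\psi}(h)}$ with coefficients frozen at points $w_j$; a one-dimensional lemma (Lemma \ref{locestproppen}) in which condition \eqref{sub4} enters through the exponential weight $E(t)=\exp\bigl(\frac{{\psi}(h)}{h}\int_0^t{\Phi}_h\bigr)$ built from the indicator ${\Phi}_h$ of ${\Omega}_{{\psi}(h)}(K)$ (uniformly bounded because $|{\Omega}_{{\psi}(h)}(K)| \le Ch/{\psi}(h)$), yielding the multiplier $B$; control of the linearization errors $K(t,h^{1/2}w)-K_j(t)$ by the matrix inequality $|\w{\partial_wK\,u,u}| \le C\w{Ku,u}^{1/2}\mn u$; and the almost-orthogonality estimates of Proposition \ref{cleanupest} and Lemma \ref{liplem} to sum the pieces and to handle the ${\tau}$-cut-offs at the scale ${\varepsilon}\sqrt{{\psi}(h)}/h$. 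These steps, not the reduction to a normal form, are the content of the proposition, and your proposal does not provide them.
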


Let ${\omega}$ be a neighborhood of $\wt w_0$ such that $\supp R \bigcap
{\omega} = \emptyset$, where $R$ is given by Proposition~\ref{qestprop}.
Take ${\varphi}\in C^\infty_0({\omega})$ such that $0 \le {\varphi} \le
1$ and  ${\varphi} = 1$ in a neighborhood of ~$\wt w_0$. By substituting ${\varphi}^w(t,x,hD_{t,x})u$
in~\eqref{locsubest} we obtain from the calculus that for any $N$ we have
\begin{equation}\label{locsubest0}
   {\psi}(h)\mn {{\varphi}^w(t,x,hD_{t,x})u} \le C_N
 (\mn{Q^w(t,x,hD_{t,x}){\varphi}^w(t,x,hD_{t,x})u} + h^N\mn u) \qquad \forall\, u \in C^\infty_0
\end{equation}
for small enough ~$h$ since $R{\varphi} \equiv 0$. Now the commutator 
\begin{equation}\label{commest}
 \mn{[Q^w(t,x,hD_{t,x}),{\varphi}^w(t,x,hD_{t,x})]u} \le C h \mn u \qquad u \in C^\infty_0
\end{equation}
and since $Q= MP$ the calculus gives
\begin{multline}\label{simpest}
 \mn {Q^w(t,x,hD_{t,x})u}  \le \mn{M^w(t,x,hD_{t,x})P(h)u} + C h \mn u
 \\ \le
 C'(\mn{P(h)u} +  h \mn u) \qquad u \in C^\infty_0 
\end{multline}
The estimates~\eqref{locsubest0}--\eqref{simpest} give
\begin{equation}\label{locsubest1}
 {\psi}(h) \mn {{\varphi}^w(t,x,hD_{t,x})u} \le  C(\mn{P(h)u} +  h \mn u)
\end{equation}

Since $0 \notin {\Sigma}_\infty(P)$ we obtain by using the Borel Theorem finitely many
functions ${\phi}_j\in C^\infty_0$, $j = 1,\dots,N$, such that
$0 \le {\phi}_j \le 1$, $\sum_j {\phi}_j = 1 $ on $ {\Sigma}_0(P)$
and  the estimate
~\eqref{locsubest1} holds with ${\phi} = {\phi}_j$. Let
${\phi}_0 = 1 - \sum_{j\ge 1} {\phi}_j$, then since $0 \notin
{\Sigma}_\infty(P)$ we find that $\mn{P^{-1}} \le C$ on $\supp 
{\phi}_0$. Thus ${\phi}_0 = {\phi}_0P^{-1}P$ and the calculus gives
\begin{equation}
 \mn{{\phi}_0^w(t,x,hD_{t,x})u } \le C(\mn{P(h)u} + h\mn u)\qquad u \in C^\infty_0
\end{equation}
By summing up, we obtain 
\begin{equation}\label{locsubest2}
 {\psi}(h) \mn{u } \le C( \mn{P(h)u} + h\mn u)\qquad u \in C^\infty_0
\end{equation}
Since $h = {\delta}{\phi}({\delta}) \ll {\delta}$ we find $ {\psi}(h) ={\delta}
\gg h$ when $h \to 0$. Thus, we find 
for small enough ~$h$ that the last term in the right hand side of~\eqref{locsubest2} can be
cancelled by changing the constant, then $P(h)$ is injective with closed
range. Since $P^*(h)$ also is of subelliptic type ~${\phi}$ by
Proposition~\ref{fininv} we obtain the estimate
~\eqref{locsubest2} for $P^*(h)$. Thus $P^*(h)$ is injective making
$P(h)$ is surjective, which together with ~\eqref{locsubest2} gives Theorem~\ref{subthm}.

\begin{proof} [Proof of Proposition~\ref{qestprop}]
First we shall prepare the symbol $Q$ locally near ~$\wt w_0 =
(0,0,w_0)$. Since $\im Q \ge 0$ we obtain from Lemma~\ref{semiprop} that 
$\ran Q(\wt w_0) \bot \ker Q(\wt w_0)$ which gives $\ker Q^N(\wt w_0) = \ker Q(\wt w_0)$.
Let $\dim \ker Q(\wt w_0) = K$ then  by choosing an
orthonormal base and multiplying from the left as in
Example~\ref{Qex}, we may assume that 
\begin{equation}
 Q = 
\begin{pmatrix}
Q_{11} & 0 \\ 0 & Q_{22}
\end{pmatrix} 
\end{equation}
where $Q_{11}$ is $K \times K$ matrix, $Q_{11}(\wt w_0) = 0$ and $|Q_{22}(\wt w_0)| \ne 0$. 
Also, we find that $Q_{11}$ satisfies
the conditions~\eqref{sub1}--\eqref{sub4} with $\Cal V = \bc^K$ near $\wt w_0$.

Now it suffices to prove the estimate with $Q$ replaced by $Q_{11}$.
In fact, by using the ellipticity of $Q_{22}$ at~$\wt w_0$ we find 
\begin{equation}\label{q22est}
 \mn {u''} \le C(\mn{Q_{22}^w u''} + \mn{R_1^wu''} +h\mn {u''} \qquad
 u'' \in C^\infty_0(\br^n, \bc^{N-K}) 
\end{equation}
where $u = (u',u'')$ and $\wt w_0 \notin \supp R_1$. Thus, if we have
the estimate~\eqref{locsubest} for $Q_{11}^w$ with $R = R_2$, then 
since ${\psi}(h)$ is bounded we obtain the estimate for $Q^w$:
\begin{equation*}
{\psi}(h) \mn u \le C_0(\mn{Q_{11}^wu'} + \mn{Q_{22}^wu''} +\mn{R^wu}
+ h \mn u) \le C_1(\mn{Q^wu} +\mn{R^wu} +h\mn u)
\end{equation*}
where $\wt w_0 \notin \supp R$, $R = (R_1,R_2)$.

Thus, in the following we may assume that $Q= Q_{11}$ is $K \times K$
system satisfying the conditions
~\eqref{sub1}--\eqref{sub4} with $\Cal V = \bc^K$ near $\wt w_0$.
Since $\partial_{\tau}\re Q > 0$ at~$\wt w_0$ by~\eqref{sub1}, we find
from the matrix version of the Malgrange Preparation Theorem
in ~\cite[Theorem~4.3]{de:prep} that
\begin{equation}
Q(t,{\tau},w) = E(t,{\tau},w)({\tau}\id + K_0(t,w)) \qquad \text{near ~$\wt w_0$}
\end{equation}
where $E$, $K_0 \in C^\infty$, and $\re E > 0$ at $\wt w_0$.
By taking $M(t,w) = E(t,0,w)$ we find $\re M > 0$ and
$$
 Q(t,{\tau},w) = E_0(t,{\tau},w)({\tau}M(t,w) + iK(t,w))=
 E_0(t,{\tau},w) Q_0(t,{\tau},w)
$$
where $E_0(t,0,w) \equiv \id$. Thus we find that $Q_0$ satisfies
~\eqref{sub2}, \eqref{sub3} and~\eqref{sub4} when ${\tau} =
0$  near $\wt w_0$. Since $K(0, w_0) = 0$ we obtain that  $\im K
\equiv 0$ and $K \ge c K^2 \ge 0$ near ~$(0,w_0)$. We have $\re M > 0$ and 
\begin{equation}\label{realpartest}
 |\w{\im Mu,u}| \le C\w{Ku,u}^{1/2}\mn u \qquad  \text{near ~$(0,w_0)$}
\end{equation}
In fact, we have 
\begin{equation*}
 0 \le \im Q \le K + {\tau}(\im M + \re (E_1K)) + C{\tau}^2
\end{equation*}
where $E_1(t,w) = \partial_{\tau}E(t,0,w)$.
Lemma~\ref{semidefest} gives 
\begin{equation*}
  |\w{\im Mu,u} + \re \w{E_1Ku,u} | \le C\w{Ku,u}^{1/2}\mn u
\end{equation*}
and since $K^2 \le CK$ we obtain
\begin{equation*}
 | \re \w{E_1Ku,u}| \le C\mn{Ku}\mn u  \le  C_0\w{Ku,u}^{1/2}\mn u
\end{equation*}
which gives ~\eqref{realpartest}. Now by cutting off when $|{\tau}|
\ge c > 0$ we obtain that
\begin{equation*}
Q^w = E_0^wQ_0^w + R_0^w + hR_1^w
\end{equation*}
where $R_j \in C_\rb^\infty$ and $\wt w_0 \notin
\supp R_0$. Thus, it suffices to prove the estimate~\eqref{locsubest} for
$Q_0^w$. 
We may now reduce to the case when $\re M \equiv \id$. In fact, 
\begin{equation*}
 Q_0^w \cong M_0^w((\id + iM_1^w)hD_t +i K_1^w)M_0^w \qquad\text{modulo
   $\Cal O(h)$}
\end{equation*}
where $M_0 = (\re M)^{1/2}$ is invertible, $M_1^* = M_1$ and $K_1 =
M_0^{-1} K M_0^{-1} \ge 0$. By changing $M_1$ and $K_1$ and making $K_1
> 0$ outside a neighborhood of 
$(0, w_0)$ we may assume that $M_1$, $K_1 \in C^\infty_\rb$ and ~$iK_1$ satisfies
~\eqref{sub4} for all $c > 0$ and any ~$w$, by the invariance given by
Remark~\ref{subinvrem}. Observe that
condition~\eqref{realpartest} also is 
invariant under the mapping $Q_0 \mapsto E^*Q_0 E$.

We shall use the symbol classes $f \in S(m, g)$ defined by 
\begin{equation*}
 |\partial_{{\nu}_1}\dots \partial_{{\nu}_k}f | \le C_k m\, \prod_{j=1}^k
 g({\nu}_j)^{1/2} \qquad \forall\ {\nu}_1, \dots ,{\nu}_k \quad \forall\, k
\end{equation*}
for constant weight $m$ and metric $g$, and $\op S(m,g)$ the
corresponding Weyl operators ~$f^w$.
We shall need the following estimate for the model operator~$Q^w_0$.

\begin{prop}\label{locestprop}
Assume that 
$$
Q = M^w(t,x,hD_x)hD_t + iK^w(t, x, hD_x)
$$ 
where $M(t,w)$ and $0 \le K(t,w) \in  L^\infty(\br,
C^\infty_\rb(T^*\br^n))$ are $N \times N$ system such that  $\re M \equiv \id$,
$\im M$ satisfies~\eqref{realpartest} and  $iK$ satisfies~\eqref{sub4} for
all ~$w$ and $c > 0$ with non-decreasing $0 <{\phi}({\delta}) \to 0$ as ${\delta}
\to 0$. Then there exists a real valued $B(t,w) \in L^\infty(\br, S(1,
H|dw|^2/h))$ such that $hB(t,w)/{\psi}(h) \in \lip(\br, S(1,
H|dw|^2/h))$, and
\begin{equation}\label{locest}
{\psi}(h)\mn u^2  \le \im \w{Qu,B^w(t,x,hD_x)u} +C h^2\mn{D_tu}^2  \qquad 0 < h
\ll 1
\end{equation}
for any $u \in C^\infty_0(\br^{n+1}, \bc^N)$. Here the bounds on
$B(t,w)$ are uniform, ${\psi}(h) = {\delta}
\gg h$ is the inverse to $h = {\delta}{\phi}({\delta})$ so  $0 < H =
\sqrt{h/{\psi}(h)} \ll 1$ as $h \to 0$. 
\end{prop}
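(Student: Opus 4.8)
The plan is a multiplier (positivity) argument: produce $B$ so that, modulo errors absorbed by the calculus, the symbol of $\frac{1}{2i}\big(QB^w-(B^wQ)^*\big)$ is bounded below by a multiple of $\psi(h)$. First I would fix $w=(x,\xi)$ as a parameter and reduce to a one-parameter family of one-dimensional ($t$-)problems; the regularity claims on $B$ — that $B\in L^\infty(\br, S(1,H|dw|^2/h))$ and $hB/\psi(h)\in\lip(\br, S(1,H|dw|^2/h))$ — are then verified afterwards from the explicit construction, the point being that the natural $w$-scale of the construction is $\sqrt{h\psi(h)}=h/H$, which is exactly the scale of the metric $g=H|dw|^2/h$, and that $\partial_tB=O(1/H^2)$ while $B=O(1)$, so $H^2B=(h/\psi(h))B$ has bounded $t$-derivative. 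Writing $\delta=\psi(h)$, so that $h=\delta\phi(\delta)$ and $H^2=h/\psi(h)=\phi(\delta)\to0$, hypothesis \eqref{sub4} on $iK$ says precisely that the ``bad set'' $\{t:\ \lambda_{\min}(K(t,w))\le\delta\}\cap\{|t|<c\}$ has measure $\le C\phi(\delta)=Ch/\delta$ for every $w$.

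Next I would carry out the algebra of the multiplier. Using $\re M\equiv\id$, write $M=\id+iM_1$ with $M_1=\im M$ Hermitian; integrating $hD_t$ by parts against the real scalar weight $B$ gives
\begin{equation*}
 \im\w{Qu,B^wu}=\tfrac{h}{2}\w{(\partial_tB)^wu,u}+\re\w{M_1^whD_tu,B^wu}+\re\w{K^wu,B^wu}+O(h)\mn u^2 ,
\end{equation*}
and the normalisation $\re M\equiv\id$ is used exactly here, to make the first term clean. The middle term is the only ``high'' term; it is controlled by splitting off $Ch^2\mn{D_tu}^2$ (the slack built into \eqref{locest}) and using the structural bound \eqref{realpartest} relating $\im M$ to $K$, together with $K\ge cK^2$, to dominate the remainder by $\re\w{K^wu,B^wu}$ plus $C\delta\mn u^2$ once $B$ is bounded. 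Since $B$ is a real scalar and $K$ Hermitian, $\re\w{K^wu,B^wu}=\w{(BK)^wu,u}+O(h)\mn u^2$, and $(BK)(t,w)\ge B(t,w)\lambda_{\min}(K(t,w))\id\ge0$ when $B\ge0$. So, after quantisation by the sharp G\aa rding inequality for systems (Proposition~\ref{garding}), the estimate reduces to producing a bounded real $B(t,w)\ge0$ with
\begin{equation*}
 \tfrac{h}{2}\partial_tB(t,w)\,\id+B(t,w)K(t,w)\ge c\,\delta\,\id\qquad\text{for all }t ,
\end{equation*}
modulo $O(h)$, which by the matrix inequality above follows from the scalar inequality $\tfrac{h}{2}\partial_tB+B\,\lambda_{\min}(K(\cdot,w))\ge c\delta$.

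The construction of $B$ is the heart of the proof, and is the semiclassical, $w$-parametrised analogue of H\"ormander's weight for subelliptic estimates, and of the scalar construction in the proof of Theorem~1.4 of \cite{dsz}. On the ``good'' set $\{\lambda_{\min}(K(t,w))\ge\delta\}$ one keeps $B$ comparable to a fixed positive constant, so $B\lambda_{\min}(K)\ge c\delta$; across each component of the ``bad'' set one forces $\partial_tB$ to have the critical size $\delta/h=1/\phi(\delta)=1/H^2$, with the right sign. Because the bad set has measure $\le C\phi(\delta)=Ch/\delta$, the total variation of $B$ accumulated there is $\lesssim(\delta/h)\cdot(h/\delta)=O(1)$, so a bounded $B$ exists; concretely one rescales to unit scale $t=\rho s$ with $\rho^k\sim\delta$ in the model $\lambda_{\min}(K)\sim(t/\rho)^k$ and solves the ordinary differential inequality $\tfrac12\beta'+s^k\beta\ge c$, whose solutions are bounded (in fact decay at $\pm\infty$) for even $k$, and treats the general case by integrating this differential inequality along $t$ using only the measure bound \eqref{sub4}. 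Differentiating the construction in $w$ costs powers of $\sqrt{h\delta}=h/H$, giving $B\in S(1,g)$ uniformly in $t$, and the Lipschitz-in-$t$ bound for $H^2B$ follows from $\partial_tB=O(1/H^2)$.

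The main obstacle is precisely this weight construction carried out with a matrix $K$ and with the required uniformity: one must keep $B$ scalar (so that the integration by parts is clean and $B^w$ is self-adjoint) while the relevant lower bound $\lambda_{\min}(K(t,w))$ is only Lipschitz in $(t,w)$, so a smooth substitute — built from the smooth quantities $\w{K(t,w)u,u}$ and the spectral projection of $K$, as in Example~\ref{normformex} — must be used, its $w$-derivatives estimated in the $h$-dependent class $S(1,g)$ uniformly in $t$, and its Lipschitz bound in $t$ established, all compatibly with the errors of the Weyl calculus and of the sharp G\aa rding inequality. The algebraic preparations ($Q=E_0\cdot(\tau M+iK)$ via the matrix Malgrange theorem, the reduction $\re M\equiv\id$, and Lemma~\ref{semidefest} behind \eqref{realpartest}) are already in hand, so the real work is the weight.
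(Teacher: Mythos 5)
Your one-dimensional weight is essentially the paper's: the exponential weight $E(t)=\exp\bigl(\tfrac{\psi(h)}{h}\int_0^t\Phi_h\bigr)$ built from the indicator $\Phi_h$ of ${\Omega}_{\psi(h)}(K)$, bounded because \eqref{sub4} gives $|{\Omega}_{\psi(h)}(K)|\le Ch/\psi(h)$, and the use of \eqref{realpartest} together with the slack $Ch^2\mn{D_tu}^2$ to absorb the $\im M\,hD_t$ term (this is Lemma~\ref{locestproppen}, with $B=\varrho E^2$). But your global strategy has a genuine gap exactly where you say ``the real work'' lies, and it is not a technicality. First, the claim that ``differentiating the construction in $w$ costs powers of $h/H$, giving $B\in S(1,H|dw|^2/h)$ uniformly in $t$'' is unsubstantiated and implausible as stated: the weight is a nonlocal (in $t$) functional of the sublevel sets $\set{t:\ \lambda_{\min}(K(t,w))\le\delta}$, which vary only measurably with $w$; neither $\lambda_{\min}$ nor the indicator of its sublevel set is differentiable in $w$, and replacing them by smooth substitutes built from $\w{Ku,u}$ or spectral projections does not obviously yield symbol bounds for the resulting $t$-integrated exponential. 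Second, even granting such a $B$, your reduction ``apply sharp G\aa rding to $\tfrac h2\partial_tB\,\id+BK-c\delta\,\id\ge0$'' does not close quantitatively: for symbols in $S(1,H|dw|^2/h)$ the G\aa rding (or Weyl--Wick) error is $O(H)=O(\sqrt{\phi(\psi(h))})$, which in general dominates $\psi(h)$ (for finite type of order $\mu\le1$ one has $H=\psi^{\mu/2}\gg\psi$), so the positive term $c\,\psi(h)\mn u^2$ would be swallowed by the quantization error. The same problem afflicts the composition $B^wK^w$ versus $(BK)^w$ at this scale.

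The paper's proof is organized precisely to avoid both issues. After rescaling $(x,\xi)\mapsto(h^{1/2}x,h^{-1/2}\xi)$ it performs a second microlocalization: an $\ell^2$-valued partition of unity $\set{\phi_j}$, $\set{\psi_j}\in S(1,G)$, $G=H|dw|^2$, freezes the coefficients at points $w_j$ to get purely one-dimensional operators $Q_j(t,hD_t)=M_j(t)hD_t+iK_j(t)$, and applies the 1-D Lemma~\ref{locestproppen} to each, with scalar weights $B_j(t)$ carrying no $w$-dependence at all; positivity of the constant-in-$w$ matrices $K_j(t)$, $F_j(t)=K_j(t)+\psi(h)\id$ is then exact and no G\aa rding inequality in $w$ is needed. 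The errors from freezing, $M-M_j$ and $K-K_j$, are handled by Taylor expansion: the linear term in $K$ is controlled by the square-root estimate \eqref{sub3a} (Lemma~\ref{semidefest}, using $K\ge0$) combined with Remark~\ref{grem} and the almost-orthogonality result Proposition~\ref{cleanupest}, and all remaining calculus errors are $O(hH^{-1})=O(\sqrt{h\psi(h)})\ll\psi(h)$ or $O(h)$ and are summed via the $\ell^2$ structure. The final multiplier is $B^w=4\sum_jB_j\phi_j^w\phi_j^w$, whose membership in $\op S(1,G)$ uniformly in $t$ and the Lipschitz property of $hB/\psi(h)$ come for free from the partition, not from any $w$-regularity of the 1-D construction. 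So the missing ingredient in your outline is this localization-and-freezing mechanism (or some substitute for it); without it, neither the asserted symbol class of $B$ nor the positivity-after-quantization step is justified.
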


Observe that $H^2 = h/{\psi}(h)= {\phi}({\psi}(h))  \to 0$ and $h/H =
\sqrt{{\psi}(h)h} \ll {\psi}(h) \to 0$ as $h \to 0$, since 
$0 < {\phi}({\delta}) $ is non-decreasing.

To prove Proposition~\ref{qestprop} we shall cut off where $|{\tau}|
\gtrless {\varepsilon}\sqrt{{\psi}}/h$. Take ${\chi}_0(r) \in C^\infty_0(\br)$ such
that $0 \le {\chi}_0 \le 1$, ${\chi}_0(r) = 1$ when $|r| \le 1$ and
$|r| \le 2$ in $\supp {\chi}_0$. Then $1-{\chi}_0 = {\chi}_1$ where $0
\le {\chi}_1 \le 1$ is supported where 
$|r| \ge 1$. Let ${\phi}_{j,{\varepsilon}}(r) =
{\chi}_j(hr/{\varepsilon}\sqrt{{\psi}})$, $j = 0$, $1$,
for ${\varepsilon} > 0$, then ${\phi}_{0,{\varepsilon}}$ is supported
where $|r| \le 2{\varepsilon}\sqrt{{\psi}}/h$ and  ${\phi}_{1,{\varepsilon}}$ is supported
where $|r| \ge {\varepsilon}\sqrt{{\psi}}/h$. We have that
${\phi}_{j,{\varepsilon}}({\tau}) \in S(1, h^2d{\tau}^2/{\psi})$, $j = 0$, $1$,
and $u = 
{\phi}_{0,{\varepsilon}}(D_t)u + {\phi}_{1,{\varepsilon}}(D_t)u$,
where we shall estimate each term separately. Observe that we shall
use the ordinary quantization and not the semiclassical
for these operators. 

To estimate the first term, we
substitute ${\phi}_{0,{\varepsilon}}(D_t)u$ in ~\eqref{locest}. We find that 
\begin{multline}\label{est1}
 {\psi}(h)\mn{{\phi}_{0,{\varepsilon}}(D_t)u}^2 \le \im \w{Qu,
   {\phi}_{0,{\varepsilon}}(D_t) B^w(t,x,hD_x){\phi}_{0,{\varepsilon}}(D_t)u} \\
 +
 \im\w{[Q, {\phi}_{0,{\varepsilon}}(D_t)]u,
   B^w(t,x,hD_x){\phi}_{0,{\varepsilon}}(D_t)u} + 4C{\varepsilon}^2{\psi}\mn u^2
\end{multline}
In fact, $h\mn{D_t{\phi}_{0,{\varepsilon}}(D_t)u}
\le 2{\varepsilon}\sqrt{{\psi}}\mn u$
since it is a Fourier multiplier 
and $|h{\tau}{\phi}_{0,{\varepsilon}}({\tau})| \le
2{\varepsilon}\sqrt{{\psi}}$. Next we shall estimate the commutator term.
Since $\re Q = hD_t\id - h \partial_t \im M^w/2$ and $\im Q = h \im M^w
D_t + K^w +  h \partial_t \im M^w/2i$ we find that
$[\re Q, {\phi}_{0,{\varepsilon}}(D_t)] \in 
  \op S(h, {\Cal G})$
and
\begin{equation*}
 [Q, {\phi}_{0,{\varepsilon}}(D_t)] = i[\im Q,
 {\phi}_{0,{\varepsilon}}(D_t)] \\
 = i [K^w,  {\phi}_{0,{\varepsilon}}(D_t)] =
 -h\partial_tK^w
 {\phi}_{2,{\varepsilon}}(D_t) /{\varepsilon}\sqrt{{\psi}}
\end{equation*}
is a symmetric operator modulo $\op S(h, \Cal G)$, where
$\Cal G =  dt^2 + h^2d{\tau}^2/{\psi} + |dx|^2 + h^2|d{\xi}|^2$ and $
{\phi}_{2,{\varepsilon}}({\tau}) = {\chi}_0'(h{\tau}/{\varepsilon}\sqrt{{\psi}})$.
In fact, we have that $h^2/{\psi}(h) \le C h$, $h[\partial_t \im M^w,
{\phi}_{0,{\varepsilon}}(D_t)]$ and $ [\im M^w,
{\phi}_{0,{\varepsilon}}(D_t)]hD_t \in
\op S(h, \Cal G)$, since $|{\tau}| \le {\varepsilon}\sqrt{{\psi}}/h$ in
$\supp {\phi}_{0,{\varepsilon}}({\tau})$. Thus, we find that 
\begin{multline}\label{comterm}
 -2i \im  \big({\phi}_{0,{\varepsilon}}(D_t) B^w [Q,
 {\phi}_{0,{\varepsilon}}(D_t)]\big) = 2i  h{\varepsilon}^{-1}{\psi}^{-1/2}
 \im  \big({\phi}_{0,{\varepsilon}}(D_t) B^w\partial_tK^w
 {\phi}_{2,{\varepsilon}}(D_t)\big) \\
=  h{\varepsilon}^{-1}{\psi}^{-1/2} \Big
({\phi}_{0,{\varepsilon}}(D_t) B^w [\partial_tK^w,
 {\phi}_{2,{\varepsilon}}(D_t)] +  {\phi}_{0,{\varepsilon}}(D_t)[ B^w
, {\phi}_{2,{\varepsilon}}(D_t)]\partial_tK^w \\
+  {\phi}_{2,{\varepsilon}}(D_t)[{\phi}_{0,{\varepsilon}}(D_t), B^w]\partial_tK^w +
{\phi}_{2,{\varepsilon}}(D_t) B^w [ {\phi}_{0,{\varepsilon}}(D_t), \partial_tK^w ] \Big)
\end{multline}
modulo $\op S(h, \Cal G)$. As before, the calculus gives that $[ {\phi}_{j,{\varepsilon}}(D_t),
\partial_tK^w ] \in \op S(h{\psi}^{-1/2}, \Cal G)$ for any $j$. 
Since $t \to h B^w/{\psi} \in \lip(\br, \op S(1, \Cal G))$ uniformly
and ${\phi}_{j,{\varepsilon}}({\tau}) =
{\chi}_j(h{\tau}/{\varepsilon}\sqrt{{\psi}})$ with ${\chi}'_j \in
C^\infty_0(\br)$, Lemma~\ref{liplem} with ${\kappa}= {\varepsilon}\sqrt{{\psi}}/h$ gives that 
\begin{equation*}
 \left\| [{\phi}_{j,{\varepsilon}}(D_t), B^w] \right\|_{\Cal
   L(L^2(\br^{n+1}))} \le C\sqrt{{\psi}}/{\varepsilon}
\end{equation*}
uniformly. If we combine the estimates above we can estimate the commutator term: 
\begin{equation}\label{est2}
 | \im\w{[Q, {\phi}_{0,{\varepsilon}}(D_t)]u,
   B^w(t,x,hD_x){\phi}_{0,{\varepsilon}}(D_t)u}| \le Ch \mn u^2 \ll
 {\psi}(h) \mn u^2 \qquad h \ll 1
\end{equation}
which together with ~\eqref{est1} will give the estimate for the first term for small enough
~${\varepsilon}$ and~$h$.

We also have to estimate ${\phi}_{1,{\varepsilon}}(D_t)u$, then we shall use that $Q$
is elliptic when $|{\tau}| \ne 0$. We have
$$
\mn{{\phi}_{1,{\varepsilon}}(D_t)u}^2 = \w{{\chi}^w(D_t)u,u}
$$ 
where ${\chi}({\tau}) = {\phi}_{1,{\varepsilon}}^2({\tau})\in S(1,
h^2d{\tau}^2/{\psi})$ is real with support where 
$|{\tau}| \ge {\varepsilon}\sqrt{{\psi}}/h$. Thus, we may write
$
 {\chi}(D_t) = {\varrho}(D_t)hD_t$
where ${\varrho}({\tau}) = {\chi}({\tau})/h{\tau} \in
S({\psi}^{-1/2},  h^2d{\tau}^2/{\psi})$ by Leibniz' rule since
$|{\tau}|^{-1} \le h/{\varepsilon}\sqrt{{\psi}}$ in ~$\supp
{\varrho}$. Now $hD_t\id = \re Q + h\partial_t \im M^w/2$ so we find
\begin{equation*}
 \w{{\chi}(D_t)u,u} = \re\w{{\varrho}(D_t)Qu, u} +
 \frac{h}{2}\re\w{{\varrho}(D_t)(\partial_t  \im M^w)u,u}+ \im 
 \w{{\varrho}(D_t)\im Qu,u}
\end{equation*}
where $|h\re\w{{\varrho}(D_t)(\partial_t \im M^w)u,u}| \le Ch\mn
u^2/{\varepsilon}\sqrt{{\psi}}$ and 
$$|\re\w{{\varrho}(D_t)Qu, u}| \le \mn{Qu}\mn{{\varrho}(D_t)u} \le 
\mn{Qu}\mn u/{\varepsilon}\sqrt{{\psi}}$$ 
since ${\varrho}(D_t)$ is a Fourier multiplier and $|{\varrho}({\tau})| \le 1
/{\varepsilon}\sqrt{{\psi}}$. We have that
$$
\im Q = K^w(t,x,hD_x) +hD_t \im M^w(t,x,hD_x)
- \frac{h}{2i}\partial_t \im M^w(t,x,hD_x) 
$$
where $\im M^w(t,x,hD_x)$ and $K^w(t,x,hD_x) \in \op S(1, \Cal G)$ are symmetric.
Since ${\varrho} = {\chi}/h{\tau} \in S({\psi}^{-1/2}, \Cal G)$ is
real we find that 
\begin{multline*}
 \im ({\varrho}(D_t)\im Q) = \im {\varrho}(D_t) K^w + \im
 {\chi}(D_t)\im M^w \\= \frac{1}{2i}([{\varrho}(D_t),K^w(t,x,hD_x)] +
 [{\chi}(D_t),\im M^w(t,x,hD_x)])
\end{multline*}
modulo terms in $\op S(h/\sqrt{{\psi}}, \Cal G) \subseteq \op S(h/{{\psi}}, \Cal G)$.
Here the calculus gives
$$
[{\varrho}(D_t),K^w(t,x,hD_x)] \in \op  S(h/{\psi},  \Cal G)
$$ 
and similarly we have that 
$$ [{\chi}(D_t),\im M^w(t,x,hD_x)]
\in \op S(h/\sqrt{{\psi}},  \Cal G) \subseteq \op S(h/{\psi},  \Cal G)$$
which gives that $|\im \w{{\varrho}(D_t)\im Qu,u}| \le
Ch\mn u^2/{\psi}$. In fact, since the metric $\Cal G$ is constant,
it is uniformly ${\sigma}$ temperate for all $h>0$. We obtain that 
\begin{equation*}
 {\psi}(h)\mn{{\phi}_{1,{\varepsilon}}(D_t)u}^2 \le
 C_{\varepsilon}(\sqrt{\psi}\mn{Qu}\mn u + h\mn u^2)
\end{equation*}
which together with \eqref{est1} and~ \eqref{est2} gives the
estimate~\eqref{locsubest} for small enough~${\varepsilon}$ and~$h$,
since $h/{\psi}(h) \to 0$ as~$h \to 0$. 
\end{proof}

\begin{proof} [Proof of Proposition~\ref{locestprop}]
We shall do a second
microlocalization in $w = (x,{\xi})$.
By making a linear symplectic change of coordinates:
$(x,{\xi}) \mapsto (h^{1/2}x, h^{-1/2}{\xi})$ 
we obtain that $Q(t,{\tau}, x, h{\xi})$ is changed into
 $$
Q(t,{\tau},h^{1/2}w) \in S(1, dt^2 + d{\tau}^2 + h |dw|^2)
\qquad\text{when $|{\tau}| \le c$}
 $$
In these coordinates we find $B(h^{1/2}w) \in S(1,G)$,  $G = H|dw|^2$,
if $B(w) \in S(1,H|dw|^2/h)$. In the following, we shall use
ordinary Weyl quantization in the $w$ ~variables.

We shall follow an approach similar to the one of 
\cite[Section ~5]{dsz}. 
To localize the estimate we take
$\set{{\phi}_j(w)}_j$, $\set{{\psi}_j(w)}_j \in S(1,G)$
with values in $\ell^2$,
such that $0 \le {\phi}_j$,  $0 \le {\psi}_j$, $\sum_{j} {\phi}^2_j
\equiv 1$ and ${\phi}_j {\psi}_j = {\phi}_j$, $\forall\,j$. We may also
assume that ${\psi}_j$ is supported in a $G$
neighborhood of $w_j$. This can be done uniformly in ~$H$, by taking 
${\phi}_j(w) = {\Phi}_j(H^{1/2} w)$ and ${\psi}_j(w) =
{\Psi}_j(H^{1/2} w)$, with $\set{{\Phi}_j(w)}_j$ and
$\set{{\Psi}_j(w)}_j \in S(1, |dw|^2)$. 
Since $\sum {\phi}_j^2 = 1$ and $G = H|dw|^2$ the calculus gives
\begin{equation}
\sum_j \mn{{\phi}_j^w(x,D_x)u}^2 -C H^2 \mn{u}^2 \le
\mn{u}^2 \le \sum_j \mn{{\phi}_j^w(x,D_x)u}^2 + C
H^2 \mn{u}^2
\end{equation}
for $u \in C^\infty_0(\br^n)$, thus for small enough $H$ we find 
\begin{equation}\label{pouest}
\sum_j \mn{{\phi}_j^w(x,D_x)u}^2  \le 2\mn{u}^2 \le
4 \sum_j \mn{{\phi}_j^w(x,D_x)u}^2\qquad\text{for $u \in C^\infty_0(\br^n)$}
\end{equation}
Observe that since ${{\phi}_j}$ has values in $\ell^2$
we find that $\set{{\phi}_j^wR_j^w}_j\in \op S(H^{{\nu}}, G)$
also has values in $\ell^2$ if $R_j \in S(H^{{\nu}}, G)$
uniformly. Such terms will be summable:
\begin{equation}\label{errest}
 \sum_{j}^{}\mn{r_j^wu }^2 \le CH^{2{\nu}}\mn u^2
\end{equation}
for $\set{r_j}_j \in S(H^{\nu}, G)$ with values in $\ell^2$,
see ~\cite[p.\ 169]{ho:yellow}.
Now we fix~$j$ and let  
$$Q_j(t,{\tau}) =Q(t,{\tau},h^{1/2}w_j) = M_j(t) {\tau} + iK_j(t)
$$
where $M_j(t) = M(t,h^{1/2}w_j)$ and $K_j(t) = K(t,h^{1/2}w_j) \in
L^\infty(\br)$. Since   $K(t,w) \ge 0$ we find from
Lemma~\ref{semidefest} and~\eqref{realpartest} that
\begin{equation}\label{sub3a}
|\w{\im M_j(t)u,u}| + |\w{d_wK(t,h^{1/2}w_j)u,u}| \le C\w{K_j(t)u,u}^{1/2}\mn u
\qquad \forall\, u \in \bc^N\qquad \forall\,t
\end{equation}
and condition ~\eqref{sub4} means that 
\begin{equation}\label{sub4a}
\left|\set{t: \inf_{|u| = 1}\w{K_j(t)u,u} \le {\delta}}\right|
\le C {\phi}({\delta})
\end{equation}
We shall prove an estimate for the corresponding one-dimensional
operator 
$$Q_j(t,hD_t) = M_j(t) hD_t  + iK_j(t)$$
by using the following result.

\begin{lem}\label{locestproppen}
Assume that 
$$
Q(t,hD_t) = M(t)hD_t + iK(t)
$$ 
where $M(t)$ and $0 \le K(t)$ are $N \times N$
systems, which are uniformly bounded in $ L^\infty(\br)$, such that $\re M \equiv \id$,
$\im M$ satisfies~\eqref{realpartest} for almost all~$t$ and $iK$ 
satisfies~\eqref{sub4} for any $c > 0$ with non-decreasing ${\phi}({\delta}) \to 0$ as ${\delta}
\to 0$. Then there exists a uniformly bounded real $B(t) \in
L^\infty(\br)$ so that $hB(t)/{\psi}(h) \in \lip(\br)$ uniformly and
\begin{equation}\label{locesten}
{\psi}(h)\mn u^2 + \w{Ku,u} \le \im \w{Q u,Bu} + C h^2\mn{D_tu}^2 \qquad 0 < h
\ll 1
\end{equation}
for any $u \in C^\infty_0(\br, \bc^N)$.  Here ${\psi}(h) = {\delta}
\gg h$ is the inverse to $h = {\delta}{\phi}({\delta})$.
\end{lem}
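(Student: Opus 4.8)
The plan is to prove Lemma~\ref{locestproppen} by the multiplier (``energy'') method that is used for the scalar subelliptic estimate, Theorem~1.4 in~\cite{dsz}, the new ingredient being the treatment of the non-self-adjoint part $\im M$ with the help of condition~\eqref{realpartest}. First I would set things up: since $\re M\equiv\id$ we have $M(t)=\id+iS(t)$ with $S(t)=\im M(t)$ Hermitian and $\|M(t)^{-1}\|\le1$; and, after adding $i\varrho K$ to the multiplier as in Remark~\ref{nfrem}, we may assume $\im Q\ge cQ^{*}Q\ge0$. Write $\delta=\psi(h)$, so $\phi(\psi(h))=h/\psi(h)$, and introduce the ``bad set'' $\omega_{h}=\{t:\ K(t)\text{ has an eigenvalue}\le\psi(h)\}$; by~\eqref{sub4} (with $c\to\infty$) its Lebesgue measure satisfies $|\omega_{h}|\le C\phi(\psi(h))=Ch/\psi(h)$, while off $\omega_{h}$ one has $\langle K(t)v,v\rangle\ge\psi(h)|v|^{2}$ for all $v$.

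Next I would construct the multiplier. Set
\[
B(t)=A+\frac{2\psi(h)}{h}\int_{-\infty}^{t}\mathbf 1_{\omega_{h}}(s)\,ds ,
\]
with $A\ge2$ a fixed constant to be chosen. Then $B$ is non-decreasing, $A\le B(t)\le A+2C$ is bounded uniformly in $h$, $B'(t)=\tfrac{2\psi(h)}{h}\mathbf 1_{\omega_{h}}(t)$ a.e., and $hB(t)/\psi(h)=Ah/\psi(h)+2\int_{-\infty}^{t}\mathbf 1_{\omega_{h}}$ is Lipschitz in $t$ with constant $\le2$, uniformly in $h$. Since $K$ is only measurable in $t$, the lowest eigenvalue of $K(t)$ is measurable, so $B$ is well defined; one may mollify $B$ at a scale $\ll h/\psi(h)$ if a smooth multiplier is wanted, without changing these bounds. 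The point of this choice is that the total variation of $B$ stays bounded precisely because the set on which $B$ must rise steeply, $\omega_{h}$, has measure comparable to $h/\psi(h)$.

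Then I would write down the exact identity obtained by integration by parts, using that $B$ is real scalar and $K$ Hermitian:
\[
\im\langle Qu,B u\rangle=\frac{h}{2}\int B'(t)|u(t)|^{2}\,dt+\Re\langle S\,hD_{t}u,B u\rangle+\int B(t)\langle K(t)u(t),u(t)\rangle\,dt ,
\]
for $u\in C_{0}^{\infty}(\br,\bc^{N})$. For the first and third terms one has: on $\omega_{h}$, $\tfrac{h}{2}B'|u|^{2}\ge\psi(h)|u|^{2}$; off $\omega_{h}$, $B\langle Ku,u\rangle\ge\tfrac{A}{2}\langle Ku,u\rangle+\tfrac{A}{2}\psi(h)|u|^{2}$; and everywhere $B\langle Ku,u\rangle\ge\tfrac{A}{2}\langle Ku,u\rangle\ge0$. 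Hence for $A\ge2$,
\[
\frac{h}{2}\int B'|u|^{2}+\int B\langle Ku,u\rangle\ \ge\ \psi(h)\|u\|^{2}+\frac{A}{2}\langle Ku,u\rangle .
\]

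The remaining, and genuinely delicate, step is to control $\Re\langle S\,hD_{t}u,Bu\rangle$: a crude Cauchy--Schwarz bound here produces an $O(1)\|u\|^{2}$ error, which would swamp the gain $\psi(h)\|u\|^{2}$, so the structure of~\eqref{realpartest} must be used essentially. I would use that~\eqref{realpartest} implies $\ker K(t)\subseteq\ker S(t)$ and, more quantitatively, that $S(t)=K(t)^{1/2}R(t)+R(t)^{*}K(t)^{1/2}$ for some measurable $R$ with $\|R\|_{L^\infty}\le C$ (a standard linear-algebra consequence of the form bound, obtained by diagonalizing $K(t)$). Splitting $\Re\langle S\,hD_{t}u,Bu\rangle$ along this factorization and moving $K^{1/2}$ and the scalar $B$ across the inner product, one half is bounded by $C\|B\|_{\infty}h\|D_{t}u\|\,\|K^{1/2}u\|\le \tfrac14C_{0}h^{2}\|D_{t}u\|^{2}+\varepsilon\langle Ku,u\rangle$, which is absorbed into the $Ch^{2}\|D_{t}u\|^{2}$ slack and into $\tfrac{A}{2}\langle Ku,u\rangle$ once $\varepsilon<A/4$; the other half, which still contains $K^{1/2}hD_{t}u$, is treated after an integration by parts in $t$ and by invoking $\im Q\ge cQ^{*}Q$ together with $hD_{t}u=M^{-1}(Qu-iKu)$ to confine $\|K^{1/2}hD_{t}u\|$, so that this contribution is likewise absorbed into a small multiple of $\psi(h)\|u\|^{2}+\langle Ku,u\rangle$ plus $Ch^{2}\|D_{t}u\|^{2}$. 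This interplay between~\eqref{realpartest} and the ellipticity-type bound~\eqref{sub2} is the main obstacle and is exactly where the systems case departs from the scalar one. Collecting everything, choosing first $A$, then $\varepsilon$, then $h_{0}$, and finally rescaling $B$ by a fixed constant to restore the coefficients $1$ in front of $\psi(h)\|u\|^{2}$ and $\langle Ku,u\rangle$, yields~\eqref{locesten} with a uniformly bounded real $B(t)$ such that $hB(t)/\psi(h)\in\lip(\br)$ uniformly.
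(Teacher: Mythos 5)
Your multiplier and energy identity are essentially the paper's own construction in a different dress: the paper conjugates by $E(t)=\exp\bigl(\tfrac{\psi(h)}{h}\int_0^t\Phi_h\bigr)$, with $\Phi_h$ the characteristic function of $\Omega_{\psi(h)}(K)$, and ends with the scalar multiplier $B=\varrho E^2$; your $B(t)=A+\tfrac{2\psi(h)}{h}\int_{-\infty}^t\mathbf 1_{\omega_h}$ is the same device (bounded, increasing, with $hB'/\psi(h)$ supported in $\Omega_{\psi(h)}(K)$), and your lower bound for $\tfrac h2\int B'|u|^2+\int B\langle Ku,u\rangle$ is the paper's observation that $K+\psi(h)\Phi_h\id_N\ge\tfrac12(K+\psi(h)\id_N)$. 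Up to that point the argument is correct.

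The genuine gap is your treatment of the cross term $\Re\langle \im M\,hD_tu,Bu\rangle$. The factorization $\im M=K^{1/2}R+R^*K^{1/2}$ with $R\in L^\infty$ is a correct consequence of \eqref{realpartest}, and the half in which $K^{1/2}$ lands on $u$ is indeed absorbable; but the half containing $K^{1/2}hD_tu$ is exactly the hard part, and your two proposed tools fail here. First, an integration by parts in $t$ is not available: in the lemma $K(t)$, hence $K^{1/2}(t)$ and $R(t)$, are only $L^\infty$ in $t$ (they come from $K(t,w)\in L^\infty(\br,C^\infty_\rb)$ frozen at $w_j$), so there is nowhere to put the $t$-derivative, and no bound on $[hD_t,K^{1/2}(t)]$. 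Second, $\im Q\ge cQ^*Q$ is not among the hypotheses of the lemma (only \eqref{realpartest} and \eqref{sub4} are assumed), and even granting it, writing $hD_tu=M^{-1}(Qu-iKu)$ produces a term of size $\|Qu\|\,\|u\|$ which cannot be absorbed: the right-hand side of \eqref{locesten} contains $\im\langle Qu,Bu\rangle$, which is linear in $Qu$, not $\|Qu\|^2$, and trading $\|Qu\|\,\|u\|\le\alpha\|u\|^2+C_\alpha\|Qu\|^2$ leaves an $\alpha\|u\|^2$ that must be $\ll\psi(h)\|u\|^2$ and an unabsorbable $\|Qu\|^2$. The paper's proof closes this step differently, by the key estimate \eqref{mest}: split $u=\chi_0(hD_t)u+\chi_1(hD_t)u$ into low and high semiclassical frequencies, use \eqref{realpartest} at the symbol level together with the sharp G\aa{}rding inequality for systems (Proposition~\ref{garding}) applied to $\varepsilon K(t)+C_\varepsilon\chi_0^2(hD_t)h^2D_t^2\pm\im M(t)\chi_0(hD_t)hD_t$ on the low frequencies, and the trivial bound $|\chi_1(h\tau)|\le C|h\tau|$ on the high frequencies, yielding $|\langle \im M\,hD_tu,u\rangle|\le\varepsilon\langle Ku,u\rangle+C_\varepsilon(h^2\|D_tu\|^2+h\|u\|^2)$. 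Replacing your vague step by this frequency-splitting/G\aa{}rding argument (applied with $Bu$ in place of $u$, or equivalently conjugating as in the paper) would complete your proof.
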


\begin{proof} 
Let $0 \le {\Phi}_{h}(t) \le 1$ be the characteristic function of the set
${\Omega}_{{\delta}}(K)$  with ${\delta} = {\psi}(h)$.
Since ${\delta} ={\psi}(h)$ is the inverse of $h ={\delta}{\phi}({\delta})$
we find that ${\phi}({\psi}(h)) = h/{\delta} = h/{\psi}(h)$.
Thus, we obtain from~\eqref{sub4a} that
\begin{equation*}
 \int {\Phi}_{h}(t)\,dt  = |{\Omega}_{{\delta}}(K)| \le C
 h/{\psi}(h) 
\end{equation*}
Let 
\begin{equation}
E(t)= \exp \left( \frac{{\psi}(h)}{h}\int^{t}_0 {\Phi}_h(s)\, ds \right)
\end{equation}
then we find that $E$ and $E^{-1} \in L^{\infty}(\br)$ uniformly and
$hE'/ {\psi}(h) = {\Phi}_hE$ in~ $\Cal D'(\br)$.
We have
\begin{multline}\label{easyest}
E(t)Q(t,hD_t)E^{-1}(t) = Q(t,hD_t)+ E(t)h [M(t) D_t,
E^{-1}(t)]\id_N \\=  Q(t,hD_t) + i  {\psi}(h){\Phi}_h(t)\id_N -
{\psi}(h) {\Phi}_h(t)\im M(t) 
\end{multline} 
since $(E^{-1})' = - E'E^{-2}$. In the following, we let
\begin{equation}\label{Fjdef}
F(t)= K(t) + {\psi}(h)\id_N \ge  {\psi}(h)\id_N
\end{equation}
By the definition we have ${\Phi}_h(t) < 1 \implies K(t) \ge {\psi}(h)\id_N$, so 
$$
K(t) + {\psi}(h){\Phi}_h(t)\id_N \ge \frac{1}{2}F(t)
$$ 
Thus by taking the inner product in $L^2(\br)$ we find from ~\eqref{easyest} that 
\begin{multline*}
  \im \w{E(t) Q(t,hD_t)E^{-1}(t)u,u}  \\ \ge \frac{1}{2}\w{
   F(t)u,u} +  \w{\im M(t)hD_tu,u} -ch\mn u^2   \qquad u \in C^\infty_0(\br, \bc^N)
\end{multline*}
since  $\im Q(t,hD_t)= K(t)+ \im M(t)hD_t +  \frac{h}{2i}\partial_t
\im M(t)$. Now we may use~\eqref{realpartest} to estimate for any
${\varepsilon} > 0$
\begin{equation}\label{mest}
 |\w{\im MhD_tu,u}| \le {\varepsilon}\w{Ku,u} + C_{\varepsilon}(h^2\mn{D_t
 u}^2 + h\mn u^2)\qquad \forall\, u \in C^\infty_0(\br, \bc^N) 
\end{equation}
In fact, $u = {\chi}_0(hD_t)u +  {\chi}_1(hD_t)u$ where ${\chi}_0(r) \in
C_0^\infty(\br)$ and $|r| \ge 1$ in $\supp {\chi}_1$. We obtain from
~\eqref{realpartest} for any ${\varepsilon}>0$ that
\begin{equation*}
 |\w{\im M(t) {\chi}_0(h{\tau})h{\tau}u,u}| \le
 C\w{K(t)u,u}^{1/2}|{\chi}_0(h{\tau})h{\tau}|\mn u \le {\varepsilon}\w{K(t)u,u} +
 C_{\varepsilon}\mn{{\chi}_0(h{\tau}) h{\tau}u}^2
\end{equation*}
so by using G\aa rdings inequality in Proposition~\ref{garding} on
$${\varepsilon}K(t) +  C_{\varepsilon}{\chi}_0^2(h D_t)
h^2 D_t^2\pm \im M(t) {\chi}_0(hD_t)hD_t$$ we obtain 
\begin{equation*}
  |\w{\im M(t) {\chi}_0(hD_t)hD_t u,u}| \le {\varepsilon}\w{K(t)u,u} + C_{\varepsilon}h^2\mn{D_t
 u}^2 + C_0h\mn u^2\quad \forall\, u \in C^\infty_0(\br, \bc^N) 
\end{equation*}
since $\mn{ {\chi}_0(hD_t)hD_t u} \le C\mn{hD_tu}$. The other term is easier to estimate:
\begin{equation*}
  |\w{\im M(t) {\chi}_1(hD_t) hD_t u,u}| \le C\mn{hD_t u}\mn{ {\chi}_1(hD_t)
    u} \le C_1 h^2\mn{D_tu}^2
\end{equation*}
since $|{\chi}_1(h{\tau})| \le C|h{\tau}|$. By taking ${\varepsilon} =
1/6$ in ~\eqref{mest} we obtain
\begin{equation*}
  \w{F(t)u,u} \le 3 \im \w{E(t) Q(t,hD_t)E^{-1}(t)u,u} +  C(h^2\mn{D_t
 u}^2 + h\mn u^2)
\end{equation*}
Now $hD_tEu = EhD_tu -i {\psi}(h){\Phi}_hEu$ so we find by substituting $E(t)u$ that
\begin{multline}\label{conjest}
 {\psi}(h)\mn {E(t) u}^2 + \w{KE(t)u,E(t)u} \\ \le 3\im
\w{Q(t,hD_t)u,E^2(t)u}  +  C(h^2\mn{D_t
 u}^2 + h\mn u^2 + {\psi}^2(h)\mn{E(t)u}^2)
\end{multline} 
for $u \in C_0^\infty(\br, \bc^N)$. Since $E \ge c$, $K \ge 0$ and $h \ll
{\psi}(h) \ll 1$ when $h
\to 0$ we obtain~\eqref{locesten} with scalar $B =  {\varrho}E^2$ for
${\varrho} \gg 1$ and $h \ll 1$.
\end{proof}

To finish the proof of Proposition~\ref{locestprop}, we substitute ${\phi}_j^wu$ in
the estimate~\eqref{locesten} with $Q = Q_j$ to obtain that
\begin{equation}\label{newest}
  {\psi}(h) \mn{{\phi}_j^wu}^2 + \w{K_j{\phi}_j^wu,{\phi}_j^wu}
 \le \im \w{{\phi}_j^wQ_j(t,hD_t)u, B_j(t) {\phi}_j^wu} + Ch^2\mn{{\phi}_j^wD_tu}^2
\end{equation}
for $u \in C^\infty_0(\br^{n+1}, \bc^N)$, since ${\phi}_j^w(x,D_x)$ and $Q_j(t,hD_t)$ commute.
Next, we shall replace the approximation $Q_j$ by the original operator
$Q$. In a $G$ neighborhood of $\supp {\phi}_j$ we may
use the Taylor expansion in $w$ to write for almost all $t$
\begin{equation}\label{taylorappr}
 Q(t,{\tau},h^{1/2}w) - Q_j(t,{\tau})= i(K(t,h^{1/2}w) - K_j(t)) + (M(t,h^{1/2}w) - M_j(t)){\tau}
\end{equation}
We shall start by estimating the last term
in~\eqref{taylorappr}. Since $M(t,w) \in C^\infty_\rb$ we have 
\begin{equation}\label{Mest0}
 |M(t,h^{1/2}w) - M_j(t)| \le C h^{1/2}H^{-1/2}\qquad \text{in $\supp {\phi}_j$}
\end{equation}
because then $|w-w_j| \le c H^{-1/2}$. Since $M(t,h^{1/2}w) \in S(1,
h|dw|^2)$ and $h \ll H$ we find from~\eqref{Mest0} that $M(t,h^{1/2}w) - M_j(t) \in S(h^{1/2}H^{-1/2},
G)$ in ~$\supp {\phi}_j$ uniformly in ~~$t$.
By the Cauchy-Schwarz inequality we find
\begin{equation}\label{Mest}
 |\w{{\phi}_j^w(M^w- M_j)hD_tu, B_j(t) {\phi}_j^wu}| \le C(\mn{{\chi}_j^whD_tu}^2
 + hH^{-1}\mn{{\phi}_j^wu}^2)
\end{equation}
for $ u \in C^\infty_0(\br^{n+1}, \bc^N)$ where ${\chi}_j^w =
h^{-1/2}H^{1/2}{\phi}_j^w(M^w- M_j) \in \op S(1, 
 G)$ uniformly in $t$ with values in $\ell^2$. Thus we find from~\eqref{errest} that
\begin{equation*}
 \sum_{j} \mn{{\chi}_j^whD_tu}^2 \le C \mn {hD_tu}^2  \qquad u \in
C^\infty_0(\br^{n+1}) 
\end{equation*}
and for the last terms in ~\eqref{Mest} we have 
\begin{equation*}
 hH^{-1}\sum_{j} \mn{{\phi}_j^w u}^2 \le 2hH^{-1} \mn u^{2} \ll {\psi}(h) \mn u^2 \qquad h \to 0 \qquad u \in
C^\infty_0(\br^{n+1}) 
\end{equation*}
by ~\eqref{pouest}.
For the first term in the right hand side of~\eqref{taylorappr} we find from Taylor's
formula
\begin{equation*}
  K(t,h^{1/2}w) - K_j(t) =
 h^{1/2}\w{S_j(t),W_j(w)} + R_j(t,{\tau},w) \qquad \text{in $\supp {\phi}_j$}
\end{equation*}
where $S_j(t) =  \partial_w K(t,h^{1/2}w_j) \in L^\infty(\br)$,  $R_j \in
S(hH^{-1}, G)$ uniformly for almost all ~$t$ and $W_j \in S(h^{-1/2}, h|dw|^2) $ 
such that ${\phi}_j(w)W_j(w) = {\phi}_j(w)(w-w_j) = \Cal
O(H^{-1/2})$. Here we could take $W_j(w) =
{\chi}(h^{1/2}(w-w_j))(w-w_j)$ for a suitable cut-off function ${\chi}
\in C^\infty_0$. We obtain from the calculus that
\begin{equation}\label{xref}
{\phi}_j^wK_j(t) = {\phi}_j^wK^w(t,h^{1/2}x,h^{1/2}D_x) -
 h^{1/2} {\phi}_j^w\w{S_j(t), W_j^w} + \wt R^w_j 
\end{equation}
where $\set{\wt R_j}_j \in S(hH^{-1}, G)$
with values in $\ell^2$ for almost all~$t$. Thus we may estimate the sum
of these error terms by~\eqref{errest} to obtain
\begin{equation}\label{yref}
 \sum_j|\w{\wt R_j^wu,B_j(t){\phi}_j^wu }| \le C hH^{-1}\mn u^2 \ll
 {\psi}(h)\mn u^2\qquad h \to 0
\end{equation}
for $ u \in
C^\infty_0(\br^{n+1}, \bc^N)$. Observe that it follows from
~\eqref{sub3a} for any ${\kappa} > 0$ and almost all~~$t$ that
\begin{equation*}
|\w{S_j(t)u,u}| \le C
 \w{K_j(t)u,u}^{1/2}\mn u \le {\kappa}\w{K_j(t)u,u} + 
 C\mn u^2/{\kappa} \qquad \forall\, u \in \bc^N
\end{equation*}
Let 
$F_j(t) = F(t, h^{1/2}w_j) =
K_j(t) +  {\psi}(h)\id_N$, then 
by taking ${\kappa} = {\varrho}H^{1/2}h^{- 1/2}$ we find that for
any~${\varrho} > 0$ there exists ~~$h_{\varrho} >  0$ so that   
\begin{multline}\label{apprest}
h^{1/2}H^{-1/2}|\w{S_ju,u}|  \le {\varrho} \w{K_ju,u} + 
 Ch H^{-1}\mn u^2/{\varrho} \\\le {\varrho}\w{F_ju,u} \qquad \forall\, u \in
 \bc^N\qquad 0 < h \le h_{\varrho}
\end{multline}
since $hH^{-1} \ll {\psi}(h)$ when $h \ll 1$.
Now $F_j$ and $S_j$ only depend on ~$t$, so by~\eqref{apprest} we may
use Remark~\ref{grem} in the Appendix for fixed~ $t$ 
with $A = h^{1/2}H^{-1/2} S_j$, $B = {\varrho}F_j$, $u$ replaced with ${\phi}_j^wu$
and $v$ with $B_j H^{1/2}{\phi}_j^wW_j^wu$. Integration
then gives
\begin{equation}\label{apprest0}
 h^{1/2}|\w{B_j{\phi}_j^w \w{
     S_j(t),  W_j^w}u, {\phi}_j^w u}| \le \frac {3\varrho}2(\w{F_j(t){\phi}_j^wu,{\phi}_j^wu} 
+ \w{F_j(t){\psi}_j^wu,{\psi}_j^wu})
\end{equation}
for $ u \in C^\infty_0(\br^{n+1}, \bc^N) $, $0 < h \le h_{\varrho}$, where
$$ 
{\psi}_j^w = B_j H^{1/2}{\phi}_j^wW_j^w \in
\op S(1,G) \qquad\text{with values in $\ell^2$}
$$
In fact, since ${\phi}_j \in S(1,G)$ and $W_j \in S(h^{-1/2},
h|dw|^2)$ we find that ${\phi}_j^wW_j^w = ({\phi}_jW_j)^{w}$ modulo
$\op S(H^{1/2},G)$, and since $|{\phi}_jW_j| \le CH^{-1/2}$ we find from Leibniz'
rule that ${\phi}_jW_j \in S(H^{-1/2}, G)$.
Now $F \ge {\psi}(h)\id_N \gg hH^{-1}\id_N$ so by using
Proposition~\ref{cleanupest} in the Appendix and then integrating in~$t$ we find that
$$
\sum_j\w{F_j(t){\psi}_j^wu, {\psi}_j^wu} \le
C \sum_j\w{F_j(t){\phi}_j^wu,{\phi}_j^wu}  \qquad u \in
C^\infty_0(\br^{n+1}, \bc^N) $$ 
We obtain from ~\eqref{pouest} and~\eqref{Fjdef} that
\begin{equation*}
   {\psi}(h) \mn u^2 \le 2\sum_{j}^{}
 \w{F_j(t){\phi}_j^wu,{\phi}_j^wu}  \qquad u \in
C^\infty_0(\br^{n+1}, \bc^N) 
\end{equation*}
Thus, for any ${\varrho} > 0$ we obtain from~\eqref{newest} and
\eqref{Mest}--\eqref{apprest0} that 
\begin{equation*}
(1-C_0{\varrho})\sum_{j}^{} \w{F_j(t){\phi}_j^wu,{\phi}_j^wu} \le \sum_{j} 
 \im \w{{\phi}_j^wQ u, B_j(t) {\phi}_j^wu} 
+  C_{\varrho}h^2\mn{D_tu}^2  \qquad 0 < h \le h_{\varrho}
\end{equation*}
We have that $
\sum_{j}B_j{\phi}_j^w{\phi}_j^w \in S(1, G)$ is a scalar symmetric
operator uniformly in ~$t$.
When ${\varrho} = 1/2C_0$ we obtain the
estimate~\eqref{locest} with
$B^w = 4\sum_{j}B_j{\phi}_j^w{\phi}_j^w$, which
finishes the proof of Proposition~\ref{locestprop}.
\end{proof}

\appendix
\section{}

We shall first study the condition for the
one-dimensional model operator 
$$hD_t\id_N + i F(t) \qquad 0 \le F(t) \in C^\infty(\br) $$ 
to be of finite type of order ${\mu}$:
\begin{equation}\label{subcond01}
\left|{\Omega}_{\delta}(F)\right| \le C {\delta}^{\mu} \qquad 0 <
{\delta} \ll 1
\end{equation}
and we shall assume that $0 \notin {\Sigma}_\infty(P)$.
When $F(t)\notin C^\infty(\br)$ we may have any ${\mu} > 0$ in ~\eqref{subcond01}, for
example with $F(t) = |t|^{1/{\mu}}\id_N$. 
But when $F \in C^1_\rb$ the estimate cannot hold with ${\mu} > 1$,
and since it trivially holds for ${\mu}= 0$ 
the only interesting cases are $0 < {\mu} \le 1$.

When $0 \le F(t)$ is diagonalizable for any ~$t$ with eigenvalues
${\lambda}_j(t) \in C^\infty$, $j = 1, \dots, N$, then condition~\eqref{subcond01} is
equivalent to 
\begin{equation*}
 \left|{\Omega}_{\delta}({\lambda}_j)\right|\le C {\delta}^{\mu}
 \qquad \forall\,j \quad 0 < {\delta} \ll 1 
\end{equation*}
since ${\Omega}_{\delta}(F) = \bigcup_j
 {\Omega}_{\delta}({\lambda}_j)$.
Thus we shall start by studying the scalar case.

\begin{prop}\label{scalarsubcondlem}
Assume that $0 \le f(t) \in C^\infty(\br)$ such that $f(t) \ge c > 0$ when
$|t| \gg 1$, i.e., $0 \notin {\Sigma}_\infty(f)$. We find that $f$
satisfies ~\eqref{subcond01} with ${\mu} 
> 0$ if and only if ${\mu} \le 1/k$ for an even $k \ge 0$ so that
\begin{equation}\label{dersubcondsc}
 \sum_{j \le k}|\partial_t^jf(t)|  > 0 \qquad
 \forall\,t
\end{equation}
\end{prop}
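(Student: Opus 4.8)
The plan is to reduce the statement to a standard quantitative sublevel-set estimate, and then to combine it with a compactness/covering argument (for the "if" part) and a Taylor expansion at a point of maximal vanishing (for the "only if" part). Throughout, the hypothesis $0\notin{\Sigma}_\infty(f)$ provides $R>0$ and $c_0>0$ with $f(t)\ge c_0$ for $|t|>R$, so that for $0<{\delta}<c_0$ one has ${\Omega}_{\delta}(f)=\{t:f(t)\le{\delta}\}\subseteq I:=[-R,R]$, and $f^{-1}(0)\subseteq I$ is compact. The technical core I would establish first, by induction on $j\ge1$, is the estimate: if $g\in C^j([a,b])$ with $|g^{(j)}|\ge A>0$ on $[a,b]$, then $|\{t\in[a,b]:|g(t)|\le{\delta}\}|\le C_j({\delta}/A)^{1/j}$. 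For $j=1$, $g$ is strictly monotone, so the set is an interval over which $g$ sweeps a range of size $\le2{\delta}$, hence of length $\le2{\delta}/A$. For $j\ge2$, $g^{(j-1)}$ is strictly monotone with $|(g^{(j-1)})'|\ge A$, so $\{|g^{(j-1)}|\le{\eta}\}$ is an interval of length $\le2{\eta}/A$; on each of the (at most two) complementary subintervals $|g^{(j-1)}|\ge{\eta}$, and the inductive hypothesis bounds the contribution there by $C_{j-1}({\delta}/{\eta})^{1/(j-1)}$; optimizing ${\eta}\sim A^{(j-1)/j}{\delta}^{1/j}$ yields the claim.

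For the "if" direction, assume $k\ge0$ is even, ${\mu}\le1/k$, and $\sum_{j\le k}|\partial_t^j f|>0$ on $\br$. If $k=0$ then $f$ never vanishes, so $f\ge c_1>0$ on $I$ by compactness, and ${\Omega}_{\delta}(f)=\emptyset$ for ${\delta}<c_1$; so assume $k\ge2$. For each $t_0\in I$ pick the least $j(t_0)\le k$ with $f^{(j(t_0))}(t_0)\ne0$; by continuity $|f^{(j(t_0))}|$ stays positive on a neighborhood of $t_0$, and compactness of $I$ produces finitely many intervals $J_1,\dots,J_M$ covering $I$ with $|f^{(j_l)}|\ge c_1>0$ on $J_l$ and $j_l\le k$. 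On a $J_l$ with $j_l=0$ we have $f\ge c_1>{\delta}$ for small ${\delta}$, so ${\Omega}_{\delta}(f)\cap J_l=\emptyset$; on a $J_l$ with $j_l\ge1$ the lemma gives $|{\Omega}_{\delta}(f)\cap J_l|\le C({\delta}/c_1)^{1/j_l}\le C'{\delta}^{1/k}$, since $1\le j_l\le k$ and $0<{\delta}<1$. Summing over $l$ and using ${\mu}\le1/k$ gives $|{\Omega}_{\delta}(f)|\le C''{\delta}^{1/k}\le C''{\delta}^{\mu}$, which is \eqref{subcond01}.

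For the "only if" direction, assume \eqref{subcond01} holds for some ${\mu}>0$. If $f$ vanished to infinite order at some $t_0$, Taylor's formula would give $f(t)\le C_N|t-t_0|^N$ near $t_0$ for every $N$, hence $|{\Omega}_{\delta}(f)|\ge c\,{\delta}^{1/N}$ for small ${\delta}$; taking $N>1/{\mu}$ contradicts \eqref{subcond01}. Thus the decreasing closed sets $F_m=\{t:\partial_t^j f(t)=0,\ j\le m\}\subseteq f^{-1}(0)$, all contained in a fixed compact set, have empty intersection, so $F_{m_0}=\emptyset$ for some $m_0$: the order of vanishing of $f$ is bounded on $\br$. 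Since $f\ge0$, at any zero the first nonvanishing derivative has even positive order; let $k$ be the largest vanishing order occurring, so $k$ is $0$ or even, with $k=0$ exactly when $f$ has no zero (then ${\mu}\le1/k=\infty$ is vacuous), and $\sum_{j\le k}|\partial_t^j f|>0$ on $\br$. Choosing $t_0$ at which $f$ vanishes to order exactly $k\ge2$ gives $f^{(k)}(t_0)>0$ and $f(t)\le C_0|t-t_0|^k$ near $t_0$, whence $|{\Omega}_{\delta}(f)|\ge2({\delta}/C_0)^{1/k}$ for small ${\delta}$; comparing with \eqref{subcond01} forces ${\delta}^{1/k-{\mu}}$ to stay bounded below as ${\delta}\to0^+$, i.e.\ ${\mu}\le1/k$. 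This exhibits the required even $k$ and completes the proof.

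The main obstacle is the sublevel-set lemma; everything else is bookkeeping. I expect the ${\eta}$-splitting in its inductive step to be the delicate point: one must keep the constants $C_j$ uniform and handle the (at most two) complementary subintervals carefully. The parity observation — that $f\ge0$ forces the vanishing order to be even, which is precisely what makes "$k$ even" automatic in the statement — is immediate from the sign of the leading Taylor coefficient, but should be recorded explicitly.
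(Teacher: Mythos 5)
Your proof is correct, but it does not follow the paper's argument, so a comparison is in order. For the necessity direction you and the paper use the same mechanism: a Taylor upper bound $f(t)\le C_0|t-t_0|^{k}$ at a point of high-order vanishing forces $|{\Omega}_{\delta}(f)|\ge c\,{\delta}^{1/k}$, which is incompatible with \eqref{subcond01} unless ${\mu}\le 1/k$. You organize it differently, though: you first exclude infinite-order vanishing and then use compactness of the nested closed sets $F_m$ to get a uniform bound on the vanishing order, and you take $k$ to be the maximal vanishing order, which makes both the evenness of $k$ and the validity of \eqref{dersubcondsc} explicit; the paper instead argues by contraposition with $k=[1/{\mu}]+1$ and leaves the reduction to an even $k$ essentially implicit. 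For the sufficiency direction the routes genuinely diverge: the paper exploits $f\ge 0$ directly, noting that \eqref{dersubcondsc} makes $f^{-1}(0)$ finite and that at each zero the first nonvanishing derivative has even order $j\le k$, so Taylor gives the lower bound $f\ge c|t-t_0|^{j}$ and hence $|{\Omega}_{\delta}(f)\cap{\omega}|\le C{\delta}^{1/j}\le C{\delta}^{1/k}$ near each zero. You instead prove a van der Corput--type sublevel-set lemma (if $|g^{(j)}|\ge A$ on an interval then $|\{|g|\le{\delta}\}|\le C_j({\delta}/A)^{1/j}$, by induction on $j$ with the ${\eta}$-splitting you describe) and combine it with a finite covering of $[-R,R]$ by intervals on which some derivative of order $\le k$ is bounded below. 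Your route needs neither the finiteness of the zero set nor the evenness of the vanishing order for this half, and it produces constants controlled explicitly by the derivative lower bounds, which makes uniformity in auxiliary parameters (as needed when the proposition is applied uniformly in $(x,{\xi})$, e.g.\ in Example~\ref{scalarex}) transparent; the paper's route is shorter and avoids the sublevel lemma altogether. One wording slip to fix: from $c{\delta}^{1/k}\le|{\Omega}_{\delta}(f)|\le C{\delta}^{\mu}$ it is ${\delta}^{{\mu}-1/k}$ that stays bounded below (equivalently ${\delta}^{1/k-{\mu}}$ bounded above) as ${\delta}\to 0^+$, not ${\delta}^{1/k-{\mu}}$ bounded below; the stated conclusion ${\mu}\le 1/k$ is of course unaffected.
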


Simple examples as $f(t) = e^{-t^2}$ show that the condition that
$0 \notin {\Sigma}_\infty(f)$ is necessary for the conclusion of 
Proposition~\ref{scalarsubcondlem}.

\begin{proof}
Assume that ~\eqref{dersubcondsc} does not hold with $k \le 1/{\mu}$, then there exists
$t_0$ such that $f^{(j)}(t_0) = 0$ for all integer $j \le 1/{\mu}$.
Then Taylor's formula gives that $f(t) \le c |t-t_0|^k$ and
$|{\Omega}_{\delta}(f)| \ge c {\delta}^{1/k}$ where $k = [1/{\mu}] + 1> 1/{\mu}$,
which contradicts condition~\eqref{subcond01}. 

Assume now that condition~\eqref{dersubcondsc} holds for some $k$, 
then $f^{-1}(0)$ consists of finitely many points. In fact, else there
would exist $t_0$ where $f$ vanishes of infinite order since $f(t)
\ne 0$ when $|t| \gg 1$. Also note that $\bigcap_{{\delta} >
  0}{\Omega}_{\delta}(f) = f^{-1}(0)$, in fact $f$
must have a positive infimum outside any neighborhood of $f^{-1}(0)$.
Thus, in order to estimate $|{\Omega}_{\delta}(f)|$ for
${\delta} \ll 1$ we only have to consider a small 
neighborhood ~${\omega}$ of $t_0 \in f^{-1}(0)$. Assume that
\begin{equation*}
 f(t_0) = f'(t_0) = \dots = f^{(j-1)}(t_0) =
 0\text{ and } f^{(j)} (t_0)\ne 0
\end{equation*}
for some $j \le k$.
Since $f \ge 0$ we
find that $j$ must be even and $ f^{(j)} (t_0) > 0$.
Taylor's formula gives as before
$
 f(t) \ge c|t -t_0|^j $ for  $ |t - t_0| \ll 1 
$
and thus we find that 
$$\left|{\Omega}_{\delta}(f) \bigcap {\omega}\right| \le C
{\delta}^{1/j} \le C{\delta}^{1/k} \qquad 0 < {\delta} \ll 1$$ 
if ${\omega}$ is a small neighborhood of $t_0$. Since $f^{-1}(0)$
consists of finitely many points we find that
\eqref{subcond01} is satisfied with ${\mu} = 1/k$ for an even $k$.
 \end{proof}

Thus, if $0 \le F \in  C^\infty(\br)$ is $C^\infty$ diagonalizable
system and $0 \notin {\Sigma}_\infty(P)$
then condition  ~\eqref{subcond01} is equivalently to 
\begin{equation}
 \sum_{j \le k}|\partial_t^j\w{F(t)u(t), u(t)}|/\mn {u(t)}^2  > 0 \qquad
 \forall\,t
\end{equation}
for any $0 \ne u(t) \in C^\infty(\br)$, since this holds for
diagonal matrices and is invariant.
This is true also in the general case by the following proposition.

\begin{prop}\label{subcondlem}
Assume that $0 \le F(t) \in C^\infty(\br)$ is an $N \times N$ system
such that $0 \notin {\Sigma}_\infty(F)$. 
We find that $F$ satisfies ~\eqref{subcond01} with ${\mu} > 0$ if
and only if ${\mu} \le 1/k$ for an even $k \ge 0$ so that
\begin{equation}\label{dersubcond}
 \sum_{j \le k}|\partial_t^j\w{F(t)u(t), u(t)}|/\mn{u(t)}^2  > 0 \qquad
 \forall\,t
\end{equation}
for any $0 \ne u(t) \in C^\infty(\br)$.
\end{prop}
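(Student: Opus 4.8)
The plan is to reduce to the scalar Proposition~\ref{scalarsubcondlem} by a chain of localizations. Set $\lambda(t)=\min_{\mn u=1}\w{F(t)u,u}$, the smallest eigenvalue of $F(t)\ge 0$; it is continuous, nonnegative, and $\ge c>0$ for $|t|\gg1$ since $0\notin{\Sigma}_\infty(F)$. By~\eqref{omegadef} we have ${\Omega}_{\delta}(F)=\set{t:\lambda(t)\le{\delta}}$, so condition~\eqref{subcond01} concerns only the sublevel sets of the single continuous function $\lambda$, and ${\Sigma}_0(F)=\lambda^{-1}(0)$ is compact. First I would discard the region where $\lambda\ge c$, which contributes nothing to ${\Omega}_{\delta}(F)$ for small~${\delta}$; it then suffices to estimate $|{\Omega}_{\delta}(F)\cap{\omega}|$ near each $t_0\in{\Sigma}_0(F)$ and, for the converse, to exhibit at some such $t_0$ a smooth unit curve $u(t)$ making $\w{F(t)u(t),u(t)}$ vanish to high order.

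Near a fixed $t_0\in{\Sigma}_0(F)$ I would use the spectral projection ${\Pi}(t)=(2{\pi}i)^{-1}\oint_{|z|={\varepsilon}}(z\id_N-F(t))^{-1}\,dz$ onto the eigenvalues of $F(t)$ of modulus $<{\varepsilon}$, which is $C^\infty$ and commutes with $F(t)$ for ${\varepsilon}$ small; trivializing the bundle $\ran{\Pi}(t)$ by a smooth orthonormal frame reduces matters, near $t_0$, to a $C^\infty$ family $\wt F(t)\ge0$ of $K\times K$ matrices with $\wt F(t_0)=0$ and $\lambda(t)=\min_{\mn v=1}\w{\wt F(t)v,v}$ on a neighborhood of $t_0$. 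Because ${\Pi}$ commutes with $F$ the block decomposition is orthogonal, so~\eqref{subcond01} localizes to $\wt F$ and the derivatives of $\w{F(t)u(t),u(t)}$ at $t_0$ are controlled by those of the $\wt F$-block (a test curve leaving $\ran{\Pi}(t_0)$ only adds a manifestly positive-definite, lower-order term); Remark~\ref{subinvrem} handles the base changes. Since $\wt F(t_0)=0$ is a minimum of each $t\mapsto\w{\wt F(t)v,v}\ge0$ we get $\wt F'(t_0)=0$, hence $\wt F(t)=(t-t_0)^2\wt F_1(t)$ with $\wt F_1\in C^\infty$, $\wt F_1(t_0)\ge0$. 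Iterating — replacing $\wt F$ by $\wt F_1$ when $\wt F_1(t_0)=0$, by the restriction of $\wt F_1$ to the small eigenvalues when $\wt F_1(t_0)$ has a nontrivial kernel, and stopping when the leading matrix is positive definite — the dimension drops at each projection step and the number of extra $(t-t_0)^2$ factors between projections is finite unless $\wt F$ vanishes to infinite order. Thus either the recursion terminates, yielding $\lambda(t)=|t-t_0|^{N(t_0)}g(t)$ near $t_0$ with $g\in C^\infty$, $g(t_0)>0$ and $N(t_0)$ a positive even integer, whence $|{\Omega}_{\delta}(F)\cap{\omega}|\asymp{\delta}^{1/N(t_0)}$ for ${\delta}\ll1$; or it does not, in which case $\lambda$ is bounded below by no power of $|t-t_0|$ and a (possibly moving) smooth unit curve $u(t)$ makes $\w{F(t)u(t),u(t)}$ flat at $t_0$, so $|{\Omega}_{\delta}(F)\cap{\omega}|$ beats every power of~${\delta}$ and~\eqref{dersubcond} fails for every $k$. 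Summing the finitely many local contributions (and noting that if ${\Sigma}_0(F)$ is infinite its accumulation points are nonterminating $t_0$'s) one reads off: \eqref{subcond01}$\iff N:=\max_{t_0}N(t_0)\le1/{\mu}$, while~\eqref{dersubcond} for even $k$ holds $\iff k\ge N(t_0)$ for all $t_0\iff k\ge N$; together these give the asserted equivalence (with $N=0$, $k=0$ in the elliptic case), the terminal step being the essentially scalar situation covered by Proposition~\ref{scalarsubcondlem}.

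The heart of the argument, and the step I expect to be delicate, is the precise matching of the recursive order $N(t_0)$ with the derivative condition~\eqref{dersubcond}: that every smooth unit curve $u$ with $u(t_0)\ne0$ makes $\w{F(t)u(t),u(t)}$ vanish at $t_0$ to order at most $N(t_0)$, with equality achieved by a curve threading the successive kernels $\ker\wt F_j(t_0)$ produced by the recursion. The "at most" part requires checking that once $u$ leaves one of these kernels it acquires a genuinely nonvanishing, lower-order term (because the complementary block is positive definite near $t_0$), and the extremal curve must be assembled from the smooth orthonormal frames trivializing the successive small-eigenvalue bundles; it is exactly the phenomenon of a rotating kernel of $F(t)$ that forces~\eqref{dersubcond} to be tested against moving, not merely constant, vectors $u(t)$. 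Once this correspondence is in place, the scalar bookkeeping of Proposition~\ref{scalarsubcondlem} — even order of vanishing $2j$ at a zero $\Leftrightarrow|{\Omega}_{\delta}|\le C{\delta}^{1/2j}$ there, evenness of the vanishing order of a $C^\infty$ nonnegative function, and finiteness of the zero set — transfers directly.
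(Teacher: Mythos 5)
Your proposal is correct in substance, but it takes a genuinely different route from the paper's. The paper handles necessity in one line: for any smooth curve $u(t)$ with $|u(t)|\equiv 1$ the scalar function $f(t)=\w{F(t)u(t),u(t)}$ has ${\Omega}_{\delta}(f)\subset{\Omega}_{\delta}(F)$, so the first half of Proposition~\ref{scalarsubcondlem} applies directly; sufficiency is then obtained from Lemma~\ref{analem}: Taylor-expand $F$ at $t_0$, diagonalize the symmetric polynomial part by analytic (Kato--Rellich) perturbation theory, and use the minimax principle to conclude that each eigenvalue ${\lambda}_j(t)$ agrees with the quadratic form $\w{F(t)v_j(t),v_j(t)}$ up to $\Cal O(|t-t_0|^{k+1})$, after which \eqref{dersubcond} applied with $u=v_j$, the scalar reasoning, and the resulting finiteness of ${\Sigma}_0(F)$ give $|{\Omega}_{\delta}(F)|\le C{\delta}^{1/k}$. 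You instead construct a local normal form by iterated spectral projections onto the small eigenvalues and repeated factoring of $(t-t_0)^2$, obtaining the two-sided bound ${\lambda}(t)\asymp|t-t_0|^{N(t_0)}$ for the lowest eigenvalue and matching the recursion order $N(t_0)$ with the maximal order of vanishing of $\w{F(t)u(t),u(t)}$ over smooth test curves. This costs more bookkeeping (smoothness of the nested bundles, termination of the recursion versus infinite-order vanishing, and the induction showing that a curve leaving one of the successive kernels picks up a nonvanishing lower-order term all need to be written out), and one assertion is slightly overstated: ${\lambda}(t)=|t-t_0|^{N(t_0)}g(t)$ with $g$ smooth and positive need not hold, only the comparability ${\lambda}(t)\asymp|t-t_0|^{N(t_0)}$, which is all your measure estimate uses. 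What your route buys is a sharper structural statement: the exact local order of degeneracy at each zero, explicit extremal curves threading the nested kernels (making transparent why \eqref{dersubcond} must be tested against moving vectors), and a description of $F$ near ${\Sigma}_0(F)$, whereas the paper's Lemma~\ref{analem} needs only finite-order $C^k$ data and avoids constructing any invariant subbundles. Note also that your necessity direction could be shortened to the paper's restriction-to-curves argument rather than being routed through the recursion.
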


Observe that since  $0 \notin {\Sigma}_\infty(F)$ it suffices to check
condition~\eqref{dersubcond} on a compact interval.

\begin{proof}
First we assume that~\eqref{subcond01} holds with ${\mu} > 0$,
let $u(t) \in C^\infty(\br, \bc^N)$ such that $|u(t)| \equiv 1$,
and $f(t) = \w{F(t)u(t),u(t)} \in C^\infty(\br)$. Then we
have $ {\Omega}_{\delta}(f) \subset {\Omega}_{\delta}(F)$ so
 ~\eqref{subcond01} gives
\begin{equation*}
 |{\Omega}_{\delta}(f)| \le |{\Omega}_{\delta}(F)| \le C{\delta}^{\mu}
 \qquad 0 < {\delta} \ll 1
\end{equation*}
The first part of the proof of Proposition~\ref{scalarsubcondlem} then gives
~\eqref{dersubcond} for some $k \le 1/{\mu}$.

For the proof of the sufficiency of ~\eqref{dersubcond} we need the
following simple lemma.

\begin{lem}\label{analem}
Assume that $F(t) = F^*(t) \in C^k(\br)$ is an $N \times N$ system with
eigenvalues ${\lambda}_j(t) \in \br$,  $j = 1,\dots, N$.
Then, for any $t_0 \in \br$, there exist analytic $v_{j}(t) \in \bc^N$, $j =
1,\dots, N$, so that  $\set{v_{j}(t_0)} $ is a base for $\bc^N$ and
\begin{equation}
\left|{\lambda}_j(t) - \w{F(t)v_{j}(t), v_{j}(t)}\right| \le
C|t-t_0|^k \qquad \text{for $|t - t_0| \le 1$}
\end{equation}
after a renumbering of the eigenvalues.
\end{lem}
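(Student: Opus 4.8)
The plan is to reduce the statement to an honest real-analytic eigenvalue problem, apply the classical Rellich theorem to obtain analytic eigenvectors, and then control the error between that model problem and $F$ by the Taylor remainder. Throughout one works on the interval $|t-t_0|\le 1$; the case $k=0$ is trivial (take the $v_j$ constant equal to any orthonormal basis, using that the eigenvalues of $F$ are bounded near $t_0$), so I assume $k\ge 1$.

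First I would replace $F$ near $t_0$ by its Taylor polynomial
\[
 P(t)=\sum_{m=0}^{k-1}\frac{F^{(m)}(t_0)}{m!}\,(t-t_0)^m .
\]
Three properties of $P$ are immediate and are all that will be used: since $F(t)=F^*(t)$ for real $t$, each $F^{(m)}(t_0)$ is Hermitian, so $P(t)^*=P(t)$ for $t\in\br$; $P$ is a matrix-valued polynomial, hence real analytic, with $P(t_0)=F(t_0)$; and, by Taylor's formula with integral remainder together with the boundedness of the continuous matrix function $F^{(k)}$ on $[t_0-1,t_0+1]$, one has $\mn{F(t)-P(t)}\le C|t-t_0|^k$ for $|t-t_0|\le 1$. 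This is the only place the $C^k$ hypothesis enters.

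Next I would invoke Rellich's theorem for the one-parameter real-analytic family of Hermitian matrices $t\mapsto P(t)$: there exist real-analytic scalar functions $\mu_1(t),\dots,\mu_N(t)$ and a real-analytic orthonormal frame $v_1(t),\dots,v_N(t)$ in $\bc^N$ with $P(t)v_j(t)=\mu_j(t)v_j(t)$ for all $j$. In particular $\set{v_j(t_0)}$ is an orthonormal basis of $\bc^N$ and $\w{P(t)v_j(t),v_j(t)}=\mu_j(t)$. The reason for using Rellich rather than spectral projections of $P$ is that the $v_j$ so obtained stay analytic across any eigenvalue crossings of $P$, which may occur.

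Finally I would combine the two ingredients. Since $\mn{v_j(t)}=1$, the Taylor bound gives $|\w{F(t)v_j(t),v_j(t)}-\mu_j(t)|=|\w{(F(t)-P(t))v_j(t),v_j(t)}|\le C|t-t_0|^k$. On the other hand $\set{\mu_j(t)}$ is the spectrum of $P(t)$ counted with multiplicity, so Weyl's perturbation inequality for Hermitian matrices together with $\mn{F(t)-P(t)}\le C|t-t_0|^k$ shows that the eigenvalues of $F(t)$ and the numbers $\mu_j(t)$ agree pairwise up to $C|t-t_0|^k$; because $P(t_0)=F(t_0)$ the matching permutation agrees with the identity at $t_0$, and by continuity of all the functions involved it can be chosen constant on a neighbourhood of $t_0$ (on the remainder of $[t_0-1,t_0+1]$ the desired inequality is trivial, since there $|t-t_0|$ is bounded away from $0$ while the spectrum stays bounded). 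Renumbering the eigenvalues $\lambda_j$ of $F$ by this permutation and applying the triangle inequality then yields $|\lambda_j(t)-\w{F(t)v_j(t),v_j(t)}|\le C|t-t_0|^k$ for $|t-t_0|\le 1$, which is the assertion. I do not expect a genuine obstacle here — the argument combines Taylor's formula, Rellich's theorem and Weyl's inequality — the only step needing a word of care being the last one, that the renumbering of eigenvalues can be made uniform in $t$ near $t_0$, which is exactly where $P(t_0)=F(t_0)$ and continuity are used.
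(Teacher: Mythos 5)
Your argument is essentially the paper's own proof: replace $F$ by its Taylor polynomial of degree $k-1$ at $t_0$ (a Hermitian, analytic family with error $\Cal O(|t-t_0|^k)$), apply Rellich's theorem (Kato, Theorem II.6.1) to get analytic eigenvalues and an analytic orthonormal eigenframe $v_j$, compare the eigenvalues of $F(t)$ with those of the truncation by Weyl's inequality (the paper's "minimax principle"), and finish with the triangle inequality using $\mn{v_j}=1$. The only point where you go beyond the paper — arguing that the matching permutation can be chosen constant on a neighbourhood of $t_0$ "by continuity" — is not justified when eigenvalues of $F(t_0)$ are degenerate or the analytic branches cross at $t_0$ (which is exactly the interesting case here), but it is also not needed: the renumbering in the lemma, as in the paper's proof and its later use, may be taken pointwise in $t$ from the Weyl/minimax pairing you have already established.
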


By a well-known theorem of Rellich, the eigenvalues ${\lambda}(t)
\in C^1(\br)$  for symmetric
$F(t) \in C^1(\br)$, see~\cite[Theorem~II.6.8]{kato}.

\begin{proof}
It is no restriction to assume $t_0 = 0$. By Taylor's formula
\begin{equation*}
 F(t) = F_k(t) + R_k(t)
\end{equation*}
where $F_k$ and $R_k$ are symmetric, $F_k(t)$ is a polynomial of
degree $k-1$ and $R_k(t) = \Cal O(|t|^k)$. Since $F_k(t)$ is symmetric
and holomorphic, it has a base of normalized holomorphic eigenvectors $v_{j}(t)$
with real holomorphic eigenvalues $\wt {\lambda}_{j}(t)$
by~\cite[Theorem~II.6.1]{kato}.
Thus $\wt {\lambda}_j(t) = \w{F_k(t) v_j(t),v_j(t)}$ and 
by the minimax principle we may renumber the eigenvalues so that 
\begin{equation*}
 |{\lambda}_j(t) - \wt {\lambda}_{j}(t)| \le \mn{R_k(t)} \le C|t|^k
 \qquad \forall\,j 
\end{equation*}
Since 
\begin{equation*}
 | \w{(F(t)-F_k(t))v_{j}(t), v_{j}(t)}| =  |\w{R_k(t)v_{j}(t),
   v_{j}(t)}| \le C|t|^k \qquad \forall\,j
\end{equation*}
we obtain the result.
\end{proof}

Assume now that~\eqref{dersubcond} holds for some $k$.
As in the scalar case, we have that $k$ is even and
$\bigcap_{{\delta} >
  0}{\Omega}_{\delta}(F)= {\Sigma}_0(F) = |F|^{-1}(0)$.
Thus, for small ${\delta}$ we only have to consider a small
neighborhood of $t_0 \in {\Sigma}_0(F)$. Then by using
Lemma~\ref{analem} we have after renumbering that for each
eigenvalue ${\lambda}(t)$ of $F(t)$ there exists $v(t) \in C^\infty$ so
that $|v(t)| \ge c >0$ and
\begin{equation}\label{evap}
 |{\lambda}(t) - \w{F(t)v(t),v(t)}| \le C|t-t_0|^{k+1} \qquad
 \text{when $|t-t_0| \le c$}
\end{equation}
Now if ${\Sigma}_0(F) \ni t_j
\to t_0$ is an accumulation point, then after choosing a subsequence
we obtain that for some eigenvalue ${\lambda}_k$ we have
${\lambda}_k(t_j) = 0$, $\forall\, j$. Then ${\lambda}_k$ vanishes
of infinite order at $t_0$, contradicting ~\eqref{dersubcond} by ~\eqref{evap}.
Thus, we find that ${\Sigma}_0(F)$ is a finite collection of points.
By using~\eqref{dersubcond} with $u(t) = v(t)$ we find as in
the second part of the proof of Proposition~\ref{scalarsubcondlem} that 
\begin{equation*}
 \w{F(t)v(t),v(t)} \ge c |t-t_0|^j \qquad
 |t-t_0| \ll 1
\end{equation*}
for some even $j \le k$, which by ~\eqref{evap} gives that
\begin{equation*}
 {\lambda}(t) \ge c |t-t_0|^j -C |t-t_0|^{k+1} \ge c' |t-t_0|^j \qquad
 |t-t_0| \ll 1
\end{equation*}
Thus $|{\Omega}_{\delta}({\lambda})\bigcap {\omega}| \le
c{\delta}^{1/j}$ if ${\omega}$ for ${\delta} \ll 1$ if ${\omega}$ is
a small neighborhood of $t_0 \in {\Sigma}_0(F)$. Since
${\Omega}_{\delta}(F) = \bigcup_j {\Omega}_{\delta}({\lambda}_j)$,
where $\set{{\lambda}_j(t)}_j$ are the eigenvalues of $F(t)$,
we find by adding up that $|{\Omega}_{\delta}(F)| \le
C{\delta}^{1/k}$. Thus the largest
${\mu}$ satisfying ~\eqref{subcond01} must be $\ge 1/k$.
\end{proof}

Let $A(t) \in \lip(\br, \Cal L (L^2(\br^n))) $ be the $L^2(\br^n)$
bounded operators which are Lipschitzcontinuous in the parameter $t\in
\br$. This means that 
\begin{equation} \label{lipop}
A(s)-A(t)/s-t = B(s,t) \in \Cal L (L^2(\br^n)) \qquad \text{uniformly
  in $s$ and $t$}
\end{equation}
One example is $A(t) = a^w(t,x,D_x)$ where $a(t,x,{\xi})
\in \lip(\br, S(1,G))$ for a ${\sigma}$ temperate metric $G$
which is constant in $t$ such that $G/G^{{\sigma}} \le 1$.

\begin{lem}\label{liplem}
Assume that  $A(t) \in \lip(\br, \Cal L (L^2(\br^n))) $ and
${\phi}({\tau}) \in C^\infty(\br)$ such that ${\phi}'({\tau}) \in
C^\infty_0(\br)$. Then for ${\kappa} > 0$ we can estimate the
commutator 
\begin{equation*}
 \left\|\big[{\phi}(D_t/{\kappa}), A(t) \big] \right\|_{\Cal L
     (L^2(\br^{n+1}))} \le C{\kappa}^{-1} 
\end{equation*}
where the constant only depends on ${\phi}$ and the bound on
$A(t) $ in $\lip(\br, \Cal L (L^2(\br^n)))$.
\end{lem}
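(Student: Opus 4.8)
The plan is to realize $[\phi(D_t/\kappa),A(t)]$ as a convolution in the variable $t$ against an \emph{operator-valued} kernel which, thanks to the Lipschitz dependence of $A$ on $t$, turns out to be genuinely integrable with $L^1$-norm of order $\kappa^{-1}$; the estimate then follows from Schur's test. First note that since $\phi'\in C_0^\infty(\br)$ the function $\phi$ is constant outside a compact interval, hence bounded, so $\phi(D_t/\kappa)$ is a bounded Fourier multiplier on $L^2(\br_t)$ and the left-hand side is a well-defined bounded operator. By \eqref{lipop} we may write $A(s)-A(t)=(s-t)B(s,t)$ with $L:=\sup_{s\ne t}\mn{B(s,t)}_{\Cal L(L^2(\br^n))}<\infty$, where $L$ is controlled by the norm of $A$ in $\lip(\br,\Cal L(L^2(\br^n)))$.

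Let $k_\kappa$ be the convolution kernel of $\phi(D_t/\kappa)$, a priori only a tempered distribution on $\br$ with Fourier transform $\phi(\cdot/\kappa)$. Computing on the dense set of $u$ of the form $u(t)=\sum_j f_j(t)g_j$, $f_j\in\Cal S(\br)$, $g_j\in L^2(\br^n)$, one gets
\[
 [\phi(D_t/\kappa),A(t)]u(t)=\int k_\kappa(t-s)\bigl(A(s)-A(t)\bigr)u(s)\,ds=-\int \ell_\kappa(t-s)B(s,t)u(s)\,ds,
\]
where $\ell_\kappa(r):=r\,k_\kappa(r)$. The decisive observation is that $\ell_\kappa$ is the inverse Fourier transform of $\tfrac{i}{\kappa}\phi'(\cdot/\kappa)\in C_0^\infty(\br)$, hence a Schwartz function; by scaling $\mn{\ell_\kappa}_{L^1(\br)}$ equals $\kappa^{-1}$ times an expression in $\widehat{\phi'}$ only, so $\mn{\ell_\kappa}_{L^1(\br)}\le C_\phi\kappa^{-1}$ with $C_\phi$ depending only on $\phi$. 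Therefore $[\phi(D_t/\kappa),A(t)]$ has $\Cal L(L^2(\br^n))$-valued kernel $\Cal K(t,s)=-\ell_\kappa(t-s)B(s,t)$, satisfying $\mn{\Cal K(t,s)}_{\Cal L(L^2(\br^n))}\le L\,|\ell_\kappa(t-s)|$, and Schur's test (equivalently Young's inequality for operator-valued convolution kernels) gives $\mn{[\phi(D_t/\kappa),A(t)]}_{\Cal L(L^2(\br^{n+1}))}\le L\,\mn{\ell_\kappa}_{L^1(\br)}\le C_\phi L\,\kappa^{-1}$, which is the asserted bound with $C$ depending only on $\phi$ and on the $\lip$-bound of $A$.

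The one delicate point — the main, though mild, obstacle — is that $k_\kappa$ is not a function (this is exactly where the hypothesis $\phi'\in C_0^\infty$, rather than $\phi\in\Cal S$, enters), so the step ``multiply the kernel by $(s-t)$ and absorb it into the smooth factor $B$'' must be justified. I would do this by approximation: set $\phi^{(R)}=\phi\,\chi(\cdot/R)$ with $\chi\in C_0^\infty$, $\chi\equiv1$ near $0$; then $k^{(R)}_\kappa$ is Schwartz, the displayed identity holds trivially for $\phi^{(R)}$, one has $\phi^{(R)}(D_t/\kappa)\to\phi(D_t/\kappa)$ strongly on $L^2$, and $\ell^{(R)}_\kappa\to\ell_\kappa$ in $L^1(\br)$ — the latter by an integration-by-parts estimate exploiting that $\phi''\in C_0^\infty$ is disjoint from $\supp\chi'(\cdot/R)$ once $R$ is large. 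Passing to the limit $R\to\infty$ in the bound already proved for $\phi^{(R)}$ (which is uniform in $R$) finishes the argument.
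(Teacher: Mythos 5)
Your proof is correct and follows essentially the same route as the paper: both use the Lipschitz quotient $A(s)-A(t)=(s-t)B(s,t)$, transfer the factor $(s-t)$ onto the multiplier so that $\phi(\tau/\kappa)$ becomes $\kappa^{-1}$ times a compactly supported derivative (yielding the kernel $\widehat{\rho}(\kappa(s-t))B(s,t)$ with $\rho$ essentially $\phi'$), and conclude by an $L^1$ kernel bound via Schur/Cauchy--Schwarz. The only difference is presentational: the paper computes formally with distribution kernels in $\Cal D'$, while you keep the kernel operator-valued and add the cutoff approximation $\phi^{(R)}$ to justify the distributional step, which is a harmless extra precaution.
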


\begin{proof}
In the following, we shall denote by $A(t,x,y)$ the distribution kernel of
$A(t)$. Then we find from ~\eqref{lipop} that 
\begin{equation}\label{lipref}
 A(s,x,y) - A(t,x,y) = (s -t)B(s,t,x,y)
\end{equation}
where $B(s,t,x,y)$ is the kernel for $B(s,t)$ for $s$, $t\in \br$. Then
\begin{multline}\label{comref}
\left\langle \big[{\phi}(D_t/{\kappa}), A(t)\big]u,v \right\rangle \\
  =(2{\pi})^{-1} \int e^{i(t-s){\tau}} 
 {\phi}({\tau}/{\kappa})(A(s,x,y)-A(t,x,y))u(s,x)\overline{v(t,y)}\,
 d{\tau} ds dt dx dy 
\end{multline}
for $u$, $ v \in C_0^\infty(\br^{n+1})$, and by using \eqref{lipref}
we obtain that the commutator has kernel
\begin{multline*}
(2{\pi})^{-1} \int  e^{i(t-s){\tau}} {\phi}({\tau}/{\kappa})(s -t)B(s,t,x,y)
 \,d{\tau} \\=  {\kappa}^{-1} \int  e^{i(t-s){\tau}} {\rho}({\tau}/{\kappa})B(s,t,x,y)
 \,d{\tau} = \widehat {\rho}({\kappa}(s-t)) B(s,t,x,y)
\end{multline*}
in $\Cal D(\br^{2n+ 2})$, where ${\rho} \in C^\infty_0(\br)$. 
Thus, we may estimate ~\eqref{comref} by using Cauchy-Schwarz:
\begin{equation*}
 \int |\widehat {\varrho}({\kappa}s)\w{B(s + t,t)u(s+t),  {v(t)}}_{L^2(\br^n)}|\,
 dtds \le C{\kappa}^{-1}\mn u \mn v
\end{equation*}
where the norms are in $\Cal L(L^2(\br^{n+1}))$.
\end{proof}

We also need some results about the lower bounds of systems, and
we shall use the following version of the G\aa rding inequality for systems.
A convenient way for proving the inequality is to use the Wick
quantization of $a \in L^\infty(T^*\br^n)$ given by
\begin{equation*}
a^{Wick}(x,D_x)u(x) = \int_{T^*\br^n}a(y,{\eta})
{\Sigma}^w_{y,{\eta}}(x,D_x)u(x)\,dyd{\eta}\qquad u \in  \Cal S(\br^n)
\end{equation*}
using the rank one orthogonal projections
${\Sigma}^w_{y,{\eta}}(x,D_x)$ in $L^2(\br^2)$ with Weyl symbol
$${\Sigma}_{y,{\eta}}(x,{\xi}) =
{\pi}^{-n}\exp\left(-|x-y|^2- |{\xi}-{\eta}|^2\right)
$$
(see~\cite[Appendix~B]{de:suff} or~\cite[Section~4]{ln:coh}).
We find that
$a^{Wick}$: $ \Cal S(\br^n) \mapsto \Cal S'(\br^n)$
is symmetric on $\Cal S(\br^n)$ if $a$ is real-valued,
\begin{equation} \label{poswick}
a \ge 0 
\implies
\sw{a^{Wick}(x,D_x)u,u} \ge
0 \qquad u \in \Cal S(\br^n)
\end{equation}
and
$ 
\mn{a^{Wick}(x,D_x)}_{\Cal L(L^2(\br^n))} \le
\mn{a}_{L^\infty(T^*\br^n)}, 
$ 
which is the main advantage with the Wick quantization. If $a \in S(1,
h|dw|^2)$ we find that 
\begin{equation}\label{weylwick}
a^{Wick} = a^w + r^w
\end{equation}
where $r \in S(h, h|dw|^2)$.  For a
reference, see \cite[Proposition~4.2]{ln:coh}.

\begin{prop}\label{garding}
Let\/ $0 \le A  \in C^\infty_\rb(T^*\br^n)$ be an $N \times N$ system, 
then we find that
\begin{equation*}
  \w{A^w(x,hD)u,u} \ge - Ch\mn u^2 \qquad \forall\, u \in
  C^\infty_0(\br^n, \bc^N)
\end{equation*}
\end{prop}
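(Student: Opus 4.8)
The plan is to use the Wick quantization recalled above together with a metaplectic rescaling that reduces $A^w(x,hD)$ to an ordinary ($h$-free) Weyl quantization of a symbol in $S(1,h|dw|^2)$, where the stated properties of the Wick calculus apply directly.

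First I would introduce the unitary operator $U_h$ on $L^2(\br^n)$ given by $(U_hv)(x) = h^{-n/4}v(h^{-1/2}x)$, acting componentwise on $L^2(\br^n,\cn)$. A direct computation with the formula \eqref{weyl}, substituting $\xi \mapsto h^{-1/2}\xi$ in the integral, shows that
\[ U_h^{-1}A^w(x,hD)U_h = B^w(x,D), \qquad B(w) = A(h^{1/2}w), \]
where $B^w(x,D)$ is the ordinary Weyl quantization. Since $U_h$ is unitary, it preserves both $\mn u$ and the quadratic form $\w{A^w(x,hD)u,u}$, so it suffices to prove $\w{B^w(x,D)v,v} \ge -Ch\mn v^2$ for $v \in C^\infty_0(\br^n,\cn)$, with $C$ independent of $h \in (0,1]$. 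Here $B \ge 0$ as a Hermitian matrix at every point since $A \ge 0$, and $B = A(h^{1/2}\cdot) \in S(1,h|dw|^2)$ uniformly in $h$: indeed $\partial_{\nu_1}\cdots\partial_{\nu_k}B(w) = h^{k/2}(\partial_{\nu_1}\cdots\partial_{\nu_k}A)(h^{1/2}w)$ is bounded by $C_k h^{k/2}$ because $A \in C^\infty_\rb$, which is exactly the estimate for $S(1,g)$ with $g = h|dw|^2$. This metric is constant, hence $\sigma$-temperate, with $g/g^\sigma = h^2 \le 1$.

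Now I would apply the Wick calculus to $B$. By \eqref{weylwick}, $B^w = B^{Wick} + r^w$ with $r \in S(h,h|dw|^2)$ uniformly, so by $L^2$-boundedness for this (constant, hence admissible) metric we get $\mn{r^w}_{\Cal L(L^2(\br^n,\cn))} \le Ch$, since every relevant seminorm of $r$ in $S(h,h|dw|^2)$ is $\Cal O(h)$. On the other hand $B \ge 0$ gives, via \eqref{poswick} applied to the matrix-valued symbol, $\w{B^{Wick}v,v} \ge 0$ for $v \in \Cal S(\br^n,\cn)$: using that $\Sigma^w_{y,\eta}$ acts as a scalar orthogonal projection on the fibre variable and therefore commutes with the matrix $B(y,\eta)$, one has
\[ \w{B^{Wick}v,v} = \int_{T^*\br^n}\w{B(y,\eta)\Sigma^w_{y,\eta}v,\Sigma^w_{y,\eta}v}\,dy\,d\eta \ge 0. \]
Combining, $\w{B^w(x,D)v,v} = \w{B^{Wick}v,v} + \w{r^wv,v} \ge -Ch\mn v^2$, and transporting back by $U_h$ gives $\w{A^w(x,hD)u,u} \ge -Ch\mn u^2$ for $u$ in $\Cal S$, hence for $u \in C^\infty_0(\br^n,\cn)$ by density.

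The only real bookkeeping is the matrix-valued form of \eqref{poswick}--\eqref{weylwick}: the positivity uses the commutation of $\Sigma^w_{y,\eta}\otimes\id_N$ with the matrix symbol as above, and the expansion \eqref{weylwick} holds entrywise with the same metric; beyond that the proof is exactly the scalar one. I do not expect any serious obstacle — the content is entirely in having the correct scaling so that $A^w(x,hD)$ lands in the symbol class $S(1,h|dw|^2)$ for which the Wick machinery recalled in the text gives both the positivity and the $\Cal O(h)$ error.
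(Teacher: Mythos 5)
Your proof is correct and follows essentially the same route as the paper: the metaplectic rescaling $(x,\xi)\mapsto(h^{1/2}x,h^{-1/2}\xi)$ to land in $S(1,h|dw|^2)$, then the Wick quantization via \eqref{weylwick} and \eqref{poswick}, with the $\Cal O(h)$ remainder bounded in $L^2$. The only difference is that you spell out the unitary $U_h$ and the matrix-valued form of the Wick positivity explicitly, which the paper leaves implicit.
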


This result is well known, see for example Theorem~18.6.14
in~\cite{ho:yellow}, but
we shall give a short and direct proof.

\begin{proof}
By making a $L^2$ preserving linear symplectic change of coordinates:
$(x,{\xi}) \mapsto (h^{1/2}x, h^{-1/2}{\xi})$ 
we may assume that $0 \le A\in S(1, h|dw|^2)$. Then
we find from~\eqref{weylwick} that $A^w = A^{Wick} + R^w$ where   $R \in S(h,
h|dw|^2)$. Since $A \ge 0$ we obtain from~\eqref{poswick} that 
\begin{equation*}
 \w{A^wu,u} \ge \w{R^wu,u} \ge - Ch\mn u^2 \qquad \forall\, u \in
  C^\infty_0(\br^n, \bc^N)
\end{equation*}
which gives the result.
\end{proof}

\begin{rem}\label{grem}
Assume that $A$ and $B$ are $N \times N$ matrices such that $\pm A \le B$:
Then we find
\begin{equation*}
 \left| \w{Au,v} \right| \le \frac{3}{2}\left(  \w{Bu,u}  +
   \w{Bv,v} \right) \qquad \forall\, u, \ v \in \bc^N
\end{equation*}
\end{rem}

In fact, since $B \pm A \ge 0$ we find by the Cauchy-Schwarz inequality that
\begin{equation*}
 2 \left| \w{(B \pm A)u,v}\right| \le   \w{(B \pm A)u,u} +
  \w{(B \pm A)v,v} \qquad \forall\,
 u,\ v \in \bc^N
\end{equation*}
and $ 2  \left| \w{B u,v}\right| \le  \w{Bu,u} +  \w{Bv,v}$.
The estimate can then be expanded to give the inequality, since
\begin{equation*}
   \left|\w{Au,u} \right|  \le   \w{Bu,u}  \qquad
\forall\, u \in \bc^N
\end{equation*}
by the assumption.

\begin{lem}\label{semidefest}
Assume that $0 \le F(t)\in C^2(\br)$ is an $N \times N$ system such
that $F'' \in L^\infty(\br)$. Then we have  
\begin{equation*}
|\w{F'(0)u,u}|^2 \le C \mn{F''}_{L^\infty}\w{F(0)u,u}\mn u^2 \qquad \forall\,
u \in \bc^N
\end{equation*}
\end{lem}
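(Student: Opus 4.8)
The plan is to reduce at once to a one-variable statement by freezing the test vector. Fix $u \in \bc^N$ and put $g(t) = \w{F(t)u,u}$. Since $F(t) \ge 0$ we have $g(t) \ge 0$ for all $t$, and since $F \in C^2(\br)$ we have $g \in C^2(\br)$ with $g'(t) = \w{F'(t)u,u}$ and $g''(t) = \w{F''(t)u,u}$, so that $|g''(t)| \le \mn{F''(t)}\mn u^2 \le \mn{F''}_{L^\infty}\mn u^2 =: M$. Thus it will be enough to prove the elementary inequality
\[ |g'(0)|^2 \le 2M\,g(0) \]
for every nonnegative $g \in C^2(\br)$ with $\sup_t|g''(t)| \le M$; this yields the lemma with constant $C = 2$.

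To prove this scalar inequality I would use Taylor's formula with Lagrange remainder at the origin: for each $t \in \br$ there is $\theta$ between $0$ and $t$ with
\[ 0 \le g(t) = g(0) + g'(0)t + \tfrac12 g''(\theta)t^2 \le g(0) + g'(0)t + \tfrac{M}{2}t^2 . \]
Hence the quadratic polynomial $q(t) = \tfrac{M}{2}t^2 + g'(0)t + g(0)$ is nonnegative on all of $\br$. When $M > 0$ this forces its discriminant to be nonpositive, $g'(0)^2 - 2M g(0) \le 0$, which is exactly the claim; when $M = 0$ the affine function $g(0) + g'(0)t$ is nonnegative on $\br$, so $g'(0) = 0$ and the inequality is trivial (one may also apply the case $M>0$ with $M$ replaced by $\varepsilon > 0$ and let $\varepsilon \downarrow 0$).

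I do not expect a genuine obstacle here: this is the matrix form of the classical Glaeser (Malgrange--H\"{o}rmander) inequality, and the only point deserving care is that the reduction to a scalar quadratic form $g(t) = \w{F(t)u,u}$ must be carried out for each fixed $u$ before differentiating, precisely because the eigenvalues and eigenvectors of $F(t)$ need not depend smoothly on $t$; testing against fixed vectors sidesteps this. The hypothesis $F'' \in L^\infty$ enters only to provide the uniform bound $M$ on $|g''|$ over $\br$; should one wish to relax $F \in C^2$ to $F$ locally bounded with distributional second derivative in $L^\infty$, one would replace the Lagrange form of the remainder by the integral form of Taylor's formula, but under the stated hypotheses this refinement is not needed.
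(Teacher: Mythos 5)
Your proof is correct and follows essentially the same route as the paper: fix $u$, reduce to the scalar nonnegative $C^2$ function $g(t)=\w{F(t)u,u}$, and apply the Glaeser-type inequality $|g'(0)|^2\le 2\sup|g''|\,g(0)$. The only difference is that the paper cites this scalar inequality (Lemma~7.7.2 in \cite{ho:yellow}) while you prove it inline by Taylor's formula and the discriminant argument, which is the standard proof of that lemma and even makes the constant $C=2$ explicit.
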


\begin{proof}
Take $u\in \bc^N$ with $|u|= 1$ and let $0 \le f(t) = \w{F(t)u,u} \in
C^2(\br)$. Then $|f''| \le \mn{F''}_{L^\infty}$ so Lemma~7.7.2 in \cite{ho:yellow} gives
\begin{equation*}
 |f'(0)|^2 = |\w{F'(0)u,u}|^2 \le  C \mn{F''}_{L^\infty}f(0) = C\mn{F''}_{L^\infty} \w{F(0)u,u}
\end{equation*}
which proves the result.
\end{proof}

\begin{lem}\label{semiest} 
Assume that $F\ge 0$ is an $N \times N$ matrix and that $A$ is a $L^2$ bounded scalar operator, then 
\begin{equation*}
 0 \le \w{FAu,Au} \le \mn A^2\w{Fu,u}
\end{equation*}
for any $u \in C^\infty_0(\br^n, \bc^N)$.
\end{lem}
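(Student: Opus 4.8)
The plan is to reduce everything to a factorization of the positive semidefinite matrix $F$. First I would write $F = G^*G$ with $G = F^{1/2}$ the (unique) positive semidefinite square root, regarded as a constant matrix acting pointwise on $\bc^N$-valued functions. Then for any $v \in L^2(\br^n,\bc^N)$ we have $\w{Fv,v} = \w{G^*Gv,v} = \mn{Gv}^2$, where $\w{\cdot,\cdot}$ is the inner product on $L^2(\br^n,\bc^N)$ obtained by integrating the fibrewise Hermitian form. In particular $\w{FAu,Au} = \mn{GAu}^2 \ge 0$, which already gives the left-hand inequality in the statement.

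For the right-hand inequality the key observation is that the scalar operator $A$, extended to $\bc^N$-valued functions by acting on each component separately, commutes with the constant matrix $G$: indeed $(GAv)_i = \sum_j G_{ij}(Av_j) = A\big(\sum_j G_{ij}v_j\big) = (AGv)_i$ by linearity of $A$. Hence $\mn{GAu}^2 = \mn{AGu}^2$, and since $A$ acts componentwise, $\mn{AGu}^2 = \sum_i \mn{A(Gu)_i}^2 \le \mn A^2 \sum_i \mn{(Gu)_i}^2 = \mn A^2\,\mn{Gu}^2 = \mn A^2\,\w{Fu,u}$, where $\mn A$ is the operator norm of $A$ on scalar $L^2(\br^n)$. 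Combining the two chains of equalities and inequalities yields $0 \le \w{FAu,Au} \le \mn A^2\,\w{Fu,u}$, as desired.

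There is essentially no serious obstacle here; the only points requiring care are bookkeeping conventions: that $\w{\cdot,\cdot}$ and $\mn{\cdot}$ on vector-valued functions come from summing over components the scalar $L^2$-quantities, that $Au$ denotes the componentwise extension of $A$, and that $F^{1/2}$ exists because $F$ is positive semidefinite. Once these are fixed, the short computation above is complete, and it uses nothing about $A$ beyond linearity and $L^2$-boundedness, so it applies regardless of whether $A$ is a pseudodifferential operator or an arbitrary bounded operator.
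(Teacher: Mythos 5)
Your proof is correct and amounts to essentially the same argument as the paper's: the paper diagonalizes the constant matrix $F$ in an orthonormal eigenbasis and bounds $\sum_j f_j\mn{Au_j}^2 \le \mn A^2\sum_j f_j\mn{u_j}^2$, while you factor $F = (F^{1/2})^*F^{1/2}$ and commute the componentwise scalar operator $A$ past the constant matrix $F^{1/2}$; both routes rest on exactly the same two facts (that $A$ commutes with constant matrices and that $\mn{Av}\le \mn A\,\mn v$ componentwise). No gap; the bookkeeping points you flag (componentwise action of $A$, $F$ constant, existence of $F^{1/2}$) are precisely what is needed.
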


\begin{proof}
Since $F \ge 0$ we can choose an orthonomal base for~ $\bc^N$ such that
$\w{Fu,u} = \sum_{j=1}^{N} f_j |u_j|^2$ for $u = (u_1, u_2, \dots) \in
\bc^N$, where $f_j \ge 0$ are the eigenvalues of~$F$. In this base
we find
\begin{equation*}
 0 \le \w{FAu,Au} = \sum_{j} f_j \mn{Au_j}^2 \le \mn A^2 \sum_{j} f_j
 \mn{u_j}^2 =  \mn A^2\w{Fu,u}
\end{equation*}
for $u \in C^\infty_0(\br^n, \bc^N)$.
\end{proof}

\begin{prop}\label{cleanupest}
Assume that $h/H \le F \in S(1,g)$ is an $N \times N$ system,
$\set{{\phi}_j}$ and
$\set{{\psi}_j} \in S(1, G)$ with values in $\ell^2$ such that
$\sum_{j} |{\phi}_j|^2 \ge c > 0$ and 
${\psi}_j$ is supported in a fixed $G$ neighborhood of $w_j \in \supp
{\phi}_j$, $\forall\,j$. Here $g = h|dw|^2$ and $G = H|dw|^2$ are
constant metrics, $0 < h \le H \le 1$.
If $F_j = F(w_j)$ we find for $H \ll 1$ that
\begin{equation} \label{erroreq}
\sum_{j}
\w{F_j{\psi}_j^w(x,D_x)u, {\psi}_j^w(x,D_x)u} \le C \sum_{j}
\w{F_j{\phi}_j^w(x,D_x)u, {\phi}_j^w(x,D_x)u}
\end{equation}
for any $u \in C^\infty_0(\br^{n}, \bc^N)$.
\end{prop}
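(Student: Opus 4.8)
The goal is to prove the operator inequality~\eqref{erroreq}. The strategy is to use the partition of unity $\sum_j|\phi_j|^2\ge c$ to build an exact resolution of the identity, to push each $\psi_j^w$ through it, and then to use Lemma~\ref{semiest} (which says $\langle FAu,Au\rangle\le\|A\|^2\langle Fu,u\rangle$ for a constant matrix $F\ge0$ and a scalar operator $A$) to trade the scalar cut-offs against each other without ever disturbing the matrix weights $F_j$.

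First I would record the basic comparison for the weight. Since $F\ge0$ and $F\in S(1,g)$ with $g=h|dw|^2$, we have $\|F''\|_{L^\infty}\le Ch$, so Lemma~\ref{semidefest} gives $|\langle\partial_wF(w)v,v\rangle|\le Ch^{1/2}\langle F(w)v,v\rangle^{1/2}|v|$ for all $w$ and $v\in\bc^N$. Integrating this differential inequality along the segment from $w_j$ to $w$ and using $F(w_j)\ge(h/H)\id$ (so that $\langle F(w_j)v,v\rangle^{1/2}\ge(h/H)^{1/2}|v|$ absorbs the increment $Ch^{1/2}|w-w_j|\le C(h/H)^{1/2}$) yields $c_0F(w_j)\le F(w)\le c_1F(w_j)$ as quadratic forms whenever $w$ is in a fixed $G$-neighbourhood of $w_j$; this neighbourhood only has to contain $\supp\phi_j$ and $\supp\psi_j$, so the inequality holds on each of these supports, and by operator monotonicity of $t\mapsto t^{1/2}$ so does $c_0^{1/2}F(w_j)^{1/2}\le F(w)^{1/2}\le c_1^{1/2}F(w_j)^{1/2}$. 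The same argument applied to two centres $w_j,w_k$ with $|w_j-w_k|\lesssim H^{-1/2}$ gives $F_j\le CF_k$, hence $\|F_j^{1/2}F_k^{-1/2}\|\le C^{1/2}$; Remark~\ref{subinvrem} lets one pass freely between these comparable weights.

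Next, since $\sum_j|\phi_j|^2\ge c$, the operator $P:=\sum_j(\phi_j^w)^*\phi_j^w$ has symbol $\ge c-\Cal O(H)\ge c/2$, so by the sharp G\aa rding inequality for the $G$-calculus (cf.\ Proposition~\ref{garding}) it satisfies $P\ge(c/4)\id$; it is therefore invertible on $L^2$ with $P^{-1}=Q^w$, $Q\in S(1,G)$ a scalar symbol. The identity $\psi_j^w=\psi_j^wQ^wP=\sum_kC_{jk}\,\phi_k^w$ with $C_{jk}:=\psi_j^wQ^w(\phi_k^w)^*$ is then exact, and $\{C_{jk}\}$ is a matrix of scalar operators with $\|C_{jk}\|\le C$ and rapid off-diagonal decay $\|C_{jk}\|\le C_N\langle d_{jk}\rangle^{-N}$, $d_{jk}$ the $G$-distance between $w_j$ and $w_k$, since $\psi_j$ is supported near $w_j$, $\phi_k$ near $w_k$, and $Q^w$ is pseudolocal for the $G$-calculus. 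Expanding $\langle F_j\psi_j^wu,\psi_j^wu\rangle=\sum_{k,l}\langle F_jC_{jk}\phi_k^wu,C_{jl}\phi_l^wu\rangle$, Cauchy--Schwarz together with Lemma~\ref{semiest} (applied with $A=C_{jk}$, $F=F_j$, vector $\phi_k^wu$) gives
\[
\langle F_j\psi_j^wu,\psi_j^wu\rangle\le\Bigl(\sum_k\|C_{jk}\|\,\langle F_j\phi_k^wu,\phi_k^wu\rangle^{1/2}\Bigr)^{2}.
\]
For the indices $k$ with $d_{jk}\lesssim1$ (a uniformly bounded set) one replaces $F_j$ by $CF_k$ using the weight comparison; squaring, summing in $j$, and using that each $k$ is close to only boundedly many $j$ produces the desired bound $C\sum_k\langle F_k\phi_k^wu,\phi_k^wu\rangle$. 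The remaining ``far'' part is treated by a Schur/Cotlar--Stein estimate for the rapidly decaying matrix $\{C_{jk}\}$ and by replacing $Q$ by a localised parametrix $Q_j$ of $P$ near $\supp\psi_j$, so that $C_{jk}$ becomes negligible whenever $w_k$ is far from $w_j$.

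The main obstacle is exactly this control of the far terms and the error absorption at the end. The cut-offs $\phi_j,\psi_j$ are quantised at the coarse scale $G=H|dw|^2$, whereas the weight $F$ lives at the finer scale $g=h|dw|^2$ and may be as small as $h/H\ll H$, so the $\Cal O(H)$ (or, using that $\psi_j$ is real, $\Cal O(H^2)$) error terms produced by the symbol calculus are far too large to be absorbed into the right-hand side directly; the symbol-plus-G\aa rding route therefore fails and one must argue at the operator level. Keeping the constant matrices $F_j$ intact via Lemma~\ref{semiest} and using the localised parametrix is precisely what ensures that every surviving error is genuinely of order $hH^{-1}\|u\|^2$, which is dominated by the right-hand side because $F\ge(h/H)\id$ forces $\sum_k\langle F_k\phi_k^wu,\phi_k^wu\rangle\ge c(h/H)\|u\|^2$; the summability of the remaining $\ell^2$-valued remainders is the estimate recalled in~\cite[p.\ 169]{ho:yellow}.
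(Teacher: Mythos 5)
Your overall strategy is essentially the paper's: both arguments build an exact resolution of the identity from the elliptic sum over the $\phi_j$ (you invert $\sum_j(\phi_j^w)^*\phi_j^w$; the paper inverts $(\chi^{-1})^w\sum_j\ol\phi{}_j^w\phi_j^w$ using Theorem~7.6 in \cite{bc:sob} — the same tool you implicitly need to assert $P^{-1}=Q^w$ with $Q\in S(1,G)$ uniformly in $H$), insert it to express $\psi_j^w$ through the $\phi_k^w$, keep the constant matrices $F_j$ intact via Lemma~\ref{semiest}, compare the weights through Lemma~\ref{semidefest} combined with the lower bound $h/H\le F$, and sum using the off-diagonal decay of the scalar factors. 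Your Cauchy--Schwarz bound for $\w{F_j\psi_j^wu,\psi_j^wu}$ and the treatment of the near-diagonal indices are correct.

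The gap is in the far terms. You establish $F_j\le CF_k$ only for $d_{jk}\lesssim 1$, and for distant pairs you propose to make $C_{jk}$ negligible by replacing $Q$ with a localised parametrix, asserting that the surviving errors are of order $hH^{-1}\mn u^2$. That assertion is unsubstantiated and, as stated, fails: any error produced by a parametrix replacement in the $S(1,G)$-calculus is controlled only in powers of $H$ (at best $\Cal O(H^\infty)$ in operator norm), with no factor of $h$ attached; and, as you yourself point out, an additive term $\Cal O(H^N)\mn u^2$ cannot be absorbed by the right-hand side, which may be only of size $(h/H)\mn u^2$ — in the application $h/H$ can be smaller than every power of $H$ (e.g.\ ${\phi}({\delta})=1/\log(1/{\delta})$ gives $H^2\approx 1/\log (1/h)$ while $h/H = H{\psi}(h)$). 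The repair lies inside your own first step: integrating $|\partial_t\w{F v,v}|\le Ch^{1/2}\w{Fv,v}^{1/2}|v|$ and using $F\ge (h/H)\id_N$ gives the comparison with a constant growing only polynomially in the distance, $F_j\le C(1+d_{jk})^2F_k$ for \emph{all} $j,k$ — exactly the estimate the paper derives from Taylor's formula and Lemma~\ref{semidefest}. With that, the rapid decay $\mn{C_{jk}}\le C_N(1+d_{jk})^{-N}$ beats the polynomial loss, and a Schur-type summation of your bound closes the argument with no additive remainder and no parametrix at all. The paper's proof is the symmetric version of this: it inserts the resolution of identity on both sides, producing the triple sum over $j,k,l$ with coefficients $C_N(1+d_{jl}+d_{lk})^{2-N}$.
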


\begin{proof}
Since ${\chi} = \sum_{j}|{\phi}_j|^2  \ge c > 0$ we find that
${\chi}^{-1} \in S(1, G)$. The calculus gives
$$
({\chi}^{-1})^w\sum_{j}\ol {\phi}_j^w
{\phi}_j^w = 1 + r^w
$$ 
where $r \in S(H, G)$
uniformly in ~$H$. Thus, the mapping 
$
u \mapsto ({\chi}^{-1})^w\sum_{j}\ol{\phi}_j^w{\phi}_j^wu
$ 
is a homeomorphism on $L^2(\br^n)$ for small enough ~$H$.
Now the constant metric $G =
H|dw|^2$ is trivially {\em strongly ${\sigma}$ temperate} according to
Definition~7.1 in~\cite{bc:sob}, so Theorem~7.6 in~\cite{bc:sob} 
gives $B \in S(1, G)$ such that 
$$B^w({\chi}^{-1})^w\sum_{j}\ol {\phi}_j^w
{\phi}_j^w = \sum_{j} B_j^w{\phi}_j^w   = 1
$$ 
where $B_j^w = B^w({\chi}^{-1})^w\ol {\phi}_j^w \in \op S(1,G)$
uniformly, which gives $1= \sum_{j}\ol{\phi}_j^w \ol B_j^w$ since
$(B_j^w)^* = \ol B_j^w$. Now we shall put 
$$
\wt{\Cal F}^w(x,D_x) =  \sum_{j}
\ol{\psi}_j^w(x,D_x) F_j{\psi}_j^w(x,D_x) 
$$
then 
\begin{equation}\label{calcref}
\wt {\Cal F}^w = \sum_{jk} \ol {\phi}_j^w \ol B_j^w\wt{\Cal F}^w
B_k^w{\phi}^w_k = \sum_{jkl} \ol{\phi}_j^w \ol B_j^w \ol{\psi}_l^w F_l
{\psi}_l^w B_k^w{\phi}^w_k
\end{equation}
Let  $C_{jkl}^w = \ol B_j^w \ol{\psi}_l^w
{\psi}_l^wB_k^w$, then we find from~\eqref{calcref} that 
\begin{equation*}
 \w{\wt {\Cal F}^wu,u} = \sum_{jkl}\w{ F_lC_{jkl}^w{\phi}_k^w u,{\phi}_j^w u}
\end{equation*}
Let $d_{jk}$ be the $H^{-1}|dw|^2$ distance between the
$G$ neighborhoods in which ${\psi}_j$ and ${\psi}_k$ are supported.
The usual calculus estimates (see~\cite[p.\ 168]{ho:yellow}
or~\cite[Th.\ 2.6]{bc:sob}) gives
that the $L^2$ operator norm of $C_{jkl}^w$ can be estimated by
\begin{equation*}
 \mn{C_{jkl}^w} \le C_N(1+ d_{jl} + d_{lk})^{-N}
\end{equation*}
for any $N$. 
We find by Taylor's formula, Lemma~\ref{semidefest} and the
Cauchy-Schwarz inequality that
\begin{multline*}
 |\w{(F_j - F_k)u,u}| \le
 C_1 |w_j - w_k| \w{F_ku,u}^{1/2}h^{1/2}\mn u \\+
 C_2 h |w_j - w_k|^2 \mn u^2 \le C\w{F_ku,u}(1 + d_{jk})^2
\end{multline*}
since $|w_j - w_k| \le C(d_{jk}+H^{-1/2})$ and  $h \le hH^{-1} \le
F_k$. Since $F_l \ge 0$ we obtain that   
\begin{equation*}
 2|\w{F_lu,v}| \le \w{F_lu,u}^{1/2}\w{F_lv,v}^{1/2} \le
 C\w{F_ju,u}^{1/2}\w{F_kv,v}^{1/2} (1+
 d_{jl})(1 + d_{lk}) 
\end{equation*}
and Lemma~~\ref{semiest} gives
\begin{equation*}
 \w{ F_kC_{jkl}^w{\phi}_k^w u, F_kC_{jkl}^w{\phi}_k^w u } \le
 \mn{C_{jkl}}^2\w{F_k{\phi}_k^wu,{\phi}_k^wu} 
\end{equation*}
Thus we find that
\begin{multline*}
 \sum_{jkl}\w{ F_lC_{jkl}^w{\phi}_k^w u,{\phi}_j^w u} \le C_N
  \sum_{jkl}(1+ d_{jl} + d_{lk})^{2-N}
  \w{F_k{\phi}_k^wu,{\phi}_k^wu}^{1/2}
  \w{F_j{\phi}_j^wu,{\phi}_j^wu}^{1/2}\\
\le C_N\sum_{jkl}(1+ d_{jl})^{1-N/2}
(1+ d_{lk})^{1-N/2} \left(\w{F_j{\phi}_j^wu,{\phi}_j^wu} + \w{F_k{\phi}_k^wu,{\phi}_k^wu}\right)
\end{multline*}
Since 
\begin{equation*}
 \sum_j(1 + d_{jk})^{-N} \le C \qquad \forall\, k
\end{equation*}
for $N$ large enough by \cite[p.\ 168]{ho:yellow}), we obtain the
estimate~\eqref{erroreq} and the result. 
\end{proof}

\bibliographystyle{amsplain}

\providecommand{\bysame}{\leavevmode\hbox to3em{\hrulefill}\thinspace}
\providecommand{\MR}{\relax\ifhmode\unskip\space\fi MR }
\providecommand{\MRhref}[2]{%
  \href{http://www.ams.org/mathscinet-getitem?mr=#1}{#2}
}
\providecommand{\href}[2]{#2}

\end{document}